 \DeclarePairedDelimiter{\floor}{\lfloor}{\rfloor}
 \DeclarePairedDelimiter{\ceil}{\lceil}{\rceil}
\def\xstringversion     {1.6}
\def\xstringdate        {2012/10/24}
\edef\CurrentAtCatcode  {\the\catcode`\@}
\newwrite\@xs@message
\newif\if@xs@empty
\write\m@ne{Package: xstring \xstringdate\space\space v\xstringversion\space\space String manipulations (C Tellechea)}%
	\long\def\@firstoftwo#1#2{#1}
	\long\def\@secondoftwo#1#2{#2}
	\long\def\@gobble#1{}
	\long\def\@ifnextchar#1#2#3{%
		\let\reserved@d=#1%
		\def\reserved@a{#2}%
		\def\reserved@b{#3}%
		\futurelet\@let@arg\@ifnch}
	\def\@ifnch{%
		\ifx\@let@arg\@sptoken
			\let\reserved@c\@xifnch
		\else
			\ifx\@let@arg\reserved@d
				\let\reserved@c\reserved@a
			\else
				\let\reserved@c\reserved@b
			\fi
		\fi
		\reserved@c}
	\def\:{\let\@sptoken= } \:
	\def\:{\@xifnch} \expandafter\def\: {\futurelet\@let@arg\@ifnch}
	\def\@ifstar#1{\@ifnextchar *{\@firstoftwo{#1}}}
	\long\def\@testopt#1#2{\@ifnextchar[{#1}{#1[{#2}]}}
	\def\@empty{}
\def\@xs@testempty#1{%
	\expandafter\ifx\expandafter\@empty\detokenize{#1}\@empty\@xs@emptytrue\else\@xs@emptyfalse\fi}
\def\@xs@MakeVerb{
	\begingroup
		\def\do##1{\catcode`##112\relax}%
		\dospecials
		\obeyspaces
		\@xs@ReadVerb
}
\def\setverbdelim#1{
	\expandafter\@xs@testempty\expandafter{\@gobble#1}%
	\if@xs@empty
	\else
		\begingroup
			\newlinechar`\^^J%
			\immediate\write\@xs@message
			{Package xstring Warning: verb delimiter is not a single token on input line \the\inputlineno^^J}%
		\endgroup
	\fi
	\def\@xs@ReadVerb##1#1##2#1{
		\endgroup
		\@xs@afterreadverb{##2}}
}
\def\verbtocs#1{%
	\def\@xs@afterreadverb##1{\def#1{##1}}%
	\@xs@MakeVerb
}
	\gdef\tokenize#1#2{%
		\begingroup
			\@xs@def\@xs@reserved@A{#2}
			\def\@xs@AssignResult^^00##1^^00\@xs@nil{\gdef#1{##1}}
			\everyeof{\@xs@nil}
			\endlinechar\m@ne
			\catcode\z@12\relax
			\expandafter\@xs@AssignResult\scantokens\expandafter{\expandafter^^00\@xs@reserved@A^^00}
		\endgroup
	}%
\def\@xs@twochars{^^00}%
\xdef\@xs@twochars{\@xs@twochars^^00}%
\edef\@xs@reserved@A{\def\noexpand\@xs@AssignResult##1\@xs@twochars}
\def\tokenize#1#2{%
	\begingroup
		\@xs@def\@xs@reserved@A{#2}
		\everyeof\expandafter{\@xs@twochars#1}
		\endlinechar\m@ne
		\expandafter\@xs@AssignResult\scantokens\expandafter{\expandafter\relax\@xs@reserved@A}
}%
\def\@xs@ReturnResult#1#2{%
	\def\@xs@argument@A{#1}%
	\@xs@testempty{#2}%
	\if@xs@empty
		\@xs@argument@A
	\else
		\let#2\@xs@argument@A
	\fi
}
\def\normalexpandarg{%
	\let\@xs@def\def
	\def\@xs@expand##1{\unexpanded\expandafter{##1}}}
\def\expandarg{%
	\let\@xs@def\def
	\def\@xs@expand##1{\unexpanded\expandafter\expandafter\expandafter{##1}}%
}
\def\fullexpandarg{%
	\let\@xs@def\edef
	\def\@xs@expand##1{##1}
}
\def\saveexpandmode{\let\@xs@saved@def\@xs@defarg\let\@xs@saved@expand\@xs@expand}
\def\restoreexpandmode{\let\@xs@defarg\@xs@saved@def\let\@xs@expand\@xs@saved@expand}
\def\@xs@ifbeginwithbrace#1{%
	\csname @%
		\expandafter\@gobble\string{%
		\expandafter\@gobble\expandafter{\expandafter{\string#1}%
		\expandafter\expandafter\expandafter\expandafter\expandafter\expandafter\expandafter\expandafter\expandafter\expandafter\expandafter\expandafter\expandafter\expandafter\expandafter\@firstoftwo
		\expandafter\expandafter\expandafter\expandafter\expandafter\expandafter\expandafter\@gobble
		\expandafter\expandafter\expandafter\@gobble
		\expandafter\expandafter\expandafter{\expandafter\string\expandafter}\string}%
		\expandafter\@gobble\string}%
		\@secondoftwo{first}{second}oftwo%
	\endcsname
}
\def\@xs@returnfirstsyntaxunit#1#2{%
	\def\@xs@groupfound{\expandafter\def\expandafter#2\expandafter{\expandafter{#2}}\@xs@gobbleall}
	\def\@xs@assignfirsttok##1##2\@xs@nil{\let\@xs@toks0\def#2{##1}}%
	\def\@xs@testfirsttok{%
		\let\@xs@next\@xs@assignfirsttok
		\ifx\@xs@toks\bgroup
			\expandafter\@xs@ifbeginwithbrace\expandafter{\@xs@argument}{\def\@xs@next{\afterassignment\@xs@groupfound\def#2}}{}%
		\fi
		\@xs@next}%
	\def\@xs@argument{#1}%
	\edef\@xs@next{\expandafter\@xs@beforespace\detokenize{#1} \@xs@nil}
	\ifx\@xs@next\@empty
		\def\@xs@next{\expandafter\ifx\expandafter\@empty\detokenize\expandafter{\@xs@argument}\@empty\let#2\@empty\else\def#2{ }\let\@xs@toks0\fi}%
	\else
		\def\@xs@next{\expandafter\futurelet\expandafter\@xs@toks\expandafter\@xs@testfirsttok\@xs@argument\@xs@nil}%
	\fi
	\@xs@next
}
\def\@xs@testsecondtoken#1\@xs@nil{\@xs@ifbeginwithbrace{#1}}
\def\@xs@gobblespacebeforebrace#1#{}
\def\@xs@removefirstsyntaxunit#1#2{%
	\def\@xs@argument{#1}%
	\expandafter\expandafter\expandafter\ifx\expandafter\expandafter\expandafter\@empty\expandafter\@xs@beforespace\detokenize\expandafter{\@xs@argument} \@xs@nil\@empty
		\expandafter\@xs@testempty\expandafter{\@xs@argument}%
		\if@xs@empty
			\let#2\@empty
		\else
			\afterassignment\@xs@testsecondtoken
			\expandafter\let\expandafter\@xs@secontoken\expandafter=\expandafter\@sptoken\@xs@argument\@xs@@nil\@xs@nil
				{\expandafter\expandafter\expandafter\def\expandafter\expandafter\expandafter#2%
				 \expandafter\expandafter\expandafter{\expandafter\@xs@gobblespacebeforebrace\@xs@argument}}%
				{\expandafter\expandafter\expandafter\def\expandafter\expandafter\expandafter#2%
				 \expandafter\expandafter\expandafter{\expandafter\@xs@behindspace\@xs@argument\@xs@nil}}%
		\fi
	\else
		\expandafter\expandafter\expandafter\def\expandafter\expandafter\expandafter#2%
		\expandafter\expandafter\expandafter{\expandafter\@gobble\@xs@argument}%
	\fi
}
\def\@xs@beforespace#1 #2\@xs@nil{#1}
\def\@xs@behindspace#1 #2\@xs@nil{#2}
\def\@xs@returnfirstsyntaxunit@ii#1#2\@xs@nil{#1}
\def\@xs@gobbleall#1\@xs@nil{}
\def\@xs@expand@and@detokenize#1#2{%
	\def#1{#2}%
	\expandafter\edef\expandafter#1\expandafter{\@xs@expand#1}
	\edef#1{\detokenize\expandafter{#1}}
}
\def\@xs@expand@and@assign#1#2{\@xs@def#1{#2}}
\def\@xs@edefaddtomacro#1#2{\edef#1{\unexpanded\expandafter{#1}#2}}
\def\@xs@addtomacro#1#2{\expandafter\def\expandafter#1\expandafter{#1#2}}
\def\@xs@argstring{0########1########2########3########4########5########6########7########8########9}
\def\@xs@DefArg#1{\def\@xs@defarg0##1#1##2\@xs@nil{\def\@xs@myarg{##1#1}}\expandafter\@xs@defarg\@xs@argstring\@xs@nil}
\def\@xs@DefArg@#1{\expandafter\@xs@defarg@\expandafter{\number\numexpr#1+1}}
\def\@xs@defarg@#1{\def\@xs@defarg0##11##2#1##3\@xs@nil{\def\@xs@myarg{[##11]##2#1}}\expandafter\@xs@defarg\@xs@argstring\@xs@nil}
\def\@xs@OneArg#1{\expandafter\@xs@onearg\expandafter{\number\numexpr#1-1}{#1}}
\def\@xs@onearg#1#2{\def\@xs@defarg##1#1##2#2##3\@xs@nil{\def\@xs@myarg{##2#2}}\expandafter\@xs@defarg\@xs@argstring\@xs@nil}
\def\@xs@BuildLines#1#2#3#4{%
	\let\@xs@newlines\@empty
	\let\@xs@newargs\@empty
	\def\@xs@buildlines##1{%
		\expandafter\@xs@OneArg\expandafter{\number\numexpr##1+#1-1}%
		\edef\@xs@reserved@B{\noexpand\@xs@expand\csname @xs@arg@\romannumeral\numexpr##1\endcsname}%
		\ifnum##1=\@ne
			\@xs@testempty{#3}%
			\if@xs@empty
				\expandafter\@xs@addtomacro\expandafter\@xs@newargs\expandafter{\expandafter{\@xs@reserved@B}}%
				\edef\@xs@reserved@B{\ifnum##1>#4 @xs@def\else @xs@assign\fi}%
			\else
				\expandafter\@xs@addtomacro\expandafter\@xs@newargs\expandafter{\expandafter[\@xs@reserved@B]}%
				\def\@xs@reserved@B{@xs@def}
			\fi
		\else
			\expandafter\@xs@addtomacro\expandafter\@xs@newargs\expandafter{\expandafter{\@xs@reserved@B}}%
			\edef\@xs@reserved@B{\ifnum##1>#4 @xs@def\else @xs@assign\fi}%
		\fi
		\edef\@xs@newlines{\unexpanded\expandafter{\@xs@newlines}\expandafter\noexpand\csname\@xs@reserved@B\endcsname\expandafter\noexpand\csname @xs@arg@\romannumeral\numexpr##1\endcsname{\@xs@myarg}}%
		\ifnum##1<#2\relax
			\def\@xs@next{\expandafter\@xs@buildlines\expandafter{\number\numexpr##1+1}}%
			\expandafter\@xs@next
		\fi}%
	\@xs@buildlines\@ne
}
\def\@xs@newmacro{%
	\@ifstar
		{\let\@xs@reserved@D\@empty\@xs@newmacro@}
		{\let\@xs@reserved@D\relax\@xs@newmacro@0}%
}
\def\@xs@newmacro@#1#2#3#4#5{%
	\edef\@xs@reserved@A{@xs@\expandafter\@gobble\string#2}%
	\edef\@xs@reserved@C{\expandafter\noexpand\csname\@xs@reserved@A @\ifx\@empty#3\@empty @\fi\endcsname}%
	\edef\@xs@reserved@B{%
		\ifx\@empty\@xs@reserved@D
			\def\noexpand#2{\noexpand\@ifstar
				{\let\noexpand\@xs@assign\noexpand\@xs@expand@and@detokenize\expandafter\noexpand\@xs@reserved@C}%
				{\let\noexpand\@xs@assign\noexpand\@xs@expand@and@assign\expandafter\noexpand\@xs@reserved@C}%
			}%
		\else
			\def\noexpand#2{\let\noexpand\@xs@assign\noexpand\@xs@expand@and@assign\expandafter\noexpand\@xs@reserved@C}%
		\fi
		\ifx\@empty#3\@empty
		\else
			\def\expandafter\noexpand\@xs@reserved@C{%
				\noexpand\@testopt{\expandafter\noexpand\csname\@xs@reserved@A @@\endcsname}{\ifx\@xs@def\edef#3\else\unexpanded{#3}\fi}}%
		\fi
	}%
	\@xs@reserved@B
	\ifx\@empty#3\@empty
		\@xs@BuildLines1{#4}{#3}{#1}%
		\@xs@DefArg{#4}%
	\else
		\expandafter\@xs@BuildLines\expandafter1\expandafter{\number\numexpr#4+1}{#3}{#1}%
		\@xs@DefArg@{#4}%
	\fi
	\edef\@xs@reserved@B{\def\expandafter\noexpand\csname\@xs@reserved@A @@\endcsname\@xs@myarg}%
	\edef\@xs@reserved@C{\unexpanded\expandafter{\@xs@newlines}\edef\noexpand\@xs@call}%
	\edef\@xs@reserved@D{%
		\noexpand\noexpand\expandafter\noexpand\csname\@xs@reserved@A\endcsname\unexpanded\expandafter{\@xs@newargs}%
	}%
	\ifnum#5=\@ne\edef\@xs@reserved@D{\noexpand\noexpand\noexpand\@testopt{\unexpanded\expandafter{\@xs@reserved@D}}{}}\fi
	\@xs@edefaddtomacro\@xs@reserved@C{{\unexpanded\expandafter{\@xs@reserved@D}}\noexpand\@xs@call}%
	\@xs@edefaddtomacro\@xs@reserved@B{{\unexpanded\expandafter{\@xs@reserved@C}}}%
	\@xs@reserved@B
	\edef\@xs@reserved@B{%
		\def\expandafter\noexpand\csname\@xs@reserved@A\endcsname
			\@xs@myarg\ifnum#5=\@ne[\unexpanded{##}\number\numexpr\ifx\@empty#3\@empty#4+1\else#4+2\fi]\fi
	}%
	\@xs@reserved@B
}
\def\@xs@read@reserved@C{%
	\expandafter\@xs@testempty\expandafter{\@xs@reserved@C}%
	\if@xs@empty
		\ifnum\@xs@nestlevel=\z@
			\let\@xs@next\relax
		\else
			\let\@xs@next\@xs@atendofgroup
		\fi
	\else
		\expandafter\@xs@returnfirstsyntaxunit\expandafter{\@xs@reserved@C}\@xs@reserved@A
		\expandafter\@xs@removefirstsyntaxunit\expandafter{\@xs@reserved@C}\@xs@reserved@C
		\let\@xs@next\@xs@read@reserved@C
		\@xs@exploregroups
		\ifx\bgroup\@xs@toks
			\advance\integerpart\@ne
			\begingroup
				\expandafter\def\expandafter\@xs@reserved@C\@xs@reserved@A
				\@xs@manage@groupID
				\let\@xs@nestlevel\@ne
				\integerpart\z@
				\@xs@atbegingroup
		\else
			\global\advance\decimalpart\@ne
			\@xs@atnextsyntaxunit
		\fi
	\fi
	\@xs@next
}
\def\@xs@read@reserved@D{%
	\expandafter\@xs@testempty\expandafter{\@xs@reserved@D}%
	\if@xs@empty
		\ifnum\@xs@nestlevel=\z@
			\let\@xs@next\relax
		\else
			\let\@xs@next\@xs@atendofgroup
		\fi
	\else
		\expandafter\expandafter\expandafter\@xs@IfBeginWith@i\expandafter\expandafter\expandafter{\expandafter\@xs@reserved@D\expandafter}\expandafter{\@xs@reserved@E}%
			{\global\advance\decimalpart\@ne
			\let\@xs@reserved@D\@xs@reserved@A
			\@xs@atoccurfound
			}%
			{\expandafter\@xs@returnfirstsyntaxunit\expandafter{\@xs@reserved@D}\@xs@reserved@A
			\expandafter\@xs@removefirstsyntaxunit\expandafter{\@xs@reserved@D}\@xs@reserved@D
			\let\@xs@next\@xs@read@reserved@D
			\@xs@exploregroups
			\ifx\bgroup\@xs@toks
				\advance\integerpart\@ne
				\begingroup
					\expandafter\def\expandafter\@xs@reserved@D\@xs@reserved@A
					\@xs@manage@groupID
					\let\@xs@reserved@C\@empty
					\let\@xs@nestlevel\@ne
					\integerpart\z@
			\else
				\expandafter\@xs@addtomacro\expandafter\@xs@reserved@C\expandafter{\@xs@reserved@A}%
			\fi
			}%
	\fi
	\@xs@next
}
\@xs@newmacro\StrRemoveBraces{}{1}{1}{%
	\def\@xs@reserved@C{#1}%
	\let\@xs@reserved@B\@empty
	\let\@xs@nestlevel\z@
	\@xs@StrRemoveBraces@i
	\expandafter\@xs@ReturnResult\expandafter{\@xs@reserved@B}{#2}%
}
\def\@xs@StrRemoveBraces@i{%
	\expandafter\@xs@testempty\expandafter{\@xs@reserved@C}%
	\if@xs@empty
		\ifnum\@xs@nestlevel=\z@
			\let\@xs@next\relax
		\else
			\expandafter\endgroup
			\expandafter\@xs@addtomacro\expandafter\@xs@reserved@B\expandafter{\@xs@reserved@B}%
			\let\@xs@next\@xs@StrRemoveBraces@i
		\fi
	\else
		\expandafter\@xs@returnfirstsyntaxunit\expandafter{\@xs@reserved@C}\@xs@reserved@A
		\expandafter\@xs@removefirstsyntaxunit\expandafter{\@xs@reserved@C}\@xs@reserved@C
		\let\@xs@next\@xs@StrRemoveBraces@i
		\ifx\bgroup\@xs@toks
			\ifx\@xs@exploregroups\relax
				\begingroup
					\expandafter\def\expandafter\@xs@reserved@C\@xs@reserved@A
					\let\@xs@nestlevel\@ne
					\integerpart\z@
					\let\@xs@reserved@B\@empty
			\else
				\expandafter\@xs@addtomacro\expandafter\@xs@reserved@B\@xs@reserved@A
			\fi
		\else
			\expandafter\@xs@addtomacro\expandafter\@xs@reserved@B\expandafter{\@xs@reserved@A}%
		\fi
	\fi
	\@xs@next
}
\def\@xs@cutafteroccur#1#2#3{%
	\ifnum#3<\@ne\expandafter\@firstoftwo\else\expandafter\@secondoftwo\fi
		{\let\@xs@reserved@C\@empty\let\@xs@reserved@E\@empty\let\groupID\@empty}
		{\@xs@cutafteroccur@i{#1}{#2}{#3}}%
}
\def\@xs@cutafteroccur@i#1#2#3{%
	\def\@xs@reserved@D{#1}\let\@xs@reserved@C\@empty\def\@xs@reserved@E{#2}%
	\decimalpart\z@\integerpart\z@\def\groupID{0}\let\@xs@nestlevel\z@
	\def\@xs@atendofgroup{%
		\expandafter\endgroup
		\expandafter\@xs@addtomacro\expandafter\@xs@reserved@C\expandafter{\expandafter{\@xs@reserved@C}}%
		\@xs@read@reserved@D}%
	\def\@xs@atoccurfound{%
		\ifnum\decimalpart=\numexpr(#3)\relax
			\global\let\@xs@reserved@D\@xs@reserved@D
			\global\let\@xs@reserved@C\@xs@reserved@C
			\global\let\groupID\groupID
			\@xs@exitallgroups
			\let\@xs@next\relax
		\else
			\expandafter\@xs@addtomacro\expandafter\@xs@reserved@C\expandafter{\@xs@reserved@E}%
			\let\@xs@next\@xs@read@reserved@D
		\fi}%
	\@xs@read@reserved@D
	\def\@xs@argument@A{#2}%
	\ifnum\decimalpart=\numexpr(#3)\relax 
		\let\@xs@reserved@E\@xs@reserved@D
		\expandafter\expandafter\expandafter\def\expandafter\expandafter\expandafter\@xs@reserved@D\expandafter\expandafter\expandafter{\expandafter\@xs@reserved@C\@xs@argument@A}%
	\else
		\let\@xs@reserved@C\@empty\let\@xs@reserved@E\@empty\let\groupID\@empty
	\fi
}
	\def\@xs@argument@A{#2}\def\@xs@argument@B{#3}%
\@xs@testempty\expandafter{\@xs@reserved@D}%
	\def\@xs@argument@A{#1}\def\@xs@argument@B{#2}%
\@xs@testempty\expandafter{\@xs@argument@B}%
		\let\@xs@next\@secondoftwo
		\def\@xs@next{\expandafter\expandafter\expandafter\@xs@IfBeginWith@i
			\expandafter\expandafter\expandafter{\expandafter\@xs@argument@A\expandafter}\expandafter{\@xs@argument@B}}%
\def\@xs@IfBeginWith@i#1#2{%
	\def\@xs@argument@A{#1}\def\@xs@argument@B{#2}%
	\expandafter\@xs@testempty\expandafter{\@xs@argument@B}%
	\if@xs@empty
		\let\@xs@next\@firstoftwo
	\else
		\expandafter\@xs@testempty\expandafter{\@xs@argument@A}
		\if@xs@empty
			\let\@xs@next\@secondoftwo
		\else
			\expandafter\@xs@returnfirstsyntaxunit\expandafter{\@xs@argument@B}\@xs@reserved@B
			\expandafter\@xs@returnfirstsyntaxunit\expandafter{\@xs@argument@A}\@xs@reserved@A
			\ifx\@xs@reserved@A\@xs@reserved@B
				\expandafter\@xs@removefirstsyntaxunit\expandafter{\@xs@argument@B}\@xs@reserved@B
				\expandafter\@xs@removefirstsyntaxunit\expandafter{\@xs@argument@A}\@xs@reserved@A
				\def\@xs@next{
					\expandafter\expandafter\expandafter\@xs@IfBeginWith@i
					\expandafter\expandafter\expandafter{\expandafter\@xs@reserved@A\expandafter}\expandafter{\@xs@reserved@B}}%
			\else
				\let\@xs@next\@secondoftwo
			\fi
		\fi
	\fi
	\@xs@next
}
	\def\@xs@argument@A{#1}\def\@xs@argument@B{#2}%
		\let\@xs@reserved@A\@secondoftwo
			\let\@xs@reserved@A\@secondoftwo
\@xs@testempty\expandafter{\@xs@reserved@C}%
				\let\@xs@reserved@A\@firstoftwo
				\let\@xs@reserved@A\@secondoftwo
\def\@xs@IfSubStrBefore@i[#1,#2]#3#4#5{%
	\def\@xs@reserved@C{#3}%
	\ifx\@xs@exploregroups\relax
		\let\@xs@reserved@B\@empty
		\let\@xs@nestlevel\z@
		\@xs@StrRemoveBraces@i
		\let\@xs@reserved@C\@xs@reserved@B
	\fi
	\def\@xs@reserved@A{#5}%
	\expandafter\expandafter\expandafter\@xs@cutafteroccur\expandafter\expandafter\expandafter{\expandafter\@xs@reserved@C\expandafter}\expandafter{\@xs@reserved@A}{#2}%
	\def\@xs@reserved@A{#4}%
	\expandafter\expandafter\expandafter\@xs@cutafteroccur\expandafter\expandafter\expandafter{\expandafter\@xs@reserved@C\expandafter}\expandafter{\@xs@reserved@A}{#1}%
	\let\groupID\@empty
	\expandafter\@xs@testempty\expandafter{\@xs@reserved@C}%
	\if@xs@empty
		\expandafter\@secondoftwo
	\else
		\expandafter\@firstoftwo
	\fi
}
\def\@xs@IfSubStrBehind@i[#1,#2]#3#4#5{\@xs@IfSubStrBefore@i[#2,#1]{#3}{#5}{#4}}
\def\@xs@formatnumber#1#2{%
	\def\@xs@argument@A{#1}%
	\@xs@testempty{#1}%
	\if@xs@empty
		\def#2{0X}
	\else
		\@xs@returnfirstsyntaxunit{#1}\@xs@reserved@A
		\def\@xs@reserved@B{+}%
		\ifx\@xs@reserved@A\@xs@reserved@B
			\expandafter\@xs@removefirstsyntaxunit\expandafter{\@xs@argument@A}\@xs@reserved@C
			\expandafter\@xs@testempty\expandafter{\@xs@reserved@C}%
			\if@xs@empty
			 	\def#2{+0X}%
			 \else
			 	\expandafter\def\expandafter#2\expandafter{\expandafter+\expandafter0\@xs@reserved@C}%
			 \fi
		\else
			\def\@xs@reserved@B{-}%
			\ifx\@xs@reserved@A\@xs@reserved@B
				\expandafter\@xs@removefirstsyntaxunit\expandafter{\@xs@argument@A}\@xs@reserved@A
				\expandafter\@xs@testempty\expandafter{\@xs@reserved@A}%
				\if@xs@empty
				 	\def#2{-0X}%
				 \else
				 	\expandafter\def\expandafter#2\expandafter{\expandafter-\expandafter0\@xs@reserved@A}%
				 \fi
			\else
				\expandafter\def\expandafter#2\expandafter{\expandafter0\@xs@argument@A}%
			\fi
		\fi
	\fi
}
\@xs@newmacro\IfInteger{}{1}{0}{%
	\@xs@formatnumber{#1}\@xs@reserved@A
	\decimalpart\z@
	\afterassignment\@xs@defafterinteger\integerpart\@xs@reserved@A\relax\@xs@nil
	\let\@xs@after@intpart\@xs@afterinteger
	\expandafter\@xs@testdot\@xs@afterinteger\@xs@nil
	\ifx\@empty\@xs@afterdecimal
		\ifnum\decimalpart=\z@
			\let\@xs@next\@firstoftwo
		\else
			\let\@xs@afterinteger\@xs@after@intpart
			\let\@xs@next\@secondoftwo
		\fi
	\else
		\let\@xs@afterinteger\@xs@after@intpart
		\let\@xs@next\@secondoftwo
	\fi
	\@xs@next
}
\@xs@newmacro\IfDecimal{}{1}{0}{%
	\@xs@formatnumber{#1}\@xs@reserved@A
	\decimalpart\z@
	\afterassignment\@xs@defafterinteger\integerpart\@xs@reserved@A\relax\@xs@nil
	\expandafter\@xs@testdot\@xs@afterinteger\@xs@nil
	\ifx\@empty\@xs@afterdecimal
		\expandafter\@firstoftwo
	\else
		\expandafter\@secondoftwo
	\fi
}
\def\@xs@defafterinteger#1\relax\@xs@nil{\def\@xs@afterinteger{#1}}
\def\@xs@testdot{%
	\let\xs@decsep\@empty
	\@ifnextchar.%
		{\def\xs@decsep{.}\@xs@readdecimalpart}%
		{\@xs@testcomma}%
}
\def\@xs@testcomma{%
	\@ifnextchar,%
		{\def\xs@dessep{,}\@xs@readdecimalpart}%
		{\@xs@endnumber}%
}
\def\@xs@readdecimalpart#1#2\@xs@nil{%
	\ifx\@empty#2\@empty
		\def\@xs@reserved@A{0X}%
	\else
		\def\@xs@reserved@A{0#2}%
	\fi
	\afterassignment\@xs@defafterinteger\decimalpart\@xs@reserved@A\relax\@xs@nil
	\expandafter\@xs@endnumber\@xs@afterinteger\@xs@nil
}
\def\@xs@endnumber#1\@xs@nil{\def\@xs@afterdecimal{#1}}
\def\@xs@IfStrEqFalse@i#1#2{\let\@xs@reserved@A\@secondoftwo}
\def\@xs@IfStrEqFalse@ii#1#2{
	\@xs@IfDecimal{#1}%
		{\@xs@IfDecimal{#2}%
			{\ifdim#1pt=#2pt
				\let\@xs@reserved@A\@firstoftwo
			\else
				\let\@xs@reserved@A\@secondoftwo
			\fi
			}%
			{\let\@xs@reserved@A\@secondoftwo}
		}%
		{\let\@xs@reserved@A\@secondoftwo}
}
\def\@xs@TestEqual#1#2{
	\def\@xs@reserved@A{#1}\def\@xs@reserved@B{#2}%
	\ifx\@xs@reserved@A\@xs@reserved@B
		\let\@xs@reserved@A\@firstoftwo
	\else
		\expandafter\expandafter\expandafter\@xs@reserved@D\expandafter\expandafter\expandafter{\expandafter\@xs@reserved@A\expandafter}\expandafter{\@xs@reserved@B}%
	\fi
	\@xs@reserved@A
}
	\let\@xs@reserved@D\@xs@IfStrEqFalse@i
	\let\@xs@reserved@D\@xs@IfStrEqFalse@ii
\def\IfStrEqCase{%
	\@ifstar
		{\def\@xs@reserved@E{\IfStrEq*}\@xs@IfStrCase}%
		{\def\@xs@reserved@E{\IfStrEq}\@xs@IfStrCase}%
}
\def\@xs@IfStrCase#1#2{\@testopt{\@xs@IfStringCase{#1}{#2}}{}}
\def\IfEqCase{%
	\@ifstar
		{\def\@xs@reserved@E{\IfEq*}\@xs@IfEqCase}%
		{\def\@xs@reserved@E{\IfEq}\@xs@IfEqCase}%
}
\def\@xs@IfEqCase#1#2{\@testopt{\@xs@IfStringCase{#1}{#2}}{}}
\def\@xs@IfStringCase#1#2[#3]{%
	\def\@xs@testcase##1##2##3\@xs@nil{
		\@xs@reserved@E{#1}{##1}%
			{##2}
			{\@xs@testempty{##3}%
			 \if@xs@empty
			 	\def\@xs@next{#3}
			 \else
			 	\def\@xs@next{\@xs@testcase##3\@xs@nil}
			 \fi
			 \@xs@next
			 }%
	}%
	\@xs@testcase#2\@xs@nil
}
\@xs@ReturnResult\expandafter{\@xs@reserved@C}{#4}%
\@xs@ReturnResult\expandafter{\@xs@reserved@E}{#4}%
\def\@xs@StrBetween@i[#1,#2]#3#4#5[#6]{%
	\begingroup
		\noexploregroups
		\@xs@cutafteroccur{#3}{#5}{#2}%
		\expandafter\@xs@cutafteroccur\expandafter{\@xs@reserved@C}{#4}{#1}%
		\expandafter
	\endgroup
	\expandafter\@xs@ReturnResult\expandafter{\@xs@reserved@E}{#6}%
	\let\groupID\@empty
}
\def\exploregroups{\let\@xs@exploregroups\relax}
\def\noexploregroups{\def\@xs@exploregroups{\let\@xs@toks0\relax}}
\def\saveexploremode{\let\@xs@saveexploremode\@xs@exploregroups}
\def\restoreexploremode{\let\@xs@exploregroups\@xs@saveexploremode}
\@xs@newmacro\StrSubstitute{0}{3}{1}{%
	\def\@xs@reserved@D{#2}\let\@xs@reserved@C\@empty\def\@xs@reserved@E{#3}%
	\def\@xs@argument@C{#3}\def\@xs@argument@D{#4}%
	\decimalpart\z@\integerpart\z@\def\groupID{0}\let\@xs@nestlevel\z@
	\def\@xs@atendofgroup{%
		\expandafter\endgroup
		\expandafter\@xs@addtomacro\expandafter\@xs@reserved@C\expandafter{\expandafter{\@xs@reserved@C}}%
		\@xs@read@reserved@D
	}%
	\def\@xs@atoccurfound{%
		\ifnum#1<\@ne
			\let\@xs@reserved@A\@xs@argument@D
		\else
			\ifnum\decimalpart>#1
				\let\@xs@reserved@A\@xs@argument@C
			\else
				\let\@xs@reserved@A\@xs@argument@D
			\fi
		\fi
		\expandafter\@xs@addtomacro\expandafter\@xs@reserved@C\expandafter{\@xs@reserved@A}%
		\@xs@read@reserved@D
	}%
	\@xs@testempty{#3}%
	\if@xs@empty
		\expandafter\@xs@ReturnResult\expandafter{\@xs@reserved@D}{#5}%
	\else
		\@xs@read@reserved@D
		\expandafter\@xs@ReturnResult\expandafter{\@xs@reserved@C}{#5}%
	\fi
}
\@xs@newmacro\StrDel{0}{2}{1}{\@xs@StrSubstitute[#1]{#2}{#3}{}[#4]}
\def\@xs@exitallgroups{\ifnum\@xs@nestlevel>\z@\endgroup\expandafter\@xs@exitallgroups\fi}
\@xs@newmacro\StrLen{}{1}{1}{%
	\def\@xs@reserved@C{#1}%
	\decimalpart\z@
	\let\@xs@nestlevel\z@
	\def\groupID{0}%
	\let\@xs@atbegingroup\relax
	\def\@xs@atendofgroup{\endgroup\@xs@read@reserved@C}%
	\let\@xs@atnextsyntaxunit\relax
	\@xs@read@reserved@C
	\expandafter\@xs@ReturnResult\expandafter{\number\decimalpart}{#2}%
}
\def\@xs@continuetonext{%
	\expandafter\@xs@testempty\expandafter{\@xs@reserved@C}%
	\if@xs@empty
		\ifnum\@xs@nestlevel>\z@
			\expandafter\endgroup\expandafter\@xs@addtomacro\expandafter\@xs@reserved@B\expandafter{\expandafter{\@xs@reserved@B}}
			\expandafter\expandafter\expandafter\@xs@continuetonext
		\fi
	\fi
}%
\def\@xs@manage@groupID{%
	\begingroup\def\@xs@reserved@A{0}%
	\ifx\@xs@reserved@A\groupID
		\endgroup\edef\groupID{\number\integerpart}
	\else
		\endgroup\expandafter\@xs@addtomacro\expandafter\groupID\expandafter{\expandafter,\number\integerpart}%
	\fi
}
\def\StrSplit{%
	\@ifstar
		{\let\@xs@reserved@E\@xs@continuetonext\StrSpl@t}%
		{\let\@xs@reserved@E\relax\StrSpl@t}%
}
\@xs@newmacro\StrSpl@t{}{2}{0}{\@xs@StrSplit@i{#2}{#1}\@xs@StrSplit@ii}
\def\@xs@StrSplit@i#1#2{%
	\def\@xs@reserved@D{#1}\def\@xs@reserved@C{#2}\let\@xs@reserved@B\@empty\let\groupID\@empty
	\ifnum#1>\z@
		\decimalpart\z@\integerpart\z@\def\groupID{0}\let\@xs@nestlevel\z@
		\def\@xs@atendofgroup{%
			\expandafter\endgroup
			\expandafter\@xs@addtomacro\expandafter\@xs@reserved@B\expandafter{\expandafter{\@xs@reserved@B}}%
			\@xs@read@reserved@C
		}%
		\def\@xs@atbegingroup{\let\@xs@reserved@B\@empty}%
		\def\@xs@atnextsyntaxunit{%
			\expandafter\@xs@addtomacro\expandafter\@xs@reserved@B\expandafter{\@xs@reserved@A}%
			\ifnum\decimalpart=\@xs@reserved@D\relax
				\ifx\@xs@reserved@C\@empty\@xs@reserved@E\fi
				\global\let\@xs@reserved@B\@xs@reserved@B
				\global\let\@xs@reserved@C\@xs@reserved@C
				\global\let\groupID\groupID
				\@xs@exitallgroups
				\let\@xs@next\relax
			\fi
		}%
		\@xs@read@reserved@C
	\fi
}
\def\@xs@StrSplit@ii#1#2{\let#1\@xs@reserved@B\let#2\@xs@reserved@C}
\let\groupID\@empty
		\let\@xs@reserved@C\@empty
		\let\@xs@reserved@E\@empty
\def\@xs@StrCut@ii#1#2#3{%
	\def\@xs@reserved@D{#1}%
	\let\@xs@reserved@C\@empty
	\def\@xs@reserved@E{#2}%
	\decimalpart\z@\integerpart\z@
	\def\groupID{0}%
	\let\@xs@nestlevel\z@
	\def\@xs@atendofgroup{%
		\expandafter\endgroup
		\expandafter\@xs@addtomacro\expandafter\@xs@reserved@C\expandafter{\expandafter{\@xs@reserved@C}}%
		\@xs@read@reserved@D
	}%
	\def\@xs@atoccurfound{%
		\ifnum\decimalpart=\numexpr(#3)\relax
			\global\let\@xs@reserved@D\@xs@reserved@D
			\global\let\@xs@reserved@C\@xs@reserved@C
			\global\let\groupID\groupID
			\@xs@exitallgroups
			\let\@xs@next\relax
		\else
				\expandafter\@xs@addtomacro\expandafter\@xs@reserved@C\expandafter{\@xs@reserved@E}%
			\let\@xs@next\@xs@read@reserved@D
		\fi
	}%
	\@xs@read@reserved@D
	\def\@xs@argument@A{#2}%
	\let\@xs@reserved@E\@xs@reserved@D
		\expandafter\expandafter\expandafter
	\def
		\expandafter\expandafter\expandafter
	\@xs@reserved@D
		\expandafter\expandafter\expandafter
	{\expandafter\@xs@reserved@C\@xs@argument@A}%
}
\def\@xs@StrCut@iii#1#2{\let#1\@xs@reserved@C\let#2\@xs@reserved@E}
\@xs@newmacro\StrMid{}{3}{1}{%
	\begingroup
		\noexploregroups
		\let\@xs@reserved@E\relax
		\@xs@StrSplit@i{#3}{#1}%
		\edef\@xs@reserved@C{\number\numexpr#2-1}%
		\let\@xs@reserved@E\relax
		\expandafter\expandafter\expandafter\@xs@StrSplit@i\expandafter\expandafter\expandafter{\expandafter\@xs@reserved@C\expandafter}\expandafter{\@xs@reserved@B}%
	\expandafter\endgroup
	\expandafter\@xs@ReturnResult\expandafter{\@xs@reserved@C}{#4}%
	\let\groupID\@empty
}
\@xs@newmacro\StrGobbleLeft{}{2}{1}{%
	\let\@xs@reserved@E\relax
	\@xs@StrSplit@i{#2}{#1}%
	\expandafter\@xs@ReturnResult\expandafter{\@xs@reserved@C}{#3}%
}
\@xs@newmacro\StrLeft{}{2}{1}{%
	\let\@xs@reserved@E\relax
	\@xs@StrSplit@i{#2}{#1}%
	\expandafter\@xs@ReturnResult\expandafter{\@xs@reserved@B}{#3}%
}
\@xs@newmacro\StrGobbleRight{}{2}{1}{%
	\@xs@StrLen{#1}[\@xs@reserved@D]%
	\let\@xs@reserved@E\relax
	\expandafter\@xs@StrSplit@i\expandafter{\number\numexpr\@xs@reserved@D-#2}{#1}%
	\expandafter\@xs@ReturnResult\expandafter{\@xs@reserved@B}{#3}%
}
\@xs@newmacro\StrRight{}{2}{1}{%
	\@xs@StrLen{#1}[\@xs@reserved@D]%
	\let\@xs@reserved@E\relax
	\expandafter\@xs@StrSplit@i\expandafter{\number\numexpr\@xs@reserved@D-#2}{#1}%
	\expandafter\@xs@ReturnResult\expandafter{\@xs@reserved@C}{#3}%
}
\@xs@newmacro\StrChar{}{2}{1}{%
	\let\@xs@reserved@B\@empty
	\def\@xs@reserved@C{#1}\def\@xs@reserved@D{#2}%
	\ifnum#2>\z@
		\def\groupID{0}\let\@xs@nestlevel\z@\integerpart\z@\decimalpart\z@
		\let\@xs@atbegingroup\relax
		\def\@xs@atendofgroup{\endgroup\@xs@read@reserved@C}%
		\def\@xs@atnextsyntaxunit{%
			\ifnum\decimalpart=\@xs@reserved@D
				\global\let\@xs@reserved@B\@xs@reserved@A
				\global\let\groupID\groupID
				\@xs@exitallgroups
				\let\@xs@next\relax
			\fi
		}%
		\@xs@read@reserved@C
	\fi
	\expandafter\@xs@testempty\expandafter{\@xs@reserved@B}%
	\if@xs@empty\let\groupID\@empty\fi
	\expandafter\@xs@ReturnResult\expandafter{\@xs@reserved@B}{#3}%
}
\@xs@newmacro\StrCount{}{2}{1}{%
	\@xs@testempty{#2}%
	\def\@xs@reserved@D{#1}\def\@xs@reserved@E{#2}\let\@xs@reserved@C\@empty
	\if@xs@empty
		\@xs@ReturnResult{0}{#3}%
	\else
		\decimalpart\z@\integerpart\z@\def\groupID{0}\let\@xs@nestlevel\z@
		\def\@xs@atendofgroup{%
			\expandafter\endgroup
			\expandafter\@xs@addtomacro\expandafter\@xs@reserved@C\expandafter{\expandafter{\@xs@reserved@C}}%
			\@xs@read@reserved@D
		}%
		\def\@xs@atoccurfound{\let\@xs@reserved@C\@empty\@xs@read@reserved@D}%
		\@xs@read@reserved@D
		\expandafter\@xs@ReturnResult\expandafter{\number\decimalpart}{#3}%
	\fi
}
\@xs@newmacro\StrPosition{1}{2}{1}{%
	\@xs@cutafteroccur{#2}{#3}{#1}%
	\let\@xs@reserved@E\groupID
	\ifx\@xs@reserved@C\@xs@reserved@D
		\@xs@ReturnResult{0}{#4}%
		\let\@xs@reserved@E\@empty
	\else
		\expandafter\@xs@StrLen\expandafter{\@xs@reserved@C}[\@xs@reserved@C]%
		\expandafter\@xs@ReturnResult\expandafter{\number\numexpr\@xs@reserved@C+1}{#4}%
	\fi
	\let\groupID\@xs@reserved@E
}
\def\comparestrict{\let\@xs@comparecoeff\@ne}
\def\comparenormal{\let\@xs@comparecoeff\z@}
\def\savecomparemode{\let\@xs@saved@comparecoeff\@xs@comparecoeff}
\def\restorecomparemode{\let\@xs@comparecoeff\@xs@saved@comparecoeff}
	\def\@xs@reserved@A{#1}%
	\def\@xs@reserved@B{#2}%
		\def\@xs@next{\@xs@StrCompare@i{#1}{#2}{#3}}%
\def\@xs@StrCompare@i#1#2#3{%
	\def\@xs@StrCompare@iii##1{%
		\let\@xs@reserved@A\@empty
		\expandafter\@xs@testempty\expandafter{\@xs@reserved@C}%
		\if@xs@empty
			\def\@xs@reserved@A{*\@xs@comparecoeff}%
		\else
			\expandafter\@xs@testempty\expandafter{\@xs@reserved@D}%
			\if@xs@empty
				\def\@xs@reserved@A{*\@xs@comparecoeff}%
			\fi
		\fi
		\def\@xs@next{%
			\expandafter\@xs@ReturnResult\expandafter
			{\number\numexpr##1\@xs@reserved@A\relax}{#3}%
		}%
	}%
	\def\@xs@StrCompare@ii##1{
		\expandafter\@xs@returnfirstsyntaxunit\expandafter{\@xs@reserved@C}\@xs@reserved@A
		\expandafter\@xs@returnfirstsyntaxunit\expandafter{\@xs@reserved@D}\@xs@reserved@B
		\ifx\@xs@reserved@B\@xs@reserved@A
			\expandafter\@xs@testempty\expandafter{\@xs@reserved@A}%
			\if@xs@empty
				\@xs@StrCompare@iii{##1}
			\else
				\def\@xs@next{\expandafter\@xs@StrCompare@ii\expandafter{\number\numexpr##1+1}}
				\expandafter\@xs@removefirstsyntaxunit\expandafter{\@xs@reserved@C}\@xs@reserved@C
				\expandafter\@xs@removefirstsyntaxunit\expandafter{\@xs@reserved@D}\@xs@reserved@D
			\fi
		\else
			\@xs@StrCompare@iii{##1}%
		\fi
		\@xs@next
	}%
	\def\@xs@reserved@C{#1}\def\@xs@reserved@D{#2}%
	\@xs@StrCompare@ii1%
}
\@xs@newmacro\StrFindGroup{}{2}{1}{%
	\def\@xs@reserved@A{#2}\def\@xs@reserved@B{0}%
	\ifx\@xs@reserved@A\@xs@reserved@B
		\def\@xs@next{\@xs@ReturnResult{{#1}}{#3}}%
	\else
		\def\@xs@next{\@xs@StrFindGroup@i{#1}{#2}[#3]}%
	\fi
	\@xs@next
}
\def\@xs@StrFindGroup@i#1#2[#3]{%
	\def\@xs@StrFindGroup@ii{%
		\expandafter\@xs@testempty\expandafter{\@xs@reserved@C}%
		\if@xs@empty
			\def\@xs@next{\@xs@ReturnResult{}{#3}}
		\else
			\expandafter\@xs@returnfirstsyntaxunit\expandafter{\@xs@reserved@C}\@xs@reserved@D
			\ifx\bgroup\@xs@toks
				\advance\decimalpart\@ne
				\ifnum\decimalpart=\@xs@reserved@A
					\ifx\@empty\@xs@reserved@B
						\def\@xs@next{\expandafter\@xs@ReturnResult\expandafter{\@xs@reserved@D}{#3}}
					\else
						\expandafter\def\expandafter\@xs@next\expandafter{\expandafter\@xs@StrFindGroup@i\@xs@reserved@D}
						\expandafter\@xs@addtomacro\expandafter\@xs@next\expandafter{\expandafter{\@xs@reserved@B}[#3]}
					\fi
				\else
					\expandafter\@xs@removefirstsyntaxunit\expandafter{\@xs@reserved@C}\@xs@reserved@C
					\let\@xs@next\@xs@StrFindGroup@ii
				\fi
			\else
				\expandafter\@xs@removefirstsyntaxunit\expandafter{\@xs@reserved@C}\@xs@reserved@C
				\let\@xs@next\@xs@StrFindGroup@ii
			\fi
		\fi
		\@xs@next
	}%
	\@xs@extractgroupnumber{#2}\@xs@reserved@A\@xs@reserved@B
	\decimalpart\z@
	\ifnum\@xs@reserved@A>\z@\def\@xs@reserved@C{#1}\else\let\@xs@reserved@C\@empty\fi
	\@xs@StrFindGroup@ii
}
\def\@xs@extractgroupnumber#1#2#3{%
	\def\@xs@extractgroupnumber@i##1,##2\@xs@nil{\def#2{##1}\def#3{##2}}%
	\@xs@extractgroupnumber@i#1,\@xs@nil
	\ifx\@empty#3\else\@xs@extractgroupnumber@i#1\@xs@nil\fi
}
\def\expandingroups{\let\@xs@expandingroups\exploregroups}
\def\noexpandingroups{\let\@xs@expandingroups\noexploregroups}
\def\StrExpand{\@testopt{\@xs@StrExpand}{1}}
\def\@xs@StrExpand[#1]#2#3{%
	\begingroup
		\@xs@expandingroups
		\ifnum#1>\z@
			\integerpart#1\relax
			\decimalpart\z@\def\groupID{0}\let\@xs@nestlevel\z@
			\def\@xs@atendofgroup{%
				\expandafter\endgroup
				\expandafter\@xs@addtomacro\expandafter\@xs@reserved@B\expandafter{\expandafter{\@xs@reserved@B}}%
				\@xs@read@reserved@C
			}%
			\def\@xs@atbegingroup{\let\@xs@reserved@B\@empty}%
			\def\@xs@atnextsyntaxunit{%
				\expandafter\expandafter\expandafter\@xs@addtomacro
				\expandafter\expandafter\expandafter\@xs@reserved@B
				\expandafter\expandafter\expandafter{\@xs@reserved@A}%
			}%
			\def\@xs@reserved@C{#2}%
			\@xs@StrExpand@i{#1}
		\else
			\def\@xs@reserved@B{#2}%
		\fi
		\global\let\@xs@reserved@B\@xs@reserved@B
	\endgroup
	\let#3\@xs@reserved@B
	\let\groupID\@empty
}
\def\@xs@StrExpand@i#1{%
	\ifnum#1>\z@
		\let\@xs@reserved@B\@empty
		\@xs@read@reserved@C
		\let\@xs@reserved@C\@xs@reserved@B
		\def\@xs@reserved@A{\expandafter\@xs@StrExpand@i\expandafter{\number\numexpr#1-1}}%
	\else
		\let\@xs@reserved@A\relax
	\fi
	\@xs@reserved@A
}
\def\scancs{\@testopt{\@xs@scancs}{1}}
\def\@xs@scancs[#1]#2#3{%
	\@xs@StrExpand[#1]{#3}{#2}%
	\edef#2{\detokenize\expandafter{#2}}%
}
 \newcommand\id{\mathrm{id}}
 \newcommand\tGr{\mathrm{Gr}}
 \newcommand\tad{\mathrm{ad}}
 \newcommand\qRa{\quad\Rightarrow\quad}
 \newcommand\bop{\bigoplus}
 \newcommand\op{\oplus}
 \newcommand\ot{\otimes}
 \newcommand\fann{\mathfrak{ann}}
 \newcommand\bx{{\mathbf x}}
 \newcommand\bS{{\mathbf S}}
 \newcommand\bX{{\mathbf X}} 
 \newcommand\bY{{\mathbf Y}}
 \newcommand\bZ{{\mathbf Z}}
 \newcommand\rnk{\mathrm{rank}}
 \newcommand\im{\mathrm{im}}
 \newcommand\fa{{\mathfrak a}} 
 \newcommand\fb{{\mathfrak b}}
 \newcommand\fc{{\mathfrak c}}
 \newcommand\ff{{\mathfrak f}}
 \newcommand\fg{{\mathfrak g}}
 \newcommand\fh{{\mathfrak h}}
 \newcommand\fk{{\mathfrak k}}
 \newcommand\fl{{\mathfrak l}}
 \newcommand\fp{{\mathfrak p}}
 \newcommand\fq{{\mathfrak q}}
 \newcommand\fs{{\mathfrak s}}
 \newcommand\fsl{\mathfrak{sl}}
 \newcommand\fz{{\mathfrak z}}
 \newcommand\fD{{\mathfrak D}}
 \newcommand\fT{{\mathfrak T}}
 \newcommand\fU{{\mathfrak U}}
 \newcommand\fX{{\mathfrak X}}
 \newcommand\cA{{\mathcal A}}
 \newcommand\cD{{\mathcal D}}
 \newcommand\cG{{\mathcal G}}
 \newcommand\cI{{\mathcal I}}
 \newcommand\cL{{\mathcal L}}
 \newcommand\cP{{\mathcal P}}
 \newcommand\cS{{\mathcal S}}
 \newcommand\cV{{\mathcal V}}
 \newcommand\cZ{{\mathcal Z}}
 \newcommand\sfc{\mathsf{c}}
 \newcommand\sfE{\mathsf{E}}
 \newcommand\sfF{\mathsf{F}}
 \newcommand\sfH{\mathsf{H}}
 \newcommand\sfZ{\mathsf{Z}}
 \newcommand\bbC{{\mathbb C}}
 \newcommand\bbE{{\mathbb E}}
 \newcommand\bbO{{\mathbb O}}
 \newcommand\bbP{{\mathbb P}}
 \newcommand\bbR{{\mathbb R}}
 \newcommand\bbS{{\mathbb S}}
 \newcommand\bbU{{\mathbb U}}
 \newcommand\bbV{{\mathbb V}}
 \newcommand\bbW{{\mathbb W}}
 \newcommand\bbZ{{\mathbb Z}}
 \newcommand\tspan{\mathrm{span}}
 \newcommand\Ben{\begin{enumerate}}
 \newcommand\Een{\end{enumerate}}
 \newcommand\Bex{\begin{example}}
 \newcommand\Eex{\end{example}}
 \def\inj{\hookrightarrow}
\def\tGL{\text{GL}}
 \newcommand\SL{\mathrm{SL}}
 \newcommand\ad{{\rm ad}}
 \newcommand\Ad{{\rm Ad}}
 \def\RP{\bbR\bbP}
 \def\assoc/{associative}
 \def\arb/{arbitrary}
 \def\btw/{between}
 \def\coeff/{coefficient}
 \def\cohom/{cohomology}
 \def\coord/{coordinate}
 \def\coordsys/{coordinate system}
 \def\cpt/{compact}
 \def\cred/{completely reducible}
 \def\cts/{continuous}
 \def\dga/{differential-graded algebra}
 \def\dR/{de Rham}
 \def\Euc/{Euclidean} 
 \def\grp/{group}
 \def\hom/{homomorphism}
 \def\inv/{invariant}
 \def\iso/{isomorphism}
 \def\La/{Lie algebra}
 \def\Lag/{Lagrangian Grassmannian}
 \def\LG/{Lie group}
 \def\MA/{Monge--Amp\`ere}
 \def\MC/{Maurer--Cartan}
 \def\lintr/{linear transformation} 
 \def\mfld/{manifold}
 \def\nb/{normal bundle}
 \def\nbd/{neighbourhood}
 \def\nondeg/{non-degenerate}
 \def\posdef/{positive definite}
 \def\pu/{partition of unity}
 \def\rep/{representation}
 \def\Riem/{Riemannian}
 \def\sg/{subgroup}
 \def\ss/{semi-simple}
 \def\inv/{invariant}
 \def\irr/{irreducible}
 \def\Jacid/{Jacobi identity}
 \def\li/{linearly independent}
 \def\nd/{nowhere dependent}
 \def\nz/{nowhere zero}
 \def\on/{orthonormal}
 \def\onb/{\on/ basis}
 \def\orc/{\orth/ complement}
 \def\orth/{orthogonal}
 \def\orp/{\orth/ projection}
 \def\pde/{partial differential equation}
 \def\resp/{respectively}
 \def\seq/{sequence}
 \def\std/{standard}
 \def\SW/{Stiefel-Whitney}
 \def\uc/{universal cover}
 \def\vb/{vector bundle}
 \def\vf/{vector field}
 \def\vs/{vector space}
 \def\wrt/{with respect to}
 \def\st{\,|\,}
 \newcommand\qbox[1]{\quad\mbox{#1}\quad}
 \renewcommand\dim{{\rm dim}}
\newcommand\myscale{0.8}
\newcommand\tcirc[3]{
	\ifthenelse{\equal{#1}{w}}{\filldraw[fill=white,draw=black] (#2) circle (0.075);}{}%
	\ifthenelse{\equal{#1}{b}}{\filldraw[black] (#2) circle (0.075);}{}%
	\draw (#2) ++(0,0.35) node {$#3$};
	}
\newcommand\tdots[1]{\draw (#1) ++(0.55,0) node {$\cdots$}}
\newcommand\bond[1]{\draw (#1) -- +(1,0)}
\newcommand\vbond[1]{\draw (#1) -- +(0,-1)}
\newcommand\diagbond[2]{
	\ifthenelse{\equal{#1}{u}}{
		\draw[semithick] (#2) -- +(0.5,0.865);
	}{}
	\ifthenelse{\equal{#1}{d}}{
		\draw[semithick] (#2) -- +(0.5,-0.865);
	}{}
	}
\newcommand\dbond[2]{
	\draw (#2) ++(0.03,0.03) -- +(0.94,0);
	\draw (#2) ++(0.03,-0.03) -- +(0.94,0);
	\ifthenelse{\equal{#1}{r}}{
		\draw[semithick] (#2) ++(0.6,0) ++(-0.15,0.2) -- ++(0.15,-0.2) -- +(-0.15,-0.2);
	}{}
	\ifthenelse{\equal{#1}{l}}{
		\draw[semithick] (#2) ++(0.45,0) ++(0.15,0.2) -- ++(-0.15,-0.2) -- +(0.15,-0.2);
	}{}
	}
\newcommand\tbond[2]{
	\draw (#2)  -- +(1,0);
	\draw (#2) ++(0.05,0.06) -- +(0.9,0);
	\draw (#2) ++(0.05,-0.06) -- +(0.9,0);
	\ifthenelse{\equal{#1}{r}}{
		\draw[semithick] (#2) ++(0.6,0) ++(-0.15,0.2) -- ++(0.15,-0.2) -- +(-0.15,-0.2);
	}{}
	\ifthenelse{\equal{#1}{l}}{
		\draw[semithick] (#2) ++(0.45,0) ++(0.15,0.2) -- ++(-0.15,-0.2) -- +(0.15,-0.2);
	}{}
	}
\newcommand\tcross[2]{
	\draw (#1) ++(0,0.35) node {$#2$};
	\draw[semithick] (#1) ++(-0.15,-0.15)-- +(0.3,0.3);
	\draw[semithick] (#1) ++(-0.15,0.15)-- +(0.3,-0.3);
	}
\newcommand\tsquare[2]{
		\draw[semithick,color=blue] (#1) ++(-0.15,-0.15) rectangle ++(0.3,0.3);
		\tcross{#1}{#2};
		}
\newcommand\tstar[2]{
	\draw[color=red] (#1) node {\Large$*$};
	\draw (#1) ++(0,0.35) node {$#2$};
	}
\newcommand\tdiamond[2]{
	\draw[color=ForestGreen] (#1) ++(0,-0.02) node {\scalebox{2.5}{$\blackdiamond$}};
	\draw (#1) ++(0,0.30) node {$#2$};
	}
\newcommand\DDnode[3]{
\ifthenelse{\equal{#1}{w}}{\tcirc{w}{#2}{#3}}{}		
\ifthenelse{\equal{#1}{b}}{\tcirc{b}{#2}{#3}}{}		
\ifthenelse{\equal{#1}{x}}{\tcross{#2}{#3}}{}		
\ifthenelse{\equal{#1}{s}}{\tstar{#2}{#3}}{}		
\ifthenelse{\equal{#1}{q}}{\tsquare{#2}{#3}}{}		
\ifthenelse{\equal{#1}{d}}{\tdiamond{#2}{#3}}{}		
}
\newcommand\Aone[2]{
 \begin{tiny}
 \begin{tikzpicture}[scale=\myscale,baseline=-3pt]
 
 \StrChar{#1}{1}[\nodetype];
 \DDnode{\nodetype}{0,0}{#2};
 \useasboundingbox (-.4,-.2) rectangle (0.4,0.55); 
 \end{tikzpicture}
 \end{tiny}
 }
\newcommand\Atwo[2]{
 \begin{tiny}
 \begin{tikzpicture}[scale=\myscale,baseline=-3pt]
 
 \bond{0,0};	

 \StrBefore{#2}{,}[\labelone]
 \StrBehind{#2}{,}[\labeltwo]
 
 \StrChar{#1}{1}[\nodetype];
 \DDnode{\nodetype}{0,0}{\labelone};
 \StrChar{#1}{2}[\nodetype];
 \DDnode{\nodetype}{1,0}{\labeltwo};
 \useasboundingbox (-.4,-.2) rectangle (1.4,0.55); 
 \end{tikzpicture}
 \end{tiny}
 }
\newcommand\Dfive[2]{
 \begin{tiny}
 \begin{tikzpicture}[scale=\myscale,baseline=-3pt]
 \bond{0,0};		
 \bond{1,0};		
 \draw ++(2,0) -- +(0.5,0.865);	
 \draw ++(2,0) -- +(0.5,-0.865);	

 \StrBefore{#2}{,}[\labelone]
 \StrBetween[1,2]{#2}{,}{,}[\labeltwo]
 \StrBetween[2,3]{#2}{,}{,}[\labelthree]
 \StrBetween[3,4]{#2}{,}{,}[\labelfour]
 \StrBehind[4]{#2}{,}[\labelfive]

 \StrChar{#1}{1}[\nodetype];
 \DDnode{\nodetype}{0,0}{\labelone};
 \StrChar{#1}{2}[\nodetype];
 \DDnode{\nodetype}{1,0}{\labeltwo};
 \StrChar{#1}{3}[\nodetype];
 \DDnode{\nodetype}{2,0}{\labelthree};
 \StrChar{#1}{4}[\nodetype];
 \DDnode{\nodetype}{2.5,0.865}{\labelfour};
 \StrChar{#1}{5}[\nodetype];
 \DDnode{\nodetype}{2.5,-0.865}{\labelfive};
 \end{tikzpicture}
 \end{tiny}
 }
\newcommand\Dsix[2]{
 \begin{tiny}
 \begin{tikzpicture}[scale=\myscale,baseline=-3pt]
 \bond{0,0};		
 \bond{1,0};		
 \bond{2,0};		
 \draw ++(3,0) -- +(0.5,0.865);	
 \draw ++(3,0) -- +(0.5,-0.865);	

 \StrBefore{#2}{,}[\labelone]
 \StrBetween[1,2]{#2}{,}{,}[\labeltwo]
 \StrBetween[2,3]{#2}{,}{,}[\labelthree]
 \StrBetween[3,4]{#2}{,}{,}[\labelfour]
 \StrBetween[4,5]{#2}{,}{,}[\labelfive]
 \StrBehind[5]{#2}{,}[\labelsix]

 \StrChar{#1}{1}[\nodetype];
 \DDnode{\nodetype}{0,0}{\labelone};
 \StrChar{#1}{2}[\nodetype];
 \DDnode{\nodetype}{1,0}{\labeltwo};
 \StrChar{#1}{3}[\nodetype];
 \DDnode{\nodetype}{2,0}{\labelthree};
 \StrChar{#1}{4}[\nodetype];
 \DDnode{\nodetype}{3,0}{\labelfour};
 \StrChar{#1}{5}[\nodetype];
 \DDnode{\nodetype}{3.5,0.865}{\labelfive};
 \StrChar{#1}{6}[\nodetype];
 \DDnode{\nodetype}{3.5,-0.865}{\labelsix};
 \end{tikzpicture}
 \end{tiny}
 }
\newcommand\Edd[2]{
 \begin{tiny}
 \begin{tikzpicture}[scale=\myscale,baseline=-3pt]
 \foreach \x in {0,1,2,3} {
	\bond{\x,0};
 }
 \vbond{2,0};
 
 \StrLen{#1}[\Ernk]
 
 \StrChar{#1}{1}[\nodetype];
 \DDnode{\nodetype}{0,0}{\StrBefore{#2}{,}};
 \StrChar{#1}{2}[\nodetype];
 \DDnode{\nodetype}{2,-1}{\StrBetween[1,2]{#2}{,}{,}};
 \StrChar{#1}{3}[\nodetype];
 \DDnode{\nodetype}{1,0}{\StrBetween[2,3]{#2}{,}{,}};
 \StrChar{#1}{4}[\nodetype];
 \DDnode{\nodetype}{2,0}{\StrBetween[3,4]{#2}{,}{,}};
 \StrChar{#1}{5}[\nodetype];
 \DDnode{\nodetype}{3,0}{\StrBetween[4,5]{#2}{,}{,}};
 \StrChar{#1}{6}[\nodetype];

 \ifthenelse{\equal{\Ernk}{6}}{
 		\DDnode{\nodetype}{4,0}{\StrBehind[5]{#2}{,}};
 		\useasboundingbox (-.4,-1.2) rectangle (4.4,0.55);
	}{}%
 
 \ifthenelse{\equal{\Ernk}{7}}{
 		\bond{4,0};
 		\DDnode{\nodetype}{4,0}{\StrBetween[5,6]{#2}{,}{,}};
		\StrChar{#1}{7}[\nodetype];
		\DDnode{\nodetype}{5,0}{\StrBehind[6]{#2}{,}};
 		\useasboundingbox (-.4,-1.2) rectangle (5.4,0.55);
	}{}%

 \ifthenelse{\equal{\Ernk}{8}}{
 		\bond{4,0};
 		\bond{5,0};
 		\DDnode{\nodetype}{4,0}{\StrBetween[5,6]{#2}{,}{,}};
		\StrChar{#1}{7}[\nodetype];
		\DDnode{\nodetype}{5,0}{\StrBetween[6,7]{#2}{,}{,}};
		\StrChar{#1}{8}[\nodetype];
		\DDnode{\nodetype}{6,0}{\StrBehind[7]{#2}{,}};
		\useasboundingbox (-.4,-1.2) rectangle (6.4,0.55);
	}{}%

 \end{tikzpicture}
 \end{tiny}
 }
 \newtheorem{thm}{Theorem}[section]
 \newtheorem{lemma}[thm]{Lemma}
 \newtheorem{cor}[thm]{Corollary}
 \newtheorem{prop}[thm]{Proposition}
 \theoremstyle{defn}
 \newtheorem{defn}[thm]{Definition}
 \newtheorem{example}[thm]{Example}
 \theoremstyle{remark}
 \newtheorem{remark}[thm]{Remark}
 \numberwithin{equation}{section}
 \newcommand\finf{\mathfrak{inf}}
 \newcommand\prn{\operatorname{pr}}
 \newcommand\gr{\operatorname{gr}}
 \newcommand\Seg{\operatorname{Seg}}
 \newcommand\Sec{\operatorname{Sec}}
 \newcommand\deq{\,\,\dot{=}\,\,}
 \newcommand\din{\,\dot\in\,}
 \newcommand\mydiamond{\scalebox{1.5}{{\color{ForestGreen} $\blackdiamond$}}}
\newcommand\Add[2]{
 \begin{tiny}
 \begin{tikzpicture}[scale=\myscale,baseline=-3pt]

 \bond{0,0};		
 \tdots{1,0};
 \bond{2,0};		

 \StrBefore{#2}{,}[\labelone]
 \StrBetween[1,2]{#2}{,}{,}[\labeltwo]
 \StrBetween[2,3]{#2}{,}{,}[\labelthree]
 \StrBehind[3]{#2}{,}[\labelfour]

 \StrChar{#1}{1}[\nodetype];
 \DDnode{\nodetype}{0,0}{\labelone};
 \StrChar{#1}{2}[\nodetype];
 \DDnode{\nodetype}{1,0}{\labeltwo};
 \StrChar{#1}{3}[\nodetype];
 \DDnode{\nodetype}{2,0}{\labelthree};
 \StrChar{#1}{4}[\nodetype];
 \DDnode{\nodetype}{3,0}{\labelfour};

 \useasboundingbox (-.4,-.2) rectangle (3.4,0.55); 
 \end{tikzpicture}
 \end{tiny}
 }
\subjclass[2010]{Primary: 58D19, Secondary: 57S25, 58K70, 22F30}
\keywords{Automorphisms, symmetry, parabolic geometry, curvature, rigidity, isotropy}
\begin{document}

 \title[Jet-determination of symmetries of parabolic geometries]{Jet-determination of symmetries of parabolic geometries}
 \author[B. Kruglikov]{Boris Kruglikov}
 \author[D. The]{Dennis The}

 \address{Department of Mathematics and Statistics, University of Troms$\o$, 90-37, Norway}
 \email{boris.kruglikov@uit.no}
 
 \address{
 Department of Mathematics and Statistics, University of Troms$\o$, 90-37, Norway; 
 Fakult\"at f\"ur Mathematik, Universit\"at Wien, Oskar-Morgenstern-Platz 1, 1090 Wien, \"Osterreich;
 Mathematical Sciences Institute, Australian National University, ACT 0200, Australia} 
 \email{dennis.the@uit.no}

 \date{\today}
 \begin{abstract}

 We establish 2-jet determinacy for the symmetry algebra of the underlying structure of any (complex or real) parabolic geometry. At non-flat points, we prove that the symmetry algebra is in fact 1-jet determined.  Moreover, we prove 1-jet determinacy at any point for a variety of non-flat parabolic geometries -- in particular torsion-free, parabolic contact, and several other classes.
  \end{abstract}
 \maketitle


 \section{Introduction}

A classical problem in geometry is to determine when the local properties of an automorphism (or infinitesimally, a symmetry) constrain the geometry globally.

 \begin{defn}
A vector field $\bX$ on a manifold $M$ is $k$-jet determined at $x \in M$ if $j^k_x(\bX) \neq 0$, i.e.\ in any local coordinate system centred at $x$, the $k$-th order Taylor polynomials at x of the coefficients of $\bX$ (in the coordinate basis) are not all identically zero.  If $\cS \subset \fX(M)$ is a linear subspace, then $\cS$ is $k$-jet determined if $\{ \bX \in \cS \st j^k_x(\bX) = 0 \}$ is trivial for any $x\in M$.
 \end{defn}

For many geometric structures, their automorphisms are uniquely determined by a finite jet at a given point.  For example, an affine transformation, i.e.\ a transformation $\phi : M \to M$ that preserves geodesics of a given linear connection $\nabla$ on a (connected) manifold $M$ together with the affine parametrization, is completely determined by the value $\phi(x)$ and the differential $d_x\phi$ at any given point $x\in M$. Consequently, the same is true for isometries of Riemannian, pseudo-Riemannian and sub-Riemannian structures (in the latter case the claim is based on the Pontryagin maximum principle).  In other words, automorphisms of these geometries are determined by the 1-jet $j^1_x\phi$.

In complex analysis, an analogous statement is H.\ Cartan's uniqueness theorem \cite{H-Car1935} claiming that biholomorphic automorphisms of a bounded domain in $\bbC^n$ are uniquely determined by their 1-jets at any point $x$ inside the domain. On the boundary, the situation is more complicated.  Indeed, local automorphisms of Levi-flat real hypersurfaces are not determined by any finite jet.  On the contrary, by the classical results of \'E.\ Cartan, Tanaka, Chern and Moser \cite{Car1932,Tan1962,CM1974} a germ of a local biholomorphism in $\mathbb{C}^n$ sending a germ of a real analytic Levi-nondegenerate hypersurface to itself is uniquely determined by its 2-jet at a point.  (The corresponding problem for Levi-degenerate hypersurfaces is much more complicated \cite{ELZ}.)  For a non-degenerate quadric hypersurface, there exist automorphisms that are strictly 2-jet and not 1-jet determined.  However, in the general Levi-nondegenerate case the 2-jet is redundant.  Namely, if a strictly pseudoconvex partially integrable real-analytic hypersurface is not spherical, then its automorphisms are everywhere 1-jet determined by the Webster--Schoen theorem \cite{Web,Sch} (see also \cite{KL}). In the case the Levi form of the hypersurface has arbitrary signature, this statement is the context of Beloshapka--Loboda theorem \cite{B1980,L1982}.

In projective differential geometry, a projective structure $(M,[\nabla])$ is an equivalence class of torsion-free affine connections that share the same set of unparametrized geodesics.  Any projective symmetry is determined by its 2-jet.  If $(M,[\nabla])$ is not isomorphic near $x\in M$ to $\RP^n$ with its standard projective structure, then any projective symmetry is 1-jet determined at $x$.  This was proved by Nagano--Ochiai \cite{NO} in the case where $M$ is compact and $[\nabla]$ is Riemannian metrizable.  The proof in the general case is due to \v{C}ap--Melnick \cite{CM2013b}.

In conformal differential geometry, analogous results have been proven by Frances \cite{Frances}, Frances--Melnick \cite{FM}, and \v{C}ap--Melnick \cite{CM2013b}.  Namely, if a conformal Killing vector field vanishes to higher-order at a point, then the conformal structure is flat on an open set containing the point in its closure.  In this way, they established a local version of the Obata--Ferrand theorem.

 We note that Levi-nondegenerate hypersurface-type CR-geometry, projective differential geometry and conformal geometry are particular instances (of underlying structures) of the so-called {\em parabolic geometries} \cite{CS2009}.  For all such geometries, there is a homogeneous model $(G \to G/P, \omega_G)$ called the {\em flat model}, where $G$ is a semisimple Lie group, $P \subsetneq G$ is a parabolic subgroup, and $\omega_G$ is the left-invariant Maurer--Cartan form on $G$.  General (regular, normal) parabolic geometries $(\cG \stackrel{\pi}{\to} M, \omega)$ of type $(G,P)$ (or ``$G/P$ geometries'') are curvature deformations of the homogeneous model, and there is an equivalence of categories with underlying geometric structures (see \cite{CS2009} for more details).  In particular, there is a 1-1 correspondence between symmetries $\bX \in \fX(M)$ of the underlying geometric structure and symmetries $\xi \in \finf(\cG,\omega) \subset \fX(\cG)^P$ of the Cartan geometry $(\cG \to M, \omega)$.  We will assume throughout that $M$ is a {\em connected} manifold.

A parabolic subgroup $P \subsetneq G$ induces a ($P$-invariant) filtration $\fg = \fg^{-\nu} \supset ... \supset \fg^\nu$ (with $\nu \geq 1$), where $\fp = \fg^0$ and $\fp_+ = \fg^1$ is its nilradical.  Fix $u \in \cG$ and $x = \pi(u) \in M$.  If the symmetry $\bX$ has a fixed point at $x$, then $\omega_u(\xi) \in \fp$ is called its {\em isotropy}.\footnote{In \cite{CM2013a}, the isotropy is identified with an element of $T_x^*M$, but we will not use this identification.}  If $\omega_u(\xi) \in \fp_+$, then the fixed point $x$ is of {\em higher-order}.  In \cite{CM2013a}, \v{C}ap and Melnick initiated the program of studying higher-order fixed points of automorphisms of parabolic geometries, and they created a new technique through which they and subsequently Melnick--Neusser \cite{MN2015} obtained several new results in the field. Many of these results concern $|1|$-graded geometries (also known as generalized conformal or AHS-structures) and parabolic contact geometries, but the results for more general parabolic geometries have not been easily accessible.  Indeed, in \cite{KT2014} we observed that some parabolic geometries can admit symmetries with higher order fixed points without being flat anywhere.  More precisely, we introduced a (non-flat) submaximally symmetric path geometry \cite[eqn (5.6a)]{KT2014} with a symmetry \cite[see $\bS$ in eqn (5.6b)]{KT2014} having isotropy in $\fp_+$.  This example eliminates the hope for a bold general claim that the existence of a higher order fixed point could imply flatness of the geometry.

 In this paper we attack the problem of jet-determination for symmetries of parabolic geometries.  We show that this is related to non-existence of symmetries with isotropy in the top-slot $\fg^\nu$.  First of all, we apply the prolongation-rigidity results of \cite{KT2014} and exploit the fundamental derivative \cite{CS2009} to obtain the following result\footnote{General 2-jet determinacy of symmetry algebras of parabolic geometries was known earlier -- see Remark \ref{R:2-jet-det}.} (that, in an equivalent formulation for $|1|$-graded parabolic geometries, was already obtained in \cite{MN2015} via a different approach).
 
 \begin{thm}\label{Th-A}
The symmetry algebra $\cS$ of the underlying structure of any (real or complex) regular, normal parabolic geometry of type $(G,P)$ is everywhere 2-jet determined. Moreover, if $G$ is simple, then $\cS$ is 1-jet determined at any non-flat point.
 \end{thm}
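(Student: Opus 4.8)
The plan is to pass to the canonical regular normal Cartan geometry $(\cG\to M,\omega)$, translate $k$-jet determinacy into an algebraic condition on the isotropy, and then feed the non-flat case into prolongation-rigidity. By the equivalence of categories a symmetry $\bX\in\cS$ lifts to a unique $\xi\in\finf(\cG,\omega)$, equivalently to an adjoint tractor $s\in\Gamma(\cA M)$, $\cA M=\cG\times_P\fg$, that is an infinitesimal automorphism. Since $\omega$ is an absolute parallelism, $\xi$ is determined by its value $A:=\omega_u(\xi)\in\fg$ at a single $u$ over $x$; thus $\cS\hookrightarrow\fg$. The first step is to express $j^k_x\bX$ through $A$ alone: I would use the fundamental derivative $D$ of \cite{CS2009}, for which \v{C}ap's formula writes $Ds$ algebraically in terms of $s$ together with a term linear in the curvature $\kappa$, so that all jets of $s$ — and hence of $\bX=\Pi(s)$, with $\Pi:\cA M\to TM$ the canonical projection — become explicit universal functions of $A$ and of the jets of $\kappa$ at $x$.

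Writing $A=A_0+\cdots+A_\nu$ with $A_i\in\fg_i$, the value $\bX(x)$ vanishes iff $A\in\fp$. A direct computation in the homogeneous model (the case $\kappa\equiv 0$), using invariance of the Killing form together with the fact that $\fp_+$ is generated by $\fg_1$, shows that the linear part of $\bX$ vanishes iff $[\fg_{-j},A_i]=0$ whenever $j>i$, which is equivalent to $[A,\fp_+]=0$; that is, $j^1_x\bX=0$ iff $A$ lies in the centre $Z(\fp_+)$ of the nilradical (which contains, and in the relevant cases equals, the top slot $\fg^\nu$). The same bookkeeping shows that vanishing of the $2$-jet forces $A=0$. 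In the curved case the fundamental-derivative formula reproduces exactly these homogeneous expressions modulo corrections of strictly higher filtration degree, since $\kappa$ is regular and hence $\fp_+$-valued; thus the leading symbol is unchanged and injectivity is preserved. Consequently $j^2_x\bX=0\Rightarrow A=0$ at every $x$, which is the everywhere $2$-jet determinacy (recovering the known Remark \ref{R:2-jet-det}).

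For the second statement, assume $G$ simple and $x$ non-flat, so the harmonic curvature satisfies $\kappa_H(x)\neq 0$, and suppose $j^1_x\bX=0$. By the dictionary $A\in\fp_+$, and to leading order $A\in Z(\fp_+)$, so the isotropy is higher-order and concentrated in the top slot $\fg^\nu$. Since $\xi$ preserves $\omega$ it preserves $\kappa_H$, and evaluating $\cL_\xi\kappa_H=0$ at the fixed point yields the purely algebraic constraint $A\bullet\kappa_H(x)=0$ for the natural $\fp$-action on the curvature module. I would then invoke the prolongation-rigidity of \cite{KT2014}: for simple $\fg$, a nonzero element of the top slot $\fg^\nu$ cannot annihilate a nonzero harmonic curvature. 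Hence $A=0$, i.e. $\bX=0$, giving $1$-jet determinacy at every non-flat point.

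The main obstacle is controlling the curved-case dictionary: one must verify that the curvature corrections in $Ds$, and their iterates entering the $2$-jet, genuinely raise filtration degree and so cannot enlarge the kernel computed in the flat model, so that the leading correspondences $j^1\leftrightarrow Z(\fp_+)$ and $j^2\leftrightarrow\{0\}$ persist. The second delicate point is the bridge to prolongation-rigidity: passing from the analytic identity $\cL_\xi\kappa_H=0$ to the algebraic annihilation $A\bullet\kappa_H(x)=0$, and checking that simplicity of $G$ alone — with no exceptional cases — suffices once the relevant isotropy has been pinned to the top slot $\fg^\nu$, even where the full $\fg_+$-prolongation-rigidity of \cite{KT2014} fails.
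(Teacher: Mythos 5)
Your argument for the second statement breaks down at the step you describe as merely ``delicate.'' You propose to evaluate $\cL_\xi\kappa_H=0$ at the fixed point to get the algebraic constraint $A\bullet\kappa_H(x)=0$ for $A=\omega_u(\xi)\in\fg^\nu$, and then to invoke \cite{KT2014} to conclude $A=0$. But the harmonic curvature module $\bbV=\ker(\partial^*)/\im(\partial^*)$ is a \emph{completely reducible} $P$-representation, i.e.\ $\fp_+$ acts trivially on it (this is precisely why (D.7) gives $D_r\psi\deq 0$ whenever $r(u)\in\fg^1$). Hence for any $A\in\fg^\nu\subset\fp_+$ the identity $A\bullet\kappa_H(x)=0$ holds automatically, for every geometry and every $A$: your constraint is vacuous, and there is no rigidity statement of the form ``a nonzero element of $\fg^\nu$ cannot annihilate a nonzero harmonic curvature'' -- all of them do. What prolongation-rigidity actually provides (Theorem \ref{T:height}) is that $\fa^{\phi}_\nu=0$ for $0\neq\phi\in H^2_+(\fg_-,\fg)$, where $\fa^\phi=\prn^\fg(\fg_-,\fann(\phi))$ and $\fann(\phi)\subset\fg_0$; i.e.\ no nonzero $X\in\fg_\nu$ has \emph{all} iterated brackets $\ad_{Y_\nu}\circ\cdots\circ\ad_{Y_1}(X)$, $Y_i\in\fg_{-1}$, landing in $\fann_{\fg_0}(\phi)$. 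To use this you must first prove that the isotropy of a symmetry satisfies this prolongation condition, i.e.\ $\fs(u)\subseteq\fa^{\kappa_H(u)}$ at the given point. That is the analytic core of the matter: it is obtained by differentiating $D_s\kappa_H=0$ repeatedly in directions of negative filtration degree and controlling all correction terms coming from (D.2)--(D.7) -- this is Theorem \ref{T:sip}, leading to Theorem \ref{T:strong-incl} -- not by a single evaluation at the fixed point. Note also that the inclusion \eqref{E:gap-incl} from \cite{KT2014} was proved only at \emph{regular} points, and a higher-order fixed point need not be regular; removing that hypothesis is exactly what Section \ref{S:tractors} accomplishes, and your proposal contains no substitute for it.

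On the 2-jet statement your plan (flat-model computation plus fundamental derivative to absorb curvature) is the same as the paper's (Proposition \ref{P:flat-jet} and Theorem \ref{T:M-bracket}), but the reason you give for discarding the curvature corrections is wrong: regularity means $\kappa$ has positive homogeneity, not that $\kappa$ is $\fp_+$-valued (that is stronger even than torsion-freeness). The corrections actually disappear for a different reason: $\kappa$ is horizontal and the projection $\Pi(s)$ of the symmetry vanishes at $x$, so terms like $\kappa(\Pi(s),\Pi(t))$ die at $u$, while the remaining terms are controlled by the filtration property \eqref{D_tA} of $D$ together with (D.2), (D.4), (D.5), (D.8). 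Your flat-model dictionary itself is fine for $\fg$ simple, where indeed $Z(\fp_+)=\fg_\nu$; but since you explicitly defer the curved-case verification, and the 1-jet half rests on a vacuous constraint, the proposal as it stands does not prove either assertion of the theorem.
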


 Here, $x \in M$ is a {\em non-flat point} if $\kappa_H(x) \neq 0$, where $\kappa_H$ is the harmonic curvature, which is the fundamental obstruction to flatness of the geometry.  (We have $\kappa_H \equiv 0$ if and only if the full curvature satisfies $\kappa \equiv 0$.) The question whether this can be extended to a point $x$ where $\kappa_H$ vanishes, but some finite jet of it does not, constitutes a more delicate problem that we study next.

We advance in this problem by combining the techniques of \cite{CM2013a} with the observation (explained in the next sections) that the only symmetries with isotropy in the top-slot $\fg^\nu$ are those that are 2-jet and not 1-jet determined.
For such symmetries the criterion of \cite{CM2013a} (see Section \ref{Ss:hovs} below) becomes much more tractable.  (This criterion is sufficient but not necessary to conclude local flatness of the geometry in the presence of higher order fixed points. One of the purposes of \cite{MN2015} was to develop stronger techniques to conclude flatness.)  This allows us to reduce the question of 1-jet determination to a purely algebraic problem in Lie algebras and representation theory.  We then develop an analogue of Kostant's orthogonal cascade of roots \cite{Kos2012} to reduce the problem to a purely combinatorial one.
This leads to the following results formulated in the analytic setting for simplicity.

 \begin{thm} Let $G$ be simple (with $\fg_\bbC$ simple if $\fg$ is real).  Consider an {\bf analytic} (real or complex) regular, normal parabolic geometry $(\cG \to M, \omega)$ of type $(G,P)$ with $M$ connected.  Suppose it is not flat, i.e.\ $\kappa_H$ is nonzero in at least one point.  Then the symmetry algebra $\cS \subset \fX(M)$ of the underlying geometric structure on $M$ is everywhere 1-jet determined in either of the following cases:\footnote{For complex Yamaguchi-nonrigid geometries: (i) torsion-free geometries are classified in Appendix \ref{S:nr-torfree}, (ii) $(G,P)$ with $\bbP(\fg^\nu)$ a single $P$-orbit are classified in Proposition \ref{P:NYR}.}
 \begin{enumerate}
 \item[(i)] The geometry is torsion-free.
 \item[(ii)] The top-slot $\fg^\nu$ contains only one nonzero $P$-orbit.
 \end{enumerate}
 \end{thm}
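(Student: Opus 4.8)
The plan is to combine Theorem \ref{Th-A} with the higher-order fixed-point machinery and then to settle a clean algebraic problem by a cascade argument. By Theorem \ref{Th-A} the symmetry algebra $\cS$ is everywhere $2$-jet determined and is already $1$-jet determined at non-flat points, so the only remaining task is $1$-jet determinacy at the (necessarily flat) points where $\kappa_H$ vanishes while the geometry is globally non-flat. As established in the preceding sections, for a non-flat geometry this reduces to ruling out nonzero symmetries whose isotropy lies in the top slot $\fg^\nu$: by the observation recalled there (and by the prolongation-rigidity of \cite{KT2014}, which forces the positive part of a higher-order isotropy to sit in $\fg^\nu$), these are exactly the symmetries that are $2$-jet but not $1$-jet determined. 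Since $\fg_\bbC$ is simple, I would first pass to the complexification so that it suffices to treat the complex simple case. Thus the theorem is reduced to the assertion: under hypothesis (i) or (ii), a non-flat analytic geometry of type $(G,P)$ admits no nonzero symmetry with isotropy in $\fg^\nu$.

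To prove this I would argue by contradiction. Suppose $\xi$ is such a symmetry, with $Z = \omega_u(\xi) \in \fg^\nu \setminus \{0\}$ over a fixed point $x = \pi(u)$. Invariance of the Cartan curvature, $\cL_\xi \kappa = 0$, evaluated along the flow of $\xi$ at the higher-order fixed point $u$, is precisely the situation of \v{C}ap--Melnick \cite{CM2013a}; as recalled in Section \ref{Ss:hovs}, their criterion becomes tractable here because $Z$ occupies the single slot $\fg^\nu$. The output is an algebraic compatibility condition between $Z$ and the value of the harmonic curvature, and the plan is to show that in cases (i) and (ii) this condition can only be met by the zero curvature value. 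When that holds, the criterion forces $\kappa_H$ to vanish on an open set whose closure contains $x$; by analyticity and connectedness of $M$ this yields $\kappa_H \equiv 0$, contradicting non-flatness. Hence the entire problem is reduced to the purely representation-theoretic statement that, for every simple complex $(G,P)$ satisfying (i) or (ii) and every $0 \neq Z \in \fg^\nu$, no nonzero element of the harmonic curvature module $H^2_+(\fg_-,\fg)$ is compatible with $Z$.

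To settle this, I would decompose $H^2_+(\fg_-,\fg) = \bigoplus_i W_i$ into its Kostant $\fg_0$-components, use the grading element to keep track of homogeneities, and translate ``compatible with $Z$'' into the vanishing of an explicit $\ad_Z$-type operator (followed by the harmonic projection) on each $W_i$. In case (ii), the hypothesis that $\bbP(\fg^\nu)$ is a single $P$-orbit lets me normalize $Z$ to a distinguished extremal vector, so that the operator is governed by a single dominant weight and injectivity is checkable directly. In case (i), torsion-freeness confines $\kappa_H$ to homogeneity components of Weyl-curvature type valued in $\fg_{\geq 0}$, which restricts the weights occurring in the relevant $W_i$ and again makes the operator computable. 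The central device is an analogue of Kostant's orthogonal cascade of roots \cite{Kos2012} adapted to the parabolic $\fp$: the cascade produces a family of mutually strongly orthogonal roots from which one reads off both the $P$-orbit structure of $\fg^\nu$ and the extremal weights of the $W_i$, thereby converting the injectivity question into a finite comparison of weights. The Yamaguchi-rigid cases should fall out immediately (the relevant modules are then too small to host a compatible curvature), so the substance lies in the Yamaguchi-nonrigid geometries, which are classified as indicated in the footnote.

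The main obstacle is this last, combinatorial comparison: one must verify \emph{uniformly}---across all simple complex $\fg$, all parabolics $\fp$, and all Kostant components of $H^2_+$---that the extremal weights of $\kappa_H$ never align with $\fg^\nu$ in the way that would admit a nonzero compatible curvature. This is exactly where hypotheses (i) and (ii) are indispensable: the submaximally symmetric path geometry of \cite{KT2014} exhibits a nonzero symmetry with isotropy in $\fg^\nu$ on a non-flat geometry, so the desired weight inequality genuinely fails in general and the cascade bookkeeping must be arranged to exploit precisely the torsion-free or single-orbit restriction. I would organize the verification by cascade length, reducing the generic cases to a single weight estimate and disposing of the finitely many low-rank exceptions by direct inspection.
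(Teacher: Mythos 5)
Your proposal is essentially the paper's own proof: reduce via Theorem \ref{Th-A} and Theorem \ref{T:M-bracket} to excluding nonzero symmetries with isotropy in the top slot $\fg^\nu$ (at which points $\kappa_H$ necessarily vanishes), apply the \v{C}ap--Melnick criterion recalled in Section \ref{Ss:hovs}, make it checkable through the top-slot orthogonal cascade and its $H$-sequence (Theorem \ref{T:TSOC}, Lemma \ref{L:TSOC-sl2}) together with the Yamaguchi-nonrigid classifications (Appendix \ref{S:nr-torfree}, Proposition \ref{P:NYR}), and finally use analyticity and connectedness of $M$ to turn local flatness near the fixed point into a global contradiction with non-flatness. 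The one correction to your algebraic translation: the condition to verify is not injectivity of an $\ad_Z$-type operator on the curvature modules, but the eigenvalue condition (CM.3') --- non-negativity $\mu(\sfH_j)\geq 0$ of the cascade's semisimple elements $\sfH_j$ on the lowest weights $\mu$ of $H^2_+(\fg_-,\fg)$, checked for \emph{all} cascade representatives in case (i) and for $\sfH_1$ in case (ii) --- which is precisely what Tables \ref{F:tor-free} and \ref{F:1-graded} and the $m=1$ analysis in Theorem \ref{T:gen} record.
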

 
 We note that since $\fp_+$ acts trivially on $\fg^\nu$, then the $P$-orbits on $\fg^\nu$ are determined by the orbits for the reductive part $G_0 \subset P$.
 
 \begin{thm} Let $G$ be simple (with $\fg_\bbC$ simple if $\fg$ is real).  Suppose that $(G,P)$ is {\bf not} $(A_\ell, P_{s,s+1})$, $2\leq s<\frac{\ell}{2}$ or $(B_\ell,P_\ell)$ with $\ell \geq 5$ odd.  If an {\bf analytic} regular, normal parabolic geometry $(\cG \stackrel{\pi}{\to} M, \omega)$ of type $(G,P)$ (with $M$ connected) admits $\xi \in \finf(\cG,\omega)$ with isotropy $\omega_u(\xi)$ lying in the open $P$-orbit in the top-slot $\fg^\nu$.  (In particular, $\bX = \pi_*(\xi)$ has vanishing 1-jet at $x = \pi(u) \in M$.)  Then the geometry is flat.
 \end{thm}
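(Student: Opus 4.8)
The plan is to combine the flatness criterion of \v{C}ap--Melnick \cite{CM2013a} (recalled in Section~\ref{Ss:hovs}) with a representation-theoretic analysis of the top-slot isotropy, and then to settle the resulting combinatorics with an analogue of Kostant's orthogonal cascade of roots. First I would record the dynamical input. By hypothesis $\xi \in \finf(\cG,\omega)$ has isotropy $Z = \omega_u(\xi)$ lying in the open $P$-orbit of $\fg^\nu$; since $\fp_+$ acts trivially on $\fg^\nu$, this is the open $G_0$-orbit, so $Z$ is a \emph{generic} element of the top slot, and $x=\pi(u)$ is a higher-order fixed point of $\bX=\pi_*(\xi)$. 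The crucial simplification is that $\ad_Z$ raises filtration degree by $\nu$, so that $\ad_Z^3 = 0$ and $\exp(tZ)$ acts by a low-degree polynomial; this is exactly what makes the criterion tractable in the top slot. I would complete $Z$ to a Jacobson--Morozov triple $\{Z, H, Y\}$ and use the grading element $H$ to encode the contracting dynamics of the flow of $\xi$ near $x$.

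Next I would invoke the criterion of \cite{CM2013a}: the higher-order fixed point with isotropy $Z$ forces the curvature $\kappa$ to vanish on an open set $U$ with $x \in \overline{U}$, provided the harmonic curvature module is strictly contracted by the flow. In the top slot this condition amounts to a definite-sign inequality for the eigenvalues of $H$ on $\bbH$. By Kostant's theorem the harmonic curvature module $\bbH = H^2(\fg_-,\fg)$ is a sum of irreducible $G_0$-modules whose highest weights $\mu_1,\dots,\mu_m$ are explicitly computable, so the criterion reduces to the purely algebraic statement $\langle \mu_j, H\rangle < 0$ for all $j$. Once this is verified, analyticity of $\kappa_H$ together with connectedness of $M$ upgrades vanishing on $U$ to $\kappa_H \equiv 0$, whence $\kappa \equiv 0$ and the geometry is flat; this is precisely where the analytic hypothesis enters.

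Then comes the combinatorial heart. I would construct an analogue of Kostant's orthogonal cascade \cite{Kos2012} inside the top slot: a maximal family of strongly orthogonal roots $\gamma_1,\dots,\gamma_r$ of grading $\nu$, so that $Z = \sum_i e_{\gamma_i}$ represents the open orbit and, by strong orthogonality, the associated $\fsl_2$-triples commute and $H = \sum_i \gamma_i^\vee$. The required inequalities then become the finite check $\sum_i \langle \mu_j, \gamma_i^\vee\rangle < 0$ ranging over all Kostant weights $\mu_j$ of $\bbH$.

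I expect the main obstacle to be carrying out this check uniformly across all simple types and all parabolics: one must show both that the cascade genuinely exhausts the directions needed for $Z$ to be generic in $\fg^\nu$, and that the pairing of every harmonic weight with the cascade has the correct strict sign. The two excluded families $(A_\ell, P_{s,s+1})$ with $2 \le s < \frac{\ell}{2}$ and $(B_\ell, P_\ell)$ with $\ell \geq 5$ odd should emerge exactly as the configurations where some $\langle \mu_j, H\rangle = 0$, so that the strict contraction fails and non-flat geometries with open-orbit top-slot isotropy can persist. The delicate point is to prove that these are the \emph{only} obstructions by a structural cascade argument, rather than by a type-by-type enumeration of the weights $\mu_j$ against the coroots $\gamma_i^\vee$.
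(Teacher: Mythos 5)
Your overall architecture coincides with the paper's: reduce to the \v{C}ap--Melnick criterion, produce top-slot orbit representatives via an orthogonal cascade with associated $\fsl_2$-triples $\{\sfE_j,\sfH_j,\sfF_j\}$, reduce the criterion to pairings of $\sfH_m$ against the Kostant weights of $H^2_+(\fg_-,\fg)$, and upgrade local to global flatness by analyticity and connectedness. But there is a genuine error at the heart of your reduction: the sign of the eigenvalue condition is backwards. The criterion actually used ((CM.3), from \cite[Cor.\ 2.14]{CM2013a}) demands that $H$ act on $H^2_+(\fg_-,\fg)$ with \emph{non-negative} eigenvalues, which (since each $\sfH_j$ is a non-negative integer combination of the $\sfZ_i$) reduces to $\mu(\sfH_j)\geq 0$ on \emph{lowest} weights $\mu=-w\cdot\lambda$; you instead require $\langle \mu_j, H\rangle < 0$ on highest weights, i.e.\ strictly negative spectrum (``strict contraction''). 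That condition is never satisfiable in this setting: for instance, for $C_\ell/P_\ell$ the open-orbit element has $\sfH_\ell = 2\sfZ_\ell = 2\sfZ$, and by regularity the grading element acts on $H^2_+(\fg_-,\fg)$ with strictly \emph{positive} eigenvalues, so every pairing is positive. Your combinatorial program, as written, would terminate with the conclusion that the criterion fails for every $(G,P)$. (A strictly positive variant also fails: for $A_\ell/P_2$, $\ell\geq 4$, the $(23)$-component has $\sfH_2(\mu)=0$, yet the theorem covers this case; non-strict non-negativity is genuinely what is needed.) Correspondingly, your prediction that the excluded families appear as borderline equalities is off: under the correct convention, $(A_\ell, P_{s,s+1})$ with $w=(s+1,s)$ and $(B_\ell,P_\ell)$, $\ell\geq 5$ odd, with $w=(\ell,\ell-1)$ are exactly the cases where $\sfH_m(\mu)<0$.

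The sign matters for a second, structural reason that your proposal omits entirely. Because the correct inequality is non-strict, Theorem \ref{T:CM-curve} needs the anchor hypothesis $\kappa_H(x)=0$ at the higher-order fixed point: flow-invariance plus non-expansion cannot force vanishing without a point where the harmonic curvature is already known to vanish, and $P$-equivariance gives nothing at $x$ itself since $\fp_+$ acts trivially on the harmonic curvature module. In the paper this anchor is supplied by Theorem \ref{Th-A}: since $\bX\neq 0$ and $j^1_x(\bX)=0$, 1-jet determinacy at non-flat points forces $\kappa_H(x)=0$; that result in turn rests on the Tanaka-prolongation theorems (Theorems \ref{T:strong-incl} and \ref{T:height}), a substantial ingredient with no counterpart in your outline. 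A smaller point: the uniform ``structural cascade argument'' you hope for is not how the exceptional families are isolated; the paper verifies $\sfH_m(\mu)\geq 0$ by a structured but case-by-case check (the $m=1$ case handled uniformly via $\sfH_1 = \sum_{i\in I_\sfc}\sfZ_i$, the $m>1$ cases against explicit tables), and the two excluded families are found, not predicted, by that enumeration.
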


The analyticity assumption can be relaxed to smooth with changing flatness to local flatness, see the formulation in Section \ref{S:notorsion} and \ref{S:gen}. The simplicity assumption on $G$ is crucial: a direct product of a flat $(G,P)$ geometry and a general $(G',P')$ geometry yields an example of non-flat $(G\times G', P\times P')$ geometry having a symmetry that is 2-jet and not 1-jet determined.  However, when $\fg$ is real, the assumption that $\fg_\bbC$ is simple is unnecessary and taken here only for simplification of the arguments. (For instance our proof works for c-projective structures already treated in \cite{MN2015}.)

The structure of the paper is as follows.  In Section \ref{S:flat}, we discuss the setup of parabolic geometries, and prove 2-jet determination of symmetries for all flat models.  In Section \ref{S:tractors}, we use the fundamental derivative and adjoint tractors to prove a fundamental property (Theorem \ref{T:strong-incl}) of Tanaka prolongation that strengthens our earlier key result \cite[Thm.\ 2.4.6]{KT2014}.  This is then applied in Section \ref{S:symmetries} to prove our first main result on 1-jet determination.

In Section \ref{S:subcominuscle}, we show that $\fg^\nu$ is a {\em sub-cominuscule} representation and introduce the {\em top-slot orthogonal cascade}, which yields natural representatives for the $G_0$-orbits in $\bbP(\fg^\nu)$.  These tools allow us to effectively control the ingredients of a rigidity criterion from \cite{CM2013a} and to prove 1-jet determinacy of symmetries for a wide range of non-flat parabolic geometries. This constitutes a smooth version of our second main result in Section \ref{S:rigidity} (the analytic version follows).

In Section \ref{S:conclusion}, we briefly discuss relations of our results to other investigations of symmetries for geometric structures, and outline further applications. In Appendix \ref{S:DD}, we summarize the crucial ingredients from representation theory associated to parabolic geometries that facilitates the reading of our paper.  In Appendix \ref{S:nr-torfree}, we classify all Yamaguchi-nonrigid torsion-free parabolic geometries, a result that is used in Theorem \ref{thm:notorsion}, but is of independent interest.

\medskip

{\bf Conventions:} We write $A_\ell, B_\ell, C_\ell, D_\ell, G_2, F_4, E_6, E_7, E_8$ for the complex simple Lie algebras, {\bf or} any complex Lie groups having these as their Lie algebras.  We use the Bourbaki ordering of simple roots.  If $\fh \subset \fg$ is the Cartan subalgebra, the symmetric pairing $\langle \cdot, \cdot \rangle$ on $\fh^*$ will be normalized so that all {\em short} simple roots $\alpha_i$ satisfy $\langle \alpha_i, \alpha_i \rangle = 2$.\\

{\bf Acknowledgements}:  We thank J.M.\ Landsberg for discussions on minuscule varieties and sub-cominuscule representations. We are grateful to A.\ Isaev and I.\ Kossovskiy for information about the results on stability and linearization in CR-geometry. B.K. was supported by the University of Troms\o{} while visiting the Australian National University (where this work was initiated) and the University of Vienna.  D.T. was supported by a Lise Meitner Fellowship (project M1884-N84) of the Austrian Science Fund (FWF).
 
 \section{Parabolic geometries: the flat model}
 \label{S:flat}

 \subsection{Generalities on parabolic geometries}
 \label{S:gen-parabolic}

 Let $G$ be a real or complex semisimple Lie group, $P \subsetneq G$ a parabolic subgroup, and $\fp \subsetneq \fg$ the corresponding Lie algebras.  There is a canonical $P$-invariant filtration $\fg = \fg^{-\nu} \supset ... \supset \fg^\nu$ ($\nu \geq 1$) with $\fp = \fg^0$ and $\fp_+ = \fg^1$ its nilradical.  The filtration is preserved by the Lie bracket, i.e.\ $[\fg^i,\fg^j] \subset \fg^{i+j}$, and iterated brackets of $\fg^{-1}$ generate all of $\fg$.  For convenience, we will fix a Lie algebra $\bbZ$-grading (henceforth, a ``$|\nu|$-grading'') $\fg = \fg_{-\nu} \op ... \op \fg_\nu$ satisfying $\fg^i = \bop_{j \geq i} \fg_j$, so that $\fp = \fg_{\geq 0}$, and $\fg_{-1}$ generates all of $\fg_-$.  (In particular, note that $\fg^\nu = \fg_\nu$.)  Such a grading always exists and arises from a grading element $\sfZ \in \fz(\fg_0)$.  (The specific choice of grading will not affect any of the arguments to follow.)  Non-degeneracy of the Killing form $B$ on $\fg$ induces isomorphisms $(\fg/\fp)^* \cong \fp_+$ as $P$-modules and $(\fg_i)^* \cong \fg_{-i}$ (for $i \neq 0$) as $G_0$-modules, where $G_0 \subset P$ is the reductive part.  (For more details, see \cite{CS2009}.)
 
  A parabolic geometry $(\cG\stackrel{\pi}{\to} M,\omega)$ of type $(G,P)$ consists of a $P$-principal bundle $\cG \to M$ endowed with a Cartan connection $\omega : T\cG \to \fg$, i.e.\
  \begin{enumerate}
  \item $\omega_u : T_u \cG \to \fg$ is a linear isomorphism for each $u \in \cG$;
  \item $\omega$ is $P$-equivariant, so $R_p^* \omega = \Ad_{p^{-1}} \circ \omega$ for $p \in P$, where $R_p$ denotes the right action on $\cG$;
  \item $\omega$ reproduces the fundamental vertical vector fields $\zeta_A$ for $A \in \fp$, so $\omega(\zeta_A) = A$.
  \end{enumerate}
 This has symmetries $\finf(\cG,\omega) = \{ \xi \in \fX(\cG)^P \st \cL_\xi \omega = 0 \}$.
 The curvature 2-form $K = d\omega + \frac{1}{2} [\omega,\omega]$ is horizontal, with curvature function $\kappa : \cG \to \bigwedge^2 (\fg / \fp)^* \otimes \fg \cong \bigwedge^2 \fp_+ \otimes \fg$.  The geometry is
 \begin{itemize}
 \item {\em regular} if $\kappa(\fg^i,\fg^j) \subset \fg^{i+j+1}$ for $i,j < 0$, i.e.\ $\kappa$ has positive homogeneity.
 \item {\em normal} if $\partial^* \kappa = 0$, where $\partial^*$ is the ($P$-equivariant) Kostant codifferential, which is (negative) the Lie algebra homology differential acting on chains on $\fp_+$ with values in $\fg$.
 \end{itemize}
 There is an equivalence of categories between regular, normal parabolic geometries and underlying geometric structures \cite{CS2009}.

 The fundamental invariant for regular, normal parabolic geometries is harmonic curvature $\kappa_H$, which is the $P$-equivariant function obtained by quotienting $\kappa$ by $\im(\partial^*)$.  Equivalently, $\kappa_H$ is a section of the natural vector bundle $\cV = \cG \times_P \bbV$, where $\bbV = \frac{\ker(\partial^*)}{\im(\partial^*)}$ is a completely reducible $P$-representation, i.e.\ $\fp_+$ acts trivially. 
 
 Since $\fg$ is a module for $\fg_-$, we have the Lie algebra cohomology differential $\partial$ on the space of cochains on $\fg_-$ with values in $\fg$, and the Kostant Laplacian $\Box := \partial \partial^* + \partial^* \partial$.  Both are $\fg_0$-equivariant.  There is an algebraic Hodge decomposition \cite{Kos1963} and $\fg_0$-module identification of $\bbV$ given by
 \[
 \bigwedge{\!}^2 \fg_-^* \ot \fg = \lefteqn{\overbrace{\phantom{ \im(\partial^*) \op \ker(\Box)}}^{\ker(\partial^*)}} \im(\partial^*) \op \underbrace{\ker(\Box) \op \im(\partial)}_{\ker(\partial)}, \quad
 \bbV = \frac{\ker(\partial^*)}{\im(\partial^*)} \cong \ker(\Box) \cong\frac{\ker(\partial)}{\im(\partial)} =: H^2(\fg_-,\fg).
 \]
 Since the geometry is regular, then $\kappa_H$ is valued in the subspace $H^2_+(\fg_-,\fg) \subseteq H^2(\fg_-,\fg)$ on which the grading element $\sfZ$ acts with positive eigenvalues.

 \subsection{The flat model}

 Let us first consider the flat case $\kappa \equiv 0$.  This is the homogeneous model $(G \to G/P, \omega_G)$, where $\omega_G$ denotes the left-invariant Maurer--Cartan form on $G$.

 \begin{lemma} \label{L:ss-bracket}
If $0<i<\nu$ and $0 \neq X\in\fg_i$, then $0 \neq [X,\fg_{-i-1}] \subset \fg_{-1}$.  If no simple ideal of $\fg$ is contained in $\fg_0$, i.e.\ the action of $G$ on $G/P$ is almost effective, this also holds for $i=0$.
\end{lemma}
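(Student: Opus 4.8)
The plan is to prove the nontrivial part, $[X,\fg_{-i-1}]\neq 0$ (the containment $[X,\fg_{-i-1}]\subset\fg_{-1}$ is immediate since $[\fg_i,\fg_{-i-1}]\subset\fg_{-1}$), and the first move would be to trade the lower slot $\fg_{-i-1}$ for the bottom of the nilradical $\fg_1$ by exploiting nondegeneracy of the Killing form $B$. Since $\ad_X$ is $B$-skew and $B$ restricts to a perfect pairing of $\fg_{-1}$ with $\fg_1$ and of $\fg_{-i-1}$ with $\fg_{i+1}$ (the latter nonzero as $i+1\le\nu$), for $X\in\fg_i$ one gets
\[
[X,\fg_{-i-1}]=0 \iff B\big([X,\fg_{-i-1}],\fg_1\big)=0 \iff B\big(\fg_{-i-1},[X,\fg_1]\big)=0 \iff [X,\fg_1]=0 .
\]
So it suffices to show that the centralizer $\mathfrak z_{\fg_i}(\fg_1):=\{X\in\fg_i : [X,\fg_1]=0\}$ is trivial for $0<i<\nu$.

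The crucial observation is that $\mathfrak z_{\fg_i}(\fg_1)$ is a $\fg_0$-submodule of $\fg_i$: if $[X,\fg_1]=0$ and $g\in\fg_0$, then Jacobi together with $[\fg_0,\fg_1]\subset\fg_1$ gives $[[g,X],A]=[g,[X,A]]-[X,[g,A]]=0$ for all $A\in\fg_1$. Assuming $\mathfrak z_{\fg_i}(\fg_1)\neq 0$, as a nonzero $\fg_0$-module it would contain a $\fg_0^{\mathrm{ss}}$-highest weight vector; since each root space of $\Sigma$-height $i$ has multiplicity one in $\fg_i$, this vector is a root vector $E_\mu$ with $\mu$ a root of height $i$. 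Being a $\fg_0^{\mathrm{ss}}$-highest weight vector means $[E_{\alpha_k},E_\mu]=0$ for all simple $\alpha_k$ with $k\notin\Sigma$; lying in $\mathfrak z_{\fg_i}(\fg_1)$ gives the same for $k\in\Sigma$ (as then $E_{\alpha_k}\in\fg_1$). Hence $E_\mu$ is killed by every simple raising operator, i.e.\ it is a highest weight vector of the adjoint representation.

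This is where simplicity of $\fg$ enters decisively: the adjoint representation is irreducible with one-dimensional highest weight space spanned by $E_\theta$, so $\mu=\theta$, forcing $|\mu|_\Sigma=\nu$ and contradicting $|\mu|_\Sigma=i<\nu$. Thus $\mathfrak z_{\fg_i}(\fg_1)=0$ and the case $0<i<\nu$ follows. For the boundary case $i=0$ the same Killing-form trick yields, for $X\in\fg_0$, the equivalence $[X,\fg_{-1}]=0\iff[X,\fg_1]=0$; the common centralizer $\fa=\mathfrak z_{\fg_0}(\fg_1)$ then annihilates $\fg_{\pm1}$, hence (since $\ad$ is a derivation and $\fg_\pm$ is generated by $\fg_{\pm1}$) all of $\fg_{\neq 0}$, and a short Jacobi computation shows $\fa$ is an ideal of $\fg$ contained in $\fg_0$; almost-effectiveness then forces $\fa=0$.

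The hard point is exactly the positive-slot vanishing $\mathfrak z_{\fg_i}(\fg_1)=0$ for $1\le i<\nu$. The formal consequences of ``$\fp_+$ is generated by $\fg_1$'' plus Killing duality only give the analogous vanishing in the \emph{negative} slots (equivalently the surjectivity $[\fg_1,\fg_k]=\fg_{k+1}$), which does not imply the positive-slot statement; the representation-theoretic reduction to a single highest weight vector, where irreducibility of the adjoint module can be invoked, is what breaks this circularity. It is also where I expect the simplicity hypothesis to be genuinely necessary rather than cosmetic — a direct sum of unequally graded simple ideals already defeats the naive semisimple reading, so for semisimple $\fg$ the statement should be run inside the simple ideal supporting $X$, with $\nu$ the grading depth of that ideal.
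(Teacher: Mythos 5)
Your proof is correct, and after the opening move it takes a genuinely different route from the paper's. The common part is the Killing-form duality: both you and the authors reduce $0\neq[X,\fg_{-i-1}]$ to the vanishing of the positive-side centralizer $\cZ_i=\{Y\in\fg_i : [Y,\fg_1]=0\}$. The paper then stays entirely root-free: since $\fg_1$ bracket-generates $\fg_+$, any element of $\cZ_i$ centralizes all of $\fg_+$, and since $\cZ_i$ is $\fg_0$-stable, the ideal of $\fg$ it generates equals $\sum_{t\geq 0}\ad_{\fg_-}^t(\cZ_i)\subset\bigoplus_{j\leq i}\fg_j$, a nonzero proper ideal, contradicting simplicity. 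You instead argue via weights: a highest weight vector of the $\fg_0$-module $\cZ_i$ is a root vector $E_\mu$ (each weight space of $\fg_i$, $i>0$, being a one-dimensional root space), killed by every simple raising operator -- by highest-weight-ness for uncrossed nodes and by membership in $\cZ_i$ for crossed ones -- so $E_\mu$ must span the highest root space, whose grading degree is $\nu$ rather than $i$; simplicity enters as irreducibility of the adjoint representation. What the paper's version buys is that it applies verbatim to real $\fg$; your argument needs complexification (root spaces, highest weight theory), and when $\fg$ is a complex simple algebra regarded as real, $\fg_\bbC$ has two simple factors, so the argument must be run in each factor separately (both carry the induced grading with the same depth $\nu$) -- a routine step, in the spirit of your closing remark, but one you should state, since the lemma is asserted for real $\fg$ as well. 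What your version buys is that it pinpoints exactly where a centralizing vector would have to sit, and your $i=0$ argument (the common centralizer of $\fg_{\pm 1}$ is an ideal of $\fg$ contained in $\fg_0$, excluded by almost-effectiveness) is a self-contained proof of the fact the paper only cites from \cite{CS2009}. Your final caveat about genuinely semisimple $\fg$ -- that the statement must be read inside the simple ideal supporting $X$, since the depths of the ideals may differ -- is also well taken; the paper's reduction ``we may assume that $\fg$ is simple'' silently presumes this reading.
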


 \begin{proof}
We may assume that $\fg$ is simple.  The Killing form $B$ is $\ad$-invariant, so $B(\ad_X(v), w) = -B(v,\ad_X(w))$ for $v \in \fg_{-i-1}$ and $w \in \fg_1$.  If $0 \neq \ad_X(w) \in \fg_{i+1}$, then choose $v \not\in \ad_X(w)^\perp$ (by non-degeneracy of $B$).  So it suffices to show that $0 \neq [X,\fg_1] \subset \fg_{i+1}$.  For $i=0$, see \cite[Prop.3.1.2(5)]{CS2009}.

Fixing $0 < i < \nu$, assume that $\cZ_i := \{ Y \in \fg_i \st [Y,\fg_1] = 0 \}$ is nontrivial.  Since $\fg_1$ is bracket-generating in $\fg_+$, then $[\cZ_i,\fg_+] = 0$.  Thus, $\cZ_i \subset \fz(\fg_+)$ is abelian.  Since $\fg_0$ consists of grading-preserving derivations of $\fg_+$, it preserves $\cZ_i$.  Hence, $\tad_\fg^\infty(\cZ_i):=\sum_{t=0}^\infty\tad_\fg^t(\cZ_i) = \sum_{t=0}^\infty\tad_{\fg_{-}}^t(\cZ_i)$ is a proper ideal in $\fg$, which contradicts $\fg$ being simple.
 \end{proof}

 Given $X \in \fg$, let $\bX = \iota(X) \in \fX(G/P)$ be the projection of the corresponding {\em right}-invariant vector field on $G$.  On $G/P$, the symmetry algebra of the differential geometric structure underlying the flat model is $\cS = \{ \bX = \iota(X) \st X \in \fg \}$.

 \begin{prop} \label{P:flat-jet} Let $G$ be a real or complex semisimple Lie group and $P \subsetneq G$ a parabolic subgroup with no simple ideal of $\fg$ contained in $\fp$.  Let $\cS \subset \fX(G/P)$ be the projection of the right-invariant vector fields on $G$.  Then $\cS$ is everywhere 2-jet (and not 1-jet) determined.   At the origin $o \in G/P$,
 \begin{align*}
 \fp = \{ X \in \fg \st j^0_o(\bX) = 0 \}, \qquad
 \fg^\nu = \{ X \in \fg \st j^1_o(\bX) = 0 \}.
 \end{align*}
 \end{prop}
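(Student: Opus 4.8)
The plan is to compute the jets of the fundamental vector fields $\bX=\iota(X)$ directly at the origin $o=eP$, and then transport the conclusions to all of $G/P$ by homogeneity. First observe that $\iota(X)$ is the fundamental vector field of the left $G$-action: its flow is $gP\mapsto\exp(tX)gP$, so $\iota(X)(o)=\frac{d}{dt}\big|_0\exp(tX)P$ is the class of $X$ in $T_o(G/P)\cong\fg/\fp$. Hence $j^0_o(\bX)=0$ exactly when $X\in\fp$, which is the first displayed identity.

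For the $1$-jet I restrict to $X\in\fp$, so that $\bX(o)=0$ and the linear part is an intrinsic endomorphism of $T_o(G/P)$. Since $d_oL_p=\Ad_p$ on $\fg/\fp$ for $p\in P$ (from $L_p(\exp(sY)P)=\exp(s\,\Ad_pY)P$), the flow $L_{\exp(tX)}$ has differential $e^{t\,\ad_X}$ at $o$, so the linearization of $\bX$ is $\ad_X$ acting on $\fg/\fp$. Thus $j^1_o(\bX)=0$ iff $X\in\fp$ and $[X,\fg]\subseteq\fp$. I then claim $\{X\in\fp:[X,\fg]\subseteq\fp\}=\fg_\nu=\fg^\nu$. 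The inclusion $\fg_\nu\subseteq{}$LHS is immediate from $[\fg_\nu,\fg_j]\subseteq\fg_{\nu+j}\subseteq\fp$. Conversely, writing $X=\sum_{i\ge0}X_i$ with $X_i\in\fg_i$ and letting $i_0<\nu$ be the lowest index with $X_{i_0}\neq0$, the degree-$(-1)$ component of $[X,Y]$ for $Y\in\fg_{-i_0-1}$ equals $[X_{i_0},Y]$, which must vanish identically; this contradicts Lemma~\ref{L:ss-bracket} (whose hypothesis for $i_0=0$ is precisely the almost-effectiveness assumption, since a graded simple ideal contained in $\fp$ necessarily lies in $\fg_0$). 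This gives the second displayed identity, and in particular $\{X:j^1_o\bX=0\}=\fg^\nu\neq0$, so $\cS$ is not $1$-jet determined.

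For $2$-jet determinacy it remains to show $j^2_o(\bX)\neq0$ whenever $0\neq X\in\fg^\nu=\fg_\nu$. I use the exponential chart $\fg_-\ni Y\mapsto\exp(Y)P$, in which $\bX$ is represented by the field $Y\mapsto V(Y)\in\fg_-$ with $V=\partial_t|_0 Z$, where $\exp(tX)\exp(Y)=\exp(Z(t,Y))\,p(t,Y)$, $Z\in\fg_-$, $p\in P$. The exact identity $\exp(tX)\exp(Y)=\exp(Y)\exp\!\big(t\,e^{-\ad_Y}X\big)$ and matching first-order-in-$t$ terms after splitting $\fg=\fg_-\oplus\fp$ yield $V(Y)=G(Y)^{-1}\pi_-\!\big(e^{-\ad_Y}X\big)$, where $\pi_-:\fg\to\fg_-$ is the projection along $\fp$ and $G(Y)=\pi_-\circ\frac{1-e^{-\ad_Y}}{\ad_Y}\big|_{\fg_-}$ satisfies $G(0)=\id$. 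For $X\in\fg_\nu$ the constant and linear terms of $\pi_-(e^{-\ad_Y}X)$ vanish, because $[\fg_-,\fg_\nu]\subseteq\fp$; hence the quadratic part of $V$ is $V_2(Y)=\tfrac12\,\pi_-[Y,[Y,X]]$. Choosing $Y\in\fg_{-\nu}$ makes $[Y,[Y,X]]\in\fg_{-\nu}\subseteq\fg_-$, so $V_2(Y)=\tfrac12[Y,[Y,X]]$.

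The crux is now purely algebraic: for $0\neq X\in\fg_\nu$ I must produce $Y\in\fg_{-\nu}$ with $[Y,[Y,X]]\neq0$. Here I invoke the graded refinement of the Jacobson--Morozov theorem: the homogeneous nilpotent $X\in\fg_\nu$ admits an $\fsl_2$-triple $(X,H,Y_0)$ with $H\in\fg_0$ and $0\neq Y_0\in\fg_{-\nu}$. Then $[Y_0,X]=-H$ and $[Y_0,[Y_0,X]]=[H,Y_0]=-2Y_0\neq0$, so $V_2\not\equiv0$ and $j^2_o(\bX)\neq0$. (In the real case one argues the same after complexifying, since $Y\mapsto[Y,[Y,X]]$ is a quadratic map that vanishes on $\fg_{-\nu}$ iff it vanishes on $(\fg_{-\nu})_\bbC$, where the complex triple excludes this.) Finally $\cS$ is $G$-invariant, as $(L_g)_*\iota(X)=\iota(\Ad_gX)$, so $\Ad_g$ carries $\{X:j^k_o\bX=0\}$ onto $\{X:j^k_{g\cdot o}\bX=0\}$; since the origin may be moved to any point and $\{X:j^2_o\bX=0\}=0$, the algebra $\cS$ is everywhere $2$-jet determined. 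I expect the two delicate steps to be the bookkeeping in the exponential chart that isolates $V_2=\tfrac12\pi_-[Y,[Y,X]]$, and the appeal to a graded $\fsl_2$-triple to guarantee $[Y,[Y,X]]\neq0$.
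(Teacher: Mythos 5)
Your proof is correct, and its skeleton (reduce to the origin by $G$-equivariance of $\iota$, identify the $0$- and $1$-jet kernels, then exhibit a nonzero $2$-jet for top-slot elements) matches the paper's; but the two jet computations are executed differently. For the $1$-jet identity you linearize the flow $L_{\exp(tX)}$ to get $\ad_X$ acting on $\fg/\fp$, whereas the paper works with brackets $[\bX,\bY]|_o$ of the vector fields themselves; both routes reduce to the same algebraic fact $\{X\in\fp \st [X,\fg]\subseteq\fp\}=\fg_\nu$ via Lemma \ref{L:ss-bracket}, so this part is essentially a repackaging (and your side remark that ``no simple ideal in $\fp$'' is equivalent to ``no simple ideal in $\fg_0$'' is correct, since simple ideals are graded and their positive-degree parts would be Killing-orthogonal to everything). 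The genuine divergence is the $2$-jet step. The paper takes $Y\in\fg_{-1}$ with $0\neq[X,Y]\in\fg_{\nu-1}$ (citing \cite[Prop.~3.1.2]{CS2009}), observes that by the already-proven $1$-jet classification the field attached to $[X,Y]$ has nonzero $1$-jet, and then, in coordinates where $\bY=\partial_{x^1}$, reads off a nonvanishing second derivative of the coefficients of $\bX$; this soft bootstrap is precisely the mechanism that the paper later transplants to curved geometries in Theorem \ref{T:M-bracket} using the fundamental derivative. You instead compute the quadratic Taylor coefficient explicitly in the exponential chart, $V_2(Y)=\tfrac12\,\pi_-[Y,[Y,X]]$, and rule out its vanishing with a graded Jacobson--Morozov triple $(X,H,Y_0)$, $H\in\fg_0$, $0\neq Y_0\in\fg_{-\nu}$, so that $V_2(Y_0)=-Y_0\neq0$. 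This is valid: the graded refinement of Jacobson--Morozov is a standard fact (take the degree-zero part $H'_0=[X,F'_{-\nu}]$ of the semisimple element of an arbitrary triple, complete it to a triple by the usual lemma, then project the new lowering element to degree $-\nu$), and your complexification argument in the real case is sound because $Y\mapsto[Y,[Y,X]]$ is polynomial. What your route buys is explicit quantitative information about the $2$-jet (it recovers, for instance, the quadratic form of the conformal inversions directly); what it costs is dependence on a heavier external algebraic tool and on a chart computation specific to the flat model, whereas the paper's bracket argument is uniform over $\bbR$ and $\bbC$ and generalizes verbatim to the curved setting.
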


 \begin{proof}  It suffices to consider $o$.  Since $P$ is the stabilizer of $o$ in $G$, then $X \in \fp$ if and only if $\bX|_o = 0$.
 
 Now let $0 \neq X \in \fg^i \subset \fp$ ($i \geq 0$ maximal) with $j^1_o(\bX) = 0$.  Write $X = \sum_{j=i}^\nu X_j$, with $X_j \in \fg_j$.  Assuming $0 \leq i < \nu$, pick $Y \in \fg_{-i-1}$ as in Lemma \ref{L:ss-bracket} so that $0 \neq [X_i,Y] \in \fg_{-1}$, so $[X - X_i,Y] \in \fp$.  Since $\bY|_o \neq 0$ and $j^1_o(\bX) = 0$, then $0 = [\bX,\bY]|_o = [\bX_i,\bY]|_o \neq 0$.  Thus, $\{ X \in \fg \st j^1_o(\bX) = 0 \} \subset \fg_\nu$.
 
 If $X \in \fg_\nu$, then $[X, \fg_-] \subset \fp$, so $[\bX, \bY]|_o = 0$ for any $Y \in \fg_-$. 
 \begin{enumerate}
 \item[(i)] In coordinates $(x^i)$ centred at $o$, take a basis $\{ Y_i \}$ of $\fg_-$ with $\bY_i = \partial_{x^i} + \bY_i'$, where $\bY_i'|_o = 0$.  (In general, $\bY_i' \not\in \cS$.)  Since $\bX|_o = 0$, then  $0 = [\bY_i,\bX]|_o = [\partial_{x^i},\bX]|_o$, so $j^1_o(\bX) = 0$.
 
 \item[(ii)] By \cite[Prop.3.1.2]{CS2009}, $\exists Y \in \fg_{-1}$ with $0 \neq [X,Y] \in \fg_{\nu-1} \subset \fp$.  Thus, $j^1_o([\bX,\bY]) \neq 0$ (see above).  Since $\bY|_o \neq 0$, then $\bY = \partial_{x^1}$ in some coordinates $(x^i)$ centered at $o$.  If $\bX = X^i \partial_{x^i}$, then $[\bY,\bX] = \frac{\partial X^i}{\partial x^1} \partial_{x^i}$.  Now, $j^1_o([\bX,\bY]) \neq 0$ implies $\frac{\partial^2 X^i}{\partial x^j \partial x^1} (o) \neq 0$ for some $i,j$, i.e.\ $j^2_o(\bX) \neq 0$.
 \end{enumerate}
 \end{proof}

 \begin{example} For the $n$-dimensional flat conformal structure $[g]$, where $g = (dx^1)^2 + ... + (dx^n)^2$, any conformal Killing vector field $\bX = X^i \partial_{x^i}$ has components
 \[
 X^i = s^i + m^i{}_j x^j + \lambda x^i + r^j x_j x^i - \frac{1}{2} r^i x^j x_j.
 \]
 Those with trivial $s^i,m^i{}_j,\lambda$ are 2-jet and not 1-jet determined at the origin.  These are the inversion symmetries.
 \end{example}

 \begin{example} The Hilbert--Cartan equation $z' = (y'')^2$ corresponds to the well-known flat model for $G_2 / P_1$ geometries.  More precisely, on a 5-manifold with coordinates $(x,y,p,q,z)$, consider the Pfaffian system $\cI = \tspan\{ du - p dx, \, dp - q dx, \, dz - q^2 dx \}$,
 or equivalently the distribution $\cD = \tspan\{
 \partial_q, \, \partial_x + p \partial_y + q\partial_p + q^2 \partial_z \}$ whose (weak) derived flag has growth vector $(2,3,5)$.  We have $\fg = \fg_{-3} \op ... \op \fg_3$ and $\dim(\fg_3) = 2$.  We can confirm Proposition \ref{P:flat-jet} in {\tt Maple} using:

 \begin{verbatim}
 > restart: with(DifferentialGeometry): with(GroupActions):
   DGsetup([x,y,p,q,z],M);
   dist:=evalDG([D_q,D_x+p*D_y+q*D_p+q^2*D_z]):
   sym:=InfinitesimalSymmetriesOfGeometricObjectFields([dist],output="list"):
   iso:=IsotropyFiltration(sym,[x=0,y=0,p=0,q=1,z=0]):
   map(nops,[sym,iso[1],iso[2]]);
 \end{verbatim}
 The final command gives output {\tt [14, 9, 2]},
which confirms that $\dim(\{ X \in \fg \st j^1_o(\bX) = 0\}) = 2$.
 \end{example}

 \begin{remark}
 By \cite{Kru2011} (see formulae (9), (10) and those in between) if $X\in\fg_i$ ($i\geq0$) in the graded Tanaka algebra
$\fg_{-\nu}\oplus\ldots\oplus\fg_i\oplus\ldots$ gives $\bX$ that is $k$-jet determined, where $k \geq 0$ is the minimal order with $j^k_o(\bX) \neq 0$, then
$\frac{i}{\nu}+1\leq k\leq i+1$.  This gives another perspective on why $\bX$ is not 1-jet determined when $X \in \fg_\nu$.
 \end{remark}

 \section{Adjoint tractors and Tanaka prolongation}
 \label{S:tractors}

 \subsection{Formulation of the main result}
 \label{S:main}

Let $\widetilde\cS = \finf(\cG,\omega) = \{ \xi \in \fX(\cG)^P \st \cL_\xi \omega = 0 \}$ be the infinitesimal symmetry algebra.  The filtration on $\fg$ corresponding to $\fp$ induces a ($P$-invariant) filtration on $T\cG$ via $T^i \cG = \omega^{-1}(\fg^i)$.  A choice of $x \in M$ induces a filtration on $\widetilde\cS$ via
 \begin{align} \label{E:S-filtration}
 \widetilde\cS(x)^i = \{ \xi \in \widetilde\cS \st \xi_u \in T^i_u \cG, \forall u \in \pi^{-1}(x) \}.
 \end{align}
 It was shown in  \cite{CN2009} that:
 \begin{enumerate}
 \item $\omega_u : T_u \cG \to \fg$ is a linear injection when restricted to $\{ \xi_u \st \xi \in \widetilde\cS \}$.
 \item the Lie bracket on $\widetilde\cS$ is mapped to the bracket on $\ff(u) := \omega_u(\widetilde\cS)$ given by:
 \[
 [X,Y]_{\ff(u)} := [X,Y]_\fg - \kappa_u(X,Y).
 \]
 \item by regularity, $\ff(u)$ is a filtered Lie algebra\footnote{While $\ff(u) \subset \fg$ is a linear subspace, it is in general {\em not} a Lie subalgebra.} upon restriction of the canonical filtration on $\fg$, while its associated graded algebra $\fs(u) = \gr(\ff(u))$ is a graded subalgebra of $\fg$.
 \item $\fs_0(u) \subseteq \fann_{\fg_0}(\kappa_H(u))$.
 \end{enumerate}

 Let us recall the {\em Tanaka prolongation algebra} \cite{KT2014}:

  \begin{defn} \label{D:g-prolong} Let $\fg$ be a $\bbZ$-graded semisimple Lie algebra, and $\fa_0 \subset \fg_0$ a subalgebra.  Define the graded subalgebra $\fa \subset \fg$ by: (i) $\fa_{\leq 0} := \fg_{\leq 0}$; (ii) $\fa_k = \{ X \in \fg_k \st [X,\fg_{-1}] \subset \fa_{k-1} \}$ for $k > 0$.  We will denote $\fa = \bop_k \fa_k$ by $\prn^\fg(\fg_-,\fa_0)$.  (In particular, $\prn^\fg(\fg_-,\fg_0) = \fg$.)  When $\phi$ is an element of some $\fg_0$-representation, we will also use the notation $\fa^\phi := \prn^\fg(\fg_-,\fann(\phi))$.
 \end{defn}

 \begin{remark} \label{R:gm1-gen}
 If $\fg_-$ is generated by $\fg_{-1}$, then $\fa_k = \{ X \in \fg_k \st \ad^k_{\fg_{-1}}(X) \subset \fa_0 \}$ (see \cite[Lemma 2.1.4]{KT2014}).
 \end{remark}

 We say that $x = \pi(u)$ is a {\em regular point} if $\dim(\fs_i(u))$ is locally constant for each $i$.  (By $P$-equivariancy of the function $\omega(\xi) : \cG \to \fg$, $\dim(\fs_i(u))$ is constant along fibres of $\cG \stackrel{\pi}{\to} M$.)  At such points, we proved in \cite[Prop.\ 2.4.3]{KT2014} a fundamental relationship between $\fs_i(u)$ (for $i > 0$) and $\fs_0(u)$.  Namely, $[\fs_{i+1}(u),\fg_{-1}] \subseteq \fs_i(u)$, which yields the inclusions
 \begin{align} \label{E:gap-incl}
 \fs(u) \subseteq \prn^\fg(\fg_-,\fs_0(u)) \subseteq \fa^{\kappa_H(u)}.
 \end{align}
  The set of regular points is open and dense in $M$, so for non-flat geometries there exists $u \in \cG$ such that $x = \pi(u)$ is a regular point and $\kappa_H(u) \neq 0$.  Since $\kappa_H(u) \in H^2_+(\fg_-,\fg)$, then \eqref{E:gap-incl} implies that $\fU := \max\{ \dim(\fa^\phi) \st 0 \neq \phi \in H^2_+(\fg_-,\fg) \}$ is a universal upper bound for $\dim(\widetilde\cS) = \dim(\fs(u))$ among all non-flat (regular, normal) geometries of type $(G,P)$.

 Our goal now is to remove the regular point assumption, thereby strengthening \cite[Thm.\ 2.4.6]{KT2014}:

 \begin{thm} \label{T:strong-incl} Let $(\cG \to M, \omega)$ be a regular, normal parabolic geometry of type $(G,P)$ with $\widetilde\cS = \finf(\cG, \omega)$ and harmonic curvature $\kappa_H$.  Given any $u \in \cG$, $\fs(u) = \gr(\omega_u(\widetilde\cS)) \subseteq \fg$ satisfies:
 \begin{align} \label{E:strong-incl}
 \fs(u) \subseteq \fa^{\kappa_H(u)}.
 \end{align}
 \end{thm}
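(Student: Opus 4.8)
The plan is to reduce \eqref{E:strong-incl} to a pointwise algebraic statement about a single symmetry and then to establish that statement with the fundamental derivative, so that the local-constancy hypothesis of \cite[Prop.\ 2.4.3]{KT2014} is never used. First I would note that for $i\leq 0$ the inclusion $\fs_i(u)\subseteq\fg_i=\fa_i^{\kappa_H(u)}$ is automatic from Definition \ref{D:g-prolong}, and that the base case $\fs_0(u)\subseteq\fann_{\fg_0}(\kappa_H(u))=\fa_0^{\kappa_H(u)}$ is the property from \cite{CN2009} recalled above. By the inductive description of the prolongation together with Remark \ref{R:gm1-gen}, membership $X\in\fa_k^{\kappa_H(u)}$ is equivalent to $\ad_{Y_k}\cdots\ad_{Y_1}(X)\in\fann(\kappa_H(u))$ for all $Y_1,\dots,Y_k\in\fg_{-1}$. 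Hence it suffices to prove: if $\xi\in\widetilde\cS$ has isotropy $X=\omega_u(\xi)\in\fg^k$ with leading term $X_k=\gr_k(X)\in\fs_k(u)$ (for $k\geq 1$), then every $k$-fold bracket $\ad_{Y_k}\cdots\ad_{Y_1}(X_k)\in\fg_0$ annihilates $\kappa_H(u)$. Monotonicity of $\prn^\fg$ in the $\fg_0$-slot then assembles these into \eqref{E:strong-incl}, bypassing the intermediate term $\prn^\fg(\fg_-,\fs_0(u))$ used in \eqref{E:gap-incl}.

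The tractorial input is twofold. I encode $\xi$ by its adjoint tractor $s_\xi\in\Gamma(\cA)$, $\cA=\cG\times_P\fg$, with equivariant representative $\omega(\xi)$; the tautology $\omega^{-1}(\omega(\xi))=\xi$ shows that the fundamental derivative in the direction $s_\xi$ is just differentiation along $\xi$, so that $D_{s_\xi}\tau=0$ for every natural section $\tau$ preserved by automorphisms---in particular for $\kappa_H$ and for all of its iterated fundamental derivatives $D^{(j)}\kappa_H$. Second, differentiating $\cL_\xi\omega=0$ and using $d\omega=K-\tfrac12[\omega,\omega]$ yields, for arbitrary $t\in\Gamma(\cA)$,
\[
 D_t s_\xi=[s_\xi,t]_{\fg}-\kappa(s_\xi,t),
\]
which is the deformed bracket of \cite{CN2009} promoted to an identity of fundamental derivatives. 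The decisive gain over the regular-point argument is that $t$ here ranges over all of $\Gamma(\cA)$, not only over tractors of symmetries; this is exactly what lets us differentiate in $\fg_{-1}$-directions that are realized by no symmetry.

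Next I would evaluate at $u$. Since $k\geq 1$ forces $X\in\fg^k\subseteq\fp_+$, the vector $\omega_u^{-1}(X)$ is vertical, so for any natural $\tau$ valued in a $P$-module $\bbW$ one has $D_{s_\xi}\tau(u)=-X\bullet\tau(u)$. Feeding a section $t$ with value $Y\in\fg_{-1}$ at $u$ into the displayed formula gives $D_t s_\xi(u)=[X,Y]_{\fg}$---the curvature correction vanishing because $\kappa$ is horizontal while $X\in\fp_+$---so the leading term drops one filtration degree to $[X_k,Y]\in\fg_{k-1}$. Iterating this lowering $k$ times, and at each stage invoking $D_{s_\xi}(D^{(j)}\kappa_H)(u)=-X\bullet(D^{(j)}\kappa_H)(u)=0$ together with the Ricci-type identity that swaps the derivative slots of $D^{(j)}\kappa_H$ up to curvature terms, the surviving leading contribution is exactly $\bigl(\ad_{Y_k}\cdots\ad_{Y_1}X_k\bigr)\bullet\kappa_H(u)$; since $\fp_+$ acts trivially on the completely reducible $\bbV$, the relation $D_{s_\xi}\kappa_H=0$ at the bottom of the tower forces this to vanish. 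This is the required annihilation, giving $X_k\in\fa_k^{\kappa_H(u)}$.

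I expect the main obstacle to be the bookkeeping in this last step: controlling, at each lowering, the homogeneity of the curvature corrections produced both by the $\kappa(s_\xi,t)$ term and by the slot-swap identity for $D$, and verifying that they are of strictly higher homogeneity than the $\fg$-bracket leading term so that they cannot contaminate the associated-graded conclusion. This is precisely where \cite[Prop.\ 2.4.3]{KT2014} used regularity---to keep $\dim(\fs_i(u))$ locally constant and differentiate along $\widetilde\cS$ itself; removing it requires the homogeneity estimates furnished by regularity of $\kappa$ (positive homogeneity) and normality ($\partial^*\kappa=0$), together with a careful matching of the $\fp_+$-action on the jet $(D^{(j)}\kappa_H)(u)$ with iterated brackets into $\fann(\kappa_H(u))$.
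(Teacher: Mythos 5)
You take essentially the same route as the paper's own proof: realize the symmetry as an adjoint tractor $s$, use that the fundamental derivative along a symmetry kills natural objects ((D.8)), convert $D_t s$ into the algebraic bracket $\{s,t\}$ minus a curvature correction via (D.4), note that the correction dies at $u$ because $\Pi(s)(u)=0$, and iterate with sections $t_j$ having prescribed values $Y_j\in\fg_{-1}$ at $u$ so as to conclude $\ad_{Y_k}\cdots\ad_{Y_1}(X)\cdot\kappa_H(u)=0$; together with Remark \ref{R:gm1-gen} this is exactly the paper's reduction of Theorem \ref{T:strong-incl} to Theorem \ref{T:sip}, and your one-lowering case coincides with the paper's $n=1$ argument.

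However, the iteration step contains a genuine gap, and it is not ``mere bookkeeping.'' First, the identity you invoke at each stage, $D_{s}(D^{(j)}\kappa_H)(u)=-X\bullet(D^{(j)}\kappa_H)(u)=0$, is problematic in either reading: if $D^{(j)}\kappa_H$ means $D_{t_j}\cdots D_{t_1}\kappa_H$, this section is \emph{not} natural (it depends on the chosen $t_a$), so $D_s$ of it need not vanish; if it means the natural tensor in $\Gamma(\ot^j\cA^*M\ot\cV)$, then $D_s$ of it does vanish, but the resulting statement involves the nontrivial $\fp_+$-action on the $\cA^*M$-slots, and unwinding that into a statement about brackets acting on $\kappa_H(u)$ is precisely the work to be done. (The paper avoids the tensor picture and the ``Ricci-type slot swap'' altogether: by \eqref{D_tA}-type reasoning, each $D_t\kappa_H$ is again a section of the completely reducible bundle $\cV$, so (D.7) applies verbatim at every stage.) Second, and more seriously, your plan to dispose of all corrections by homogeneity --- that they are ``of strictly higher homogeneity'' and hence lie in $\fp_+$ where complete reducibility kills them --- fails: when one converts the iterated Lie bracket of sections $[t_k,[\dots,[t_1,s]\dots]]$ into the iterated algebraic bracket $\{t_k,\{\dots\{t_1,s\}\dots\}\}$, correction terms appear whose values at $u$ are shorter brackets of the isotropy landing in the \emph{same} filtration degree $\fg^q$, $q\geq 0$ (the paper's space $\cP^s_{n,q}$), not in $\fg^{q+1}$. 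These cannot be killed by complete reducibility; in the paper they are absorbed only because the statement is strengthened to an induction over all shorter bracket words (Theorem \ref{T:sip}), organized through Lemma \ref{L:DI} (the expansion $D_I=\sum_{s\in A\subset I}D_{[A]}D_{I\backslash A}$) and Lemma \ref{L:[]} (the congruence $[I]\stackrel{n,q}{\equiv}(-1)^n\{I\}$, itself proved by a double induction). This strengthened inductive statement is the technical heart of Section \ref{S:adj-tr-Kh}; your proposal flags it as ``the main obstacle'' but supplies neither the correct form of the inductive hypothesis nor the mechanism that closes it, so the argument as written does not go through.
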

 
 Hence, $\dim(\widetilde\cS) \leq \fU$ for non-flat (regular, normal) geometries follows.  The proof of Theorem \ref{T:strong-incl} is given in Section \ref{S:adj-tr-Kh}, but we first discuss some key properties of the fundamental derivative.


 \subsection{The fundamental derivative}  The adjoint tractor bundle is the associated bundle $\cA M = \cG \times_P \fg$, where $P$ acts on $\fg$ via restriction of the adjoint action.  Since $TM = \cG \times_P (\fg / \fp)$, there is a natural surjective bundle map $\Pi : \cA M \to TM$.  Curvature manifests as an $\cA M$-valued 2-form $\kappa \in \Omega^2(M;\cA M)$. There are two bracket operations associated with $\cA M$:
 \begin{enumerate}
 \item an {\em algebraic bracket} $\{ \cdot, \cdot \}: \cA M \times \cA M \to \cA M$, which is a bundle map making each fibre $\cA_x M$ into a Lie algebra isomorphic to $\fg$.
 \item a Lie bracket $[\cdot,\cdot]$ on $\Gamma(\cA M)$ induced via the isomorphism $\Gamma(\cA M) \cong \fX(\cG)^P$.  (See \cite[Prop.1.5.7]{CS2009}; in particular, any symmetry $\xi \in \finf(\cG,\omega)$ corresponds to some $s \in \Gamma(\cA M)$.)
 \end{enumerate}

 Given a representation $\rho : P \to \tGL(\bbE)$, consider the natural vector bundle $EM = \cG \times_P \bbE$.  The fundamental derivative $D : \Gamma(\cA M) \times \Gamma(EM) \to \Gamma(EM)$, $(r,\sigma) \mapsto D_r \sigma$ is defined as follows: Given that $r \in \Gamma(\cA M)$ corresponds to a $P$-invariant vector field $\xi \in \fX(\cG)$ and $\sigma \in \Gamma(EM)$ corresponds to a $P$-equivariant function $\phi : \cG \to \bbE$, then $D_r\sigma$ corresponds to $\xi \cdot \phi$.  In particular, the canonical $P$-invariant filtration on $\fg$ induces a filtration of $\{ \cA^i M \}$ of $\cA M$, and for any $r\in\Gamma(\cA M)$, we have
 \begin{equation}\label{D_tA}
D_r:\Gamma(\cA^i M)\to\Gamma(\cA^i M).
 \end{equation}

 \begin{framed}
Notation: Let $a,b \in \Gamma(EM)$, which we often identify with $P$-equivariant functions $a,b : \cG \to \bbE$.  Fix $u \in \cG$.  Writing $a \deq b$ will mean $a(u) = b(u)$.  (This property depends only on $x = \pi(u)$.)
 \end{framed}

 The following facts will be useful for the proof of Theorem \ref{T:strong-incl}.

 \begin{lemma}[Properties of the fundamental derivative] \label{L:fund-der} Fix $u \in \cG$ and $r,t \in \Gamma(\cA M)$.  Then
 \begin{enumerate}
 \item[(D.1)] $D_r$ is tensorial in $r$, i.e. $D_{fr} = f D_r$ for $f \in C^\infty(M)$.
 \item[(D.2)] $D_{[r,t]} = [D_r,D_t]$.
 \item[(D.3)] $D_r$ is a derivation of the algebraic bracket $\{ \cdot, \cdot \}$.
 \item[(D.4)] $[r,t] = D_r t - D_t r - \kappa(\Pi(r),\Pi(t)) + \{ r,t \}$.
 \item[(D.5)] If $r(u) \in \fp$, then $D_r t \deq \{ t, r \}$.
 \item[(D.6)] If $r(u) \in \fp$, then $D_t r \deq [t,r]$.
 \item[(D.7)] Let $\cV = \cG \times_P \bbV$, where $\bbV = \frac{\ker(\partial^*)}{\im(\partial^*)}$.  (Note $\kappa_H \in \Gamma(\cV)$.)  If $r(u) \in \fp$ and $\psi \in \Gamma(\cV)$, then $(D_r \psi)(u) = -r(u) \cdot \psi(u)$.  Thus, if $r(u) \in \fg^1$, then $D_r \psi \deq 0$ by complete reducibility of $\bbV$.
 \item[(D.8)] Suppose that $s \in \Gamma(\cA M)$ is a symmetry. Then $D_s t = [s,t]$, $D_s \kappa = 0$, $D_s \kappa_H = 0$.
 \end{enumerate}
 \end{lemma}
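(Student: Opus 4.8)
The plan is to carry out everything through the dictionary between sections and equivariant functions. Recall that $r \in \Gamma(\cA M)$ corresponds to a $P$-invariant vector field $\xi \in \fX(\cG)^P$, that a section $\sigma \in \Gamma(EM)$ corresponds to a $P$-equivariant function $\phi : \cG \to \bbE$, and that by definition $D_r\sigma$ corresponds to the directional derivative $\xi \cdot \phi$. Under this dictionary the Lie bracket $[r,t]$ corresponds to $[\xi,\eta]$, while the algebraic bracket $\{r,t\}$ corresponds to the pointwise Lie bracket $u \mapsto [\omega_u(\xi),\omega_u(\eta)]_\fg$ of the associated $\fg$-valued functions $X = \omega(\xi)$, $Y = \omega(\eta)$. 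With this in hand, (D.1)--(D.3) are formal. For (D.1), $\xi\cdot\phi$ depends pointwise on $\xi$, and $fr$ corresponds to $(f\circ\pi)\xi$, so $D_{fr}\sigma$ corresponds to $(f\circ\pi)(\xi\cdot\phi)$, i.e.\ $f D_r\sigma$. For (D.2), this is the identity $\xi\cdot(\eta\cdot\phi) - \eta\cdot(\xi\cdot\phi) = [\xi,\eta]\cdot\phi$ for directional derivatives of functions. For (D.3), $\xi\cdot(\,\cdot\,)$ is a derivation of the constant-structure pointwise bracket, so $\xi\cdot[b,c]_\fg = [\xi\cdot b, c]_\fg + [b, \xi\cdot c]_\fg$.

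Next I would establish (D.4), the one substantive identity and the only place the curvature enters. Evaluating the structure equation $K = d\omega + \tfrac12[\omega,\omega]$ on the pair $(\xi,\eta)$ gives
\[
K(\xi,\eta) = \xi\cdot Y - \eta\cdot X - \omega([\xi,\eta]) + [X,Y]_\fg .
\]
Rearranging and translating each term through the dictionary above --- $\omega([\xi,\eta]) \leftrightarrow [r,t]$, $\xi\cdot Y \leftrightarrow D_r t$, $\eta\cdot X \leftrightarrow D_t r$, $[X,Y]_\fg \leftrightarrow \{r,t\}$, and the horizontal $K(\xi,\eta) \leftrightarrow \kappa(\Pi(r),\Pi(t))$ --- yields exactly (D.4). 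The only care needed is matching the horizontal curvature $K$ on $\cG$ with the $\cA M$-valued form $\kappa(\Pi(\cdot),\Pi(\cdot))$ on $M$, which is where the projection $\Pi$ and horizontality of $K$ are used.

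For the evaluation-at-$u$ statements I would exploit that $D_r$ at $u$ depends only on $\xi_u$, hence only on $r(u)$. If $r(u) = A \in \fp$, then $\xi_u$ coincides with the value at $u$ of the fundamental vector field $\zeta_A$, so for any equivariant $\phi:\cG\to\bbE$ one gets $(D_r\sigma)(u) = (\zeta_A\cdot\phi)(u) = -A\cdot\phi(u)$ by differentiating the equivariance $\phi(u\cdot p) = \rho(p^{-1})\phi(u)$ at $p=\exp(tA)$. Applying this with the adjoint representation gives $(D_r t)(u) = -[A, Y(u)]_\fg = [Y(u),A]_\fg$, i.e.\ $D_r t \deq \{t,r\}$, which is (D.5); the identical computation for the completely reducible module $\bbV$ gives (D.7), where for $A \in \fp_+$ the action is trivial, so $D_r\psi \deq 0$. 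Statement (D.6) then follows by combining (D.4) (with $r,t$ interchanged) with (D.5) and the tensoriality of $\kappa$: since $r(u)\in\fp$ forces $\Pi(r)(x) = 0$, the term $\kappa(\Pi(t),\Pi(r))$ vanishes at $u$, and the $D_r t$ and $\{t,r\}$ terms cancel by (D.5), so (D.4) collapses to $D_t r \deq [t,r]$.

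Finally, (D.8) uses that $s$ is a symmetry, i.e.\ $\cL_\xi\omega = 0$. The identity $(\cL_\xi\omega)(\eta) = \xi\cdot\omega(\eta) - \omega([\xi,\eta])$ gives $\xi\cdot Y = \omega([\xi,\eta])$ at once, which is $D_s t = [s,t]$. Since $\cL_\xi\omega = 0$ also forces $\cL_\xi K = 0$ (as $K$ is built from $\omega$) and makes $\xi$ commute with the constant vector fields $\omega^{-1}(\,\cdot\,)$, the curvature function is annihilated, $\xi\cdot\kappa = 0$, i.e.\ $D_s\kappa = 0$; and $D_s\kappa_H = 0$ follows because $\kappa_H$ is obtained from $\kappa$ by the fixed $P$-equivariant projection onto $\bbV = \ker(\partial^*)/\im(\partial^*)$, with which $D_s$ commutes. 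The main obstacle here is bookkeeping rather than conceptual: the delicate point is the translation in (D.4) between objects on the total space $\cG$ (the forms $\omega, K$ and the bracket of vector fields) and the associated-bundle objects on $M$ (the two brackets on $\cA M$, the projection $\Pi$, and $\kappa$ as an $\cA M$-valued $2$-form); once that correspondence is pinned down, (D.5)--(D.8) are short deductions.
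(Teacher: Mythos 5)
Your proof is correct, but it is considerably more self-contained than the paper's: the paper disposes of (D.1)--(D.5) and (D.7) by citing \cite[Sec.~1.5.8]{CS2009} and of (D.8) by citing \cite[Lemma 1.5.12]{CS2009}, proving only (D.6) itself --- as a consequence of (D.4) and (D.5), which is exactly the deduction you give. Your route --- systematically translating sections into $P$-equivariant functions on $\cG$, so that $D_r$ becomes the directional derivative $\xi \cdot \phi$, the algebraic bracket becomes the pointwise $\fg$-bracket, and (D.4) becomes a rearrangement of the structure equation $K = d\omega + \tfrac{1}{2}[\omega,\omega]$ evaluated on $P$-invariant vector fields --- is the standard associated-bundle calculus that the cited reference itself employs, so the two proofs agree conceptually; yours simply supplies the details that the paper outsources. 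The individual steps all check out: the sign $(\zeta_A \cdot \phi)(u) = -A \cdot \phi(u)$ obtained by differentiating the equivariance $\phi(u \cdot p) = \rho(p^{-1})\phi(u)$, which yields (D.5) and (D.7) simultaneously; the use of horizontality of $K$ to match $K(\xi,\eta)$ with $\kappa(\Pi(r),\Pi(t))$; the vanishing of $\kappa(\Pi(t),\Pi(r))$ at $u$ when $r(u) \in \fp$ in your derivation of (D.6); and, for (D.8), the observation that $\cL_\xi \omega = 0$ forces $[\xi,\omega^{-1}(X)] = 0$ for constant $X \in \fg$, whence $\xi \cdot \kappa = 0$, and composing with the fixed $P$-equivariant quotient onto $\bbV$ gives $\xi \cdot \kappa_H = 0$. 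What the paper's version buys is brevity; what yours buys is independence from the reference, which is worthwhile since these identities carry the entire weight of Section 3 of the paper.
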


 \begin{proof}
 (D.1)-(D.5), (D.7) are proved in \cite[Sec.\ 1.5.8]{CS2009}, while (D.6) follows from (D.4) and (D.5).  Finally, (D.8) follows from \cite[Lemma 1.5.12]{CS2009}.
 \end{proof}

  We use the following notation: Given the ordered $n$-tuple $J = (t_n,...,t_1)$, with $t_j \in \Gamma(\cA M)$, define
 \begin{align*}
 D_J := D_{t_n} \cdots D_{t_1}, \qquad
 [J] := [t_n,[t_{n-1},\cdots [t_2,t_1]\cdots]], \qquad
 \{ J \} := \{ t_n,\{ t_{n-1},\cdots \{ t_2,t_1 \}\cdots\} \}.
 \end{align*}
 Write $s \in J$ if and only if $s = t_j$ for some $j$, and let $K \subset J$ if and only if $K = (t_{r_m},...,t_{r_1})$, where $1 \leq r_1 < ... < r_m \leq n$, i.e. $K$ is an {\em ordered} subset of $J$. Similarly, we abuse notation so that $J \backslash K$ makes sense as an ordered set in the obvious way.
 
  \begin{lemma} \label{L:DI} 
Let $t_j, r \in \Gamma(\cA M)$, and $I = (t_n,...,t_1,r)$.  
Then $D_I = \displaystyle\sum_{r \in A \subset I} D_{[A]} D_{I \backslash A}$.
 \end{lemma}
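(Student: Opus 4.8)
The plan is to prove the identity by induction on $n$, using essentially only property (D.2), which says that $r \mapsto D_r$ is a Lie algebra homomorphism from $(\Gamma(\cA M),[\cdot,\cdot])$ into the operators on $\Gamma(EM)$ equipped with the commutator bracket. Throughout I adopt the obvious conventions $[A] = s$ when $A = (s)$ is a singleton and $D_{()} = \id$ for the empty tuple, and I recall that $D_{(s_k,\ldots,s_1)} = D_{s_k}\cdots D_{s_1}$ is a composition with the rightmost operator applied first.

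For the base case $n = 0$ we have $I = (r)$, the only ordered subset $A \subset I$ containing $r$ is $A = (r)$ itself, and the single term is $D_{[A]} D_{I\backslash A} = D_r \,\id = D_r = D_I$. For the inductive step I would write $I = (t_n,\ldots,t_1,r)$ and $I' = (t_{n-1},\ldots,t_1,r)$, so that $D_I = D_{t_n} D_{I'}$, and apply the induction hypothesis to the shorter tuple $I'$ to obtain $D_{I'} = \sum_{r \in B \subset I'} D_{[B]} D_{I'\backslash B}$. This gives $D_I = \sum_{r \in B \subset I'} D_{t_n} D_{[B]} D_{I'\backslash B}$. The crucial manoeuvre is to commute $D_{t_n}$ past the single operator $D_{[B]}$: property (D.2) yields
\[
D_{t_n} D_{[B]} = D_{[B]} D_{t_n} + D_{[t_n,[B]]},
\]
so each summand splits into two.

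The remaining task is to match the two resulting families of terms against the ordered subsets $A \subset I$ containing $r$, organized by whether or not $t_n$ (the leftmost entry of $I$) lies in $A$. Those $A$ with $t_n \notin A$ are exactly the ordered subsets $B \subset I'$; here $I \backslash A = (t_n, I'\backslash B)$, so $D_{I\backslash A} = D_{t_n} D_{I'\backslash B}$, and the term $D_{[A]} D_{I\backslash A}$ matches $D_{[B]} D_{t_n} D_{I'\backslash B}$. Those $A$ with $t_n \in A$ are exactly $A = (t_n, B)$ for $B \subset I'$; here $[A] = [t_n,[B]]$ and $I\backslash A = I'\backslash B$, so $D_{[A]} D_{I\backslash A}$ matches $D_{[t_n,[B]]} D_{I'\backslash B}$. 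Summing over all $B$ therefore reproduces precisely $\sum_{r \in A \subset I} D_{[A]} D_{I\backslash A}$, completing the induction.

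I expect no genuine difficulty here beyond careful bookkeeping of ordered subsets; the one step worth writing out is the verification that the dichotomy $t_n \in A$ versus $t_n \notin A$ bijects cleanly with the single sum over $B \subset I'$, which is what makes the two pieces of the commutator expansion assemble into the full sum over $I$.
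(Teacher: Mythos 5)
Your proof is correct and follows essentially the same route as the paper: induction on $n$, using (D.2) to write $D_{t_n} D_{[B]} = D_{[B]} D_{t_n} + D_{[t_n,[B]]}$, and then regrouping the two families of terms according to whether $t_n$ lies in $A$ (the paper compresses this bookkeeping into one displayed line, with $J = I \backslash \{t_n\}$ playing the role of your $I'$). Your explicit verification of the bijection between the commutator terms and the ordered subsets $A \subset I$ is exactly the step the paper leaves implicit, so there is nothing to add or correct.
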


 \begin{proof} The $n=1$ case follows from (D.2).
Suppose it holds for $n-1$.  Let $J = I \backslash \{ t_n \}$.  Then
 \begin{align*}
 D_I = D_{t_n} D_J = \sum_{r \in B \subset J} D_{t_n} D_{[B]} D_{J \backslash B} = \sum_{r \in B \subset J} D_{[t_n, [B]]} D_{J \backslash B} + \sum_{r \in B \subset J} D_{[B]} D_{t_n} D_{J \backslash B} =  \sum_{r \in A \subset I} D_{[A]} D_{I \backslash A}.
 \end{align*}
 \end{proof}


 \subsection{Adjoint tractors and harmonic curvature}
 \label{S:adj-tr-Kh}


 \begin{thm} \label{T:sip} 
Let $(\cG \to M, \omega)$ be a regular, normal parabolic geometry of type $(G,P)$.  Fix $u \in \cG$. Let $s \in \Gamma(\cA M)$ be a symmetry with $s(u) \in \fg^i \subset \fp$.  Let $i_j \in \bbZ_+$ and $t_j\in \Gamma(\cA^{-i_j} M)$ such that $i - i_1 - ... - i_n \geq 0$ and $I = (t_n,...,t_1,s)$.  Then:
 \begin{align} \label{E:sip}
 \{ I \}(u) \cdot \kappa_H(u) = 0.
 \end{align}
 \end{thm}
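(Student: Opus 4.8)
The plan is to deduce \eqref{E:sip} from the single identity $D_s \kappa_H = 0$ (Lemma \ref{L:fund-der}(D.8)) by differentiating and then carefully passing between the Lie bracket and the algebraic bracket at $u$. Since $D_I = D_{t_n} \cdots D_{t_1} D_s$ and $D_s \kappa_H = 0$, we have $D_I \kappa_H = 0$, and expanding with Lemma \ref{L:DI} (with $r = s$) gives
\[
0 = (D_I \kappa_H)(u) = \sum_{s \in A \subseteq I} \bigl(D_{[A]} D_{I \backslash A} \kappa_H\bigr)(u).
\]
I would run the argument by induction on $n$. The base case $n = 0$ is immediate: $\{I\}(u) = s(u) \in \fp$, so $0 = (D_s \kappa_H)(u) = -s(u) \cdot \kappa_H(u)$ by (D.7).

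First I would reduce to the grade-zero case. The element $\{I\}(u) = \{t_n(u), \{ \cdots \{t_1(u), s(u)\} \cdots \}\}$ lies in $\fg^{i - i_1 - \cdots - i_n}$. If this grade is $\geq 1$ then $\{I\}(u) \in \fp_+$, which acts trivially on $\bbV$ by complete reducibility, so \eqref{E:sip} holds automatically. Hence I may assume $i = i_1 + \cdots + i_n$; in particular $i \geq n \geq 1$, so $s(u) \in \fp_+$.

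The crux is to evaluate the surviving terms at $u$, for which I need a \emph{grade lemma}: for every selection $A = (t_{j_k}, \ldots, t_{j_1}, s)$ one has $[A](u) \in \fg^{\,i - i_{j_1} - \cdots - i_{j_k}}$. I would prove this by induction on $|A|$: writing $A = (t_{j_k}, A')$, the inner bracket $[A'](u)$ lies in $\fp$ (indeed in $\fp_+$, by the inductive grade count in the grade-zero regime), so in the expansion of $[t_{j_k}, [A']]$ via (D.4) the curvature term drops out because $\Pi [A'](u) = 0$, and the term $-D_{[A']} t_{j_k}$ cancels $\{t_{j_k}, [A']\}$ at $u$ by (D.5); one is left with $(D_{t_{j_k}}[A'])(u)$, which is then expanded using (D.2), (D.5), (D.8) into algebraic brackets of the graded quantities $s(u)$, the $t_{j_l}(u)$, and fundamental derivatives $D_{t_a} t_b$ (the latter genuinely graded, since each $t_b$ is a global section of $\cA^{-i_b} M$, so \eqref{D_tA} applies), whose grades add to $i - \sum_l i_{j_l}$. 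Granting this, every term with $s \in A \subsetneq I$ has $[A](u) \in \fp_+$ (a proper selection omits at least one $i_j \geq 1$), hence contributes $0$ by (D.7); only the fully-bracketed term $A = I$ survives, giving $[I](u) \in \fg^0 = \fp$ and
\[
0 = (D_{[I]} \kappa_H)(u) = -[I](u) \cdot \kappa_H(u).
\]

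Finally I would convert the iterated Lie bracket $[I](u)$ into the iterated algebraic bracket $\{I\}(u)$. The same expansion shows $[I](u) \equiv \{I\}(u) \pmod{\fp_+}$ up to a correction that is a sum of algebraic brackets of $s$ with the $t_j$'s in which at least two entries have been merged into a single bracket $\{t_a, t_b\} \in \Gamma(\cA^{-(i_a + i_b)} M)$; each such correction is thus an instance of \eqref{E:sip} with strictly fewer than $n$ slots and total grade $0$, so it annihilates $\kappa_H(u)$ by the inductive hypothesis. Since $\fp_+$ acts trivially, $[I](u) \cdot \kappa_H(u) = \{I\}(u) \cdot \kappa_H(u)$, and \eqref{E:sip} follows. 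The main obstacle is precisely this last bookkeeping together with the grade lemma: because $s$ is graded only at the point $u$ and not as a global section, brackets involving $s$ carry no a priori filtration degree, and one must control them pointwise through repeated use of (D.4)--(D.8), verifying that every discrepancy between $[\,\cdot\,]$ and $\{\,\cdot\,\}$ strictly lowers the number of independent derivative slots so that the induction on $n$ closes.
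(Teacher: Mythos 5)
Your outline follows the same core strategy as the paper's proof: induct on $n$, start from $D_s\kappa_H=0$ (D.8), expand $D_I$ by Lemma \ref{L:DI}, discard every term with $s \in A \subsetneq I$, and convert the surviving iterated Lie bracket $[I](u)$ into the algebraic bracket $\{I\}(u)$ modulo corrections that are themselves killed inductively. Within this skeleton you make two genuine and worthwhile simplifications. First, reducing at the outset to total degree $q = i - i_1 - \cdots - i_n = 0$ (the case $q \geq 1$ being trivial because $\fp_+$ acts trivially on $\bbV$) lets you kill the proper selections $A \subsetneq I$ purely by filtration count plus (D.7); the paper, which carries general $q \geq 0$ through the induction, must instead invoke the inductive hypothesis for those terms. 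Second, you organize the corrections in the $[I]\leadsto\{I\}$ conversion as honest instances of \eqref{E:sip} with fewer slots: merged sections $\{t_a,t_b\} \in \Gamma(\cA^{-(i_a+i_b)}M)$ preserving total degree $0$ (these need the inductive hypothesis), and terms involving $D_{t_a}t_b \in \Gamma(\cA^{-i_b}M)$, whose total weight has dropped so that they land in $\fp_+$ at $u$. The paper instead introduces the auxiliary spaces $\cP^s_{n,q}$ of iterated $\ad$'s applied to $s(u)$ and must realize their generators by $\omega$-constant vector fields to feed them back into the induction; your bookkeeping is more intrinsic. (A check at $n=2$ confirms your picture: $[t_2,[t_1,s]] \deq \{\{t_1,t_2\},s\} + \{t_2,\{t_1,s\}\}$ modulo a term in $\fp_+$.)

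The caveat is that your ``grade lemma'' together with the conversion step is exactly the paper's Lemma \ref{L:[]} ($[I]\,\stackrel{n,q}\equiv\,(-1)^n\{I\}$; note the sign $(-1)^n$, which you drop --- harmless, since annihilating $\kappa_H(u)$ is sign-blind), and its proof is not a routine induction on $|A|$. After (D.4)/(D.5) reduce $[A](u)$ to $(D_{t_{j_k}}[A'])(u)$, the value $[A'](u)$ alone gets you no further: $D_{t_{j_k}}$ differentiates the section $[A']$, so one must re-expand $[A']$ via (D.4) as an identity of sections and differentiate each of the four resulting terms --- using Lemma \ref{L:DI} on the $D_{[A'']}t_{j_{k-1}}$ term, the Leibniz rule plus horizontality on the curvature term, and (D.3) on the algebraic-bracket term --- and then control the mixed terms of the form $D_{[B \sqcup v_{k-1}]}D_A t_k$ that appear, which involve brackets of already-formed brackets and are not covered by a lemma stated only for selections of $I$. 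This is precisely the paper's double induction (Claims $1_n$, $2_n$), i.e.\ essentially all of the work in Section \ref{S:adj-tr-Kh}. You correctly flag this bookkeeping as the main obstacle rather than claiming it done, so the outline is sound; but the theorem is not proved until that lemma is.
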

 
 This fundamental property implies Theorem \ref{T:strong-incl} as a corollary: If $X \in \fs_k(u)$ with $k > 0$ and $Y_1,...,Y_k \in \fg_{-1}$, take a symmetry $s \in \Gamma(\cA M)$ and $t_1,...,t_k \in \Gamma(\cA^{-1} M)$ with $s(u) = X$ and $t_j(u) = Y_j$.  Then \eqref{E:sip} implies that $\ad_{Y_k} \circ ... \circ \ad_{Y_1}(X)$ annihilates $\kappa_H(u)$.  By Remark \ref{R:gm1-gen}, $X \in \fa^{\kappa_H(u)}_k$.

 The $n=0$ case of Theorem \ref{T:sip} is true since $\fs_0(u) \subset \fann_{\fg_0}(\kappa_H(u))$ and $\fp_+ \cdot \kappa_H(u) = 0$.
 For $n=1$, $i \geq i_1 \geq 1$.  Since $D_s \kappa_H = 0$, then (D.2) implies
 \[
 0 = D_{t_1} D_s \kappa_H = D_s D_{t_1} \kappa_H + D_{[t_1,s]} \kappa_H.
 \]
 By (D.7), $D_s (D_{t_1} \kappa_H) \deq 0$ since $s(u) \in \fg^{i} \subseteq \fp_+$.
 By (D.8) and (D.5), $[t_1,s] = -D_s t_1 \deq -\{ t_1, s \}$.  At $u$, this lies in $\fp$.  By (D.7), $(D_{[t_1,s]} \kappa_H)(u) = \{ t_1, s \}(u) \cdot \kappa_H(u)$, so \eqref{E:sip} holds for $n=1$.  (For $|1|$-graded geometries, the proof finishes here.)  The general case will proceed by induction on $n$. However, we first introduce some notation and prove a technical lemma.

%
%

 \begin{framed}
 Notation: Fix $q \geq 0$, $n \geq 1$, and $s(u)\in\fg^i$, $i>0$.  Denote 
 \[
 \cP^s_{n,q} = \tspan\left\{ \ad_{X_k} \circ ... \circ \ad_{X_1}(s(u)) \in \fg^q \st X_1,...,X_k\in \fg, \, k < n \right\} + \fg^{q+1}.
 \]
 Write $a\,\stackrel{n,q}\equiv\,b$ if $a(u)-b(u)\in\cP^s_{n,q}$.
 Note that $\cP^s_{n',q'}\subset\cP^s_{n,q}$ if $n'\leq n$, $q'\geq q$, so
 \begin{equation}\label{anqb}
a\,\stackrel{n',q'}\equiv\,b\ \Rightarrow\ a\,\stackrel{n,q}\equiv\,b\quad
\text{ for }n'\leq n,\ q'\geq q.
 \end{equation}
 \end{framed}

 \begin{lemma} \label{L:[]} With notations as in Theorem \ref{T:sip}, let $q = i - i_1 - ... - i_n \geq 0$.  Then $[I]\,\stackrel{n,q}\equiv\,(-1)^n \{I\}$.
 \end{lemma}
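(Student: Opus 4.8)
The plan is to induct on $n$. For $n=0$ there is nothing to prove, and for $n=1$ I would argue exactly as in the $n=1$ case of Theorem \ref{T:sip}: since $s$ is a symmetry, (D.8) gives $D_s t_1 = [s,t_1]$, and since $s(u)\in\fg^i\subset\fp$, (D.5) gives $D_s t_1 \deq \{t_1,s\}$; hence $[t_1,s] = -D_s t_1 \deq -\{t_1,s\}$, which is precisely $[I]\,\stackrel{1,q}\equiv\,(-1)^1\{I\}$ (indeed an equality at $u$).

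For the inductive step write $J=(t_{n-1},\dots,t_1,s)$ and set $q_J = i-i_1-\dots-i_{n-1} = q+i_n\geq 1$, so that $[I]=[t_n,[J]]$ and $\{I\}=\{t_n,\{J\}\}$. By the inductive hypothesis $[J]\,\stackrel{n-1,q_J}\equiv\,(-1)^{n-1}\{J\}$; since $\{J\}(u)\in\fg^{q_J}$ and $\cP^s_{n-1,q_J}\subset\fg^{q_J}$, this already shows $[J](u)\in\fg^{q_J}\subset\fp_+$. I would then peel off the outermost factor using (D.4),
\[
[t_n,[J]] = D_{t_n}[J] - D_{[J]}t_n - \kappa(\Pi(t_n),\Pi([J])) + \{t_n,[J]\}.
\]
Because $[J](u)\in\fp$ we have $\Pi([J])(u)=0$, so by tensoriality of $\kappa$ the curvature term vanishes at $u$; and (D.5) gives $D_{[J]}t_n\deq\{t_n,[J]\}$, which cancels the final term. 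Hence $[I]\deq D_{t_n}[J]$, and it remains to show $D_{t_n}[J]\,\stackrel{n,q}\equiv\,(-1)^n\{I\}$.

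To evaluate $D_{t_n}[J]$ I would substitute the inductive hypothesis, $[J]=(-1)^{n-1}\{J\}+P$ with $P(u)\in\cP^s_{n-1,q_J}$, and expand $D_{t_n}\{J\}$ by the derivation property (D.3) of $D_{t_n}$ over the algebraic bracket: each summand replaces exactly one factor $t_j$ (or $s$) of $\{J\}$ by its fundamental derivative. In the summands where $D_{t_n}$ hits some $t_j$, the factor $D_{t_n}t_j$ stays in $\fg^{-i_j}$ by the filtration-preservation \eqref{D_tA}, so the whole bracket keeps value in $\fg^{q_J}=\fg^{q+i_n}\subset\fg^{q+1}\subset\cP^s_{n,q}$ by homogeneity alone. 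In the single summand where $D_{t_n}$ hits the innermost $s$, I reduce via (D.6), (D.8), (D.5) to $D_{t_n}s\deq[t_n,s]\deq-\{t_n,s\}$; inserting this gives the length-$n$ iterated bracket $-\ad_{t_{n-1}(u)}\cdots\ad_{t_1(u)}\ad_{t_n(u)}(s(u))$. Applying the Jacobi (Leibniz) identity in the fibrewise Lie algebra $\cA_xM\cong\fg$ to move $\ad_{t_n(u)}$ to the outside, this equals $-\{t_n,\{J\}\}(u)=-\{I\}(u)$ modulo the reordering corrections $\ad_{[t_n(u),t_j(u)]}$, which are iterated brackets of $s(u)$ of length $n-1<n$ lying in $\fg^q$, i.e.\ elements of $\cP^s_{n,q}$. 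Thus $(-1)^{n-1}D_{t_n}\{J\}\,\stackrel{n,q}\equiv\,(-1)^{n-1}(-\{I\})=(-1)^n\{I\}$.

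The main obstacle is the remaining error $D_{t_n}P$: the inductive hypothesis controls $P$ only at the point $u$, whereas $D_{t_n}P$ depends on the values of $P$ near $u$, so I cannot simply differentiate the congruence $P(u)\in\cP^s_{n-1,q_J}$. I would resolve this by carrying through the induction the sharper statement that the error lies in the filtration piece $\cA^{q_J}M$ as a \emph{section} (not merely pointwise): tracking (D.4) shows that all fundamental-derivative and curvature corrections distinguishing $[J]$ from $(-1)^{n-1}\{J\}$ strictly raise homogeneity, by regularity. Granting this refinement, the filtration-preservation \eqref{D_tA} yields $D_{t_n}P\in\Gamma(\cA^{q_J}M)$, so $(D_{t_n}P)(u)\in\fg^{q_J}=\fg^{q+i_n}\subset\fg^{q+1}\subset\cP^s_{n,q}$ and hence $D_{t_n}P\,\stackrel{n,q}\equiv\,0$. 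Combining the three displays gives $[I]\deq D_{t_n}[J]\,\stackrel{n,q}\equiv\,(-1)^n\{I\}$, closing the induction; the homogeneity/associated-graded bookkeeping in this last paragraph is the delicate point on which the argument really turns.
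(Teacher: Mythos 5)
Your opening moves are fine: the $n=1$ case, the peeling-off of $t_n$ via (D.4) (which is exactly the $k=n$ base case of the paper's ``Claim $1_n$''), and the treatment of $(-1)^{n-1}D_{t_n}\{J\}$ via the derivation property, (D.6)/(D.8)/(D.5) on the factor $s$, and Jacobi shuffling with length-$(n-1)$ corrections absorbed into $\cP^s_{n,q}$ -- this last part is close in spirit to the paper's ``Claim $2_n$'' computation. The gap is exactly where you locate it, and the repair you propose is not available: the error $P=[J]-(-1)^{n-1}\{J\}$ is \emph{not} a section of $\cA^{q_J}M$, not even in the flat model. Already for $n-1=1$, (D.8) gives $P=[t_1,s]+\{t_1,s\}=-D_st_1+\{t_1,s\}$, and the cancellation $D_st_1\deq\{t_1,s\}$ coming from (D.5) is purely pointwise: it holds only where the value of $s$ lies in $\fp$. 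At nearby points $u'$ the symmetry has components in negative filtration degrees, and these enter $\{t_1,s\}(u')$. Concretely, in the flat model of type $A_2/P_{1,2}$ (so $\nu=2$), take $s(g)=\Ad_{g^{-1}}E_{13}$ (isotropy $E_{13}\in\fg^2$ at $u=e$, elementary-matrix notation), and let $t_1\in\Gamma(\cA^{-1}M)$ be the equivariant extension of the constant value $E_{21}$ on $\exp(\fg_-)$. Along $u'=\exp\bigl(t(E_{21}+E_{32})\bigr)$ one computes $P(u')=[E_{21},A_-(u')]$, where $A_-$ is the $\fg_-$-component of $s$; its $\fg_{-2}$-component equals $-\tfrac{t^3}{2}E_{31}\neq 0$, so $P(u')\notin\fg^{q_J}=\fg^{1}$. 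Your appeal to regularity fails for the same reason: the corrections distinguishing $[J]$ from $(-1)^{n-1}\{J\}$ involve $D_{t_1}s$ and $\kappa(\Pi(t_1),\Pi(s))$, whose filtration degrees at nearby points are governed by the generic degree of $s$ there (namely $-\nu$), not by the degree $i$ of its value at the single point $u$.

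Since your inductive congruence is genuinely only pointwise at $u$, the term $D_{t_n}P$ sees the $1$-jet of $P$ at $u$ and is not controlled by your hypothesis. This is precisely the difficulty the paper's double induction is engineered to avoid: one never differentiates an error term. Claim $1_n$ first converts $[I]$ into $D_{t_n}\cdots D_{t_2}[t_1,s]=-D_{t_n}\cdots D_{t_2}D_st_1$, so the only object ever differentiated is $t_1$, an honest section of $\cA^{-i_1}M$ preserved by all the $D$'s via \eqref{D_tA}; then Lemma \ref{L:DI} reshuffles the string of derivatives so that every expression built from $s$ appears only in the \emph{subscript} of a fundamental derivative, where tensoriality (D.1) guarantees that only its value at $u$ matters, and the pointwise inductive hypothesis suffices. (For your $n=2$ case one can verify by this route that $(D_{t_2}P)(u)$ does vanish modulo $\cP^s_{2,q}$ -- but that verification \emph{is} the paper's argument, not a consequence of your section-level claim.) As written, the last paragraph of your proposal is a genuine gap, and closing it forces a restructuring of the induction along the paper's lines.
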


 \begin{proof} For $k\le n$ and $J_k = (t_k,...,t_1,s)$, let $v_k = [J_k]$, $w_k = \{ J_k \}$.  We use induction on $n$ to prove:\\
 
{\bf Claim $1_n$.} For $1<k\leq n$, $[I] \stackrel{n,q}\equiv D_{t_n} \cdots D_{t_k} v_{k-1}$.

{\bf Claim $2_n$.} $v_n \stackrel{n,q}\equiv (-1)^n w_n$.\\

$n=1$: Claim $1_1$ is vacuous, while for Claim $2_1$, $[t_1,s] = -D_s t_1 \deq -\{ t_1,s \}$ was shown above.

Induction hypothesis: Claims $1_{n'}$, $2_{n'}$ hold for all $n'<n$.

 \medskip\noindent
 {\bf Proof of Claim $1_n$:} Use decreasing induction on $k$.
For the base case $k=n$, use (D.4):
 \[
v_n = [t_n,v_{n-1}] = D_{t_n} v_{n-1} - D_{v_{n-1}} t_n - \kappa(\Pi(t_n),\Pi(v_{n-1})) + \{ t_n, v_{n-1} \} \deq D_{t_n} v_{n-1}.
 \]
The last equality follows from Claim $2_{n-1}$: Since $\iota_n=q+i_n>0$, then $v_{n-1}\stackrel{n-1,\iota_n}{\equiv}(-1)^{n-1} w_{n-1} \din \fg^{\iota_n}$.  Thus, $v_{n-1}\din \fp_+$, $\Pi(v_{n-1}) \deq 0$ and by (D.5), $D_{v_{n-1}} t_n \deq \{ t_n, v_{n-1} \}$.

Now suppose that the result holds for $k+1\leq n$.  We will show that it holds for $k$.
 \begin{align}
D_{t_n}\cdots D_{t_{k+1}}v_k &=D_{t_n}\cdots D_{t_{k+1}}[t_k,v_{k-1}] \nonumber\\
&=D_{t_n}\cdots D_{t_{k+1}}(D_{t_k}v_{k-1}-D_{v_{k-1}}t_k
-\kappa(\Pi(t_k),\Pi(v_{k-1}))+\{t_k,v_{k-1}\}) \nonumber\\
 &=: {\rm I} - {\rm II} - {\rm III} + {\rm IV} \label{E:I-IV}
 \end{align}

 Given an ordered subset $B = (t_{r_m},...,t_{r_1}) \subset (t_n,\dots,t_{k+1})$, consider $D_B v_{k-1}$.  Let $n':=m+k-1 < n$ and $q':= q+\sum_{j=k}^n i_j-\sum_{a=1}^m i_{r_a} \geq q + i_k > q$.  By Claim $1_{n'}$ and $2_{n'}$, we have
 \begin{align} \label{E:DB}
 D_B v_{k-1} \stackrel{n',q'}{\equiv} [t_{r_m},\cdots [t_{r_1},v_{k-1}],\cdots] 
  \stackrel{n',q'}{\equiv} (-1)^{n'} \{ B \sqcup w_{k-1} \} \din \fg^{q'} \subset \fp_+.
 \end{align}

For the second term of \eqref{E:I-IV}, we have by Lemma \ref{L:DI}:
 \begin{align*}
 {\rm II}
 &= D_{t_n}\cdots D_{t_{k+1}} D_{v_{k-1}}t_k = \sum_{A \sqcup B = (t_n,...,t_{k+1})} D_{[B \sqcup v_{k-1}]} D_A t_k.
 \end{align*}
 At $u$, given $B \subset (t_n,\dots,t_{k+1})$ and $q'$ as above, we have
 $[B \sqcup v_{k-1}] \stackrel{n',q'}{\equiv} (-1)^{n'} \{ B \sqcup w_{k-1} \}$.  Now use (D.5) on $D_{[B \sqcup v_{k-1}]} D_A t_k$.  At $u$, both $\{ \{ B \sqcup w_{k-1} \}, t_k \}$ and $\{ \cP^s_{n',q'}, D_A t_k \}$ lie in $\fg^{q'-i_k}$ by \eqref{D_tA}. If $B \neq (t_n,\dots,t_{k+1})$, these terms lie in $\fg^{q+1} \subset \cP^s_{n,q}$ since $q' - i_k > q$.  Hence, we obtain:
 \begin{align*}
 {\rm II} &\stackrel{n,q}{\equiv} \{t_k,(-1)^{n-1}\{t_n,\{t_{n-1},\dots\{t_{k+1},w_{k-1}\}\}...\}\}.
 \end{align*}

 For the third term of \eqref{E:I-IV}, we use the Leibniz rule $D_r(\kappa(t_1,t_2)) = (D_r\kappa)(t_1,t_2) + \kappa(D_r t_1, t_2) + \kappa(t_1, D_r t_2)$.  Given $B$ as above, \eqref{E:DB} implies $\Pi(D_{t_{r_m}}\cdots D_{t_{r_1}}v_{k-1})\deq0$.  Since $\kappa$ is horizontal, then $D_t \kappa$ and its further iterates are horizontal.  Hence, ${\rm III}=D_{t_n}\cdots D_{t_{k+1}}(\kappa(\Pi(t_k),\Pi(v_{k-1}))) \deq 0$.
 
 For the fourth term of \eqref{E:I-IV}, use (D.3), \eqref{E:DB}, and \eqref{anqb} to obtain
 \begin{align*}
{\rm IV}&=\sum_{A\sqcup B=(t_n,\dots,t_{k+1})}\{D_At_k,D_Bv_{k-1}\}
 \stackrel{n,q}{\equiv} \sum(-1)^{|B|+k-1}\{D_At_k,\{B\sqcup w_{k-1}\}\}\\
& \stackrel{n,q}{\equiv} (-1)^{n-1}\{t_k,\{t_n,\dots\{t_{k+1},w_{k-1}\}...\}\} 
\qbox{(if $A \neq\emptyset$, then $\{ B \sqcup w_{k-1} \} \din \fg^{q+1}$).}
 \end{align*}
Thus, ${\rm IV} \stackrel{n,q}{\equiv} {\rm II}$, and we conclude $D_{t_n}\cdots D_{t_{k+1}}v_k\,\stackrel{n,q}{\equiv}\,D_{t_n}\cdots D_{t_{k+1}}D_{t_k}v_{k-1}$, proving the claim.

 \medskip\noindent
 {\bf Proof of Claim $2_n$:} We have:
 \begin{align*}
[I] &\stackrel{n,q}{\equiv} D_{t_n}\cdots D_{t_2}[t_1,s] \qbox{(by Claim $1_n$)}\\
&=-D_{t_n}\cdots D_{t_2}D_st_1 = -D_{[t_n,\dots[t_2,s]...]}t_1+\dots \qbox{(by Lemma \ref{L:DI})}\\
 &\stackrel{n,q}{\equiv} (-1)^n\{t_1,\{t_n,\dots\{t_2,s\}...\}\}\qbox{(by Claim $2_{n-1}$ and (D.5))}\\
&=(-1)^n(\{\{t_1,t_n\},\{t_{n-1},\dots\{t_2,s\}...\}\}+\{t_n,\{t_1,\{t_{n-1},\dots\{t_2,s\}\}...\}\})\\
 &\stackrel{n,q}{\equiv} (-1)^n\{t_n,\{t_1,\{t_{n-1},\dots\{t_2,s\}\}...\}\} \qbox{(by the definition of $\cP^s_{n,q}$)}\\
 &\stackrel{n,q}{\equiv} ... \stackrel{n,q}{\equiv} (-1)^n\{t_n,\dots\{t_2,\{t_1,s\}...\}\}=(-1)^n\{I\}.
 \end{align*}

 \end{proof}

 We now return to proving Theorem \ref{T:sip}.  Assuming the induction hypothesis, we will prove it for general $n$.  By Lemma \ref{L:DI}, we have $0=D_I\kappa_H=\sum_{s\in A\subset I}D_{[A]}D_{I\setminus A}\kappa_H$.  Fixing any such $A = (t_{r_m},...,t_{r_1},s)$, let $n' = m$ and $q' = i - i_{r_1} - ... - i_{r_m}$.  Lemma \ref{L:[]} implies $[A] \stackrel{n',q'}{\equiv} (-1)^{n'} \{ A \}$. 
 \begin{itemize}
 \item If $A \neq I$, then $n' < n$ and $q' > q \geq 0$.  By the induction hypothesis, $0 = \{A\}\cdot\kappa_H$.  Moreover, $\cP^s_{n',q'} \subset \fp_+$, so these terms also annihilate $\kappa_H(u)$.  (So by (D.7), we ignore such terms.)
 \item If $A = I$, then $n'=n$ and $q' = q \geq 0$.  We have $\cP^s_{n,q} \subset \fp$ and by the induction hypothesis, $\cP^s_{n,q} \cdot \kappa_H(u) = 0$.  (In the definition of $\cP^s_{n,q}$, we had the strict inequality $k < n$.  For each $X_j \in \fg^{-i_j}$, take the ($P$-equivariant) $\omega$-constant vector field $\omega^{-1}(X_j)$ which induces a section $t_j \in \Gamma(\cA^{-i_j}M)$.)  Again by (D.7), the $\cP^s_{n,q}$ terms can be ignored.
 \end{itemize}
 Finally, we have
 \[
 0 = D_I \kappa_H \deq D_{[I]} \kappa_H \deq (-1)^n D_{\{I\}}\kappa_H\deq(-1)^{n-1}\{I\}\cdot\kappa_H.
 \]
 This finishes the proof of Theorem \ref{T:sip} and hence of Theorem \ref{T:strong-incl}.

 \subsection{The symmetry algebra is 2-jet determined}
 
 Recall that $\fX(\cG)^P \cong \Gamma(\cA M)$ via sending $\eta \in \fX(\cG)^P$ to the $P$-equivariant function $\omega(\eta)$ from $\cG$ to $\fg$ and this corresponds to a section of $\cA M$.  The following statement generalizes Proposition \ref{P:flat-jet}.
 
 \begin{thm} \label{T:M-bracket}  Let $G$ be a semisimple Lie group, $P$ a parabolic subgroup, $\fg = \fg^{-\nu} \supset ... \supset \fg^\nu$ the $P$-invariant filtration with $\fp = \fg^0$.  Assume that no simple ideal of $\fg$ is contained in $\fp$.  Let $(\cG \stackrel{\pi}{\to} M, \omega)$ be a regular, normal geometry of type $(G,P)$.  Let $0 \neq \xi \in \finf(\cG,\omega)$ and $\bX = \pi_*(\xi)$.  Fix $u \in \cG$, $x = \pi(u)$, and $\omega_u(\xi) \in \fg^i \backslash \fg^{i+1}$.  Then
 \begin{enumerate}
 \item[(a)] $i < 0$: $j^0_x(\bX) \neq 0$
 \item[(b)] $0 \leq i < \nu$: $j^0_x(\bX) = 0$ and $j^1_x(\bX) \neq 0$.
 \item[(c)] $i = \nu$: $j^1_x(\bX) = 0$ and $j^2_x(\bX) \neq 0$.
 \end{enumerate}
 Consequently, the symmetry algebra $\cS = \pi_*(\finf(\cG,\omega))$ is 2-jet determined.
 \end{thm}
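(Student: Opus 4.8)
The plan is to organize the argument by the filtration level $i$ of the isotropy $\omega_u(\xi) \in \fg^i \setminus \fg^{i+1}$, working throughout with the adjoint tractor $s \in \Gamma(\cA M)$ corresponding to $\xi$, so that $s(u) = \omega_u(\xi)$ and $\bX = \Pi(s)$. The two $j^0$ assertions are immediate: $\bX|_x = \Pi(s)(u)$ is the image of $\omega_u(\xi)$ under the quotient $\fg \to \fg/\fp \cong T_x M$, which is nonzero precisely when $i < 0$. This settles (a) and the vanishing $j^0_x(\bX) = 0$ in (b) and (c).

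Next I would establish a linearization formula valid whenever the isotropy lies in $\fp$ (i.e.\ $i \geq 0$). For any $t \in \Gamma(\cA M)$ with $\bY = \Pi(t)$ one has $[\bX,\bY]|_x = \Pi([s,t])(u)$; since $s$ is a symmetry, Lemma \ref{L:fund-der}(D.8) gives $[s,t] = D_s t$, while (D.5) gives $D_s t \deq \{t,s\}$, and because $s(u)\in\fp$ the curvature contribution drops out, leaving $[s,t](u) = -[\,\omega_u(\xi),t(u)\,]_\fg$ in the frame $u$. Equivalently, the linearization of $\bX$ at the fixed point $x$ is $-\overline{\ad_{\omega_u(\xi)}}$ acting on $\fg/\fp$, so $j^1_x(\bX)\neq 0$ if and only if $[\,\omega_u(\xi),\fg_-\,]\not\subset\fp$. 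For $0\le i<\nu$, Lemma \ref{L:ss-bracket} produces $Y\in\fg_{-i-1}$ with $0\neq[X_i,Y]\in\fg_{-1}$ (where $X_i\in\fg_i$ is the leading part of $\omega_u(\xi)$) while the remaining brackets land in $\fp$, so $\overline{[\,\omega_u(\xi),Y\,]}\neq 0$; this proves $j^1_x(\bX)\neq 0$ in (b). For $i=\nu$ we have $[\fg_\nu,\fg_-]\subset\fp$, the linearization vanishes, and $j^1_x(\bX)=0$, giving the first half of (c).

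The crux is the nonvanishing $j^2_x(\bX)\neq 0$ in (c). Since $j^1_x(\bX)=0$, the field $\bX$ vanishes to second order at $x$, and a direct coordinate computation shows $j^2_x(\bX)\neq 0$ is equivalent to the existence of $\bY,\bW$ with $[[\bX,\bY],\bW]|_x\neq 0$ (the second-order Taylor coefficient is the symmetric Hessian contracted against $\bY|_x,\bW|_x$). To evaluate this double bracket I would lift $\bY,\bW$ to $P$-invariant fields $\tilde t,\tilde r$ on $\cG$ and set $\eta := [\xi,\tilde t]$, which is $P$-invariant with $\omega_u(\eta) = -[\,\omega_u(\xi),\omega_u(\tilde t)\,]_\fg$ (again from $\cL_\xi\omega = 0$ together with verticality of $\xi_u$). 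Choosing $Y=\omega_u(\tilde t)\in\fg_{-1}$ via \cite[Prop.3.1.2]{CS2009} so that $[X_\nu,Y]\in\fg_{\nu-1}\setminus\{0\}$, and then $W=\omega_u(\tilde r)$ via Lemma \ref{L:ss-bracket} so that $[[X_\nu,Y],W]\in\fg_{-1}\setminus\{0\}$, the leading algebraic contribution to $\omega_u([\eta,\tilde r])$ is exactly $[[X_\nu,Y],W]_\fg$, matching the flat-model value of Proposition \ref{P:flat-jet}(c).

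The main obstacle is that $\eta$ is only $P$-invariant and not a symmetry, so $\cL_\eta\omega\neq 0$ and $\omega_u([\eta,\tilde r])$ acquires a correction $-(\cL_\eta\omega)(\tilde r)(u)$ beyond the algebraic bracket. I would control this via $\cL_\eta\omega = \cL_\xi(\cL_{\tilde t}\omega)$ (from $\cL_\xi\omega=0$) together with the fact that $\cL_{\tilde t}\omega$ is built from the curvature $K$, whose function $\kappa$ has strictly positive homogeneity by regularity (and $\cL_\xi\kappa = 0$ since $\xi$ is a symmetry). Consequently the correction has filtration degree strictly greater than that of $[[X_\nu,Y],W]\in\fg_{-1}$, hence lies in $\fp$ and projects to zero in $\fg/\fp$, so $[[\bX,\bY],\bW]|_x = \overline{[[X_\nu,Y],W]_\fg}\neq 0$ and $j^2_x(\bX)\neq 0$. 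Assembling the three cases, every nonzero $\xi\in\finf(\cG,\omega)$ yields $j^2_x(\bX)\neq 0$, so $\{\bX\in\cS : j^2_x(\bX)=0\}$ is trivial for each $x$, i.e.\ $\cS$ is $2$-jet determined. The delicate point throughout is the homogeneity bookkeeping on the correction terms, which is precisely the curved-geometry analogue of the filtration tracking carried out in Lemma \ref{L:[]}.
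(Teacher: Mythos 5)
Your treatment of (a), (b), and the first half of (c) is correct and is essentially the paper's own argument: you identify the linearization of $\bX$ at a fixed point with $-\ad_{\omega_u(\xi)}$ acting on $\fg/\fp$ (the paper gets this as $[t,s](u)=\{s,t\}(u)$ from (D.5) and (D.8) of Lemma \ref{L:fund-der}), then invoke Lemma \ref{L:ss-bracket} for $0\le i<\nu$ and $[\fg^\nu,\fg]\subset\fp$ for $i=\nu$. Your setup for $j^2_x(\bX)\neq 0$ -- choosing $Y\in\fg_{-1}$ via \cite[Prop.3.1.2]{CS2009}, then $W\in\fg_{-\nu}$ via Lemma \ref{L:ss-bracket}, and reducing to the double bracket $[[\bX,\bY],\bW]|_x$ -- also coincides with the paper's choices ($B$ and $C$ there).

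The genuine gap is in your control of the correction term $(\cL_\eta\omega)(\tilde r)(u)$, where $\eta=[\xi,\tilde t]$, and this step is the entire content of part (c). The claim that the correction has filtration degree strictly greater than $-1$ because ``$\cL_{\tilde t}\omega$ is built from the curvature'' (positive homogeneity, $\cL_\xi\kappa=0$) does not hold up: expanding $\cL_\eta\omega=\iota_\eta K-[\omega(\eta),\omega]+d(\omega(\eta))$ and evaluating on $\tilde r$ at $u$, the curvature term vanishes outright by horizontality of $K$ and verticality of $\eta_u$ (regularity plays no role here), and what remains is
\begin{align*}
(\cL_\eta\omega)(\tilde r)(u) \;=\; -[\omega_u(\eta),\omega_u(\tilde r)] \;+\; \bigl(\tilde r\cdot\omega(\eta)\bigr)(u)
\;=\; [[X_\nu,Y],W] \;+\; \bigl(\tilde r\cdot(\xi\cdot\omega(\tilde t))\bigr)(u),
\end{align*}
whose first summand has filtration degree \emph{exactly} $-1$ and whose second summand -- a second derivative of $\omega(\tilde t)$ -- is a priori uncontrolled; no homogeneity statement about $\kappa$ touches either of them. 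Worse, substituting this back into your decomposition gives the exact identity $\omega_u([\eta,\tilde r])=-\bigl(\tilde r\cdot(\xi\cdot\omega(\tilde t))\bigr)(u)$: your ``leading algebraic term'' is cancelled inside the correction, so the decomposition yields nothing until that second derivative is actually computed. Computing it is precisely where the paper works: with $s,t_1,t_2$ the sections corresponding to $\xi,\tilde t,\tilde r$, the term equals $(D_{t_2}D_st_1)(u)$; one commutes via (D.2), $D_{t_2}D_st_1=D_sD_{t_2}t_1+D_{[t_2,s]}t_1$, and then $(D_sD_{t_2}t_1)(u)\in[\fg^{-1},\fg^\nu]\subset\fg^{\nu-1}\subset\fp$ -- using (D.8), (D.5), and crucially the filtration property \eqref{D_tA} of $D_{t_2}$ -- while $(D_{[t_2,s]}t_1)(u)=[Y,[X_\nu,W]]=-[[X_\nu,Y],W]$ by Jacobi and $[Y,W]=0$. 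Only after this cancellation does the correction land in $\fp$ and the conclusion $j^2_x(\bX)\neq0$ follow. To repair your proof you must run this commutation argument, i.e.\ use the symmetry property of $\xi$ a second time at the level of second derivatives; as written, ``consequently the correction lies in $\fp$'' is unsupported, and the mechanism you propose (regularity of $\kappa$) cannot supply it.
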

 
 \begin{proof}
 Part (a) is clear, so suppose $i \geq 0$.  Then $j^0_x(\bX) = 0$ is clear.  
 Let $s \in \Gamma(\cA M)$  correspond to the symmetry $\xi$.
 By (D.8) in Lemma \ref{L:fund-der}, $D_s t = [s,t]$ for $t \in \Gamma(\cA M)$, so $[t,s](u) = \{ s, t \}(u)$.  Let $A = \omega_u(\xi)$.  As in \S \ref{S:gen-parabolic}, we introduce a grading $\fg = \fg_{-\nu} \op ... \op \fg_\nu$ with $\fg^i = \bop_{j \geq i} \fg_j$.
 
 Suppose $0 \leq i < \nu$. By Lemma \ref{L:ss-bracket}, $\exists B \in \fg_{-i-1}$ with $0 \neq [A,B] \in \fg^{-1} \backslash \fp$.  Pick $t \in \Gamma(\cA^{-i-1} M)$ with $t(u) = B \neq 0$.  Then $[t,s](u) = \{ s, t \}(u) = [A,B] \neq 0$.  Hence, the projection $\bY \in \fX(M)$ of the vector field corresponding to $t$ satisfies $[\bX,\bY]|_x \neq 0$ and $\bY|_x \neq 0$, so $j^1_x(\bX) \neq 0$.  Thus, (b) is proved.

Suppose $i = \nu$.  Given any $t \in \Gamma(\cA M)$, let $B := t(u)$.  Then $[t,s](u) = \{ s,t \}(u) = [A,B] \in \fp$, and so $[\bX,\bY]|_x = 0$.  Since $t$ (hence $\bY$) was arbitrary, then $j^1_x(\bX) = 0$.  We will now prove $j^2_x(\bX) \neq 0$.  By \cite[Prop.3.1.2]{CS2009}, $\exists B \in \fg_{-1}$ with $0 \neq [A,B] \in \fg^{\nu-1} \backslash \fg^\nu$ and choose any $t_1 \in \Gamma(\cA^{-1} M)$ with $t_1(u) = B$.  By Lemma \ref{L:ss-bracket},  $\exists C \in \fg_{-\nu}$ with $0 \neq [C,[B,A]] \in \fg^{-1} \backslash \fp$.  Choose any $t_2 \in \Gamma(\cA M)$ with $t_2(u) = C$.  Note that $[t_i,s](u) = \{ s, t_i \}(u) \in \fp$.  Using (D.4), (D.5), (D.2) from Lemma \ref{L:fund-der}, we have at $u$:
 \begin{align*}
 [t_2,[t_1,s]] &= D_{t_2} [t_1,s] - D_{[t_1,s]} t_2 - \kappa(\Pi(t_2), \Pi([t_1,s])) + \{ t_2, [t_1,s] \} \deq D_{t_2} [t_1,s] = -D_{t_2} D_s t_1\\
 &\deq -D_s D_{t_2} t_1 - D_{[t_2,s]} t_1 \deq -D_s D_{t_2} t_1 + \{ \{ s, t_2 \}, t_1 \}
 \end{align*}
 By \eqref{D_tA}, $D_{t_2} t_1 \in \Gamma(\cA^{-1} M)$, so $(D_s D_{t_2} t_1)(u) = -\{ s, D_{t_2} t_1 \}(u) \in \fg^{\nu-1} \subset \fp$.  Also, $\{ \{ s, t_2 \}, t_1 \}(u) = [[A,C],B] = [[A,B],C]$ since $[C,B] = 0$.  Thus, $0 \neq \{ \{ s, t_2 \}, t_1 \}(u) \in \fg^{-1} \backslash \fp$.  Hence, $\bY_i \in \fX(M)$ corresponding to $t_i$ satisfy $[\bY_2,[\bY_1,\bX]]|_x \neq 0$, with $[\bY_1,\bX]|_x = 0$ and $\bY_2|_x \neq 0$, so $j^2_x(\bX) \neq 0$.
 \end{proof}

 \begin{remark} \label{R:2-jet-det}
The result that symmetries are 2-jet determined in the $|1|$-graded case is classical -- see for instance \cite[Sec.\ 1.8]{Kob1972}; it also follows from \cite[Cor.\ 2.3]{BCEG2006} since the infinitesimal symmetry equation is an instance of a first BGG operator associated to the adjoint tractor bundle.  In the higher-graded case, the result apparently has not been explicitly stated anywhere, but K.\ Neusser \cite{Neusser2012} related the prolongations of BGG operators to weighted jets of solutions, and using these tools one can similarly derive general 2-jet determinacy for symmetries. Our proof of Theorem \ref{T:M-bracket} is completely independent of these developments.
 \end{remark}


 \section{Symmetries are 1-jet determined at non-flat points}
 \label{S:symmetries}

 \subsection{Prolongation height}
 \label{S:height}

 Given a parabolic subalgebra $\fp \subset \fg$, we have an associated $|\nu|$-grading $\fg = \fg_{-\nu} \op ... \op \fg_\nu$ corresponding to the eigenspaces of the grading element $\sfZ$.  We refer to $\nu \geq 1$ as the {\em depth} of the gradation.  Given an element $\phi$ in a $\fg_0$-representation, the {\em height} of the gradation on $\fa^\phi = \prn^\fg(\fg_-,\fann(\phi)) \subset \fg$ is the maximal $\tilde\nu = \tilde\nu(\phi) \geq 0$ such that $0 \neq \fa^\phi_{\tilde\nu} \subset \fg_{\tilde\nu}$.

 \begin{thm} \label{T:height} Let $\fg$ be a real or complex {\bf simple} Lie algebra, and $\fp \subsetneq \fg$ a parabolic subalgebra.  Let $0 \neq \phi \in H^2_+(\fg_-,\fg)$, and $\fa^\phi = \prn^\fg(\fg_-,\fann(\phi))$.  Then $0 \leq \tilde\nu(\phi) < \nu$, i.e. $\fa^\phi_\nu = 0$.
 \end{thm}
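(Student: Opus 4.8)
The plan is to reduce the statement to a purely algebraic assertion about a single top-slot vector and then to an incompatibility with the positivity of the homogeneity of $\phi$.

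First I would rewrite the top slot of the prolongation explicitly. Since $\fg_-$ is generated by $\fg_{-1}$, Remark \ref{R:gm1-gen} gives $\fa^\phi_\nu = \{ X \in \fg_\nu \st \ad^\nu_{\fg_{-1}}(X) \subseteq \fann(\phi)\}$. Expanding an iterated bracket $\ad_{[Y_1,[\dots,Y_\nu]]}$ of $\nu$ elements of $\fg_{-1}$ via the Jacobi identity as a combination of operators $\ad_{Y_{i_\nu}}\cdots\ad_{Y_{i_1}}$, and using that $\fg_{-\nu}$ is spanned by such $\nu$-fold brackets, one obtains $[\fg_{-\nu},X]\subseteq\tspan\,\ad^\nu_{\fg_{-1}}(X)$. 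Hence any $X\in\fa^\phi_\nu$ satisfies $[\fg_{-\nu},X]\cdot\phi=0$, i.e.\ $[\fg_{-\nu},X]\subseteq\fann(\phi)$. It therefore suffices to prove that for every nonzero $X\in\fg_\nu$ the subspace $[\fg_{-\nu},X]\subseteq\fg_0$ is not contained in $\fann(\phi)$.

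Next the grading element enters. Because $\phi\in H^2_+(\fg_-,\fg)$ has strictly positive homogeneity, $\sfZ\cdot\phi\neq0$, so $\sfZ\notin\fann(\phi)$; this is the only place where positivity is used at the outset, and it is exactly what fails for a product geometry with a flat factor. Fix $0\neq X\in\fg_\nu$. Since the Killing form $B$ pairs $\fg_\nu$ with $\fg_{-\nu}$ nondegenerately, choose $X^*\in\fg_{-\nu}$ with $B(X,X^*)\neq0$. Invariance of $B$ together with $\ad_\sfZ=\pm\nu\,\id$ on $\fg_{\pm\nu}$ gives $B([X,X^*],\sfZ)=\nu\,B(X,X^*)\neq0$. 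As $B$ vanishes between $\fz(\fg_0)$ and the semisimple part $[\fg_0,\fg_0]$, the element $[X,X^*]\in[\fg_{-\nu},X]$ has nonzero component along $\fz(\fg_0)$; in particular $[\fg_{-\nu},X]$ reaches the grading direction and cannot lie inside $[\fg_0,\fg_0]$ alone.

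The remaining, and hardest, point is that an element of $[\fg_{-\nu},X]$ with nonzero central component can a priori still annihilate $\phi$, its semisimple part compensating the central scalar on a weight vector. To exclude this I would invoke Kostant's theorem \cite{Kos1963}: $H^2(\fg_-,\fg)$ is a multiplicity-free $\fg_0$-module whose irreducible constituents have explicit lowest-weight harmonic cochains indexed by the length-$2$ elements of the Hasse diagram $W^\fp$. Testing weight vectors of $\fg_\nu$ (the extreme case $X=e_\theta$, the highest root vector, being representative, since the top slot is an irreducible $\fg_0$-module), I would compute $[\fg_{-\nu},X]$ explicitly --- it contains the coroot $H_\theta$ together with off-diagonal vectors $e_{\theta-\beta}\in[\fg_0,\fg_0]$ for roots $\beta$ of $\fp$-height $\nu$ --- and compare the resulting linear conditions against the admissible weights of $\phi$ dictated by Kostant, the aim being to force the homogeneity of $\phi$ to be non-positive, contradicting $\phi\in H^2_+$. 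I expect this final weight/root comparison to be the main obstacle: it is genuinely representation-theoretic, it is where the simplicity of $\fg$ is indispensable (through Lemma \ref{L:ss-bracket} and \cite[Prop.~3.1.2]{CS2009}, which guarantee the surjectivity of the relevant brackets), and it is the step most likely to demand an organizing combinatorial device rather than a case-by-case verification.
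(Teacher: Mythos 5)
Your opening reduction is correct as far as it goes: by Remark \ref{R:gm1-gen} and the expansion of $\ad_{[Y_1,[\dots,Y_\nu]]}$ into compositions of the $\ad_{Y_i}$, any $X\in\fa^\phi_\nu$ does satisfy $[\fg_{-\nu},X]\subseteq\fann(\phi)$, and your Killing-form computation showing that $[X,X^*]$ has nonzero component in $\fz(\fg_0)$ is also fine. The problem is that the step you yourself flag as ``the main obstacle'' is the entire content of the theorem, and the one concrete device you offer for it does not work. You cannot reduce to ``the extreme case $X=e_\theta$'': the locus $\{X\in\fg_\nu \st [\fg_{-\nu},X]\subseteq\fann(\phi)\}$ is a linear subspace but \emph{not} a $\fg_0$-submodule of $\fg_\nu$, because $\fann(\phi)\subset\fg_0$ is a subalgebra and not a $\fg_0$-submodule; so irreducibility of $\fg_\nu$ gives no reduction to a highest weight vector. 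At best one can normalize $X$ by the $G_0$-action, but this simultaneously moves $\phi$ inside its module, and moreover $\bbP(\fg_\nu)$ typically has several $G_0$-orbits (this is exactly what Section \ref{S:subcominuscle} quantifies, e.g.\ $\ell$ orbits for $C_\ell/P_\ell$), so $e_\theta$ is not representative of a general top-slot direction. Beyond this, no argument is supplied that would produce the desired contradiction with $\phi\in H^2_+(\fg_-,\fg)$, and the real case in the statement (in particular $\fg$ with $\fg_\bbC$ non-simple) is not addressed at all.

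For comparison, the paper performs the crucial reduction on the $\phi$ side rather than the $X$ side: by \cite[Lemma 3.1.1]{KT2014}, $\dim(\fa^\phi_r)\leq\dim(\fa^{\phi_0}_r)$, where $\phi_0$ is a \emph{lowest} weight vector of the irreducible component $\bbU_\mu\subseteq H^2_+(\fg_-,\fg)$ containing $\phi$, with $\mu=-w\cdot\lambda$ and $w=\sigma_j\sigma_k\in W^\fp(2)$ by Kostant's theorem. Then \cite[Thm.\ 3.2.6]{KT2014} gives the exact root-space description $\Delta(\fa_r^{\phi_0})=\{\alpha\in\Delta(\fg_r)\st \sfZ_{I_w}(\alpha)=r,\ \sfZ_{J_w}(\alpha)=0\}$, from which $\tilde\nu=\nu$ would force $I_w=I_\fp$; this is excluded by the one-line estimate $\langle\mu,\alpha_j^\vee\rangle=r_j+2-(r_k+1)c_{kj}\geq 2>0$, i.e.\ the crossed node $j$ never lies in $I_w$. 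The real case is then handled by complexification (the height can only increase), with $\fg_\bbC$ non-simple treated separately via \cite[Prop.\ 1]{KMT2016}. If you wish to keep your reformulation in terms of $[\fg_{-\nu},X]\not\subseteq\fann(\phi)$, you would still need these (or equivalent) representation-theoretic inputs to control $\fann(\phi)$ for \emph{every} nonzero $\phi$, not just for weight vectors; your present outline supplies none of them.
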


 Given $\phi \in H^2_+(\fg_-,\fg)$, decompose $\phi = \sum_i \phi_i$, where each $\phi_i$ lies in some $\fg_0$-irreducible component of $H^2_+(\fg_-,\fg)$.  Then $\fann(\phi) = \bigcap_i \fann(\phi_i)$ and $\fa^\phi \subset \bigcap_i \fa^{\phi_i}$.  Thus, it suffices to consider $0 \neq \phi \in \bbU$, where $\bbU \subset H^2_+(\fg_-,\fg)$ is $\fg_0$-irrep.  If $\fg$ is real, then under complexification, the height can only increase (since the number of irreps may increase).  Thus, it suffices to consider the complex simple case, except when the complexification $\fg_\bbC$ is not simple.  This occurs if and only if $\fg$ is the underlying real Lie algebra of a complex simple Lie algebra.  This case will be treated separately.

 First consider the complex simple case.  This follows immediately from \cite[Theorem 3.6.1]{KT2014}, but since this relied on the detailed classifications of \cite[Section 3.4]{KT2014}, we instead give a simpler and uniform proof here. Let $\lambda$ be the highest weight of $\fg$.  (See Appendix \ref{S:DD} for various notations below.)  By Kostant's theorem, each irreducible $\fg_0$-submodule $\bbU_\mu \subset H^2(\fg_-,\fg)$ has lowest weight $\mu = -w \cdot \lambda$, where $w \in W^\fp(2)$.  We have $\bbU_\mu \subset H^2_+(\fg_-,\fg)$ if and only if $\sfZ(\mu) > 0$.  By \cite[Lemma 3.1.1]{KT2014}, if $0 \neq \phi \in \bbU_\mu$, then $\dim(\fa^{\phi}_r) \leq \dim(\fa^{\phi_0}_r)$ for any $0 \leq r \leq \nu$, where $\phi_0$ is a lowest weight vector in $\bbU_\mu$.  Thus, it suffices to study the height of $\fa^{\phi_0}$, and structural properties thereof were established in \cite[Section 3.2]{KT2014}.  Given $\ell = \rnk(\fg)$, let $I_\fp = \{ i \st \fg_{-\alpha_i} \not\subset \fp \} \subset \{ 1,..., \ell \}$ and define
 \[
 I_w = \{ j \in I_\fp \st \langle  \mu, \alpha_j^\vee \rangle = 0 \}, \qquad
 J_w = \{ j \not\in I_\fp \st \langle \mu, \alpha_j^\vee \rangle \neq 0 \}.
 \]
 In terms of the dual basis $\{ \sfZ_i \}$ to the simple roots $\{ \alpha_j \}$, let $\sfZ_S = \sum_{a \in S} \sfZ_a$ for any subset $S \subset \{ 1, ..., \ell \}$.  The grading element is $\sfZ = \sfZ_{I_\fp}$. Then \cite[Thm.~3.2.6]{KT2014} describes each $\fa_r^{\phi_0}$ $(r > 0$) as a sum of root spaces corresponding to the roots
 \begin{align} \label{E:phi0-rt-dec}
 \Delta(\fa_r^{\phi_0}) = \{ \alpha \in \Delta(\fg_r) \st \sfZ_{I_w}(\alpha) = r, \,\, \sfZ_{J_w}(\alpha) = 0 \}, \qquad r > 0.
 \end{align}

With the above simplifications, let $\alpha \in \Delta(\fa^{\phi_0}_{\tilde\nu})$.  By \eqref{E:phi0-rt-dec}, $\tilde\nu = \sfZ_{I_w}(\alpha)$ and $\sfZ_{I_\fp\backslash I_w}(\alpha) = 0$.  Since $\alpha \leq \lambda$ and $I_w \subset I_\fp$, then $\tilde\nu = \sfZ_{I_w}(\alpha) \leq \sfZ_{I_w}(\lambda) \leq \sfZ_{I_\fp}(\lambda) = \nu$.  Thus, $\tilde\nu = \nu$ only if $I_w = I_\fp$.  We will show that this is impossible. Write $w = \sigma_j \sigma_k \in W^\fp(2)$, so $j \in I_\fp$ and $k \neq j$ (by properties (P.1), (P.2) in Appendix \ref{S:DD}).  Let $\lambda = \sum_i r_i \lambda_i$, so $r_i = \langle \lambda, \alpha_i^\vee\rangle \geq 0$.  By \eqref{E:mu}, we have
 \[
  \langle \mu, \alpha_j^\vee \rangle = -r_j + 2(r_j + 1) + (r_k+1)(c_{kj} - 2 c_{kj} ) = r_j + 2 - (r_k+1) c_{kj} \geq 2,
 \]
 since $c_{kj} = \langle \alpha_k, \alpha_j^\vee \rangle \leq 0$.  Thus, $j \not\in I_w$ and consequently, $I_w \subsetneq I_\fp$ and $\tilde\nu < \nu$.

Now consider real $\fg$ such that $\fg_\bbC$ is semisimple.  In \cite[Sec.2]{KMT2016}, we developed analogous tools to study this case.  Given any (real) $\fg_0$-irrep $\bbU \subset H^2_+(\fg_-,\fg)$, there exists a $\fg_\bbC$-weight $\mu$ such that the complexification of $\bbU$ is either $\bbW_\mu \cong \overline{\bbW_\mu}$ or $\bbW_\mu \op \overline{\bbW_\mu}$ (if $\bbW_\mu \not\cong \overline{\bbW_\mu}$).  For the real Lie algebra $\fa^\phi$, we proved in \cite[Prop.1]{KMT2016} that if $\phi_0 \in \bbW_\mu$ is extremal, then for $k \geq 0$ and any $0 \neq \phi \in \bbW_\mu$, we have:
 \begin{enumerate}
 \item[(i)] if $\bbW_\mu \cong \overline{ \bbW_\mu}$: $\dim(\fa_k^{\phi}) \leq \dim(\fa_k^{\phi_0})$;
 \item[(ii)] if $\bbW_\mu \not\cong \overline{ \bbW_\mu}$: $\dim(\fa_k^{\phi + \bar\phi}) \leq \dim(\fa_k^{\phi_0 + \overline{\phi_0}})$.
 \end{enumerate}
 For (i), $\fa^{\phi_0}$ will be a real Lie algebra underlying a complex one and we can use the complex case above to obtain the claimed result.  For (ii), we consider the complexification $\fg_\bbC \cong \fg \times \fg$ (where we regard $\fg$ as complex on the right-hand side).  Then $\dim_\bbR(\fa_k^{\phi_0 + \overline\phi_0})$ is bounded above by $2\,\dim_\bbC(\fa_k^{\phi_0})$, where $\fa^{\phi_0} \subset \fg$.  Again, use the complex case above to obtain the claimed result.
This completes the proof of Theorem \ref{T:height}.


 \subsection{General 1-jet determinacy}
 
 \begin{thm} \label{T:nf-jet} Let $G$ be a real or complex {\bf simple} Lie group, and $P \subsetneq G$ a parabolic subgroup.  Then the symmetry algebra $\cS = \pi_*(\finf(\cG,\omega))$ of the underlying structure of any regular, normal geometry $(\cG \stackrel{\pi}{\to} M, \omega)$ of type $(G,P)$ is 1-jet determined at any non-flat point $x \in M$.
 \end{thm}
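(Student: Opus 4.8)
The plan is to read this statement as the formal synthesis of the three main ingredients already assembled: the jet--filtration dictionary of Theorem~\ref{T:M-bracket}, the prolongation inclusion of Theorem~\ref{T:strong-incl}, and the vanishing of the top prolongation slot in Theorem~\ref{T:height}. Since $G$ is simple and $P \subsetneq G$, no simple ideal of $\fg$ is contained in $\fp$, so the hypotheses of Theorem~\ref{T:M-bracket} hold at every point. The key observation to exploit is that Theorem~\ref{T:M-bracket} identifies ``vanishing $1$-jet'' with ``isotropy in the top slot $\fg^\nu$'', and Theorem~\ref{T:height} says precisely that $\fg^\nu$ cannot occur in the prolongation algebra associated to a nonzero harmonic curvature when $\fg$ is simple.

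Concretely, I would fix a non-flat point $x \in M$, pick $u \in \pi^{-1}(x)$, and take $\bX = \pi_*(\xi) \in \cS$ with $\xi \in \finf(\cG,\omega)$ satisfying $j^1_x(\bX) = 0$; the goal is $\bX = 0$. Arguing by contradiction, assume $\xi \neq 0$. By the injectivity of $\omega_u$ on symmetry values recalled in \S\ref{S:main} (from \cite{CN2009}), $A := \omega_u(\xi) \neq 0$, so $A \in \fg^i \setminus \fg^{i+1}$ for a unique $i$. Now $j^1_x(\bX) = 0$ rules out cases (a) and (b) of Theorem~\ref{T:M-bracket}, which would force $j^0_x(\bX) \neq 0$ or $j^1_x(\bX) \neq 0$; hence I must be in case (c), i.e.\ $i = \nu$ and $A \in \fg^\nu = \fg_\nu$. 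Passing to the associated graded and using $\fg^{\nu+1} = 0$, the top graded piece is $\fs_\nu(u) = \ff(u) \cap \fg^\nu$, which contains the nonzero element $A$, so $\fs_\nu(u) \neq 0$. On the other hand, Theorem~\ref{T:strong-incl} gives $\fs(u) \subseteq \fa^{\kappa_H(u)}$, hence $\fs_\nu(u) \subseteq \fa^{\kappa_H(u)}_\nu$. Since $x$ is non-flat, $\kappa_H$ is nonzero at $x$, and by $P$-equivariance of the harmonic curvature function $0 \neq \kappa_H(u) \in H^2_+(\fg_-,\fg)$. With $G$ simple, Theorem~\ref{T:height} then yields $\fa^{\kappa_H(u)}_\nu = 0$, so $\fs_\nu(u) = 0$, contradicting $A \neq 0$. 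Therefore $\xi = 0$ and $\bX = 0$, which is exactly $1$-jet determinacy of $\cS$ at $x$.

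I do not expect a genuine obstacle here, since all the substantive analytic and representation-theoretic work is encapsulated in the earlier theorems, and the present statement is essentially their corollary. The only points that merit care are (i) verifying that $j^1_x(\bX) = 0$ together with $\xi \neq 0$ lands one \emph{exactly} in the top slot $\fg^\nu$ via the trichotomy of Theorem~\ref{T:M-bracket} (rather than merely in some $\fg^i$ with $i \geq 0$), and (ii) confirming that non-flatness of the point $x$ transfers to $\kappa_H(u) \neq 0$ for the chosen $u \in \pi^{-1}(x)$ through $P$-equivariance, so that Theorem~\ref{T:height} is genuinely applicable. Both are routine, so the write-up should be short.
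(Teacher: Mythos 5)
Your proposal is correct and follows essentially the same route as the paper's proof: both combine Theorem~\ref{T:M-bracket} (vanishing $1$-jet forces isotropy in the top slot $\fg^\nu$), Theorem~\ref{T:strong-incl} ($\fs(u) \subseteq \fa^{\kappa_H(u)}$), and Theorem~\ref{T:height} ($\fa^{\kappa_H(u)}_\nu = 0$ when $\kappa_H(u) \neq 0$ and $\fg$ is simple). The only difference is presentational, as you argue by contradiction while the paper argues contrapositively, and your two flagged points of care are handled correctly.
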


 \begin{proof}  Fix $x \in M$, $u \in \pi^{-1}(x)$ with $\kappa_H(u) \neq 0$.  Since $\fs(u) \subset \fa^{\kappa_H(u)}$ (Theorem \ref{T:strong-incl}) and $\fa^{\kappa_H(u)}_\nu = 0$ (Theorem \ref{T:height}), then $\fs_\nu(u) = 0$.  Let $0 \neq \xi \in \finf(\cG,\omega)$ with $\omega_u(\xi) \in \fg^i \backslash \fg^{i+1}$ projecting to $\fs_i(u)$ (with $i < \nu$).  Let $\bX = \pi_*(\xi)$.  By Theorem \ref{T:M-bracket}, $j^0_x(\bX) \neq 0$ if $i < 0$, while $j^1_x(\bX) \neq 0$ if $0 \leq i < \nu$.
 \end{proof}

 \begin{example}\label{Ex:path-geom}
In \cite[Prop.\ 5.3.2]{KT2014}, we gave the following (homogeneous) submaximally symmetric model for systems of 2nd order ODE 
in $m \geq 2$ dependent variables (or $\SL(m+2,\bbR) / P_{1,2}$ geometry):
 \[
 \ddot{x}_1 = ... = \ddot{x}_{m-1} = 0, \qquad \ddot{x}_m = \dot{x}_1^3.
 \]
Given any point symmetry $\bX$ on the space $J^0(\bbR,\bbR^m)=\bbR^{m+1}(t,x_1,\dots,x_m)$, 
we prolong it to a symmetry $\bX^{(1)}$ on the space $J^1(\bbR,\bbR^m)=\bbR^{2m+1}(t,x_1,\dots,x_m,p_1,\dots,p_m)$
(which is identified with $M$). All symmetries $\bX^{(1)}$ are 1-jet determined, including the symmetry 
 $$
\bS=t^2\partial_t+t\sum_{i=1}^mx_i\partial_{x_i}+\tfrac12x_1^3\partial_{x_m}\ \stackrel{\text{prolong}}\dashrightarrow
\bS^{(1)} = \bS + \sum_{i=1}^m (x_i - tp_i)\partial_{p_i} + \tfrac{3}{2} x_1^2 p_1 \partial_{p_m}.
 $$
This latter symmetry has isotropy in $\fp_+$ at $0\in M$, even though the geometry is nowhere flat.
 \end{example}


 \section{Sub-cominuscule representations}
 \label{S:subcominuscle}

 Let $G$ be a complex {\bf simple} Lie group and $\fg = \fg_{-\nu} \op ... \op \fg_\nu$ the grading associated to a parabolic subgroup $P \subsetneq G$.  In this section, we study the $G_0$-module structure of the top slot $\fg_\nu$.  We will show that $\fg_\nu$ is a {\em sub-cominuscule} representation and give an effective method for constructing explicit $G_0$-orbit representatives.

 \subsection{$\fg_\nu$ is sub-cominuscule}

 A fundamental weight $\lambda_j$ of $\fg$ is {\em cominuscule} if the highest root $\lambda = \sum_i m_i \alpha_i$ of $\fg$ has $m_j = 1$.  Any such weight corresponds to a {\em minuscule} variety $X \cong G/P \subset \bbP(\bbU)$, which is a Hermitian symmetric space (so $\fp = \fp_j$ induces a $|1|$-grading on $\fg$) in its minimal homogeneous embedding, i.e.\ $\bbU$ has highest weight $\lambda_j$ and $X$ is the $G$-orbit of the highest weight line in $\bbU$ (the unique closed orbit).  All {\em irreducible} minuscule varieties are given by
 \begin{align} \label{E:1-graded}
 A_\ell / P_k, \quad B_\ell / P_1, \quad C_\ell / P_\ell, \quad D_\ell / P_1, \quad D_\ell / P_\ell, \quad E_6 / P_6, \quad E_7 / P_7
 \end{align}
 in their minimal homogeneous embeddings, e.g.\ $A_\ell / P_k \cong \tGr(k,\ell+1) \inj \bbP(\bigwedge^k \bbC^{\ell+1})$.

 \begin{defn}
 Let $V$ be an irreducible $R$-module, where $R$ is a complex reductive Lie group, and let $\cV\subset \bbP(V)$ denote the unique closed $R$-orbit.  If the only $R$-orbit closures in $\bbP(V)$ are the secant varieties $\cV =: \Sec_1(\cV) \subsetneq \Sec_2(\cV) \subsetneq ... \subsetneq \Sec_m(\cV) = \bbP(V)$, then $V$ is {\em sub-cominuscule} and $\cV$ is a {\em sub-cominuscule variety}.  (The $R$-orbits are $\cV$ and $\Sec_i(\cV) \backslash \Sec_{i-1}(\cV)$ for $2 \leq i \leq m$.)
 \end{defn}

 Note that it suffices to consider only the effective part of the $R$-action on $\bbP(V)$.  In particular, by Schur's lemma, all central elements in $R$ act as scalars on $V$, so these can be ignored.

From the marked Dynkin diagram $\fD(\fg,\fp)$, a simple recipe (see Appendix \ref{S:DD}) yields the $\fg_0^{ss}$-module structure on $\fg_1$, where $\fg_0^{ss}$ denotes the semisimple part of $\fg_0$.  For (irreducible) minuscule varieties $G/P$, it was observed in \cite[Sec.3.3]{LM2003} (see Remark 3.5) that the only $G_0$-orbit closures in $\bbP(\fg_1)$ are the secant varieties (and that this is a characterizing property of minuscule varieties).  Table \ref{F:SC} gives the list of sub-cominuscule representations.  Not all sub-cominuscule varieties are themselves minuscule (since not all embeddings are minimal), but each has an associated $|1|$-grading.

 \begin{table}[h]
 \[
 \begin{array}{|c|c|c|c|c|} \hline
 G/P & G_0^{ss} & \mbox{Sub-cominuscule variety $\cV \subset \bbP(\fg_1)$} & \mbox{$\# G_0$-orbits in $\bbP(\fg_1)$}\\ \hline\hline
 A_\ell / P_k & A_{k-1} \times A_{\ell - k} &
  \Seg(\bbP^{k-1} \times \bbP^{\ell-k}) \inj \bbP(\bbC^k \boxtimes \bbC^{\ell+1-k}) & \min(k,\ell+1-k)\\ \hline
 \begin{array}{c} B_\ell / P_1 \\ D_\ell / P_1 \end{array} &
 \begin{array}{c} B_{\ell-1} \\ D_{\ell-1} \end{array} &
 \mbox{quadrics}
 \begin{array}{l}
 Q^{2\ell-3} \inj \bbP^{2\ell-2} \\
 Q^{2\ell-4} \inj \bbP^{2\ell-3}
 \end{array} & 2\\ \hline
 C_\ell / P_\ell & A_{\ell-1} & \bbP^{\ell-1} \inj \bbP(S^2 \bbC^\ell) & \ell\\ \hline
 D_\ell / P_\ell & A_{\ell-1} & \tGr(2,\ell) \inj \bbP(\bigwedge^2 \bbC^\ell) & \floor{\frac{\ell}{2}}\\ \hline
 E_6 / P_6 & D_5 & \bbS_5 = D_5 / P_5 \inj \bbP^{15} & 2\\ \hline
 E_7 / P_7 & E_6 & \bbO\bbP^2 = E_6 / P_6 \inj \bbP^{26} & 3\\ \hline
 \end{array}
 \]
 \caption{Complete list of sub-cominuscule varieties / modules}
 \label{F:SC}
 \end{table}

 Consider now the general case.  Let $\fD(\fg,\fp,\lambda)$ be $\fD(\fg,\fp)$ with coefficients $r_i$ of the highest weight $\lambda = \sum_i r_i \lambda_i$ of $\fg$ inscribed over the nodes.  There is a simple Dynkin diagram recipe (see Appendix \ref{S:DD}) to obtain the effective part of the $\fg_0^{ss}$-module structure on the top-slot $\fg_\nu$: {\em Remove crosses from $\fD(\fg,\fp,\lambda)$, then remove any connected components with only zero coefficients.}  This gives a diagram $\fT^0(\fg_0^{ss},\lambda)$.

  \begin{prop} \label{P:SC} The top-slot $\fg_\nu$ is a sub-cominuscule $G_0$-module.
 \end{prop}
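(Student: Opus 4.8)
The plan is to identify $\fg_\nu$ as an explicit irreducible $\fg_0^{ss}$-module and then match it against Table~\ref{F:SC}. First I would show that $\fg_\nu$ is $\fg_0^{ss}$-irreducible with highest weight the restriction of the highest root $\lambda$ of $\fg$, realized by the root vector $e_\lambda$. Indeed, for a root $\alpha = \sum_i n_i\alpha_i \in \Delta(\fg_\nu)$, maximality of $\sfZ_{I_\fp}(\alpha) = \nu$ together with $\lambda$ being the highest root forces $\lambda - \alpha$ to be a nonnegative integer combination of simple roots whose $I_\fp$-coefficients sum to $\nu - \nu = 0$; as these coefficients are individually nonnegative, they all vanish, so $\lambda - \alpha$ is supported on $\{ \alpha_i : i \notin I_\fp \}$, the simple roots of $\fg_0^{ss}$. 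Hence every weight of $\fg_\nu$ lies below $\lambda$ in the $\fg_0^{ss}$-root order, so $\fg_\nu$ is generated from $e_\lambda$ under $\fg_0^{ss}$; since $\lambda$ has multiplicity one in $\fg$, this makes $\fg_\nu$ irreducible of highest weight $\lambda|_{\fg_0^{ss}}$. Because $\fg_\nu$ is irreducible, the centre of $G_0$ acts by scalars, so the $G_0$- and $G_0^{ss}$-orbits on $\bbP(\fg_\nu)$ coincide; it therefore suffices to treat $\fg_\nu$ as a $\fg_0^{ss}$-module, whose effective part is exactly the diagram $\fT^0(\fg_0^{ss},\lambda)$.

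Next I would pin down the support of $\lambda$. Since the lowest root $-\lambda$ is the affine simple root $\alpha_0$, the inscribed labels satisfy $r_i = \langle \lambda, \alpha_i^\vee \rangle = -\langle \alpha_0, \alpha_i^\vee \rangle$, which is nonzero precisely when $\alpha_i$ is adjacent to $\alpha_0$ in the affine diagram. Reading off the affine diagrams, the nonzero labels occur at the two end nodes (each label $1$) in type $A_\ell$, at a single node with label $2$ in type $C_\ell$, and at a single node with label $1$ in every other type. Consequently $\fT^0(\fg_0^{ss},\lambda)$ carries at most two marked nodes, and in type $A_\ell$ these lie in distinct connected components of $\fg_0^{ss}$.

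Finally I would run the recipe case by case and match Table~\ref{F:SC}. If all support nodes of $\lambda$ are crossed (the contact gradings, for instance), then $\fg_\nu$ is the trivial one-dimensional module, so $\bbP(\fg_\nu)$ is a point and the statement holds vacuously. Otherwise: in the single-support, label-$1$ types ($B,D,E,F,G$) the effective factor is the connected component $\fc$ of the uncrossed node $p$, with highest weight the single fundamental weight $\lambda_p$ of $\fc$; in type $C_\ell$ the component is of type $A$ with weight $2\lambda_1$, giving the Veronese $\bbP^a \inj \bbP(S^2\bbC^{a+1})$; and in type $A_\ell$ the (at most two) end nodes give the Segre product $\Seg(\bbP^a \times \bbP^b)$, or a single $\bbP$ when only one survives. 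In each situation the result is one of the entries of Table~\ref{F:SC} -- a single minuscule variety \eqref{E:1-graded}, the Veronese, or the Segre of two projective spaces -- all of which are sub-cominuscule by \cite{LM2003}.

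The main obstacle is the verification, in the single-support label-$1$ cases, that the marked node $p$ stays cominuscule inside its component $\fc$, i.e.\ that deleting the crossed nodes never raises the coefficient of $\alpha_p$ in the highest root of $\fc$ above $1$ (which would yield a representation outside the table). I expect to dispatch this via the uniform observation that a node adjacent to the affine node with unit label is cominuscule in every connected full subdiagram containing it, checking the finitely many simple types against the affine data recalled in Appendix~\ref{S:DD}; the type-$C$ label-$2$ node is the single exception and produces exactly the Veronese row. The remaining bookkeeping -- that the two type-$A$ end nodes land in separate components, so that $\fg_\nu$ is the Segre of two projective spaces rather than a higher tensor product -- is immediate from the support computation above.
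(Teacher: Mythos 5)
Your overall route is the same as the paper's: the paper's proof of Proposition \ref{P:SC} consists precisely of the finite combinatorial check that, starting from the highest root (Table \ref{F:hw}) and applying recipes (R.4)--(R.5) of Appendix \ref{S:DD}, every nontrivial placement of crosses on $\fD(\fg,\fp,\lambda)$ produces for $\fT^0(\fg_0^{ss},\lambda)$ one of the third-column entries of Table \ref{F:SC}, each of which is sub-cominuscule by \cite{LM2003}; the reduction from $G_0$- to $G_0^{ss}$-orbits via Schur's lemma is likewise stated in the paper right after the definition of sub-cominuscule. Your organization of the check by contact nodes and the affine diagram is a sound way to run that same verification. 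However, two steps of your write-up are not valid as written, even though both conclusions are true.

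First, the irreducibility inference: from ``every weight of $\fg_\nu$ lies below $\lambda$'' and ``$\lambda$ has multiplicity one'' it does not follow that $\fg_\nu$ is generated by $e_\lambda$; a completely reducible module can have all weights $\leq \lambda$, a one-dimensional $\lambda$-weight space, and still contain a second irreducible summand with strictly smaller highest weight. The repair uses the grading: any $\fg_0^{ss}$-highest-weight vector $v \in \fg_\nu$ is killed by $e_{\alpha_i}$ for all uncrossed $i$ by assumption, and by $e_{\alpha_j}$ for all crossed $j$ automatically since $[\fg_{\alpha_j},\fg_\nu] \subset \fg_{\nu+1} = 0$; hence $v$ is a highest-weight vector of the irreducible adjoint representation of the simple algebra $\fg$, so $v \in \bbC e_\lambda$. (Alternatively, simply cite recipe (R.4), which the paper itself never reproves.) Second, and more seriously, your termination criterion is insufficient: that the contact node $p$ stays \emph{cominuscule} in its component $\fc$ (coefficient of $\alpha_p$ in the highest root of $\fc$ equal to $1$) is necessary but not sufficient for $(\fc,\lambda_p)$ to be an entry of Table \ref{F:SC}. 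For instance $(A_5,\lambda_3)$, $(C_3,\lambda_3)$ and $(D_6,\lambda_6)$ are all cominuscule pairs, yet $\bigwedge^3\bbC^6$, the $14$-dimensional $C_3$-module and the half-spin $D_6$-module are absent from the table (asserted there to be complete), and indeed each has an orbit closure -- e.g.\ the tangential variety of $\tGr(3,6)$ inside $\bbP(\bigwedge^3\bbC^6)$ -- that is not a secant variety of the closed orbit. So a check that only bounds this coefficient could ``pass'' modules for which the proposition fails. What must actually be verified, and what the paper's proof does verify, is that the component $\fc$ together with the \emph{position} of $p$ inside it is literally a table entry: type $A$ with $p$ an end or next-to-end node, type $B/D$ with $p$ the vector node, $D_5$ with $p$ a spinor node (arising only from $E_6$), or $E_6$ with $p$ an extreme node (arising only from $E_7$). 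This stronger statement does hold in every case -- because the contact node is adjacent to the affine node and therefore always sits at, or immediately next to, an extremity of any proper subdiagram containing it -- but it is exactly this case-by-case identification, not cominuscularity alone, that completes the proof.
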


 \begin{proof} Starting with the highest weight of $\fg$ (Table \ref{F:hw}), we consider any non-trivial collection of crosses on the Dynkin diagram $\fD(\fg)$.  Applying the recipe above, we easily obtain for $\fT^0(\fg_0^{ss},\lambda)$ only those possibilities in the third column of Table \ref{F:SC}.  For example, for $G = E_8$ and almost all choices of $\fp \subsetneq \fg$, we obtain $\fT^0(\fg_0^{ss},\lambda) \cong \Add{wwww}{1,0,0,0}$.  The only exception is $E_8 / P_1$, for which the (effective) top-slot structure agrees with that for $D_8 / P_1$.  Both are sub-cominuscule.
 \end{proof}


 \subsection{The top-slot orthogonal cascade}

  If $\lambda = \sum_i r_i \lambda_i$, we refer to $I_\sfc = \{ i \st r_i \neq 0 \}$ as the {\em contact nodes} and denote these by \mydiamond{} on the Dynkin diagram $\fD(\fg)$.  (This is the same as the set of nodes connected to $-\lambda$ in the extended Dynkin diagram of $\fg$.)  Removing all $\mydiamond{}$ yields a diagram $\fD_\lambda(\fg)$ with corresponding semisimple subalgebra $\fg(\lambda) \subset \fg$ and $\Delta(\fg(\lambda)) = \{ \alpha \in \Delta(\fg) \st \langle \lambda, \alpha \rangle = 0 \}$.  Note that $\fD_\lambda(\fg)$ is connected (i.e.\ $\fg(\lambda)$ is simple) if and only if $\fg \neq B_\ell, D_\ell$.  For $\fg = B_\ell$ or $D_\ell$, there are two connected components in $\fD_\lambda(\fg)$.

 \begin{remark} \label{R:contact}
 If $G \neq A_1$, then $\fg$ admits a contact gradation $\fg = \fc_{-2} \op \fc_{-1} \op \fc_0 \op \fc_1 \op \fc_2$ with respect to some grading element $\sfZ_\sfc$.  Here, $\fg(\lambda) = \fc_0^{ss}$, $\fc_-$ isomorphic to a Heisenberg Lie algebra, and $\fc^0 = \fc_{\geq 0}$ is the parabolic subalgebra of $\fg$, obtained from $\fD(\fg)$ by replacing all \mydiamond{} by crosses.
 \end{remark}

 \begin{prop} \label{P:hw-perp}
 Let $\beta \in \Delta(\fg(\lambda))$ be maximal.  Expressed in terms of the weights of $\fg$, we have the decomposition $\beta = \widetilde\lambda - \lambda$, where $\widetilde\lambda = \sum_{i \not\in I_\sfc} r_i \lambda_i$ is a dominant integral weight.
 \end{prop}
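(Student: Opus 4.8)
The plan is to set $\widetilde\lambda := \lambda + \beta$ and establish the two assertions packaged in the statement: that $\widetilde\lambda$ is dominant integral, and that it is supported off the contact nodes (so that $\widetilde\lambda = \sum_{i \notin I_\sfc} \widetilde r_i \lambda_i$ with $\widetilde r_i = \langle \widetilde\lambda, \alpha_i^\vee\rangle \geq 0$, whence $\beta = \widetilde\lambda - \lambda$). Integrality is immediate since $\lambda$ and $\beta$ both lie in the root lattice. For the rest, I would use that $\fg(\lambda)$ is the semisimple subalgebra whose simple roots are the non-contact ones $\{\alpha_i : i \notin I_\sfc\}$ and whose root system is exactly $\Delta(\fg)\cap\lambda^\perp$, and that $\beta$ is the highest root of one of its simple factors $\fm$. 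Everything then reduces to computing $\langle \widetilde\lambda, \alpha_i^\vee\rangle = \langle \lambda, \alpha_i^\vee\rangle + \langle \beta, \alpha_i^\vee\rangle$ node by node.

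The non-contact nodes are easy. For $i \notin I_\sfc$ one has $\langle \lambda, \alpha_i^\vee\rangle = 0$, since these are precisely the simple roots orthogonal to $\lambda$, while $\langle \beta, \alpha_i^\vee\rangle \geq 0$ because $\beta$, as the highest root of the factor $\fm$, is $\fg(\lambda)$-dominant. This yields dominance at non-contact nodes and the nonnegative integer coefficients $\widetilde r_i$ simultaneously.

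The crux is the contact nodes $i \in I_\sfc$, where I must show the two contributions cancel, i.e. $\langle\widetilde\lambda,\alpha_i^\vee\rangle = 0$. Here I would exploit the contact grading $\fg = \fc_{-2}\oplus\dots\oplus\fc_2$ of Remark \ref{R:contact}, in which $\fc_2 = \fg_\lambda$ is the one-dimensional highest root space, $\fg(\lambda) = \fc_0^{ss}$, and the grading element $\sfZ_\sfc$ satisfies $\sfZ_\sfc(\alpha_j) = 1$ for $j \in I_\sfc$ and $0$ otherwise. Consider the $\alpha_i$-string through $\beta$. Since $\beta$ is a positive combination of non-contact simple roots only, $\beta - \alpha_i$ has mixed-sign coordinates and hence is not a root; thus $\beta$ sits at the bottom of its $\alpha_i$-string and $\langle\beta,\alpha_i^\vee\rangle = -q$, where $q \geq 0$ is the largest integer with $\beta + q\alpha_i \in \Delta(\fg)$. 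Now $\beta + m\alpha_i$ has $\sfZ_\sfc$-degree $m$, hence lies in $\fc_m$; since $\fc_m = 0$ for $m \geq 3$ we get $q \leq 2$, and since $\fc_2 = \fg_\lambda$ is one-dimensional, $\beta + 2\alpha_i \in \Delta(\fg)$ forces $\beta + 2\alpha_i = \lambda$. It remains to see $q = r_i$: the node $i$ borders $\fm$ (removing the contact nodes from the connected diagram $\fD(\fg)$ leaves every component attached to a contact node, and the only type with two contact nodes is $A_\ell$, where a direct check confirms $i$ borders the unique component), so $q \geq 1$. If $q = 2$ then $\lambda = \beta + 2\alpha_i$ gives $r_i = \langle\lambda,\alpha_i^\vee\rangle = -2 + 4 = 2$; if $q = 1$ then $\beta + 2\alpha_i \notin \Delta(\fg)$ and $r_i = 1$. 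In either case $\langle\widetilde\lambda,\alpha_i^\vee\rangle = r_i - q = 0$.

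The main obstacle is precisely this last identity $q = r_i$, and the clean input on which it rests is the fact that for simple $\fg \neq A_1$ the highest root satisfies $\langle\lambda,\alpha_i^\vee\rangle \in \{1,2\}$ at a contact node, with the value $2$ occurring only for $\fg = C_\ell$ (at the short node $\alpha_1$), where one verifies directly that $\lambda - 2\alpha_1 = \beta$ is the highest root of $\fg(\lambda) = C_{\ell-1}$; for all other types $r_i = 1$ matches $q = 1$. I would also record that for $\fg = B_\ell, D_\ell$ the algebra $\fg(\lambda)$ is \emph{not} simple, so ``maximal $\beta$'' means the highest root of either simple factor, and the argument above is run on each factor separately (the unique contact node borders both). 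This light case distinction, together with the adjacency bookkeeping, is the only point where the uniform root-string argument must be supplemented by the explicit highest-root data of Table \ref{F:hw}.
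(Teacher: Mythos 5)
Your proposal is correct and follows essentially the same route as the paper: the heart of both arguments is the $\alpha_i$-root-string through $\beta$ at a contact node, with $\beta-\alpha_i\notin\Delta$ for support reasons, the string capped at length two because the contact grading has depth two with $\lambda$ its unique grading-$2$ root, and the length-two case forcing $\langle\lambda,\alpha_i^\vee\rangle=2$, i.e.\ type $C_\ell$. The only (harmless) differences are organizational: the paper settles types $A_\ell$ and $C_\ell$ by direct computation from Table \ref{F:hw} and runs the string argument only for the remaining types (where the contact node is unique, $r_j=1$, and $\beta+\alpha_j\in\Delta$ is deduced from maximality of $\beta$ in $\Delta(\fg(\lambda))$ but not in $\Delta(\fg)$, rather than from your diagram-adjacency/full-support argument), whereas you run the string argument uniformly at every contact node --- including the two contact nodes of type $A_\ell$ --- and you also spell out dominance of $\widetilde\lambda$ at the non-contact nodes, a point the paper leaves implicit.
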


 \begin{proof} For $A_\ell$ ($\ell \geq 3$), $\lambda = \alpha_1 + ... + \alpha_\ell = \lambda_1 + \lambda_\ell$ and $\beta = \lambda - \alpha_1 - \alpha_\ell = -\lambda_1 + \lambda_2 + \lambda_{\ell-1} - \lambda_\ell = \widetilde\lambda - \lambda$.  For $C_\ell$ ($\ell \geq 2$), $\lambda = 2\alpha_1 + ... + 2\alpha_{\ell-1} + \alpha_\ell = 2\lambda_1$ and $\beta = \lambda - 2\alpha_1 = -2\lambda_1 + 2\lambda_2 = \widetilde\lambda - \lambda$.

 Suppose $\fg$ is not type $A$ or $C$.  Let $j$ be the unique contact node, so $\lambda = \lambda_j$. We must have $\beta = \widetilde\lambda + b\lambda_j$, where $b \in \bbZ$.  In particular, $\langle \beta,\alpha_j^\vee \rangle = \langle \widetilde\lambda + b\lambda,\alpha_j^\vee \rangle = b \,\langle \lambda,\alpha_j^\vee \rangle = b$.

 Since $\langle \beta, \lambda_j \rangle = 0$, then $\sfZ_j(\beta) = 0$, so $\beta - \alpha_j \not\in \Delta$ (since $\beta \in \Delta^+$). While $\beta$ is maximal in $\Delta(\fg(\lambda))$, it is not maximal in $\Delta(\fg)$, so $\beta + \alpha_j \in \Delta$. The $\alpha_j$-string through $\beta$ is  $\beta, \beta + \alpha_j, ..., \beta - b\alpha_j$, so $b=-1$ or $-2$.  If $b = -2$, then $\lambda = \beta + 2\alpha_j$ (since $\lambda$ is the unique root with $\sfZ_j$-grading $+2$).  Thus, $\langle \lambda, \alpha_j^\vee \rangle = \langle \beta + 2\alpha_j, \alpha_j^\vee \rangle = b+4 = 2$, so $\fg = C_\ell$, a contradiction.  Hence, $b=-1$.
 \end{proof}

  Given $\alpha \in \Delta$, define $h_\alpha \in \fh$ by $\alpha = B(h_\alpha, \cdot)$, where $B$ is the Killing form.  For $\fg(\lambda)$, take:
 \begin{itemize}
 \item $\widetilde\fh = \fh \cap \fg(\lambda) = \tspan\{ h_{\alpha_i} \st \langle \lambda, \alpha_i \rangle = 0 \} = \tspan\{ h_{\alpha_i} \st i \not\in I_\sfc \}$ to be the Cartan subalgebra.
 \item $\widetilde\alpha_i = \alpha_i|_{\widetilde\fh}$, $i \not\in I_\sfc$ to be the simple roots.
 \item $\widetilde\lambda_i = \lambda_i|_{\widetilde\fh}$, $i \not\in I_\sfc$ to be the fundamental weights.
 \end{itemize}
 Using Proposition \ref{P:hw-perp} and the known highest weights for simple Lie algebras (Table \ref{F:hw}), we can immediately write any maximal $\beta \in \Delta(\fg(\lambda))$ in terms of the fundamental weights of $\fg$.  The following is similar to Kostant's cascade of orthogonal roots \cite{Kos2012}, but is restricted to the top part of the grading.

 \begin{defn} \label{D:TSOC} The {\em top-slot orthogonal cascade} (TSOC) is an ordered sequence $\{ \beta_1, \beta_2,... \} \subset \Delta(\fg_\nu)$, where $\beta_1 = \lambda$ is the highest root of $\fg$, and
 \begin{align} \label{E:max-root}
 \beta_j = \max\{ \alpha \in \Delta(\fg_\nu) \st \alpha \in \{ \beta_1,...,\beta_{j-1} \}^\perp \}, \quad j \geq 2.
 \end{align}
 \end{defn}

 \begin{example}[$C_\ell / P_{\ell-1}$] \label{E:CP1} Here $\nu=2$.  Using Proposition \ref{P:hw-perp} and Table \ref{F:hw}, we obtain the TSOC:
 \begin{align} \label{E:C-ex}
  \beta_1 = 2\lambda_1, \quad \beta_j = -2\lambda_{j-1} + 2\lambda_j \quad (2 \leq j \leq \ell-1).
 \end{align}
 In root notation, $\beta_i = \underbrace{0\cdots0}_{i-1}\underbrace{2\cdots2}_{\ell-i}1$, $1 \leq i \leq \ell-1$.
 Note that $\beta_\ell = -2\lambda_{\ell-1} + 2\lambda_\ell = 0\cdots01 \not\in \Delta(\fg_\nu)$.
 \end{example}

 By Lemma \ref{L:max}(ii) below, there is a unique maximal root in \eqref{E:max-root} at each step.  Now our main result in this section is:

 \begin{thm} \label{T:TSOC} Let $G$ be a complex simple Lie group, $P \subsetneq G$ a parabolic subgroup, and $\fg = \fg_{-\nu} \op ... \op \fg_\nu$ the associated grading.  Representative elements of the $G_0$-orbits in $\bbP(\fg_\nu)$ are given in terms of the TSOC $\{ \beta_1,\beta_2,\beta_3,... \} \subset \Delta(\fg_\nu)$ as the projectivization of the elements
 \begin{align} \label{E:G0-orbit-reps}
  e_{\beta_1}, \quad e_{\beta_1} + e_{\beta_2}, \quad e_{\beta_1} + e_{\beta_2} + e_{\beta_3}, \quad ...,
 \end{align}
 for any fixed choice of corresponding root vectors.
 \end{thm}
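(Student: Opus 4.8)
The plan is to use the sub-cominuscule structure of $\fg_\nu$ established in Proposition \ref{P:SC} to replace the orbit problem by the secant-variety filtration, and then to match the TSOC partial sums with the successive rank strata. By sub-cominuscularity, the $G_0$-orbit closures in $\bbP(\fg_\nu)$ form a strictly increasing chain $\cV=\Sec_1(\cV)\subsetneq\Sec_2(\cV)\subsetneq\cdots\subsetneq\Sec_m(\cV)=\bbP(\fg_\nu)$, and the $G_0$-orbits are exactly $\cV$ together with the differences $\Sec_i(\cV)\setminus\Sec_{i-1}(\cV)$ for $2\leq i\leq m$. Writing $\phi_j=e_{\beta_1}+\cdots+e_{\beta_j}$, it therefore suffices to prove (a) the TSOC has exactly $m$ terms, and (b) $[\phi_j]\in\Sec_j(\cV)\setminus\Sec_{j-1}(\cV)$ for each $j$; the classes $[\phi_1],\dots,[\phi_m]$ then exhaust the orbits, one apiece.

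First I would check $[e_{\beta_i}]\in\cV$ for every $i$. As $\fg_\nu$ is an irreducible $\fg_0$-module with highest weight the highest root $\lambda=\beta_1$, the $T$-fixed points of the closed orbit $\cV$ are exactly the weight lines $[e_\mu]$ with $\mu$ extremal, i.e.\ $\mu\in W_0\cdot\lambda$ where $W_0$ is the Weyl group of $\fg_0$. Each cascade root $\beta_i$ is the unique (Lemma \ref{L:max}(ii)) maximal root of $\Delta(\fg_\nu)\cap\{\beta_1,\dots,\beta_{i-1}\}^\perp$ in \eqref{E:max-root}, hence a long root of $\fg$; and the long roots appearing in $\Delta(\fg_\nu)$ are precisely those whose root vectors are rank-one, being the extremal weights, as one reads off uniformly from Table \ref{F:SC}. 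Granting $[e_{\beta_i}]\in\cV$, the upper bound $[\phi_j]\in\Sec_j(\cV)$ is immediate since $\phi_j$ is a sum of $j$ points of the affine cone $\hat\cV$. Finally, the maximal torus $T\subset G_0$ scales the spaces $\fg_{\beta_i}$ independently (the $\beta_i$ being linearly independent), so rescaling the chosen root vectors does not change the $G_0$-orbit of $\phi_j$; this yields the asserted independence of the choice of root vectors.

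The crux is the lower bound $[\phi_j]\notin\Sec_{j-1}(\cV)$, equivalently $\rnk(\phi_j)=j$, where the rank of $x\in\fg_\nu$ is the least number of cone points summing to $x$. Here I would use strong orthogonality of the cascade (as in Kostant's cascade \cite{Kos2012}): for $i\neq k$ one has $\beta_i\pm\beta_k\notin\Delta$, so $[e_{\beta_i},e_{\pm\beta_k}]=0$ and the summand $\phi_j-e_{\beta_1}=\sum_{i\geq2}e_{\beta_i}$ is supported in the span of the root spaces $\fg_\beta$ with $\beta\in\Delta(\fg_\nu)\cap\beta_1^\perp$, which is again a sub-cominuscule module for the relevant reductive subgroup. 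This gives an induction on $j$: the base case $[e_{\beta_1}]\in\cV=\Sec_1(\cV)$ is clear, and for the step the rank is additive under this orthogonal (block-diagonal) decomposition, just as matrix rank is additive over disjoint blocks in the Segre, $S^2$ and $\bigwedge^2$ cases (and as the Jordan-algebra rank is additive in the spinorial $E_6/P_6$ and Cayley-plane $E_7/P_7$ cases). Hence $\rnk(\phi_j)=1+\rnk\!\big(\sum_{i\geq2}e_{\beta_i}\big)=j$, placing $[\phi_j]$ in the $j$-th stratum.

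Finally I would confirm that the cascade terminates after exactly $m$ steps: it stops precisely when $\Delta(\fg_\nu)\cap\{\beta_1,\dots\}^\perp=\emptyset$, i.e.\ when no further orthogonal rank-one direction remains, so its length equals the generic rank $m$, which one reads off from the last column of Table \ref{F:SC} (e.g.\ $\ell$ for $C_\ell/P_\ell$, $\lfloor\ell/2\rfloor$ for $D_\ell/P_\ell$, $\min(k,\ell+1-k)$ for $A_\ell/P_k$). Combining (a) and (b), the $m$ classes $[\phi_j]$ represent the $m$ distinct $G_0$-orbits. I expect the main obstacle to be making the rank-additivity step genuinely uniform: the matrix-type entries of Table \ref{F:SC} are routine, but the spinorial and Cayley-plane cases require the composition- and Jordan-algebra descriptions of their rank stratifications, and the strong-orthogonality bookkeeping for the inductive reduction to $\beta_1^\perp$ must be carried out with care.
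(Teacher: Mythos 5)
Your skeleton is the same as the paper's: Proposition \ref{P:SC} converts the orbit problem into the secant filtration, and the task becomes placing $\phi_j := e_{\beta_1}+\cdots+e_{\beta_j}$ in $\Sec_j(\cV)\setminus\Sec_{j-1}(\cV)$. However, two steps where you diverge from the paper are not actually proved. The first is your inference that each cascade root is ``hence a long root of $\fg$'' because it is the maximal root of $\Delta(\fg_\nu)\cap\{\beta_1,\dots,\beta_{i-1}\}^\perp$. Maximality in a subsystem only makes a root long \emph{relative to that subsystem}; a subdiagram subalgebra can consist entirely of $\fg$-short roots (e.g.\ the $A_{\ell-1}$ generated by $\alpha_1,\dots,\alpha_{\ell-1}$ inside $C_\ell$). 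The statement you need -- that the cascade roots are $W_\fp$-conjugate to $\lambda$, hence long, hence extremal, hence $[e_{\beta_i}]\in\cV$ -- is exactly Lemma \ref{L:minuscule}(i), which the paper proves by reducing to the minuscule list and exhibiting explicit Weyl-group words case by case; your long-iff-extremal dichotomy read from Table \ref{F:SC} cannot be invoked until longness is established, so as written this step is circular.

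The second and more serious gap is the rank-additivity claim $\rnk(\phi_j)=1+\rnk\bigl(\phi_j-e_{\beta_1}\bigr)$, which carries the entire content of the lower bound $[\phi_j]\notin\Sec_{j-1}(\cV)$ and which you assert by analogy with block matrices and Jordan algebras. Beyond the need to treat the quadric, spinor and $E_7/P_7$ modules via composition/Jordan-algebra models, additivity presupposes that the rank of an element supported in the orthogonal block, computed with respect to the smaller group's closed orbit, agrees with its rank in $\fg_\nu$ -- a compatibility of stratifications that is not automatic and is essentially equivalent to what is being proved. The paper's replacement for all of this is uniform and purely Lie-theoretic: since $\lambda+\beta\notin\Delta$ and $\langle\lambda,\beta\rangle=0$, the root string forces $\lambda-\beta\notin\Delta$, and then the abelian big-cell computation $\exp(x)\cdot e_\lambda=e_\lambda+[x,e_\lambda]$ for $x\in\fl_{-1}$ shows $[e_\lambda+e_\beta]\notin G_0[e_\lambda]$ (Lemma \ref{L:minuscule}(ii)); iterating this through the nested secondary and tertiary gradings $\fg_\nu=\fg_{\nu,0}\op\fg_{\nu,1}\op\fg_{\nu,2}$, etc., excludes $\phi_j$ from each lower orbit directly, with no notion of rank required. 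To complete your route you would have to prove additivity case by case from the classical models (genuine work in the spinorial and Cayley-plane cases), or else fall back on the paper's orthogonality/root-string argument, at which point the two proofs merge. (Your claim that the cascade length equals the number $m$ of orbits is likewise read off from Table \ref{F:SC} rather than derived -- in the paper it falls out of Lemmas \ref{L:max} and \ref{L:minuscule} -- but that is minor by comparison.)
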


 The proof will follow from two lemmas.  Given a parabolic subalgebra $\fp \subsetneq \fg$, apply the following recipe: {\em Remove all contact nodes of $\fg$ from $\fD(\fg,\fp)$, then remove all cross-free connected components.}  This gives a diagram $\fD_\lambda^0(\fg,\fp)$ corresponding to an ideal $\fl_\fp(\lambda) \subset \fg(\lambda)$.  (All other ideals of $\fg(\lambda)$ lie in $\fg_0$.  We will be interested in those that intersect $\fg_\nu$ non-trivially.)  Let $\fp(\lambda) = \fl_\fp(\lambda) \cap \fp$ and denote by $L_\fp(\lambda)$, $P(\lambda)$ the connected subgroups of $G$ corresponding to $\fl_\fp(\lambda)$, $\fp(\lambda)$.

 \begin{lemma} \label{L:max}
 $\Delta(\fg(\lambda)) \cap \Delta(\fg_\nu) \neq \emptyset$ if and only if $\emptyset \neq \fT^0(\fg_0^{ss},\lambda) \not\cong \Add{wwww}{1,0,0,0}$.  In this case,
 \begin{enumerate}
 \item[(i)] $I_\fp$ does not contain any contact node(s) of $\fg$.
 \item[(ii)] $\fD^0_\lambda(\fg,\fp) \neq \emptyset$ and contains a single connected component.  (Hence, $\fl_\fp(\lambda)$ is simple and $\max\Delta(\fg(\lambda)) = \max\Delta(\fl_\fp(\lambda)) \in \Delta(\fg_\nu)$ is well-defined and unique.)
 \end{enumerate}
 \end{lemma}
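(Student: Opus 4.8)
The plan is to reduce the whole statement to one uniform root computation and isolate the remaining case-check. First I would rewrite the set in question purely combinatorially. Since $\lambda$ is dominant with $\langle \lambda,\alpha_i\rangle > 0$ exactly for the contact nodes $i \in I_\sfc$, a positive root $\alpha = \sum_i n_i\alpha_i$ satisfies $\langle\lambda,\alpha\rangle = 0$ if and only if $n_i = 0$ for every $i \in I_\sfc$. Hence
\[
\Delta(\fg(\lambda)) \cap \Delta(\fg_\nu) = \{ \alpha \in \Delta^+ \st \mathrm{supp}(\alpha) \cap I_\sfc = \emptyset, \,\, \sfZ(\alpha) = \nu \}.
\]
The two standard facts I would use are: (a) the support of any root is a connected sub-diagram, so a root avoiding $I_\sfc$ has support inside a single connected component of $\fD_\lambda(\fg)$; and (b) every mark $m_i$ in $\lambda = \sum_i m_i\alpha_i$ is $\geq 1$, while $\nu = \sfZ(\lambda) = \sum_{i \in I_\fp} m_i$.

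The heart of the argument, producing both (i) and (ii) simultaneously, is the following sandwich. Suppose $\alpha = \sum_i n_i\alpha_i$ lies in the intersection, with $\mathrm{supp}(\alpha)$ contained in a single component $C$ of $\fD_\lambda(\fg)$. Since $\lambda$ is the highest root, $\lambda - \alpha$ is a non-negative combination of simple roots, so $n_i \leq m_i$ for all $i$; and $n_i = 0$ off $\mathrm{supp}(\alpha)\subseteq C$. Therefore
\[
\nu = \sfZ(\alpha) = \sum_{i \in I_\fp \cap C} n_i \leq \sum_{i \in I_\fp \cap C} m_i \leq \sum_{i \in I_\fp} m_i = \nu,
\]
so all inequalities are equalities. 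Equality on the right forces $m_i = 0$ for every $i \in I_\fp\setminus C$, and as marks are positive this gives $I_\fp \subseteq C$: all crosses lie in the single component $C$, so $\fD^0_\lambda(\fg,\fp) = C$ is nonempty and connected, $\fl_\fp(\lambda)$ is simple, and the maximal top-slot root in $\fg(\lambda)$ is its (unique) highest root — this is (ii). Equality on the left gives $n_i = m_i$ for all $i \in I_\fp$; a contact node $c \in I_\fp$ would force $n_c = m_c \geq 1$, contradicting $n_c = 0$, so $I_\fp \cap I_\sfc = \emptyset$ — this is (i).

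It remains to prove the equivalence. For the forward implication I would argue contrapositively. If $\fT^0(\fg_0^{ss},\lambda) = \emptyset$, then by the diagram recipe $\fg_\nu$ is a trivial $\fg_0^{ss}$-module, so $\Delta(\fg_\nu) = \{\lambda\}$ and the intersection is $\{\lambda\}\cap\lambda^\perp = \emptyset$. If instead $\fg_\nu$ is the standard representation $\bbC^m$ of an $A_{m-1}$-factor (the excluded diagram), then, $\lambda$ being its highest weight, every other weight $\mu$ of $\fg_\nu$ satisfies $\langle\lambda,\mu\rangle$ equal to a negative multiple of $\langle\lambda,\lambda\rangle$ by the standard computation for $\bbC^m$, hence $\langle\lambda,\mu\rangle \neq 0$; so again no weight of $\fg_\nu$ is orthogonal to $\lambda$ and the intersection is empty. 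This proves that a nonempty intersection forces $\fT^0 \neq \emptyset$ and $\fT^0 \not\cong \Add{wwww}{1,0,0,0}$.

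For the reverse implication I would invoke Proposition \ref{P:SC}: $\fg_\nu$ is sub-cominuscule, so once $\fT^0$ is nonempty and not the excluded type-$A$ standard representation, Table \ref{F:SC} shows that $\cV \subset \bbP(\fg_\nu)$ is a \emph{proper} sub-cominuscule variety (the single-orbit entries being exactly the type-$A$ standard modules). In each such case I would exhibit a top-slot root orthogonal to $\lambda$: concretely the highest root $\beta$ of the single surviving ideal $\fl_\fp(\lambda)$, which by Proposition \ref{P:hw-perp} equals $\widetilde\lambda - \lambda$, and I would verify $\sfZ(\beta) = \nu$ so that $\beta \in \Delta(\fg_\nu)$. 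I expect precisely this last matching — checking, across the remaining sub-cominuscule types (Segre with both factors nontrivial, the two quadrics, $S^2$, $\bigwedge^2$, and the $D_5$- and $E_6$-minuscule modules) that the highest root of $\fl_\fp(\lambda)$ genuinely lands in the top slot $\fg_\nu$ rather than a lower grade — to be the only real obstacle; the uniform sandwich above disposes of (i), (ii), and the forward direction at a single stroke.
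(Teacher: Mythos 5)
Your combinatorial reformulation and the ``sandwich'' are correct as far as they go: they establish (i), (ii), and the implication that a nonempty intersection forces $\fT^0(\fg_0^{ss},\lambda)\neq\emptyset$, and this is a nice uniform replacement for the paper's case discussion of (i)--(ii). The gaps are in the two remaining halves of the equivalence. For the exclusion of $\fT^0(\fg_0^{ss},\lambda)\cong\Add{wwww}{1,0,0,0}$ you compute $\langle\lambda,\mu\rangle$ with the wrong bilinear form: orthogonality is measured by the form on $\fh^*$ of $\fg$, whereas ``the standard computation for $\bbC^m$'' uses the invariant form on the weight space of the $A_{m-1}$-factor of $\fg_0^{ss}$. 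These differ by the contribution of $\fz(\fg_0)$ (all weights of $\fg_\nu$ carry the same nonzero central character), so negativity of the semisimple-part pairing does not imply non-vanishing of the full pairing. The inference pattern is genuinely false: applied verbatim to the quadric case $B_\ell/P_1$, where $\fg_1$ is the standard representation of $B_{\ell-1}$ and the lowest weight also pairs negatively with the highest weight in the $B_{\ell-1}$-form, it would show the intersection is empty there --- yet $\alpha_1=\epsilon_1-\epsilon_2$ is orthogonal to $\lambda=\epsilon_1+\epsilon_2$, exactly as the lemma and Table \ref{F:orth-minuscule} require. (In the excluded type-A case the true pairing $\langle\lambda,\beta\rangle$ is in fact \emph{positive}, not negative.) The correct criterion is the paper's root-string argument: since $\lambda+\beta\notin\Delta$, one has $\langle\lambda,\beta\rangle=0$ if and only if $\lambda-\beta\notin\Delta(\fg_0)$, and in the standard representation of $A_{m-1}$ any two distinct weights differ by a root of $\fg_0^{ss}$, which kills orthogonality.

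Second, the reverse implication ($\emptyset\neq\fT^0\not\cong\Add{wwww}{1,0,0,0}$ implies the intersection is nonempty) is announced rather than proved: the verification you call ``the only real obstacle'' is precisely the content of this direction, and it is deferred. There are also two structural problems with the plan as stated. (a) You invoke ``the single surviving ideal $\fl_\fp(\lambda)$'', but connectedness of $\fD^0_\lambda(\fg,\fp)$ was derived in your sandwich \emph{from} nonemptiness of the intersection, which is what you are now trying to prove; a priori several components of $\fD_\lambda(\fg,\fp)$ could carry crosses, and then no highest root of a single component can have $\sfZ$-grading $\nu$. (b) Organizing the check by sub-cominuscule type, rather than by the pair $(\fg,\fp)$, presupposes that orthogonality of top-slot roots to $\lambda$ depends only on the effective $\fg_0^{ss}$-module structure of $\fg_\nu$ --- this is exactly the root-string reduction above, which you never establish. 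That reduction is what lets the paper pass without loss of generality to the seven irreducible minuscule cases ($\nu=1$) and finish by exhibiting the explicit orthogonal roots of Table \ref{F:orth-minuscule}; without it, ``checking the remaining sub-cominuscule types'' is not a well-founded finite verification.
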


 \begin{proof} Note that $\fT^0(\fg_0^{ss},\lambda) = \emptyset$ if and only if $I_\fp$ contains {\bf all} contact nodes of $\fg$.  In this case, $\fg_\nu$ is 1-dimensional and so $\Delta(\fg(\lambda)) \cap \Delta(\fg_\nu) = \emptyset$.  So let us suppose $\fT^0(\fg_0^{ss},\lambda) \neq \emptyset$ below.
 
 For $\beta \in \Delta(\fg_\nu)$, we have $\lambda + \beta \not\in \Delta$ always, so the orthogonality condition $\langle \lambda, \beta\rangle = 0$ is equivalent to $\lambda - \beta \not\in \Delta(\fg_0)$, i.e.\ the $\beta$-string through $\lambda$ is trivial.  This statement depends only on (the effective part of) the $\fg_0^{ss}$-module structure on $\fg_\nu$, which is sub-cominuscule.  Similarly, $\fT^0(\fg_0^{ss},\lambda)$ only depends on the same restricted structure.  Thus, in proving the first statement, we may without loss of generality suppose that $G/P$ is irreducible minuscule, so $\nu=1$.
 
  Suppose $\fT^0(\fg_0^{ss},\lambda) \cong \Add{wwww}{1,0,0,0}$.  Given that $\nu=1$, we must have $G/P \cong A_\ell / P_1$ and $\lambda = \alpha_1 + ... + \alpha_\ell$.   Any $\beta \in \Delta(\fg_1)$ is of the form $\beta = \alpha_1 + ... + \alpha_k$ for some $1 \leq k \leq \ell$, so $\lambda - \beta \in \Delta$ for $\lambda \neq \beta$, i.e.\ $\langle \lambda, \beta \rangle \neq 0$.  Thus, $\Delta(\fg(\lambda)) \cap \Delta(\fg_1) = \emptyset$.  Now suppose $\emptyset \neq\fT^0(\fg_0^{ss},\lambda) \not\cong \Add{wwww}{1,0,0,0}$.  Table \ref{F:orth-minuscule} gives $\lambda$ and $\beta \in \Delta(\fg_1)$ such that $\lambda - \beta \not\in\Delta$ and hence $\langle \lambda,\beta \rangle = 0$.  Thus, $\Delta(\fg(\lambda)) \cap \Delta(\fg_1) \neq\emptyset$ and hence the first statement is proven.

For the proof of (i) and (ii), consider again the case of general $\nu$.  Suppose (i) is false.  When $G \neq A_\ell$, the contact node is unique, so $\fT^0(\fg_0^{ss},\lambda) = \emptyset$.  When $G = A_\ell$, $\fT^0(\fg_0^{ss},\lambda) = \Add{wwww}{1,0,0,0}$ is forced.  For (ii), take $\beta \in \Delta(\fg(\lambda)) \cap \Delta(\fg_\nu)$.  Then $\beta$ must be an integer linear combination of simple roots from a {\em single} connected component of $\fD_\lambda(\fg,\fp)$.  If this component is cross-free, then $\beta \in \Delta(\fg_0)$, a contradiction, so $\fD^0_\lambda(\fg,\fp) \neq \emptyset$.  If $\fD^0_\lambda(\fg,\fp)$ has two connected components (each with crosses), then $\sfZ(\beta) < \sfZ(\lambda)$.  This proves (ii).
 \end{proof}

  \begin{table}[h]
  \[
 \begin{array}{|c|c|c|c|c|} \hline
 G/P & \mbox{Range} & \begin{tabular}{c} $\lambda$ in root\\ notation \end{tabular} & \beta = \max\Delta(\fl_\fp(\lambda)) & L_\fp(\lambda) / P(\lambda)\\ \hline\hline
 A_\ell / P_k & 1 < k < \ell & 11\cdots11 & 011\cdots110 & A_{\ell-2} / P_{k-1}\\
 B_\ell / P_1 & \ell \geq 2 & 122\cdots222 & 100\cdots000 & A_1 / P_1 \\
 D_\ell / P_1 & \ell \geq 4 & 122\cdots211 & 100\cdots000 & A_1 / P_1\\
 C_\ell / P_\ell & \ell \geq 3 & 222\cdots221 & 022\cdots221 & C_{\ell-1} / P_{\ell-1}\\
 D_\ell / P_\ell & \ell \geq 4 & 1\overbrace{22\cdots22}^{\ell-3}11 & \left\{ \begin{array}{ll} 001\overbrace{22\cdots22}^{\ell-5}11, & \ell \geq 5;\\ 0001, & \ell = 4
 \end{array} \right. & D_{\ell-2} / P_{\ell-2}\\
 E_6 / P_6 & - & 122321 & 101111 & A_5 / P_1\\
 E_7 / P_7 & - & 2234321 & 0112221 & D_6 / P_1\\ \hline
 \end{array}
 \]
 \caption{Maximal root orthogonal to the highest root: $|1|$-graded case}
  \label{F:orth-minuscule}
 \end{table}

 \begin{lemma} \label{L:minuscule}
 We have $\Delta(\fg(\lambda)) \cap \Delta(\fg_\nu) \neq \emptyset$ if and only if $\bbP(\fg_\nu)$ contains at least two $G_0$-orbits.

 In this case, if $\beta \in \Delta(\fg(\lambda))$ is the highest root, then
 \begin{enumerate}
 \item[(i)] $\beta$ lies in the $W_\fp$-orbit of $\lambda$, where $W_\fp$ is the Weyl group of $G_0$.  Thus, $[e_\beta] \in \cV := G_0  [e_\lambda]$.
 \item[(ii)] $[e_\lambda]$ and $[e_\lambda + e_\beta]$ lie in different $G_0$-orbits, namely $\cV$ and $\Sec_2(\cV)\backslash \cV$ respectively, and these have different dimensions.
 \end{enumerate}
 \end{lemma}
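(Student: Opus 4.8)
The plan is to read off the biconditional from Lemma~\ref{L:max}, and then to settle (i)--(ii) by combining the extreme-weight description of the closed orbit $\cV$ with a single $\ad$-rank invariant. First I would prove the equivalence. By Lemma~\ref{L:max}, $\Delta(\fg(\lambda))\cap\Delta(\fg_\nu)\neq\emptyset$ iff $\emptyset\neq\fT^0(\fg_0^{ss},\lambda)\not\cong\Add{wwww}{1,0,0,0}$; on the other hand, by Proposition~\ref{P:SC} the number of $G_0$-orbits in $\bbP(\fg_\nu)$ is the secant order of the sub-cominuscule variety $\cV$, which is $1$ (a single orbit, $\cV=\bbP(\fg_\nu)$) exactly when $\fg_\nu$ is one-dimensional ($\fT^0=\emptyset$) or $\cV$ is a linearly embedded projective space ($\fT^0\cong\Add{wwww}{1,0,0,0}$). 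Matching these two trichotomies gives the claim. In the remaining ``$\geq 2$ orbits'' case, Lemma~\ref{L:max}(ii) tells me that $\fl_\fp(\lambda)$ is simple and $\beta:=\max\Delta(\fl_\fp(\lambda))=\max\Delta(\fg(\lambda))\in\Delta(\fg_\nu)$. I would record two facts about $\beta$: it is orthogonal to $\lambda$ (by definition of $\fg(\lambda)$), and it is a long root of $\fg$, so $\langle\beta,\beta\rangle=\langle\lambda,\lambda\rangle$. The cleanest way to see the latter is to note that $\beta$ is precisely the second root of Kostant's cascade, namely the highest root orthogonal to the highest root $\lambda$, and cascade roots are long and mutually orthogonal \cite{Kos2012}; alternatively it can be read off Table~\ref{F:orth-minuscule}.

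For part (i) I would invoke the standard fact that among the weights of an irreducible module, those of maximal length are exactly the extreme weights: the weights lie in the convex hull of $W_\fp\lambda$, and the only points of that polytope on the sphere of radius $|\lambda|$ are its vertices $W_\fp\lambda$. Since $\lambda$ is the highest root of $\fg$, it is a longest weight of the $\fg_0$-module $\fg_\nu$, and $\beta$ has the same length; hence $\beta\in W_\fp\lambda$, so $[e_\beta]$ lies in the highest-weight orbit $\cV=G_0[e_\lambda]$.

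For part (ii), since $[e_\lambda],[e_\beta]\in\cV$, the point $[e_\lambda+e_\beta]$ lies on a secant line and so $[e_\lambda+e_\beta]\in\Sec_2(\cV)$. To separate it from $\cV$ I would use the $G_0$-equivariant endomorphism $\ad_X^2\colon\fg_{-\nu}\to\fg_\nu$, which is well defined because $[\fg_\nu,\fg_{-\nu}]\subseteq\fg_0$ and $[\fg_\nu,\fg_0]\subseteq\fg_\nu$, and whose rank is a projective invariant of $[X]\in\bbP(\fg_\nu)$. For a highest-weight vector, $\ad_{e_\lambda}^2(e_{-\mu})$ has weight $2\lambda-\mu$, which lies in $\Delta(\fg_\nu)$ only for $\mu=\lambda$ (both $\mu,2\lambda-\mu\leq\lambda$ with sum $2\lambda$), so $\ad_{e_\lambda}^2$ has rank $1$; by $G_0$-invariance and the fact that every point of $\cV$ is an extreme-weight line, the rank is $\equiv 1$ on all of $\cV$. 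For $X=e_\lambda+e_\beta$ I compute $\ad_X^2(e_{-\lambda})=-\langle\lambda,\lambda\rangle e_\lambda$ and $\ad_X^2(e_{-\beta})=-\langle\beta,\beta\rangle e_\beta$: the cross terms vanish because $\langle\lambda,\beta\rangle=0$ kills $[e_\beta,h_\lambda]$, while $\beta-\lambda\notin\Delta$ (as $|\beta-\lambda|^2=2\langle\lambda,\lambda\rangle$ exceeds every root length) kills $[e_\beta,e_{-\lambda}]$. Thus $\ad_X^2$ has rank $\geq 2$, so $[e_\lambda+e_\beta]\notin\cV$; being in $\Sec_2(\cV)$ it lies in the unique second orbit $\Sec_2(\cV)\setminus\cV$. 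Finally, distinct orbits in the secant stratification of a sub-cominuscule variety have strictly increasing dimension, which yields $\dim\cV<\dim(\Sec_2(\cV)\setminus\cV)$.

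The hard part is the uniform input that $\beta$ is long, i.e. $\langle\beta,\beta\rangle=\langle\lambda,\lambda\rangle$: both (i) (through the maximal-length criterion) and (ii) (through $\beta-\lambda\notin\Delta$) rest on it, and identifying $\beta$ with the second Kostant cascade root disposes of it in one stroke. A secondary point requiring care is that $\ad_X^2$ has rank exactly $1$ on all of $\cV$ rather than merely at $e_\lambda$; this follows from $G_0$-invariance together with the fact that $\cV$ consists precisely of extreme-weight lines. Should a fully uniform argument for the length be deemed unsatisfying, one can instead verify both the length of $\beta$ and the two orbit memberships case by case using Tables~\ref{F:SC} and~\ref{F:orth-minuscule}.
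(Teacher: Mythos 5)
Your part (ii) is correct and genuinely different from the paper's argument: the paper works inside the closed orbit $\cV \cong G_0^{ss}/Q$ and rules out $[e_\lambda+e_\beta]\in\cV$ by a computation in the big cell together with a root-string argument, whereas your $G_0$-invariant $\rnk\big(\ad_X^2\colon \fg_{-\nu}\to\fg_\nu\big)$ does the same job cleanly. The gap is in part (i), and it sits exactly where you flagged it. The claim ``Kostant cascade roots are long'' is false: in $B_3$ the cascade is $\{\epsilon_1+\epsilon_2,\ \epsilon_1-\epsilon_2,\ \epsilon_3\}$ and $\epsilon_3=\alpha_3$ is short; in $G_2$ it is $\{3\alpha_1+2\alpha_2,\ \alpha_1\}$ and $\alpha_1$ is short. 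So identifying $\beta$ with a cascade root does not dispose of the length ``in one stroke.'' In fact longness of $\beta$ is not a cascade generality at all: it holds in the lemma only because the hypothesis $\Delta(\fg(\lambda))\cap\Delta(\fg_\nu)\neq\emptyset$ excludes precisely those configurations in which the crossed component of $\lambda^\perp$ consists of short roots --- e.g.\ $B_3/P_3$ and $G_2/P_1$, where $\fT^0(\fg_0^{ss},\lambda)$ is a standard type-$A$ representation and Lemma \ref{L:max} kills the hypothesis. Establishing this interplay between the hypothesis and the length of $\beta$ is tantamount to the case analysis you were hoping to avoid, so your ``uniform'' proof of (i) does not stand as written.

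Your fallback (check the length of $\beta$ from Tables \ref{F:SC} and \ref{F:orth-minuscule}) is viable but incomplete as stated: Table \ref{F:orth-minuscule} covers only the $|1|$-graded cases, so for a general parabolic you still need the reduction ``it suffices to verify the claims in the irreducible minuscule case'' --- the first line of the paper's proof, justified as in Lemma \ref{L:max} by the fact that all conditions in play depend only on the effective sub-cominuscule module structure of $\fg_\nu$ --- and your proposal never establishes this. Once longness is secured, your maximal-norm criterion does yield (i) uniformly (note that all weights of $\fg_\nu$ share the same central $\fz(\fg_0)$-component, and the central and semisimple parts of $\fh^*$ are orthogonal for the Killing form, so the norm comparison descends to $\fg_0^{ss}$-weights); this is arguably slicker than the paper's route, which exhibits explicit Weyl words $w\in W_\fp$ with $\beta=w(\lambda)$ case by case. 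Finally, note that (ii) does not actually need longness: $\lambda-\beta\notin\Delta$ already follows from $\langle\lambda,\beta\rangle=0$ together with $\lambda+\beta\notin\Delta$ (trivial $\beta$-string through $\lambda$), which is the paper's argument; substituting this for your norm computation makes (ii) independent of the problematic input, leaving (i) as the only place where the case-by-case work, or the missing reduction, is genuinely required.
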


 \begin{proof} As in Lemma \ref{L:max}, it suffices to verify the claims in the irreducible minuscule case.

 By Lemma \ref{L:max}, if $\Delta(\fg(\lambda)) \cap \Delta(\fg_\nu) = \emptyset$, then $\fT^0(\fg_0^{ss},\lambda) = \emptyset$ or $\fT^0(\fg_0^{ss},\lambda) \cong \Add{wwww}{1,0,0,0}$.  In both cases, there is a single $G_0$-orbit in $\bbP(\fg_1)$.  Now suppose $\Delta(\fg(\lambda)) \cap \Delta(\fg_1) \neq \emptyset$.  We will prove (i) and (ii), from which the first claim follows.  For $B_\ell / P_1$ and $D_\ell / P_1$, (i) is immediate since $\alpha_1 = \min \Delta(\fg_1)$ is $\fg_0$-extremal.  For other cases, we give $w \in W_\fp$ such that $\beta = w(\lambda)$:
 \[
 A_\ell / P_k : \,\, (1\ell); \quad C_\ell / P_\ell : \,\, (1); \quad
 D_\ell / P_\ell : \,\, (2132); \quad E_6 / P_6:\,\, (243542); \quad E_7 / P_7: \,\,(13425431).
 \]
 Now consider (ii).   We have $\cV = G_0[e_\lambda] \cong G_0^{ss} / Q \subset \bbP(\fg_1)$, where $Q$ is a parabolic subgroup of $G_0^{ss}$.  Letting $\fl = \fg_0^{ss}$, $Q$ induces a $|1|$-grading $\fl = \fl_{-1} \op \fl_0 \op \fl_1$ with $\fq = \fl_{\geq 0}$.  Since $\fl_{-1}$ is abelian, then
 \[
 \exp(x) \cdot e_\lambda = (\id + \ad_x)(e_\lambda) = e_\lambda + [x,e_\lambda], \qquad \forall x \in \fl_{-1}.
 \]
 Assuming $[e_\lambda + e_\beta] \in G_0[e_\lambda]$, we must have $[x,e_\lambda] \in \fg_\beta$ for some $x \in \fl_{-1}$.  Hence, $\exists \gamma \in \Delta(\fl_{-1})$ such that $\lambda + \gamma = \beta$, so $\lambda - \beta \in \Delta(\fl_1)$.  Clearly $\lambda + \beta \not\in \Delta$, and $\langle \lambda, \beta \rangle = 0$ by assumption, so the $\beta$-string through $\lambda$ is trivial.  Hence, $\lambda - \beta \not\in\Delta$, a contradiction.  This proves (ii).
 \end{proof}

 Given $(\fg,\fp,\lambda)$, we apply Lemma \ref{L:max} to construct the TSOC iteratively.
 The parabolic in $G$ stabilizing $[e_\lambda]$ yields a contact grading (see Remark \ref{R:contact}).  Its intersection with $G_0^{ss}$ is a parabolic subgroup that yields a secondary grading,
 \[
 \fg_\nu = \fg_{\nu,0} \op \fg_{\nu,1} \op \fg_{\nu,2}, \quad \lambda \in \Delta(\fg_{\nu,2}), \quad
 \fg_0 = \fg_{0,-1} \op \fg_{0,0} \op \fg_{0,1}.
 \]
 (Note that $\fg_{0,2} = 0$ since $\lambda$ is the unique root with contact grading +2.)  Let $\beta = \max \Delta(\fl_\fp(\lambda)) \subset \Delta(\fg_{\nu,0})$.  We now apply Lemma \ref{L:max} to $(\fl_\fp(\lambda),\fp(\lambda),\beta)$.  (Note that $\fl_\fp(\lambda) = \fg_{*,0}^{ss}$ and $\fp(\lambda) = (\fg_{*,0}^{ss})_{\geq 0}$.)  By Lemma \ref{L:minuscule}(ii), $[e_\lambda + e_\beta]$ lies in $\Sec_2(\cV) \backslash \cV$.

 Similarly, the parabolic in $L_\fp(\lambda)$ yields stabilizing $[e_\beta]$ yields a contact grading.  Its intersection with $G_{0,0}^{ss}$ is a parabolic subgroup that yields a tertiary grading,
 \[
 \fg_{\nu,0} = \fg_{\nu,0,0} \op \fg_{\nu,0,1} \op \fg_{\nu,0,2}, \quad
 \beta \in \Delta(\fg_{\nu,0,2}), \quad
 \fg_{0,0} = \fg_{0,0,-1} \op \fg_{0,0,0} \op \fg_{0,0,1}.
 \]

 The third element of the TSOC will be $\gamma \in \Delta(\fg_{\nu,0,0})$ and
 by Lemma \ref{L:minuscule}(i), $[e_\gamma] \in \cV$.  Note $[\fg_{0,-1},e_\lambda] \subset \fg_{\nu,1}$, $[\fg_{0,-1},e_\beta] = 0$, and $[\fg_{0,0,-1},e_\beta] \subset \fg_{\nu,0,1}$.  Similar to the proof of Lemma \ref{L:minuscule}(ii),
 \[
 e_\lambda + e_\beta + e_\gamma \not\in G_0 \cdot (e_\lambda + e_\beta) + G_0\cdot e_\lambda.
 \]
 Thus, $[e_\lambda + e_\beta + e_\gamma]$ lies in $\Sec_3(\cV) \backslash \Sec_2(\cV)$.  Continuing in a similar fashion, the sequence \eqref{E:G0-orbit-reps} yields representative elements in successive secant varieties of $\cV$ (and by definition of sub-cominuscule, these are all the orbits).  This finishes the proof of Theorem \ref{T:TSOC}.

 Let us summarize: Provided that $\emptyset \neq \fT^0(\fg_0^{ss},\lambda) \not\cong \Add{wwww}{1,0,0,0}$ holds, we proceed to construct the TSOC iteratively (by removing $\mydiamond{}$ from $\fD(\fg)$ and using Proposition \ref{P:hw-perp}).

 \begin{example}[$E_7 / P_7$] \label{E:E7P7}
 \[
 \begin{array}{|c|ccccc|} \hline
 \raisebox{-0.05in}{\begin{tabular}{c}
 Dynkin diagram\\
 sequence
 \end{tabular}} & \Edd{dwwwwwx}{} & \leadsto&  \Dsix{xdwwww}{} & \leadsto&  \Aone{x}{}\\ \hline
 \fT^0(\fg_0^{ss},\lambda) &\Edd{wwwwww}{1,\!\!\!\!\!0,0,0,0,0} & & \Dfive{wwwww}{1,0,0,0,0} && \emptyset\\ \hline
 \begin{tabular}{c} TSOC in\\ weight notation \end{tabular} &
  \beta_1 = \lambda_1 && \beta_2 = -\lambda_1 + \lambda_6 && \beta_3 = -\lambda_6 + 2\lambda_7 \\ \hline
 \end{array}
 \]
 \end{example}


 \subsection{The $H$-sequence for the TSOC}
 \label{S:H-seq}

 By the Jacobson--Morozov theorem, any nilpotent element $E$ in $\fg$ can be included into a standard $\fsl_2$-triple $\{ E, H, F \}$ with commutation relations
 \begin{align} \label{E:sl2}
 [H,E] = 2E, \quad [H,F] = -2F, \quad [E,F] = H.
 \end{align}
 For any $\alpha \in \Delta$, define $H_\alpha \in \fh$ by $\alpha(H) = B(H,H_\alpha)$, where $B$ is the Killing form of $\fg$.  We have the standard relation $\langle \alpha, \beta \rangle = B(H_\alpha,H_\beta) = \alpha(H_\beta) = \beta(H_\alpha)$.  Letting $\alpha \in \Delta^+$ and defining $h_\alpha = \frac{2}{\langle \alpha, \alpha \rangle} H_\alpha$, we can choose root vectors $e_{-\alpha} \in \fg_{-\alpha}$ and $e_\alpha \in \fg_\alpha$ such that $\{ e_{\alpha}, h_\alpha, e_{-\alpha} \}$ is a standard $\fsl_2$-triple \cite[p.143]{Kna2002}.  We have $\alpha(h_\beta) = \langle \alpha, \beta^\vee \rangle$.
 
   Now let $\fp \subsetneq \fg$ be a parabolic subalgebra.
  We will identify the semisimple element $H$ for each element $E$ of the sequence \eqref{E:G0-orbit-reps} associated to the TSOC.

 \begin{lemma} \label{L:TSOC-sl2}
 Suppose that $\{ \beta_1,..., \beta_m \} \subset \Delta(\fg_\nu)$ is the TSOC.  For any $1 \leq j \leq m$, we have the standard $\fsl_2$-triple $\{ \sfE_j, \sfH_j, \sfF_j \}$ given by
 \begin{align*}
 \sfE_j &= e_{\beta_1} + ... + e_{\beta_j}\\
 \sfH_j &= h_{\beta_1} + ... + h_{\beta_j}\\
 \sfF_j &= e_{-\beta_1} + ... + e_{-\beta_j}
 \end{align*}
 \end{lemma}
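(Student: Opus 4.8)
The plan is to verify the three defining relations \eqref{E:sl2} for the triple $\{\sfE_j, \sfH_j, \sfF_j\}$ directly, reducing them to the corresponding relations for the individual triples $\{e_{\beta_a}, h_{\beta_a}, e_{-\beta_a}\}$ (which are standard $\fsl_2$-triples by our choice of root vectors) together with the vanishing of all cross-terms. The two structural inputs that kill the cross-terms are: (a) the TSOC roots are mutually orthogonal, $\langle \beta_a, \beta_b\rangle = 0$ for $a \neq b$, by the maximality selection in Definition \ref{D:TSOC}; and (b) every $\beta_a$ lies in the top slot, so $\sfZ(\beta_a) = \nu$.

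First I would compute $[\sfH_j, \sfE_j]$. Expanding bilinearly, $[\sfH_j, \sfE_j] = \sum_{a,b=1}^j [h_{\beta_a}, e_{\beta_b}] = \sum_{a,b} \langle \beta_b, \beta_a^\vee\rangle\, e_{\beta_b}$, using $\beta_b(h_{\beta_a}) = \langle \beta_b, \beta_a^\vee\rangle$. For $a \neq b$ orthogonality gives $\langle \beta_b, \beta_a^\vee\rangle = 2\langle \beta_b, \beta_a\rangle / \langle \beta_a, \beta_a\rangle = 0$, while for $a = b$ one has $\langle \beta_a, \beta_a^\vee\rangle = 2$. Hence $[\sfH_j, \sfE_j] = 2\sum_a e_{\beta_a} = 2\sfE_j$, and the identical computation (with $-\beta_b$) yields $[\sfH_j, \sfF_j] = -2\sfF_j$.

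The remaining relation $[\sfE_j, \sfF_j] = \sfH_j$ is where the top-slot hypothesis enters. Expanding, $[\sfE_j, \sfF_j] = \sum_{a,b}[e_{\beta_a}, e_{-\beta_b}]$; the diagonal terms $a = b$ contribute $\sum_a [e_{\beta_a}, e_{-\beta_a}] = \sum_a h_{\beta_a} = \sfH_j$, so it remains to show that each off-diagonal bracket $[e_{\beta_a}, e_{-\beta_b}]$ with $a \neq b$ vanishes. Since $[e_{\beta_a}, e_{-\beta_b}] \in \fg_{\beta_a - \beta_b}$ and $\beta_a \neq \beta_b$, it suffices to prove $\beta_a - \beta_b \notin \Delta$. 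I would invoke the $\beta_b$-string through $\beta_a$, namely $\beta_a - p\beta_b, \dots, \beta_a + q\beta_b$ with $p - q = \langle \beta_a, \beta_b^\vee\rangle = 0$ by orthogonality, so $p = q$. But $\sfZ(\beta_a + \beta_b) = 2\nu > \nu$ forces $\beta_a + \beta_b \notin \Delta$, whence $q = 0$ and therefore $p = 0$. Thus $\beta_a - \beta_b \notin \Delta$, the off-diagonal brackets vanish, and $[\sfE_j, \sfF_j] = \sfH_j$.

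The conceptual content is simply that mutually orthogonal roots lying in a single graded slot generate commuting $\fsl_2$-copies, and a sum of commuting standard triples is again a standard triple. The only point requiring care — the main (though mild) obstacle — is the vanishing of $[e_{\beta_a}, e_{-\beta_b}]$: orthogonality by itself only yields the symmetric string condition $p = q$, and it is the grading constraint $\sfZ(\beta_a + \beta_b) = 2\nu$ exceeding the maximal grading $\nu$ that closes off the upper end of the string and thereby forces $\beta_a - \beta_b$ out of $\Delta$.
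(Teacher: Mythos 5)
Your proof is correct and takes essentially the same route as the paper's: both reduce to the vanishing of cross-terms, killing $[e_{\beta_a},e_{-\beta_b}]$ ($a\neq b$) by noting $\beta_a+\beta_b\not\in\Delta$ (both roots lie in the top slot) and then using orthogonality $\langle\beta_a,\beta_b\rangle=0$ to exclude $\beta_a-\beta_b\in\Delta$, and both compute $[h_{\beta_a},e_{\beta_b}]=\langle\beta_b,\beta_a^\vee\rangle e_{\beta_b}=0$. The only difference is that you make explicit the root-string argument ($p=q$ plus $q=0$) that the paper leaves implicit.
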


 \begin{proof}
 For $1 \leq a \neq b \leq j$, $\beta_a + \beta_b \not\in \Delta$ since both are in the top-slot.  Since $\langle \beta_a, \beta_b \rangle = 0$, then $\beta_a - \beta_b \not\in \Delta$, so $[\sfE_j,\sfF_j] = \sfH_j$.
 Also, $[h_{\beta_a},e_{\beta_b}] =\beta_b(h_{\beta_a}) e_{\beta_b} = \langle \beta_b, \beta_a^\vee \rangle e_{\beta_b} = 0$, so $[\sfH_j,\sfE_j] = 2\sfE_j$ and similarly, $[\sfH_j,\sfF_j] = -2\sfF_j$.
 \end{proof}

 \begin{defn}
 We refer to $\{ \sfH_j \}_{j=1}^m$ as the $H$-sequence for the TSOC $\{ \beta_1, ..., \beta_m \} \subset \Delta(\fg_\nu)$.
 \end{defn}

 Let $\{ \sfZ_i \} \subset \fh$ be the dual basis to the simple roots $\{ \alpha_i \} \subset \fh^*$.
 Given $\alpha = \sum_i r_i \lambda_i$, we have $h_\alpha = \sum_i t_i \sfZ_i$, so $t_i = \alpha_i(h_\alpha) = \langle \alpha_i, \alpha^\vee \rangle = \sum_j r_j \langle \lambda_j, \alpha_i \rangle \frac{2}{\langle \alpha, \alpha \rangle} = \sum_j r_j \langle \lambda_j, \alpha_i^\vee \rangle \frac{\langle \alpha_i, \alpha_i \rangle}{\langle \alpha, \alpha \rangle}$.  Thus,
 \begin{align} \label{E:alpha-Z}
 \alpha = \sum_i r_i \lambda_i \qRa h_\alpha = \sum_i r_i \frac{\langle \alpha_i, \alpha_i \rangle}{\langle \alpha, \alpha \rangle} \sfZ_i.
 \end{align}

By Lemma \ref{L:minuscule}, the TSOC is contained in a single $W_\fp$-orbit, so $\langle \beta_j, \beta_j \rangle = \langle \lambda, \lambda \rangle$ for $1 \leq j \leq m$, where $\lambda$ is the highest weight of $\fg$.
 
  \begin{framed} Normalize $\langle \cdot, \cdot \rangle$ so that all {\em short} simple roots $\alpha_i$ satisfy $\langle \alpha_i, \alpha_i \rangle = 2$.  Hence, from the extended Dynkin diagram, we see that $\langle \lambda, \lambda \rangle$ is 2 in type ADE, 4 in type BCF, and 6 in type G.
  
 \end{framed}
 
It is straightforward to verify that $\sfH_1 = \sum_{i \in I_\sfc} \sfZ_i$, where $\lambda = \sum_i r_i \lambda_i$ and $I_\sfc = \{ i \st r_i \neq 0 \}$.

 \begin{example}[$C_\ell / P_{\ell-1}$] \label{E:CP2} Recall from \eqref{E:C-ex} that $\beta_1 = 2\lambda_1$, $ \beta_j = -2\lambda_{j-1} + 2\lambda_j \quad (2 \leq j \leq \ell-1)$.
 Since $\alpha_1,...,\alpha_{\ell-1}$ are short, then \eqref{E:alpha-Z} implies $\sfH_j = \sum_{i=1}^j h_{\beta_i} = \sfZ_j$ for $1 \leq j \leq \ell-1$.
 \end{example}

 \begin{example}[$B_\ell / P_\ell$] $ \beta_1 = \lambda_2, \quad
 \beta_j = -\lambda_{2j-2} + \lambda_{2j}\,\, (1 < j < \floor{\frac{\ell}{2}}), \quad
 \beta_{\floor{\frac{\ell}{2}}} = \left\{ \begin{array}{ll} \lambda_{\ell-1}, & \ell \mbox{ odd};\\ 2\lambda_\ell, & \ell \mbox{ even} \end{array} \right.
$ gives the TSOC.  Since $\alpha_\ell$ is the only short root, the $H$-sequence is $ \sfH_j = \sfZ_{2j}$ for $1 \leq j \leq \floor{\frac{\ell}{2}}$.
 \end{example}

 In fact, the construction of the $H$-sequence is even simpler:

 \begin{cor} \label{C:Hj-Z}
 Let $\{ \sfH_j \}_{j=1}^m$ be the $H$-sequence for the TSOC $\{ \beta_1, ..., \beta_m \} \subset \Delta(\fg_\nu)$.  Then $\sfH_j$ is obtained from the $j$-th step of the Dynkin diagram sequence for the TSOC by forming its highest weight, regarding it as a weight of $\fg$ (see discussion preceding Definition \ref{D:TSOC}), and using \eqref{E:alpha-Z}.  In particular, $\sfH_j = \sum_i t_{ji} \sfZ_i$ has $t_{ji} \in \bbZ_{\geq 0}$.
 \end{cor}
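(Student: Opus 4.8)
The plan is to collapse the sum $\sfH_j=\sum_{k=1}^j h_{\beta_k}$ into a scalar multiple of a single weight of $\fg$, and then read off both the recipe and the positivity/integrality directly.

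First I would combine Lemma \ref{L:TSOC-sl2}, which gives $\sfH_j=h_{\beta_1}+\dots+h_{\beta_j}$, with Lemma \ref{L:minuscule}, which places the entire TSOC in one $W_\fp$-orbit and hence forces $\langle\beta_k,\beta_k\rangle=\langle\lambda,\lambda\rangle=:L$ for all $k$. Because the map $\alpha\mapsto H_\alpha$ (defined by $\alpha=B(H_\alpha,\cdot)$) is linear while $h_\alpha=\tfrac{2}{\langle\alpha,\alpha\rangle}H_\alpha$, the common length $L$ lets me factor the scalar out of the sum: $\sfH_j=\tfrac{2}{L}\sum_{k=1}^j H_{\beta_k}=\tfrac{2}{L}H_{\mu_j}$, where $\mu_j:=\beta_1+\dots+\beta_j$. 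Writing $H_{\mu_j}=\sum_i\langle\alpha_i,\mu_j\rangle\sfZ_i$ and comparing with \eqref{E:alpha-Z}, this is exactly \eqref{E:alpha-Z} applied to each length-$L$ summand $\beta_k$ and added up; equivalently $\sfH_j=\sum_i(\mu_j)_i\tfrac{\langle\alpha_i,\alpha_i\rangle}{L}\sfZ_i$, with $(\mu_j)_i$ the fundamental-weight coordinates of $\mu_j$. (One must use the uniform length $L$ here, not $\langle\mu_j,\mu_j\rangle$, which in general differs.)

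Next I would identify $\mu_j$ with the highest weight produced at the $j$-th node of the Dynkin-diagram sequence, regarded as a weight of $\fg$; this is the recursive core of the argument. Let $\fl^{(j)}$ be the simple subalgebra reached at step $j$, so that $\fl^{(1)}=\fg$ and $\beta_j=\max\Delta(\fl^{(j)})$. Passing from step $j$ to step $j+1$ means applying the recipe of Lemma \ref{L:max} to $\fl^{(j)}$ with its highest root $\beta_j$, and Proposition \ref{P:hw-perp} then yields $\beta_j+\beta_{j+1}=\widetilde\lambda^{(j)}$, a weight dominant for $\fl^{(j)}$ and supported away from the contact nodes. Tracking the fixed node identifications $\widetilde\lambda_i\leftrightarrow\lambda_i$ set up in the discussion preceding Definition \ref{D:TSOC}, I would show by induction on $j$ that the partial sum $\mu_j=\sum_{k\le j}\beta_k$ is precisely the top-slot highest weight $\Lambda_j$ read off from the $j$-th diagram $\fT^0$, transported to $\fg$. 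In particular, since $\Lambda_j$ is the highest weight of a genuine $(\fl^{(j)})_0^{ss}$-irrep, its coordinates $(\mu_j)_i=r_i^{(j)}$ are nonnegative integers vanishing off the displayed nodes, so $\mu_j$ is a dominant weight of $\fg$.

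Finally I would deduce the two stated properties of $t_{ji}$. Nonnegativity is then immediate: $t_{ji}=r_i^{(j)}\tfrac{\langle\alpha_i,\alpha_i\rangle}{L}\ge 0$. For integrality I would avoid parity checks and instead observe that each $h_{\beta_k}=\beta_k^\vee$ is a coroot, so $\sfH_j$ lies in the coroot lattice $Q^\vee$; since $Q^\vee\subseteq P^\vee=\bbZ\text{-span}\{\sfZ_1,\dots,\sfZ_\ell\}$ (indeed $h_{\alpha_i}=\sum_j C_{ji}\sfZ_j$ has integer entries, $C$ the Cartan matrix), every $t_{ji}\in\bbZ$. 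I expect the main obstacle to be the inductive identification of $\mu_j$ with $\Lambda_j$: keeping the node labels of the nested subalgebras $\fl^{(j)}$ aligned with those of $\fg$ so that the relations $\beta_j+\beta_{j+1}=\widetilde\lambda^{(j)}$ from Proposition \ref{P:hw-perp} telescope correctly into $\mu_j$, and checking that the restriction-then-lift implicit in reading $\Lambda_j$ off $\fT^0$ matches the plain partial sum $\sum_{k\le j}\beta_k$.
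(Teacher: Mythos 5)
Your proposal follows essentially the same route as the paper: the paper's entire proof of Corollary \ref{C:Hj-Z} is the single observation that, by Proposition \ref{P:hw-perp}, a cascade of cancellations occurs in $\beta_1+\cdots+\beta_j$, so that the partial sum is the step-$j$ highest weight lifted to $\fg$. Your preliminary reductions are correct and in two places slightly sharper than what the paper records: the point that \eqref{E:alpha-Z} must be applied with the \emph{uniform} length $L=\langle\lambda,\lambda\rangle$ (legitimate by Lemma \ref{L:minuscule}) rather than with $\langle\mu_j,\mu_j\rangle=jL$, and the coroot-lattice argument for integrality, which bypasses the length-ratio bookkeeping that the paper's recipe implicitly requires.

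That said, the obstacle you flag at the end is not a formality; it is the real mathematical content, and the paper elides it in exactly the same way. Proposition \ref{P:hw-perp} applied inside the step-$j$ subalgebra (your $\fl^{(j)}$, the paper's iterated $\fl_\fp(\lambda)$) is an identity of weights of $\fl^{(j)}$, i.e.\ of restrictions to $\fh\cap\fl^{(j)}$. To telescope in $\fh^*$ one needs in addition that $\langle\beta_{j+1},\alpha_i^\vee\rangle=0$ for every node $i$ deleted at steps $1,\dots,j-1$; equivalently, no node of $\fl^{(j+1)}$ may be adjacent in $\fD(\fg)$ to a node removed two or more steps earlier. This fails for orthogonal cascades in general: continuing $E_8\supset E_7\supset D_6$ formally, the highest root of the $D_6$ on nodes $2,\dots,7$ has fundamental-weight coefficient $-1$ at node $8$, so the third partial sum would be $\lambda_6-\lambda_8$, which is neither dominant nor the lift $\lambda_6$ of the step-$3$ highest weight. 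What rescues the corollary is that the TSOC demands $\beta_k\in\Delta(\fg_\nu)$ at every step: by Lemma \ref{L:max} the crosses must avoid each contact node encountered, and this forces termination before any such non-insulated configuration can arise (for $E_8$, every parabolic yields a TSOC of length at most $2$). The cascades that genuinely reach length $\geq 3$ --- the type $A$, $B$, $C$, $D$ chains appearing in Proposition \ref{P:NYR} and Table \ref{F:1-graded}, together with $E_7/P_7$ --- are all insulated by inspection. So your write-up is complete to exactly the same standard as the paper's proof; closing the gap you correctly identified (in either argument) requires this extra, classification-backed check that top-slot membership prevents $\beta_{j+1}$ from interacting with nodes removed at earlier steps.
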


 \begin{proof} By Proposition \ref{P:hw-perp}, a cascade of cancellations occurs when forming the sum $\beta_1 + ... + \beta_j$ and this sum corresponds to the highest weight at the $j$-th step of the Dynkin diagram sequence.
 \end{proof}


 \subsection{Top-slot orbits for Yamaguchi-nonrigid geometries}

 A regular, normal parabolic geometry of type $(G,P)$ is {\em Yamaguchi-rigid} if $H^2_+(\fg_-,\fg) = 0$.  In this case, $\kappa_H = 0$ and the geometry is flat.  Yamaguchi \cite{Yam1993, Yam1999} identified all non-rigid geometries (which we refer to as Yamaguchi-nonrigid) when $G$ is complex simple.  The further requirement that the top-slot $\bbP(\fg_\nu)$ contain more than one $G_0$-orbit is restrictive:

 \begin{prop} \label{P:NYR}
 Let $G$ be a complex simple Lie group, $P \subsetneq G$ a parabolic subgroup, and $\fg = \fg_{-\nu} \op ... \op \fg_\nu$ the associated grading.  Suppose that:
 \begin{enumerate}
 \item regular, normal parabolic geometries of type $(G,P)$ are Yamaguchi-nonrigid, and
 \item there are at least two $G_0$-orbits in the top-slot $\bbP(\fg_\nu)$.
 \end{enumerate}
 Then $G/P \not\cong A_\ell / P_1$ is either $|1|$-graded or in the following list:
 \[
 \begin{array}{|c|c|c|c|}\hline
 G/P & \mbox{Range} & \# \mbox{orbits} & \mbox{$H$-sequence for TSOC}\\ \hline\hline
 A_\ell / P_{2,s} & 3\leq s \leq \ell-1 & 2 & \sfH_1 = \sfZ_1 + \sfZ_\ell, \quad \sfH_2 = \sfZ_2 + \sfZ_{\ell-1}\\
 A_\ell / P_{s,s+1} & 3 \leq s \leq \floor{\frac{\ell}{2}} & s & \sfH_j = \sfZ_j + \sfZ_{\ell+1-j} \quad (1 \leq j \leq s)\\
 B_\ell / P_\ell & \ell \geq 4 & \floor{\frac{\ell}{2}} & \sfH_j = \sfZ_{2j} \quad (1 \leq j \leq \floor{\frac{\ell}{2}})\\
 C_\ell / P_2 & \ell \geq 3 & 2 & \sfH_1 = \sfZ_1, \,\, \sfH_2 = \sfZ_2\\
 C_\ell / P_{2,\ell} & \ell \geq 3 & 2 & \sfH_1 = \sfZ_1, \,\, \sfH_2 = \sfZ_2\\
 C_\ell / P_{\ell-1} & \ell \geq 3 & \ell-1 & \sfH_j = \sfZ_j \quad (1 \leq j \leq \ell-1)\\
 C_\ell / P_{\ell-1,\ell} & \ell \geq 3 & \ell-1 & \sfH_j = \sfZ_j \quad (1 \leq j \leq \ell-1)\\ \hline
 \end{array}
 \]
 \end{prop}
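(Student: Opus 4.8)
The plan is to turn both hypotheses into diagrammatic conditions and then carry out a finite classification search, using the sub-cominuscule structure and the TSOC to read off the number of orbits and the $H$-sequences.

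First I would reformulate hypothesis (2). By Lemma \ref{L:minuscule} together with Lemma \ref{L:max}, the existence of at least two $G_0$-orbits in $\bbP(\fg_\nu)$ is equivalent to the combinatorial condition
\[
 \emptyset \neq \fT^0(\fg_0^{ss},\lambda) \not\cong \Add{wwww}{1,0,0,0},
\]
which I abbreviate $(\star)$. By Proposition \ref{P:SC} the top-slot is always an irreducible sub-cominuscule $G_0$-module, so $\fT^0(\fg_0^{ss},\lambda)$ is the marked diagram of one of the entries of Table \ref{F:SC}; condition $(\star)$ says it is not the single-orbit standard $A$-module $\bbC^{m+1}$ (whose projectivization $\bbP^m$ is one orbit). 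Here $\fT^0(\fg_0^{ss},\lambda)$ is obtained from $\fD(\fg,\fp,\lambda)$ by deleting every crossed node and then discarding all connected components whose coefficients vanish identically; since the nonzero coefficients of $\lambda$ sit exactly on the contact nodes $I_\sfc$ and, by Lemma \ref{L:max}(i), no contact node is crossed, this retains precisely the components of the cross-deleted diagram that meet $I_\sfc$.

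Next I would use hypothesis (1) to fix a finite search space. Yamaguchi's classification \cite{Yam1993,Yam1999} enumerates, for $G$ complex simple, every $(G,P)$ with $H^2_+(\fg_-,\fg)\neq 0$. For $\nu=1$ the non-rigid geometries are the cominuscule ones \eqref{E:1-graded}; there $\fg_\nu=\fg_1$ is itself sub-cominuscule, and $(\star)$ fails exactly for $A_\ell/P_1$ (and its diagram-dual $A_\ell/P_\ell$), where $\fg_1=\bbC^\ell$ is standard. This produces the ``$|1|$-graded, $G/P\not\cong A_\ell/P_1$'' alternative. For $\nu\geq 2$ I would run through Yamaguchi's remaining (families of) non-rigid geometries one at a time, compute $\fT^0(\fg_0^{ss},\lambda)$ by the recipe above, and keep precisely those satisfying $(\star)$; the survivors are the seven tabulated families. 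For each, the number of orbits is the orbit count of the corresponding Table \ref{F:SC} entry (for instance, for $A_\ell/P_{s,s+1}$ one finds $\fT^0$ is the Segre $\bbP^{s-1}\times\bbP^{\ell-s-1}$, giving $\min(s,\ell-s)=s$ orbits in the stated range, while $C_\ell/P_{\ell-1}$ yields the $C$-type Veronese $\bbP^{\ell-2}\inj\bbP(S^2\bbC^{\ell-1})$ with $\ell-1$ orbits), and the $H$-sequence is read off directly from the TSOC via Corollary \ref{C:Hj-Z}, starting from $\sfH_1=\sum_{i\in I_\sfc}\sfZ_i$ and applying \eqref{E:alpha-Z} at each step.

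The main obstacle is the completeness and bookkeeping of the $\nu\geq 2$ check. One must verify that every non-rigid geometry on Yamaguchi's list is tested; that the parameter ranges (e.g.\ $3\leq s\leq\ell-1$ for $A_\ell/P_{2,s}$ and $3\leq s\leq\floor{\frac{\ell}{2}}$ for $A_\ell/P_{s,s+1}$) are exactly the sub-intervals on which $\fT^0$ is a genuine multi-orbit sub-cominuscule module rather than empty or a single $\bbP^m$; and that the two-cross families $C_\ell/P_{2,\ell}$ and $C_\ell/P_{\ell-1,\ell}$ are not overlooked, since they share their $\fT^0$ — hence their orbit count and $H$-sequence — with the one-cross families $C_\ell/P_2$ and $C_\ell/P_{\ell-1}$, the extra cross at $\alpha_\ell$ lying in a cross-component disjoint from the contact node. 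Once the search is confirmed complete, the orbit counts and $H$-sequences follow mechanically from the sub-cominuscule structure and the TSOC.
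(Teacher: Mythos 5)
Your proposal is correct and takes essentially the same approach as the paper's proof: both use Yamaguchi's classification as the finite search space, reduce hypothesis (2) to the diagrammatic criterion of Lemma \ref{L:max} (equivalently your condition $(\star)$, via Lemma \ref{L:minuscule}), and read off the orbit counts and $H$-sequences from the sub-cominuscule/TSOC machinery of Section \ref{S:subcominuscle}. The only organizational difference is that the paper filters in two steps---first discarding parabolic contact geometries and all $(G,P)$ from \cite[Table 8]{KT2014} whose $I_\fp$ contains a contact node, then eliminating the remaining cases $(B_\ell,P_3)$, $(B_3,P_{1,3})$, $(D_\ell,P_3)$, $(D_\ell,P_{1,\ell})$, $(G_2,P_1)$ by Lemma \ref{L:max}---whereas your single criterion $(\star)$ performs both exclusions at once.
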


 \begin{proof}
 Aside from parabolic contact geometries (whose top slot is trivial) and $|1|$-graded geometries, Yamaguchi's non-rigid list contains only those in \cite[Table 8]{KT2014}.  Of the latter, we can automatically exclude any $(G,P)$ where $I_\fp$ contains a contact node of $G$.  (The top-slot contains just a single orbit.)  This yields the above list together with $(B_\ell,P_3), (B_3, P_{1,3}), (D_\ell,P_3), (D_\ell,P_{1,\ell}), (G_2,P_1)$, which are all extraneous by Lemma \ref{L:max} since $\fT^0(\fg_0^{ss},\lambda) \cong \Add{wwww}{1,0,0,0}$.
 \end{proof}


 \section{Application: Rigidity results for parabolic geometries}
 \label{S:rigidity}

 \subsection{Higher-order vanishing of symmetries}
 \label{Ss:hovs}

 Given any regular, normal parabolic geometry $(\cG \stackrel{\pi}{\to} M, \omega)$ of type $(G,P)$, let $0 \neq \xi \in \finf(\cG,\omega)$.  Fix $x \in M$ and any $u \in \pi^{-1}(x)$.  If $\bX = \pi_*(\xi)$ vanishes at $x$, then $0 \neq E := \omega_u(\xi) \in \fp$.  If moreover $E \in \fp_+$, then the symmetry $\bX$ vanishes to higher order.  The existence of such symmetries is often a strong restriction on the given geometry.

 By Jacobson--Morozov, we can complete $E$ to a standard $\fsl_2$-triple $\{ E, H, F \}$ satisfying \eqref{E:sl2}. Consider the following criteria due to \v{C}ap and Melnick \cite{CM2013a}:
 \begin{itemize}
\item[(CM.1)] the element $H$ belongs to $\fg_0$;
\item[(CM.2)] $H$ has non-positive spectrum as an endomorphism of $\fg_{-}$,
the generalized eigenspace for eigenvalues with real part zero is $C_\fg^{-}(E)=\{X\in\fg_{-} \st [X,E]=0\}$, and $\ad_H|_{C_\fg^{-}(E)}=0$;
\item[(CM.3)] $H$ acts semisimply on $H^2_+(\fg_-,\fg)$ and
all its eigenvalues are non-negative.
 \end{itemize}

 We refer to \cite[Cor.\ 2.14]{CM2013a} (see also \cite[Prop.2.2]{CM2013b}) for the following:

 \begin{thm} \label{T:CM-curve}
Suppose that a symmetry $\bX \neq 0$ vanishes at $x\in M$ to higher order and conditions (CM.1-3) hold.
With notations as above, denote $v=\pi_*(\omega_u^{-1}(F))\in T_xM$. Suppose that $\kappa_H(x) = 0$. Then there exists a distinguished curve $\gamma:(-\epsilon,+\epsilon)\to M$, $\gamma(0)=x$, $\gamma'(0)=v$, which is preserved by the flow of ${\rm\bf X}$ and on which it acts by projective transformations. Denoting $\gamma_+=\gamma((0,+\epsilon))\subset M$ there exists a neighborhood $U$ of $\gamma_+$, $\bar{U}\ni x$, on which the geometry is flat.
 \end{thm}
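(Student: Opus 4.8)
The plan is to follow the \v{C}ap--Melnick strategy: lift the symmetry to the Cartan bundle, build a flow-invariant distinguished curve out of the $\fsl_2$-triple $\{E,H,F\}$, and then exploit the contraction dynamics of the flow together with invariance of $\kappa_H$ to propagate the single vanishing $\kappa_H(x)=0$ to an open set. Since the statement is precisely \cite[Cor.\ 2.14]{CM2013a}, the role of this sketch is to indicate the mechanism; for the technical estimates I would ultimately cite their proof.

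First I would work upstairs on $\cG$. The symmetry $\xi \in \finf(\cG,\omega)$ generates a flow $\Phi_t$ commuting with the $P$-action and preserving $\omega$; as $\bX = \pi_*(\xi)$ vanishes at $x$, we have $0 \neq E := \omega_u(\xi) \in \fp_+$, so $\xi_u$ is vertical and $\Phi_t(u) = u\cdot\exp(tE)$ stays in the fibre over $x$. Because $\omega$ trivializes $T\cG$, I transport $F$ to the $\omega$-constant vector field $\omega^{-1}(F)$ and define the distinguished curve $\gamma(s) := \pi\big(\mathrm{Fl}^{\omega^{-1}(F)}_s(u)\big)$, so that $\gamma(0)=x$ and $\gamma'(0) = \pi_*(\omega_u^{-1}(F)) = v$ (nonzero because $F$ has nontrivial image in $\fg/\fp$). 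This is the candidate curve.

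Next I would establish invariance and the projective action. The relations \eqref{E:sl2} say $\{E,H,F\}$ span a copy of $\fsl_2$, whose associated local $\mathrm{SL}_2 \subset G$ acts on the triple by Möbius transformations; this algebraic action matches the interaction of $\Phi_t$ with the $\omega^{-1}(F)$-flow line. Concretely, since $\Phi_t$ preserves $\omega$ and moves $u$ only by the $P$-element $\exp(tE)$, it carries the $F$-geodesic to itself, reparametrizing $s$ by a fractional-linear $\sigma_t(s)$ fixing $s=0$. Hence $\Phi_t$ preserves $\gamma$ and acts on it projectively, giving the first assertions; condition (CM.1), $H\in\fg_0$, is what makes $H$ and this reparametrization compatible with the grading.

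The heart of the argument, and the main obstacle, is upgrading $\kappa_H(x)=0$ to flatness near $\gamma_+$. I would use that $\kappa_H$ is $\Phi_t$-invariant (Lemma \ref{L:fund-der}(D.8): $D_s\kappa_H = 0$), so along $\gamma$ the values $\kappa_H(\gamma(s))$ and $\kappa_H(\gamma(\sigma_t(s)))$ differ only by the linear $H$-action on the fibre $\bbV$. By (CM.3), $H$ acts semisimply on $H^2_+(\fg_-,\fg)$ with non-negative eigenvalues, so flowing toward the fixed point ($\sigma_t(s)\to 0$) scales $\kappa_H$ by factors that remain controlled as the base point approaches $x$; since $\kappa_H(x)=0$, a limiting argument forces $\kappa_H\equiv 0$ along $\gamma_+$. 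Finally (CM.2) --- $H$ having non-positive spectrum on $\fg_-$ with zero-eigenspace exactly $C_\fg^{-}(E)$ --- makes $\bX$ genuinely contracting transverse to $\gamma$, so a suitable neighborhood $U$ of $\gamma_+$ is swept into a shrinking tube around the curve on which $\kappa_H$ already vanishes; invariance then yields $\kappa_H|_U\equiv 0$, hence flatness via $\kappa_H\equiv 0 \Leftrightarrow \kappa\equiv 0$. The delicate points I expect to wrestle with are making the contraction-and-limit argument rigorous (controlling the tube $U$ and the convergence of the iterated flow) and, in the merely smooth setting, obtaining flatness only on the one-sided neighborhood $\bar U \ni x$ of $\gamma_+$ rather than a full two-sided neighborhood of $x$ --- exactly the asymmetry recorded in the statement.
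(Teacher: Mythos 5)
Your proposal is correct and takes essentially the same route as the paper: the paper does not prove Theorem \ref{T:CM-curve} itself but imports it verbatim from \cite[Cor.\ 2.14]{CM2013a} (adding only the remark that $\gamma$ is the projection of the flow of $\omega^{-1}(F)$ through $u$), which is exactly the citation you ultimately fall back on. Your sketch of the \v{C}ap--Melnick mechanism --- the parabolic action on the distinguished curve attracting $\gamma_+$ to $x$, flow-invariance of $\kappa_H$, and conditions (CM.1-3) controlling the $P$-component of the holonomy so that the limit forces vanishing of curvature near $\gamma_+$ --- is a faithful account of the cited proof and consistent with how the paper uses the result.
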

 
 The curve $\gamma$ is the projection of the flow of the vector field $\omega^{-1}(F)$ through the point $u$.
 
 By Theorem \ref{Th-A}, the hypothesis $\kappa_H(x)=0$ is fulfilled if $G$ is simple, $\bX \neq 0$ and $j^1_x(\bX) = 0$.


 \subsection{2-jet determined symmetries}
 \label{Ss:62}

 Fix $x \in M$ and any $u \in \pi^{-1}(x)$.  Let $\xi \in \finf(\cG,\omega)$ be such that $0 \neq E := \omega_u(\xi) \in \fg_\nu$ (top-slot).  Since the choice of $u \in \pi^{-1}(x)$ is irrelevant, it suffices to consider $G_0$-orbit representatives $0 \neq E \in \fg_\nu$.  In terms of the TSOC $\{ \beta_1,..., \beta_m \} \subset \Delta(\fg_\nu)$, these representatives are given by $\sfE_1,...,\sfE_m$ (see Theorem \ref{T:TSOC} and Lemma \ref{L:TSOC-sl2}).
 
 Take the corresponding $H$-sequence $\sfH_j = h_{\beta_1} + ... + h_{\beta_j}$, $1 \leq j \leq m$.  Each lies in $\fh \subset \fg_0$, so (CM.1) is satisfied.  If $\alpha \in \Delta(\fg_+)$, then $[\sfH_j,e_{-\alpha}] = \sum_{i=1}^j -\alpha(h_{\beta_i}) e_{-\alpha} = -\left( \sum_{i=1}^j \langle \alpha, \beta_i^\vee \rangle \right)e_{-\alpha}$.  But $\beta_i + \alpha \not\in \Delta$ (since $\beta_i$ are in the top-slot) and the $\alpha$-string through $\beta_i$, namely $\beta_i + n\alpha$, $-p \leq n \leq 0$, satisfies $p = \langle \beta_i, \alpha^\vee \rangle \geq 0$ (hence, $\langle \alpha, \beta_i^\vee \rangle \geq 0$).  Thus, $\sfH_j$ has non-positive spectrum on $\fg_-$.  Since $\langle \alpha, \beta_i \rangle \geq 0$, then $e_{-\alpha}$ lies in the zero eigenspace for $\sfH_j$ if and only if $\alpha \in \{ \beta_1,..., \beta_j \}^\perp$.  This gives precisely $C^-_\fg(\sfH_j)$ and $\ad(\sfH_j)$ restricted here is zero.  Thus, (CM.2) are satisfied for $\sfH_j$.

 Since the $H$-sequence is contained in $\fh$, then $\sfH_j$ acts semisimply on $H^2_+(\fg_-,\fg)$.  All $\fg_0$-weights of $H^2_+(\fg_-,\fg)$ are obtained from the lowest weight $\mu$ by adding roots in $\Delta^+(\fg_0)$. By Corollary \ref{C:Hj-Z}, $\sfH_j$ has non-negative integral coefficients when expressed as a linear combination of the $\sfZ_i$, so $\alpha(\sfH_j) \geq 0$ for any $\alpha \in \Delta^+(\fg_0)$.  Thus, (CM.3) reduces to verifying that:
 \begin{framed}
 (CM.3'): \qquad For any lowest $\fg_0$-weight $\mu$ of $H^2_+(\fg_-,\fg)$, we have $\mu(\sfH_j) \geq 0$.
 \end{framed}

 \begin{cor}
  Given any regular, normal parabolic geometry $(\cG \stackrel{\pi}{\to} M, \omega)$ of type $(G,P)$, let $\xi \in \finf(\cG,\omega)$.  Fix $x \in M$, $u \in \pi^{-1}(x)$ and suppose that $0 \neq E := \omega(\xi_u) \in \fg_\nu$, so $E$ is in the $G_0$-orbit of some element $\sfE_j$ associated to the $H$-sequence for the TSOC (see Lemma \ref{L:TSOC-sl2}).  If (CM.3') holds, then the geometry is flat on an open set $U \subset M$ with $x \in \overline{U}$.
 \end{cor}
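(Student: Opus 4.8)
The plan is to show that all the hypotheses of Theorem \ref{T:CM-curve} are in force and then invoke it directly; the Corollary is essentially an assembly of the preceding results, so the real work lies in transferring the three \v{C}ap--Melnick conditions from the chosen normal-form triple to the actual symmetry, and in supplying the flatness hypothesis $\kappa_H(x) = 0$ that Theorem \ref{T:CM-curve} requires separately.

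First I would reduce to a TSOC representative. Replacing $u$ by $u\cdot p$ for $p \in P$ replaces $E = \omega_u(\xi)$ by $\Ad_{p^{-1}}E$, and since $[\fp_+,\fg_\nu] \subset \fg^{\nu+1} = 0$ the nilradical acts trivially on the top-slot, so the $P$-orbit of $E$ in $\fg_\nu$ coincides with its $G_0$-orbit. By Theorem \ref{T:TSOC}, $[E] \in \bbP(\fg_\nu)$ is $G_0$-conjugate to $[\sfE_j]$ for a unique $j$; thus $E = c\,\Ad_g \sfE_j$ for some $g \in G_0$ and scalar $c \neq 0$, and completing via Lemma \ref{L:TSOC-sl2} gives the standard triple $\{E,\,\Ad_g\sfH_j,\,c^{-1}\Ad_g\sfF_j\}$ with semisimple element $H = \Ad_g\sfH_j$. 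The key observation is that all of (CM.1)--(CM.3) are invariant under $G_0$-conjugation — which preserves the grading, the eigenvalues of $\ad_H$ on $\fg_-$, the centralizer $C_\fg^-(E)$, and the $G_0$-action on $H^2_+(\fg_-,\fg)$ — and under rescaling $E \mapsto cE$, which leaves $H$ untouched. Hence it suffices to verify them for the representative $\sfE_j,\sfH_j$, and this is precisely what the discussion preceding the Corollary carries out: (CM.1) and (CM.2) hold outright, while (CM.3) is equivalent to the assumed (CM.3').

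Next I would supply the remaining hypothesis. Since $E \in \fg_\nu \subset \fp_+$, the field $\bX = \pi_*(\xi)$ has a higher-order fixed point at $x$, and Theorem \ref{T:M-bracket}(c) gives $j^1_x(\bX) = 0$ (with $\bX \neq 0$). With $G$ simple — implicit in the TSOC set-up of this section — Theorem \ref{Th-A} then forces $x$ to be a flat point, $\kappa_H(x) = 0$, exactly as recorded in Section \ref{Ss:hovs}. Now with (CM.1)--(CM.3) satisfied and $\kappa_H(x) = 0$, Theorem \ref{T:CM-curve} yields the distinguished curve $\gamma$ through $x$ with $\gamma'(0)=v=\pi_*(\omega_u^{-1}(F))$ and a neighbourhood $U$ of $\gamma_+$ with $\overline U \ni x$ on which the geometry is flat, which is the assertion.

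The main obstacle — the only point demanding genuine care rather than citation — is the transfer of the \v{C}ap--Melnick conditions from $\{\sfE_j,\sfH_j,\sfF_j\}$ to the given $E$. This rests on two facts being checked cleanly: that it is the $G_0$-orbit structure (not merely the $P$-orbit structure) of $\fg_\nu$ that governs the admissible isotropies, which follows from the triviality of the $\fp_+$-action on the top-slot; and that (CM.3), phrased through the spectrum of $\sfH_j$ on $H^2_+(\fg_-,\fg)$, is a genuinely conjugation-invariant condition, so that it may legitimately be decided on the representative alone. Everything else is a direct application of Theorems \ref{T:M-bracket}, \ref{Th-A}, \ref{T:TSOC}, Lemma \ref{L:TSOC-sl2}, and Theorem \ref{T:CM-curve}.
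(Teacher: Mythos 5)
Your proof is correct and takes essentially the same route as the paper: the Corollary there is precisely the assembly of the \S\ref{Ss:62} verification of (CM.1)--(CM.2) and the reduction of (CM.3) to (CM.3') for the TSOC representatives, the remark at the end of \S\ref{Ss:hovs} that Theorem \ref{Th-A} (via Theorem \ref{T:M-bracket}(c)) supplies the hypothesis $\kappa_H(x)=0$, and then an application of Theorem \ref{T:CM-curve}. Your explicit check that the \v{C}ap--Melnick conditions are invariant under $G_0$-conjugation and rescaling of $E$ merely spells out what the paper compresses into ``since the choice of $u \in \pi^{-1}(x)$ is irrelevant, it suffices to consider $G_0$-orbit representatives.''
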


 \begin{example}[$C_\ell / P_{\ell-1}$ and $B_\ell / P_\ell$]
  \begin{footnotesize}
 \[
 \begin{array}{|c|c|c|c|c|c|c|c} \hline
 G/P & \mbox{Range} & \#\mbox{orb} & \mbox{$H$-sequence for the TSOC} & w \in W^\fp & \begin{tabular}{c} $\fg_0$-lowest weight \\
 $-w\cdot\lambda$  in root notation \end{tabular} \\ \hline\hline
 C_\ell / P_{\ell-1} & \ell \geq 3 & \ell-1 & \sfH_j = \sfZ_j \,\, (1 \leq j \leq \ell-1) & (\ell-1,\ell) & [-2,...-2,1,0] \\
 & \ell = 3 & & & (21) & [1,2,-1] \\ \hline
 B_\ell / P_\ell  & \ell \geq 3 & \floor{\frac{\ell}{2}} &\sfH_j = \sfZ_{2j}\quad (1 \leq j \leq \floor{\frac{\ell}{2}}) & (\ell,\ell-1) &
 \left\{ \begin{array}{ll} [-1,-2,...,-2,-1,1], & \ell \geq 4;\\{} [-1,0,3] & \ell=3 \end{array} \right. \\ \hline
 \end{array}
 \]
 \end{footnotesize}

 The listed $w \in W^\fp$ parametrize irreducible components of $H^2_+(\fg_-,\fg)$.  For $C_\ell / P_{\ell-1}$, (CM.3') holds for $\sfH_{\ell-1}$.  When $\ell=3$, for geometries with $\kappa_H$ concentrated in the $(21)$-component, then $\sfH_1$ is also valid.  For $B_\ell / P_\ell$, $\sfH_{\frac{\ell}{2}}$ satisfies (CM.3') when $\ell \geq 4$ is even.  When $\ell=3$, $\sfH_1$ satisfies (CM.3').
 \end{example}


 \subsection{$|1|$-graded geometries}

 \begin{thm} \label{T:1-graded} Let $(\mathcal{G} \stackrel{\pi}{\to} M,\omega)$ be a (real or complex) normal $|1|$-graded parabolic geometry of type $(G,P)$ with $G$ simple.  If $\fg$ is real, we further assume that $\fg_\bbC$ is simple.  Suppose there exists a symmetry $0 \neq \xi \in \finf(\cG, \omega)$ such that at some $u \in \cG$, $\omega_u(\xi)$ belongs to the {\bf open} $G_0$-orbit in $\fg_1$. (In particular, $\bX = \pi_*(\xi)$ has vanishing 1-jet at $x = \pi(u) \in M$.)  Then the geometry is flat on a neighbourhood $U \subset M$ with $\bar{U} \ni x$.
 \end{thm}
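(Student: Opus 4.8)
The plan is to apply the Čap--Melnick criterion of Theorem \ref{T:CM-curve} to the distinguished $\fsl_2$-triple attached to the open-orbit representative of the top-slot, and to verify its three hypotheses (CM.1)--(CM.3). Since the geometry is $|1|$-graded we have $\nu = 1$ and $\fg_\nu = \fg_1 = \fp_+$, so the assumption places $E := \omega_u(\xi)$ in the open $G_0$-orbit of $\fg_1$. Changing $u$ within the fibre $\pi^{-1}(x)$ replaces $E$ by $\Ad_{p^{-1}}E$ for $p \in P$; as $[\fp_+,\fg_1] \subset \fg_2 = 0$, the nilradical acts trivially and the change is exactly by $G_0$ (and rescaling). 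Hence Theorem \ref{T:TSOC} lets me assume, up to scale, that $E = \sfE_m = e_{\beta_1} + \cdots + e_{\beta_m}$, the representative of the open orbit $\Sec_m(\cV)\setminus\Sec_{m-1}(\cV) = \bbP(\fg_1)\setminus\Sec_{m-1}(\cV)$, where $\{\beta_1,\dots,\beta_m\} \subset \Delta(\fg_\nu)$ is the TSOC and $m$ is the number of $G_0$-orbits. I then complete $\sfE_m$ to the standard triple $\{\sfE_m,\sfH_m,\sfF_m\}$ of Lemma \ref{L:TSOC-sl2}.

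Next I assemble the inputs already available. Because $E \in \fg^\nu$, Theorem \ref{T:M-bracket}(c) gives $j^1_x(\bX) = 0$ for $\bX = \pi_*(\xi)$; since $G$ is simple and $\bX \neq 0$, the 1-jet determinacy at non-flat points (Theorem \ref{Th-A}) forces $\kappa_H(x) = 0$, which is precisely the hypothesis required to invoke Theorem \ref{T:CM-curve}. Moreover, conditions (CM.1) and (CM.2) hold for every member of the $H$-sequence by the computation of Section \ref{Ss:62}: $\sfH_m \in \fh \subset \fg_0$, it has non-positive spectrum on $\fg_-$, and its zero eigenspace on $\fg_-$ is exactly $C^-_\fg(\sfE_m)$, on which $\ad(\sfH_m)$ vanishes. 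Everything therefore reduces to (CM.3), which by the reduction of Section \ref{Ss:62} is the purely algebraic statement (CM.3'): $\mu(\sfH_m) \geq 0$ for every lowest $\fg_0$-weight $\mu$ of $H^2_+(\fg_-,\fg)$.

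Verifying (CM.3') for the full-cascade element $\sfH_m$ is the crux of the argument. Two ingredients make it tractable. By Corollary \ref{C:Hj-Z}, $\sfH_m = \sum_i t_{mi}\sfZ_i$ with $t_{mi} \in \bbZ_{\geq 0}$, so $\alpha(\sfH_m) \geq 0$ for all $\alpha \in \Delta^+(\fg_0)$; as every harmonic module is generated from its lowest weight by adding roots of $\Delta^+(\fg_0)$, this is what reduces (CM.3) to the lowest weights alone. Kostant's theorem writes those lowest weights as $\mu = -w\cdot\lambda$ with $w = \sigma_b\sigma_a \in W^\fp(2)$, so $\mu(\sfH_m)\geq 0$ becomes a finite check over the $|1|$-graded entries of Table \ref{F:SC}. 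The cleanest case is conformal geometry $B_\ell/P_1,\, D_\ell/P_1$, where $m = 2$, $\beta_1 = \lambda$, $\beta_2 = \alpha_1$, and \eqref{E:alpha-Z} yields $\sfH_2 = 2\sfZ$; then $\mu(\sfH_2) = 2\sfZ(\mu) > 0$ is immediate from the positivity built into $H^2_+(\fg_-,\fg)$, which is the mechanism behind the local Obata--Ferrand phenomenon. The single-orbit type $A_\ell/P_1$ (projective structures) is different: here $\sfH_1 = \sfZ_1 + \sfZ_\ell$ is \emph{not} proportional to the grading element $\sfZ = \sfZ_1$, so (CM.3') must be checked against the explicit projective harmonic module, recovering Nagano--Ochiai/Čap--Melnick rigidity, and the remaining types are handled in the same manner using Table \ref{F:orth-minuscule} and Proposition \ref{P:hw-perp}. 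I expect this weight bookkeeping to be the principal obstacle, precisely because $\sfH_m$ is generally not a multiple of $\sfZ$ and one must genuinely use the structure of the harmonic curvature module rather than bare positivity.

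Finally, with (CM.1)--(CM.3) verified and $\kappa_H(x) = 0$ in hand, Theorem \ref{T:CM-curve} produces a distinguished curve $\gamma$ through $x$ preserved by the flow of $\bX$ together with a neighbourhood $U$ of $\gamma_+$ satisfying $\bar U \ni x$ on which the geometry is flat, which is the asserted conclusion. For real $\fg$ with $\fg_\bbC$ simple, I would run the whole argument after complexification: the triple $\{\sfE_m,\sfH_m,\sfF_m\}$ and the conditions (CM.1)--(CM.3) are insensitive to the choice of real form, exactly as in the complex-reduction used in the proof of Theorem \ref{T:height}, and the statement then reads as local flatness of the real geometry.
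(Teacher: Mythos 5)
Your proposal is correct and follows essentially the same route as the paper: reduce to the \v{C}ap--Melnick criterion of Theorem \ref{T:CM-curve} (with $\kappa_H(x)=0$ supplied by Theorem \ref{Th-A}), pass to the open-orbit TSOC representative $\sfE_m$ and its $H$-sequence element $\sfH_m$ for which (CM.1)--(CM.2) were verified in \S\ref{Ss:62}, verify the remaining condition (CM.3') by a case-by-case weight computation over the $|1|$-graded geometries, and settle the real case by complexification. The only cosmetic difference is that the paper cites the precomputed data of Table \ref{F:1-graded} for the (CM.3') check, whereas you re-derive the same numbers from Table \ref{F:orth-minuscule}, Proposition \ref{P:hw-perp} and Corollary \ref{C:Hj-Z}; the computation is identical.
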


 \begin{proof}
 In the complex case, the result follows from Table \ref{F:1-graded}: If $m$ is the number of $G_0$-orbits in $\bbP(\fg_1)$, then $\sfH_m(\mu) \geq 0$, where $\mu = -w\cdot \lambda$.  For example, for $C_\ell / P_\ell$, $\mu = [-2,...,-2,-1,1]$ (in root notation), $m=\ell$ and $\sfH_\ell = 2\sfZ_\ell$ satisfies $\sfH_\ell(\mu) = +2$.  Thus, (CM.3') is satisfied.

 In the real case, if $E \in \fg_1$ lies in the open orbit, then its complexification also lies in the open orbit.  Hence, the validity of (CM.3') follows from that of the complex case.
 \end{proof}

 \begin{footnotesize}
 \begin{table}[h]
 \[
 \begin{array}{|c|c|c|c|c|c|c|} \hline
 G/P & \mbox{Range} & \#\mbox{orb} & \mbox{$H$-sequence for the TSOC} & w \in W^\fp & \begin{tabular}{c} $\fg_0$-lowest weight\\
 $-w\cdot\lambda$  in root notation \end{tabular}\\ \hline\hline
 A_\ell / P_1 & \ell \geq 2 & 1 & \sfH_1 = \sfZ_1 + \sfZ_\ell & (12) &
 \left\{\begin{array}{ll}
 [3,1], & \ell = 2;\\ {}
 [2,0,-1,...,-1], & \ell \geq 3
 \end{array} \right.\\ \hline
 A_\ell / P_2 & \ell \geq 3 & 2 & \sfH_1 = \sfZ_1 + \sfZ_\ell, \,\, \sfH_2 = \sfZ_2 + \sfZ_{\ell-1} &
 (21) & [1,2,-1,...,-1]\\
 &&&& (23) &
 \left\{ \begin{array}{ll}
 [-1,2,1], & \ell = 3;\\ {}
  [-1,1,0,-1,...,-1], & \ell \geq 4
  \end{array} \right.
 \\ \hline
 A_\ell / P_k & 3 \leq k \leq \ceil{\frac{\ell}{2}} & k & \sfH_j = \sfZ_j + \sfZ_{\ell+1-j}\,\, (1 \leq j \leq k) &
 (k,k-1) & -(\alpha_1 + ...  + \alpha_\ell) + 2\alpha_k + \alpha_{k-1}\\
 &&&&(k,k+1) & -(\alpha_1 + ...  + \alpha_\ell) + 2\alpha_k + \alpha_{k+1}
 \\\hline
 B_\ell / P_1 & \ell \geq 2 & 2 & \sfH_1 = \sfZ_2, \,\, \sfH_2 = 2\sfZ_1 & (12) &
 \left\{ \begin{array}{ll}
 [3,1], & \ell = 2;\\ {}
 [2,0,-2,...,-2], & \ell \geq 3
 \end{array} \right.\\ \hline
 D_\ell / P_1 & \ell \geq 4 & 2 & \sfH_1 = \sfZ_2, \,\, \sfH_2 = 2\sfZ_1 & (12) & [2,0,\overbrace{-2,...,-2}^{\ell-4},-1,-1]\\ \hline
 C_\ell / P_\ell & \ell \geq 3 & \ell &
 \begin{array}{l}
  \sfH_j = \sfZ_j \, (1 \leq j \leq \ell-1),\\
  \sfH_\ell = 2\sfZ_\ell
  \end{array} & (\ell,\ell-1) & [-2,...,-2,-1,1] \\ \hline
 D_\ell / P_\ell & \ell \geq 5 & \floor{\frac{\ell}{2}} &
 \begin{array}{c} \sfH_j = \sfZ_{2j}\, (1 \leq j \leq \floor{\frac{\ell}{2}} -1),\\
 \sfH_{\floor{\frac{\ell}{2}}} = \left\{ \begin{array}{ll}
 \sfZ_{\ell-1} + \sfZ_\ell, & \ell \mbox{ odd};\\
 2\sfZ_\ell, & \ell \mbox{ even}
 \end{array} \right.
 \end{array} & (\ell,\ell-2) & [-1,-2,...,-2,-1,-1,1]
  \\ \hline
 E_6 / P_6 & - & 2 & \sfH_1 = \sfZ_2, \,\, \sfH_2 = \sfZ_1 + \sfZ_6 & (65) & [-1,-2,-2,-3,-1,1]\\ \hline
 E_7 / P_7 & - & 3 & \sfH_1 = \sfZ_1,\,\, \sfH_2 = \sfZ_6, \,\, \sfH_3 = 2\sfZ_7 & (76) & [-2,-2,-3,-4,-3,-1,1]\\ \hline
 \end{array}
 \]
 \caption{Data associated with $|1|$-graded geometries (with $G$ simple)}
  \label{F:1-graded}
 \end{table}
 \end{footnotesize}

  We note that these results were established (using a different approach) in \cite[Cor.\ 4.2]{MN2015} (see also further discussion in \S \ref{S:isolation}) with the exception of $E_6 / P_6$ and $E_7 / P_7$ cases.  (In the real case, these admit two real forms each corresponding to $E$I, $E$IV for $E_6$ and $E$V, $E$VII for $E_7$ in terms of Table B.4 of \cite{CS2009}.  Note that the Bourbaki simple root ordering was not followed there.)

 For projective geometry ($A_\ell / P_1$), there is only a single orbit.
For conformal geometry ($B_\ell / P_1$ and $D_\ell / P_1$) in any signature, the openness condition can be omitted since $\sfH_1(\mu) \geq 0$.


C-projective geometry is a real geometry with type $\SL(\ell+1,\bbC)_\bbR/P$, where $P$ is the  stabilizer of a complex line.  (Here, $\fg_\bbC$ is {\em not} simple.)  The complexification is of type $A_\ell / P_1 \times A_{\ell'} / P_{1'}$.  (Use primes to denote the second factor.)  There is only a single $G_0$-orbit in $\bbP(\fg_1) = \bbC\bbP^\ell$.  In $(\fg_1)_\bbC$, we can use the representative element $\sfH_1 = \sfZ_1 + \sfZ_\ell + \sfZ_{1'} + \sfZ_{\ell'}$.  There are three irreducible components of $H^2_+(\fg_-,\fg)$.  (See \cite{CGMN2015} and see \cite[Table 1]{KMT2016} for the (minus) lowest weights expressed in weight notation.)  Evaluating $\sfH_1$ on all lowest weights, we verify (CM.3').


 \subsection{Non-Yamaguchi-rigid, torsion-free parabolic geometries}
 \label{S:notorsion}

 \begin{thm}\label{thm:notorsion}
  Let $(\cG \stackrel{\pi}{\to} M, \omega)$ be any Yamaguchi-nonrigid, torsion-free (regular, normal) parabolic geometry of type $(G,P)$, where $G$ is simple. Let $0 \neq \xi \in \finf(\cG,\omega)$ with $\bX = \pi_*(\xi)$ having vanishing 1-jet at some $x \in M$.  Then the geometry is flat on an open set $U \subset M$ with $x \in \overline{U}$.
 \end{thm}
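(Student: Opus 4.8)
The plan is to deduce the statement from the \v{C}ap--Melnick rigidity criterion (Theorem \ref{T:CM-curve}), verifying its hypotheses by combining the top-slot machinery of Section \ref{S:subcominuscle} with the classification in Appendix \ref{S:nr-torfree}. First I would translate the hypothesis on $\bX$ into Lie theory. Since $0 \neq \xi$ and $j^1_x(\bX) = 0$, Theorem \ref{T:M-bracket} gives $0 \neq E := \omega_u(\xi) \in \fg^\nu = \fg_\nu$, so the isotropy lies in the top-slot and, being contained in $\fp_+$, the fixed point $x$ is of higher order. Because $G$ is simple and $\bX$ is a nonzero symmetry with vanishing $1$-jet, Theorem \ref{Th-A} forces $x$ to be a flat point, i.e.\ $\kappa_H(x) = 0$ (as already recorded at the end of Section \ref{Ss:hovs}); this is precisely the hypothesis needed to apply Theorem \ref{T:CM-curve}.

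Next, since both the criterion and the conclusion depend only on $x$ and on the $G_0$-orbit of $E$, I would replace $u$ by another point of $\pi^{-1}(x)$ so that $E = \sfE_j = e_{\beta_1} + \dots + e_{\beta_j}$ is one of the TSOC representatives of Theorem \ref{T:TSOC}, with its standard $\fsl_2$-triple $\{\sfE_j, \sfH_j, \sfF_j\}$ from Lemma \ref{L:TSOC-sl2}. Conditions (CM.1) and (CM.2) then hold for every $\sfH_j$ by the general computation of Section \ref{Ss:62}, which uses only that the $\beta_i$ are pairwise orthogonal top-slot roots. Condition (CM.3) reduces, again as in Section \ref{Ss:62}, to the single inequality (CM.3'): $\mu(\sfH_j) \geq 0$ for every $\fg_0$-lowest weight $\mu = -w\cdot\lambda$ of $H^2_+(\fg_-,\fg)$. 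The decisive point for the torsion-free statement is that I must establish (CM.3') for \emph{all} $j$, since the hypothesis places $E$ in an arbitrary, not necessarily open, $G_0$-orbit of $\fg_\nu$. The real case reduces to the complex simple case by complexification, exactly as in the reductions carried out in the proof of Theorem \ref{T:height}: the complexification of $E$ lands in the corresponding orbit of $(\fg_\nu)_\bbC$, and (CM.3') over $\bbC$ implies it over $\bbR$.

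The heart of the argument, and the step I expect to be the main obstacle, is verifying (CM.3') for every $\sfH_j$ under the torsion-free hypothesis. Here I would invoke the classification of Yamaguchi-nonrigid torsion-free geometries from Appendix \ref{S:nr-torfree}: this is a short, explicit list, and for each $(G,P)$ the set of curvature lowest weights $\mu$ (equivalently the admissible $w \in W^\fp(2)$) is known. Using Corollary \ref{C:Hj-Z} I would write $\sfH_j = \sum_i t_{ji}\sfZ_i$ with $t_{ji}\in\bbZ_{\geq 0}$, so that, writing $\mu = \sum_i n_i\alpha_i$ in root notation, one has $\mu(\sfH_j) = \sum_i t_{ji}\,n_i = \sum_{i=1}^j \langle\mu,\beta_i^\vee\rangle$. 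The difficulty is that individual coefficients $n_i$ can be negative, so the inequality does not follow term-by-term and genuinely relies on favourable cancellation; it is exactly the torsion-free condition that constrains the lowest weights so that these partial sums remain non-negative for every $j$ (as already observed for projective geometry, where a single orbit occurs, and for conformal geometry, where $\mu(\sfH_j)\geq 0$ holds for both orbits).

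Once (CM.3') has been verified across the classification, the three conditions (CM.1--3) hold for the chosen $\sfH_j$ while $\kappa_H(x)=0$, so Theorem \ref{T:CM-curve} produces a distinguished curve $\gamma$ through $x$ and a neighbourhood $U$ of $\gamma_+$ with $x \in \overline{U}$ on which the geometry is flat; this is exactly the asserted conclusion. In the analytic setting the same input upgrades to global flatness of the connected manifold $M$ by propagating flatness along such distinguished curves, which recovers the analytic version of the result stated in the introduction.
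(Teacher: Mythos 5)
Your proposal is correct and follows essentially the same route as the paper: reduce to top-slot isotropy via Theorem \ref{T:M-bracket}, obtain $\kappa_H(x)=0$ from Theorem \ref{Th-A}, pass to the TSOC representatives $\sfE_j$ so that (CM.1)--(CM.2) hold and (CM.3) reduces to (CM.3'), verify $\mu(\sfH_j)\geq 0$ for \emph{all} $j$ case by case over the classification of Appendix \ref{S:nr-torfree} (the paper's Table \ref{F:tor-free}), and conclude with Theorem \ref{T:CM-curve}. The one point to keep explicit, which your ``admissible $w$'' phrasing already captures, is that the (CM.3') check is made only on the torsion-free branches of $H^2_+(\fg_-,\fg)$ in which $\kappa_H$ can take values---it genuinely fails on torsion branches, e.g.\ the $(23)$-component for $C_\ell/P_2$---exactly as in the paper's table, which lists only the torsion-free $w\in W^\fp(2)$.
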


 \begin{proof}
 The classification of Yamaguchi-nonrigid, torsion-free geometries is given in Appendix \ref{S:nr-torfree}.  We give the corresponding data in Table \ref{F:tor-free}.  The following $|1|$-graded geometries were excluded since they are given in Table \ref{F:1-graded}: $A_\ell / P_1$ ($\ell \geq 2$), $A_\ell / P_2$ ($\ell \geq 3$): $(21)$-branch, $B_\ell / P_1$ ($\ell \geq 2$), $D_\ell / P_1$ ($\ell \geq 4$).  In all cases, all elements of the $H$-sequence for the TSOC act with a non-negative eigenvalue on the lowest weight of $H^2_+(\fg_-,\fg)$.
 \end{proof}

 \begin{footnotesize}
 \begin{table}[h]
 \[
 \begin{array}{|c|c|c|c|c|c|c|c|c|} \hline
 G & \mbox{Range} & P & \nu & \#\mbox{orb} & \multicolumn{2}{c|}{\mbox{$H$-sequence for the TSOC}} & w \in W^\fp & \begin{tabular}{c} $\fg_0$-lowest weight\\
 $-w\cdot\lambda$  in root notation \end{tabular}\\ \hline\hline
 A_\ell & \ell=2 & P_{1,2} & 2 & 1 &
 \sfH_1 = \sfZ_1 + \sfZ_\ell & - & (12) & [3,1]\\
  & \ell \geq 3 & P_{1,2} & 2 & 1 & & - & (21) & [1,2,-1,-1,...,-1,-1]\\
  & \ell \geq 3 & P_{1,\ell} & 2 & 1 & & - & (1\ell) & [1,-1,-1,...,-1,-1,1]\\ \hline
 B_\ell
  & \ell=2 & P_{1,2} & 3 & 1 & \sfH_1 = \sfZ_2 & - & (12) & [3,1]\\
  & \ell=3 & P_3 & 2 & 1 & & - & (32) & [-1,0,3]\\ \hline
 C_\ell & \ell=2 & P_1 & 2 & 1 & \sfH_1 = \sfZ_1 & - & (12) & [3,0]\\
 & \ell \geq 3 & P_1 & 2 & 1 & & - & (12) & [2,-1,-2,-2,...,-2,-2,-1]\\
 & \ell \geq 3 & P_2 & 2 & 2 & & \sfH_2 = \sfZ_2 & (21) & [1,2,-2,...,-2,-2,-1]\\
 & \ell \geq 3 & P_{1,2} & 4 & 1 & & - & (21) & [1,2,-2,...,-2,-2,-1]\\ \hline
 G_2 & -& P_1 & 3 & 1 & \sfH_1 = \sfZ_2 & - & (12) & [4,0]\\ \hline
 \end{array}
 \]
 \caption{Data associated with Yamaguchi-nonrigid, torsion-free, non-$|1|$-graded parabolic geometries}
 \label{F:tor-free}
 \end{table}
 \end{footnotesize}


 \subsection{General geometries}
 \label{S:gen}

  \begin{thm} \label{T:gen} Let $G$ be a real or complex simple Lie group, $P \subsetneq G$ a parabolic subgroup, and $\fg = \fg_{-\nu} \op ... \op \fg_\nu$ the associated grading.  In the real case, require that $\fg_\bbC$ be simple. Suppose that:
  \begin{enumerate}
  \item[(i)] $(\mathcal{G}\stackrel{\pi}{\to} M,\omega)$ is a regular, normal parabolic geometry of type $(G,P)$, except for:
  \[
  A_\ell/P_{s,s+1},\ 2 \leq s < \frac{\ell}{2},\ w=(s+1,s); \qquad B_\ell / P_\ell, \quad \ell \geq 5 \mbox{ odd},\ w=(\ell,\ell-1),
  \]
  i.e.\ require the component of $\kappa_H$ corresponding to the indicated $w \in W^\fp(2)$ to be trivial.

 \item[(ii)] there exists $0 \neq \xi \in \finf(\cG,\omega)$ such that at some $u \in \cG$, $\omega_u(\xi)$ lies in the {\bf open} $G_0$-orbit of $\fg_\nu$.  (In particular, the 1-jet of ${\rm\bf X}=\pi_*(\xi)$ vanishes at $x = \pi(u) \in M$.)
 \end{enumerate}
  Then the geometry is flat on an open neighbourhood $U \subset M$ with $\bar{U} \ni x$.
 \end{thm}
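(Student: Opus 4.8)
The plan is to apply the \v{C}ap--Melnick flatness criterion (Theorem~\ref{T:CM-curve}) to the isotropy $E := \omega_u(\xi) \in \fg_\nu$, reducing the whole statement to the single algebraic condition (CM.3$'$) for the element $\sfH_m$ of the $H$-sequence attached to the \emph{open} $G_0$-orbit. I would first dispose of the real case by complexification: if $\fg$ is real with $\fg_\bbC$ simple and $E$ lies in the open real $G_0$-orbit of $\fg_\nu$, then its complexification lies in the open complex orbit, and since (CM.3$'$) only records eigenvalues of $\ad_{\sfH_m}$ on $H^2_+(\fg_-,\fg)$, its validity is inherited from the complexification exactly as in the proof of Theorem~\ref{T:1-graded}. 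Thus it suffices to treat $\fg$ complex simple.

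For the reduction to $\sfH_m$: since $E$ lies in the open $G_0$-orbit of $\fg_\nu$, after acting by $G_0$ (which only moves $u$ within $\pi^{-1}(x)$) we may take $E = \sfE_m = e_{\beta_1} + \cdots + e_{\beta_m}$, the top representative of Theorem~\ref{T:TSOC}, where $\{\beta_1,\dots,\beta_m\}$ is the TSOC and $m$ is the number of $G_0$-orbits in $\bbP(\fg_\nu)$. By Lemma~\ref{L:TSOC-sl2}, $\{\sfE_m,\sfH_m,\sfF_m\}$ is a standard $\fsl_2$-triple. The discussion of Section~\ref{Ss:62} shows that (CM.1) and (CM.2) hold automatically for every $\sfH_j$, and that (CM.3) is equivalent to the combinatorial condition (CM.3$'$): $\mu(\sfH_m) \geq 0$ for every lowest $\fg_0$-weight $\mu = -w\cdot\lambda$ (with $w \in W^\fp(2)$) of $H^2_+(\fg_-,\fg)$. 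By Corollary~\ref{C:Hj-Z}, $\sfH_m = \sum_i t_{mi}\sfZ_i$ with $t_{mi} \in \bbZ_{\geq 0}$, so $\mu(\sfH_m)$ is a non-negatively weighted sum of the root coordinates of $\mu$, turning the check into a finite inspection of the Kostant data.

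The bulk of the argument is the case-by-case verification of (CM.3$'$) for $\sfH_m$, which I would organize by type. Yamaguchi-rigid types have $H^2_+(\fg_-,\fg)=0$, so $\kappa_H \equiv 0$ and the geometry is flat outright. The $|1|$-graded types are exactly those recorded in Table~\ref{F:1-graded} and are already handled in Theorem~\ref{T:1-graded}. For the higher-graded Yamaguchi-nonrigid types, the single-orbit case ($m=1$) uses $\sfH_1 = \sum_{i\in I_\sfc}\sfZ_i$; for parabolic contact geometries this is the contact grading element, which acts positively on $H^2_+(\fg_-,\fg)$ by regularity, while the remaining single-orbit (torsion-free) cases are covered by Theorem~\ref{thm:notorsion}. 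The multi-orbit higher-graded types are exactly those listed in Proposition~\ref{P:NYR} ($A_\ell/P_{2,s}$, $A_\ell/P_{s,s+1}$, $B_\ell/P_\ell$, $C_\ell/P_2$, $C_\ell/P_{2,\ell}$, $C_\ell/P_{\ell-1}$, $C_\ell/P_{\ell-1,\ell}$); for each I would evaluate $\mu(\sfH_m)$ against the lowest weights $-w\cdot\lambda$ using the explicit $H$-sequences (the computations for $C_\ell/P_{\ell-1}$ and $B_\ell/P_\ell$ in Section~\ref{Ss:62} being the template). In every case $\mu(\sfH_m)\geq 0$, \emph{except} precisely for $A_\ell/P_{s,s+1}$ with $2\leq s<\frac{\ell}{2}$ at $w=(s+1,s)$ and for $B_\ell/P_\ell$ with $\ell\geq 5$ odd at $w=(\ell,\ell-1)$. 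Hypothesis~(i) assumes these components of $\kappa_H$ vanish, so $\kappa_H$ is valued in the $\sfH_m$-invariant subrepresentation on which $\sfH_m$ acts with non-negative eigenvalues; as the flatness mechanism of \cite{CM2013a} underlying Theorem~\ref{T:CM-curve} only uses the action on the subrepresentation carrying $\kappa_H$, its conclusion still applies.

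It remains to certify the standing hypothesis $\kappa_H(x)=0$: since $E \in \fg_\nu = \fg^\nu$, Theorem~\ref{T:M-bracket}(c) gives $j^1_x(\bX)=0$ while $\bX \neq 0$, so simplicity of $G$ together with 1-jet determinacy at non-flat points (Theorem~\ref{T:nf-jet}, equivalently Theorem~\ref{Th-A}) forces $\kappa_H(x)=0$. With (CM.1--3) valid on the relevant subrepresentation and $\kappa_H(x)=0$ established, Theorem~\ref{T:CM-curve} produces a neighbourhood $U$ with $\bar{U}\ni x$ on which the geometry is flat. The hard part will be the combinatorial core of the third step: proving that the two excluded families are \emph{exactly} where $\mu(\sfH_m)<0$ arises, and that (CM.3$'$) holds for all other $(G,P,w)$. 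The delicacy is that $\sfH_m$ is supported at the innermost cascade nodes, so the sign of $\mu(\sfH_m)$ hinges on a handful of root coordinates of $-w\cdot\lambda$ whose behaviour depends sensitively on the parity of $\ell$ and on the position $s$ of the marked nodes; organizing this uniformly through the cascade structure of Corollary~\ref{C:Hj-Z} is what makes the verification manageable.
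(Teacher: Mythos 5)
Your proposal follows the same route as the paper: reduce everything to the algebraic condition (CM.3$'$) for $\sfH_m$ via Theorem \ref{T:CM-curve} and the TSOC machinery, complexify to handle the real case, certify $\kappa_H(x)=0$ via Theorems \ref{T:M-bracket} and \ref{Th-A}, and then verify $\sfH_m(\mu)\geq 0$ case by case, with the two excluded families absorbed by hypothesis (i). Your $m>1$ branch and your treatment of the exceptional components are essentially the paper's. However, your $m=1$ branch has a genuine gap: you assert that the non-contact single-orbit cases are torsion-free and hence covered by Theorem \ref{thm:notorsion}. That is false, on two counts. First, the proof of Proposition \ref{P:NYR} explicitly exhibits Yamaguchi-nonrigid, higher-graded types with exactly one $G_0$-orbit in $\bbP(\fg_\nu)$ that are neither parabolic contact nor of torsion-free type, namely $(B_\ell,P_3)$ for $\ell\geq 4$, $(B_3,P_{1,3})$, $(D_\ell,P_3)$ and $(D_\ell,P_{1,\ell})$; none of these occurs in the classification of Appendix \ref{S:nr-torfree}, so your dichotomy says nothing about them. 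Second, even for $m=1$ types that do occur there (e.g.\ $A_\ell/P_{1,2}$ or $C_\ell/P_{1,2}$), Theorem \ref{thm:notorsion} assumes the geometry itself is torsion-free, whereas Theorem \ref{T:gen} does not: a geometry of such a type with torsion has $\kappa_H$ with components in the torsion part of $H^2_+(\fg_-,\fg)$, and (CM.3$'$) must be checked on those lowest weights as well -- Table \ref{F:tor-free} records only the $w$'s of non-negative homogeneity, so the needed inequalities are simply not among those verified in Theorem \ref{thm:notorsion}.

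The paper closes the $m=1$ case by a uniform combinatorial argument, which you should substitute for the appeal to Theorem \ref{thm:notorsion}. Since $\sfH_1=\sum_{i\in I_\sfc}\sfZ_i$, one has $\sfH_1(\lambda)=2$, and \eqref{E:mu} gives $\sfH_1(\mu)=-2+\bigl(r_j+1-c_{kj}(r_k+1)\bigr)\sfH_1(\alpha_j)+(r_k+1)\,\sfH_1(\alpha_k)$ for $\mu=-w\cdot\lambda$, $w=(jk)$. If $j\in I_\sfc$ then $\sfH_1(\alpha_j)=1$ and $r_j\geq 1$, so $\sfH_1(\mu)\geq r_j-1\geq 0$ (using $c_{kj}\leq 0$ and $\sfH_1(\alpha_k)\geq 0$); symmetrically if $k\in I_\sfc$. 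It then remains only to check, by scanning the tables of nonrigid $(G,P)$ in \cite{KT2014}, that for every $m=1$ type each $w=(jk)\in W^\fp(2)$ with $\sfZ(-w\cdot\lambda)>0$ has $j$ or $k$ a contact node; this scan is exactly what the paper does, and it covers all the cases your argument misses (in particular the geometries with torsion). With this replacement, the rest of your argument stands.
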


 \begin{proof} It suffices to consider Yamaguchi-nonrigid geometries.  Let $m$ be the number of $G_0$-orbits in $\bbP(\fg_\nu)$.  Since $\omega_u(\xi)$ lies in the open orbit, we need to verify that $\sfH_m(\mu) \geq 0$ for $\mu = -w\cdot \lambda$ when $w = (jk) \in W^\fp(2)$ and $\sfZ(\mu) > 0$.  All such $w$ are in \cite[Tables 9--13]{KT2014}.  From \eqref{E:mu}, we have
 \[
 \sfH_m(\mu) = -\sfH_m(\lambda) + (r_j + 1 - c_{kj}(r_k+1)) \sfH_m(\alpha_j) + (r_k+1)\sfH_m(\alpha_k).
 \]
 \begin{itemize}
 \item $m > 1$: The $|1|$-graded cases are complete, so it remains to consider the $(G,P)$ and $\sfH_m$ in the table in Proposition \ref{P:NYR}.  Aside from the listed exceptions, we verify $\sfH_m(\mu) \geq 0$ directly.
 \item $m=1$: From Section \ref{S:H-seq}, $\sfH_1 = \sum_{i \in I_\sfc} \sfZ_i$ and hence $\sfH_1(\lambda) = 2$.  If $j \in I_\sfc$ (contact node), then $r_j \geq 1$ and $\sfH_1(\mu) \geq 0$ (since $c_{kj} \leq 0$).  This is the case for all parabolic contact geometries.  Similarly, if $k$ is a contact node, then $\sfH_1(\mu) \geq 0$.

 Suppose that neither $j$ nor $k$ is a contact node, so $\sfH_1(\mu) < 0$.  However, a quick scan through \cite[Tables 12--13]{KT2014} yields no such $(G,P)$ satisfying $m=1$.
 \end{itemize}
 Thus, $\sfH_m(\mu) \geq 0$ has been verified, and the result follows.
 \end{proof}
 
 In particular, geometries with precisely one $G_0$-orbit in $\bbP(\fg_\nu)$ satisfy the hypotheses of Theorem \ref{T:gen}.  These include projective structures ($A_\ell / P_1$), 2nd order ODE (systems) ($A_\ell / P_{1,2}$), $(2,3,5)$-distributions $(G_2 / P_1)$, $(3,6)$-distributions $(B_3 / P_3)$, parabolic contact structures, and many torsion-free geometries from Theorem \ref{thm:notorsion} -- see Table \ref{F:tor-free}.  The case of parabolic contact structures essentially follows from \cite{CM2013a}.


 \section{Discussion and outlook}
 \label{S:conclusion}

 Let us discuss some results and conjectures, related to the subject of this paper.

 \subsection{Non-flat points}
 
 For parabolic geometries, the local vanishing of the full curvature $\kappa$ is equivalent to the local vanishing of the harmonic curvature $\kappa_H$.  Fixing a point $x \in M$, the vanishing of $\kappa$ at $x$ implies the vanishing of $\kappa_H$ at $x$, but the converse is not true.  Consequently, it is natural to ask whether our notion of a non-flat point $x$, defined to mean $\kappa_H(x) \neq 0$, used in the hypothesis of Theorem \ref{Th-A} can be weakened to mean $\kappa(x) \neq 0$ instead.  Indeed, the statement
 \begin{align} \label{E:conjecture}
 0 \neq \bX \in \cS, \quad j^1_x \bX = 0 \qRa \kappa(x) = 0
 \end{align}
 follows from \cite[Thm.1.1]{MN2015} for irreducible ($|1|$-graded) parabolic geometries.  We conjecture that \eqref{E:conjecture} holds for any parabolic geometry of type $(G,P)$ with $G$ simple.  

 While $\kappa$ makes sense for any Cartan geometry, $\kappa_H$ may not having meaning.  The study of the validity of \eqref{E:conjecture} outside the parabolic setting would also be worthwhile.


 \subsection{Higher order fixed points}

 Example \ref{Ex:path-geom} (coming from \cite{KT2014}) exhibits an instance of a non-flat geometry with a higher-order fixed point.  Nevertheless, the prolongation-rigidity results of \cite{KT2014} imply that if $0 \neq \phi \in H^2_+(\fg_-,\fg)$ (in particular, if $\phi = \kappa_H(u) \neq 0$), then the Tanaka prolongation algebra $\fa^\phi$ (see Section \ref{S:main}) will often have trivial positive part.\footnote{The exceptions admitting non-trivial positive prolongation are classified in \cite[Table 4]{KT2014}, and $A_\ell / P_{1,2}$ geometry from Example \ref{Ex:path-geom} appears in this list.}  For such geometries, we anticipate that the existence of a higher-order fixed point (not necessarily only those whose isotropy is in the top-slot) would imply rigidity results, i.e.\ vanishing of $\kappa$ on $U$ with $\bar{U} \ni x$.  A similar combination of the techniques developed here together with those of \v{C}ap--Melnick and Melnick--Neusser should prove useful in this study.


 \subsection{Isolation of critical points}
 \label{S:isolation}

Another important aspect considered in \cite{CM2013a} is the isolation of higher order fixed points of an automorphism (critical points of a symmetry). While the rigidity criterion considered in \S\ref{Ss:hovs} was only sufficient (for 1-jet determinacy of symmetries at a critical point), their criterion for such a critical point being smoothly isolated is both necessary and sufficient. Our results allow us to efficiently assess this criterion for symmetries with vanishing 1-jet.

If $E=\omega_u(\xi)\in\fp$ is the isotropy of the symmetry ${\rm\bf X}=\pi_*(\xi)$ at $x\in M$, then by \cite[Proposition 2.5]{CM2013a} the dimension of the set of critical points of the same isotropy type is equal to the dimension of the centralizer $C_\fg^-(E) = \{ X \in \fg_- \st [X,E] = 0 \}$. In particular, the fixed point $x$ is smoothly isolated if and only if $C_\fg^-(E)=0$.  The space $C_\fg^-(E)$ for $G_0$-conjugacy classes of elements in $\fg_\nu$ was described in \S\ref{Ss:62}. In particular, if $\beta_1,...,\beta_m$ is the TSOC, consider $\sfE_j = e_{\beta_1} + ... + e_{\beta_j}$ as in Lemma \ref{L:TSOC-sl2}.  Then $X \in C_\fg^-(\sfE_j)$ if and only if $X$ is a sum of root vectors $e_{-\alpha}$ with  $\alpha \in \Delta(\fg_+) \cap \{ \beta_1,...,\beta_j \}^\perp$.  (Since $\beta_i \in \fg_\nu$, then $\beta_i + \alpha \not\in \Delta$.  Thus, from root strings, $\beta_i - \alpha \not\in \Delta$ if and only if $\langle \beta_i, \alpha \rangle = 0$.) Hence, $C_\fg^-(\sfE_j)\neq0$ for $j<m$, but $C_\fg^-(\sfE_m)$ can be either zero (as in $C_\ell/P_{\ell-1}$, see Example \ref{E:CP2}) or nonzero (as in $C_\ell/P_{\ell-1,\ell}$, for example).  When $j=m$, Lemma \ref{L:max} indicates that $\alpha \not\in \Delta(\fg_\nu)$.  

For $|1|$-graded geometries, this forces $\alpha \in \Delta(\fg_0)$ and $C_\fg^-(\sfE_m) = 0$. Hence, smooth isolation of the fixed point is equivalent to having isotropy in the open $G_0$-orbit of $\fg_1$.  In view of this, our Theorem \ref{T:1-graded} and \cite[Prop.\ 4.1]{MN2015} become equivalent.  In \cite[Prop.\ 4.5]{MN2015}, the authors also treat the case when the isotropy lies in the minimal $G_0$-orbit (using different techniques).


 \subsection{Linearization of symmetries at fixed points}
 \label{Ss:Lin}

For several geometries all symmetries are proved linearizable around fixed points, provided the geometry is not locally flat (clearly 1-jet determinacy is a necessary condition for linearization). This always happens by Bochner's technique in the case the action of the transformation group is proper, see \cite{Bochner1945} and the recent survey \cite{Isa}.

This also happens, due to Obata--Ferrand theorem and its local versions, for Riemannian, Lorent\-zian and
partially for general pseudo-Riemannian conformal structures, see \cite{Frances,FM}. Namely,
if the structure $[g]$ is not locally conformally flat near $x\in M$ and a conformal symmetry ${\rm\bf X}$ vanishes at $x$, then (in the general pseudo-Riemannian case, under the additional assumption that the generated local group acts semisimply) this vector field is linearizable.

Similarly, for an analytic Levi-nondegenerate hypersurface-type CR-structure that is non-spherical at $x$, a holomorphic field $\bX$ vanishing at $x$ is necessarily linearizable by Ezhov's theorem \cite{Ezh1986}. There are even stronger statements that the whole stability group is linearizable in the pseudoconvex \cite{KL}, Lorentzian \cite{Ezh1986} and some other cases (but this does not hold for all Levi-signatures \cite{Ezh1988}).

Supported by these and some other examples, one might expect that for a non-flat parabolic geometry of type $(G,P)$ (with $G$ simple), any symmetry ${\rm\bf X}$ could be proved linearizable around each fixed point $x$.  This is however not true -- our Example \ref{Ex:path-geom} yields a counterexample:

 \begin{prop}
The symmetry $\bS^{(1)}$ for the submaximally symmetric path geometry given in Example \ref{Ex:path-geom} is 1-jet determined, yet it is not linearizable around the fixed point 0.
 \end{prop}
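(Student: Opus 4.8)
The plan is to establish the two claims separately, using that at the fixed point $0$ the isotropy $\omega_u(\bS^{(1)})$ lies in $\fg^1 = \fp_+$ but not in the top-slot $\fg^2 = \fg^\nu$ (here $\nu = 2$ for $A_{m+1}/P_{1,2}$). For 1-jet determination I would invoke Theorem~\ref{T:M-bracket}(b): an isotropy in $\fg^1 \setminus \fg^2$ gives $j^0_0(\bS^{(1)}) = 0$ and $j^1_0(\bS^{(1)}) \neq 0$. Equivalently -- and this is the computation I will record, since it also drives the second part -- I read off the degree-one part of each coefficient of $\bS^{(1)}$ in the coordinates $(t, x_i, p_i)$: every coefficient other than those of the $\partial_{p_i}$ vanishes to order $\geq 2$ at $0$, whereas the coefficient $x_i - t p_i$ of $\partial_{p_i}$ (and $(x_m - t p_m) + \tfrac32 x_1^2 p_1$ of $\partial_{p_m}$) has linear part $x_i$. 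Hence the linear part of $\bS^{(1)}$ at $0$ is the nonzero nilpotent field
\[
 L := \sum_{i=1}^m x_i\, \partial_{p_i},
\]
so $j^1_0(\bS^{(1)}) \neq 0$ and $\bS^{(1)}$ is 1-jet determined.

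For non-linearizability, the key idea is that the germ at $0$ of the zero set (equilibrium locus) of a vector field vanishing at $0$ is a diffeomorphism invariant, so in particular its dimension is preserved under conjugation. First I would compute the zero set of $\bS^{(1)}$ directly: vanishing of all coefficients forces $t = 0$ (from $t^2$), then $x_1 = 0$ (from $\tfrac12 x_1^3$), and then $x_i = 0$ for all $i$ (from $x_i - t p_i$ and from the $\partial_{p_m}$-coefficient), leaving the $p_i$ free. Thus the zero-set germ is $\{t = 0,\ x_1 = \dots = x_m = 0\}$, a smooth submanifold of dimension $m$. On the other hand $L$ is nilpotent with $L^2 = 0$ and rank $m$ as a linear endomorphism, so its zero set, equal to its kernel, has dimension $(2m+1) - m = m+1$.

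Finally I would assemble the contradiction. Were $\bS^{(1)}$ linearizable near $0$, a local diffeomorphism $\Phi$ fixing $0$ would conjugate it to a linear field $N$; by naturality of the linearization at a fixed point, $N$ coincides with its own linear part $(d_0\Phi)\, L\, (d_0\Phi)^{-1}$, which is linearly conjugate to $L$ and hence has zero set $\ker N$ of dimension $m+1$. But $\Phi$ must carry the zero set of $\bS^{(1)}$ (dimension $m$) onto that of $N$ (dimension $m+1$), forcing $m = m+1$ -- a contradiction. The step needing the most care is precisely this last one: making rigorous that linearizability pins the target linear field down to linear conjugacy with $L$, so that its equilibrium locus is forced to have dimension $\dim \ker L = m+1$, together with the clean statement that the germ of the equilibrium locus and its dimension are conjugation invariants.
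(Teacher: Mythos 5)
Your proof is correct, but it takes a genuinely different route from the paper's. The paper proves non-linearizability by a formal normal-form computation (stated for $m=2$, with no loss of generality): writing a putative conjugation as $\Psi(\bx)=\bx+q_2(\bx)+\dots$, the degree-two change in the pushed-forward field is $[\,\ell,q_2\,]$, where $\ell=\sum_i x_i\partial_{p_i}$ is the linear part; its $\partial_t$-component is $-\ell(q_2^t)$ (with $q_2^t$ the $\partial_t$-component of $q_2$), which lies in the ideal generated by $x_1,\dots,x_m$ and hence can never cancel the monomial $t^2$ in the $\partial_t$-coefficient of $\bS^{(1)}$. So the paper concludes that $\bS^{(1)}$ is not even \emph{formally} linearizable. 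You instead compare a soft geometric invariant: the equilibrium locus of $\bS^{(1)}$ is the $m$-dimensional germ $\{t=0,\ x_1=\dots=x_m=0\}$, whereas any linear field to which $\bS^{(1)}$ could be conjugate is, by $1$-jet functoriality at the fixed point, linearly conjugate to $L=\sum_i x_i\partial_{p_i}$ and so has zero set of dimension $m+1$; since a conjugating diffeomorphism must carry one zero-set germ onto the other, dimensions would have to agree, a contradiction. Both arguments are complete, and your identification of the linear part (giving $1$-jet determinacy) coincides with the paper's. The trade-off: the paper's computation yields the strictly stronger conclusion of formal non-linearizability, while your argument avoids normal-form calculations entirely and rules out linearization even by conjugating maps of low regularity (anything, say $C^1$, for which pushforward and the tangent-space argument make sense), and it isolates the geometric reason for the failure -- the quadratic terms collapse the zero locus of $\bS^{(1)}$ to a proper submanifold of the kernel of its linear part.
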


 \begin{proof}
We consider the case $n=2$ for simplicity of exposition (no restriction of generality).
Then $M=\bbR^5({\rm\bf x})$, ${\rm\bf x}=(t,x_1,x_2,p_1,p_2)$, and
 $$
\bS^{(1)}=t^2\partial_t+tx_1\partial_{x_1}+(\tfrac12x_1^3+t x_2)\partial_{x_2}+(x_1-t p_1)\partial_{p_1}+(x_2+\tfrac32x_1^2p_1-t p_2)\partial_{p_2}.
 $$
At 0, the linear part is the vector field $\ell_{\bS^{(1)}}=x_1\partial_{p_1}+x_2\partial_{p_2}$. To conjugate $\bS^{(1)}$ to $\ell_{\bS^{(1)}}$ consider a coordinate transformation $\Psi({\bf\rm x})={\bf\rm x}+q_2({\bf\rm x})+\dots$, where $q_2$ is the quadratic part and we do not indicate the higher order terms. The inverse transformation has the form $\Psi^{-1}({\bf\rm x})={\bf\rm x}-q_2({\bf\rm x})+\dots$, and it is straightforward to check that whatever $q_2$ is, the term $t^2$ in the $\partial_t$-component of $\bS^{(1)}$ is not affected in the pushforward $\Psi_*\bS^{(1)}$. Consequently the field $\bS^{(1)}$ is not even formally linearizable.
 \end{proof}


 \subsection{Dimension of the stability group}

 Here we discuss applications of jet-determination to the description of admissible dimensions of stability groups (or infinitesimally, isotropy algebras), which is closely related to the problem of classifying geometric structures with large symmetry \cite{KT2014,Isa}.  Below we will consider only real parabolic geometries of type $(G,P)$ with $G^\bbC=\SL(3,\bbC)$.

 \'E.\ Cartan \cite{Car1932} proved that for a Levi-nondegenerate analytic hypersurface $M^3 \subset \bbC^2$, the stability group at a non-umbilic point is finite and has at most two elements. Beloshapka \cite{B1980} strengthened this by proving that at any non-spherical point, the stability group has dimension either 0 or 1.
 
 The split-form counterpart to CR-geometry in dimension 3 is the geometry of scalar 2nd order ODE $y'' = F(x,y,y')$ modulo point transformations, studied by S.\ Lie, R.\ Liouville, A.\ Tresse and \'E.\ Cartan (see the survey \cite{BK-ODE}).  This is the underlying structure for an $\SL(3,\bbR) / P_{1,2}$ geometry.  Flatness of this geometry is equivalent to point trivializability of the ODE.  Symmetries are vector fields on $M^3 = J^1(\bbR,\bbR)$ (with coordinates $\bx = (x,y,p)$) prolonged from $J^0(\bbR,\bbR) = \bbR^2$ (with coordinates $(x,y)$) that preserve the pair of line fields $\langle\partial_x+p\partial_y+F(x,y,p)\partial_p\rangle$ and $\langle\partial_p\rangle$.

 \begin{example}\label{Ex:ODE}
 For the examples below, $0$ is not a regular point in the sense of \cite{KT2014}.
 \begin{align*}
 \begin{array}{c|c|c}
 \mbox{ODE} & \mbox{Symmetry algebra} & \mbox{Isotropy dimension at $0$}\\ \hline\hline
 y''=(xy'-y)^3 & x\partial_y + \partial_p, \, x\partial_x-y\partial_y-2p\partial_p, \, y\partial_x-p^2\partial_p & 2 \mbox{ (not linearizable)}\\ \hline
 y'' = (y')^4 & \partial_x,\, \partial_y, \, 3x\partial_x+2y\partial_y-p\partial_p & 1 \mbox{ (linearizable)}
 \end{array}
 \end{align*}
  Notice $y\partial_x - p^2\partial_p$ is not linearizable.  This gives an even simpler example for the discussion of \S\ref{Ss:Lin}.
 \end{example}

 \begin{thm}
Suppose that the ODE $y''=F(x,y,y')$ is not point trivializable on any open domain. Then the
dimension of the isotropy algebra at any point ${\rm\bf x}$ can be either 0 or 1 or 2.
 \end{thm}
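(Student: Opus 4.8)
The plan is to bound the isotropy (stability) algebra of $\widetilde\cS=\finf(\cG,\omega)$ at an arbitrary $x=\pi(u)$, for the type $(G,P)=(\SL(3,\bbR),P_{1,2})$. Here $\fg=\fsl(3,\bbR)$ carries the $|2|$-grading with $\fg_0=\fh$ (the Cartan, $2$-dimensional), $\fg_{\pm1}$ spanned by the two simple root vectors, and $\fg_{\pm2}$ one-dimensional. Fixing a grading as in \S\ref{S:gen-parabolic}, the isotropy dimension equals $\dim\fs_0(u)+\dim\fs_1(u)+\dim\fs_2(u)$, so it suffices to bound this sum by $2$.

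First I would dispose of the top slot. Since the ODE is not point-trivializable on any open domain, the geometry is flat on no open set; as $\fg_\nu=\fg_2$ is one-dimensional every nonzero element of $\fg_2$ lies in the open $G_0$-orbit, and $(A_2,P_{1,2})$ is not among the exceptions of Theorem \ref{T:gen}. Hence a symmetry with isotropy in $\fg_2$ would force local flatness, a contradiction, so $\fs_2(u)=0$ at every $u$. Because $[\fg_1,\fg_1]=\fg_2$ and $\fs(u)$ is a graded subalgebra of $\fg$, the relation $[\fs_1(u),\fs_1(u)]\subseteq\fs_2(u)=0$ forces $\fs_1(u)$ to be an abelian subspace of $\fg_1$, hence $\dim\fs_1(u)\le1$. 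Thus a priori $\dim\fs_0(u)+\dim\fs_1(u)\le 2+1=3$, and the whole problem reduces to excluding the configuration $\fs_0(u)=\fh$, $\fs_1(u)=\bbR\,e_{\alpha_j}$.

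Next I would record the global structure forced by non-flatness. Non-flat points are dense, and at any such $u'$ Theorem \ref{T:strong-incl} gives $\fs(u')\subseteq\fa^{\kappa_H(u')}$. For $(A_2,P_{1,2})$ the two components of $H^2_+(\fg_-,\fg)$ have lowest weights $3\alpha_1+\alpha_2$ and $\alpha_1+3\alpha_2$, and by \eqref{E:phi0-rt-dec} one finds $\fa^\phi_{+}=0$ and $\fa^\phi_0=\fann_\fh(\phi)$ (at most $1$-dimensional), so $\fa^\phi=\fg_{<0}\rtimes\fann_\fh(\phi)$ is solvable of dimension $\le4$. Consequently $\widetilde\cS$ is solvable with $\dim\widetilde\cS\le\fU=4$, and at every non-flat point the isotropy is at most $1$-dimensional (the infinitesimal analogue of Beloshapka's bound, since there $\fs_1=\fs_2=0$). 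In particular the offending $3$-dimensional configuration can occur only at a degenerate point where $\kappa_H(x)=0$, where the prolongation bound itself is vacuous.

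The hard part is excluding this configuration at such a point, and this is where I expect the main obstacle: transferring the non-flat rigidity to a point of vanishing harmonic curvature. The two degree-zero symmetries spanning $\fs_0(u)=\fh$ integrate to a local $2$-torus fixing $x$ whose linear isotropy action on $T_xM\cong\fg_-$ has the distinct weights $-\alpha_1,-\alpha_2,-\alpha_1-\alpha_2$; invariance $\cL_\eta\kappa_H=0$ then forces the associated-graded $\infty$-jet of $\kappa_H$ at $x$ to lie in the \emph{finite-dimensional} span of the torus-invariant monomials (in weight-coordinates $v_1,v_2,v_3$ these are $v_1^3v_2,\,v_1^2v_3$ in the first component and $v_1v_2^3,\,v_2^2v_3$ in the second). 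Imposing in addition the degree-one symmetry, whose linearization is the nilpotent $\ad_{e_{\alpha_j}}$ on $T_xM$ acting trivially on the completely reducible fibre $\bbV$, kills the second-component terms and links the remaining coefficients. The goal is then to show that no such rigid polynomial can be the harmonic curvature of a regular normal geometry: I would feed it into the Bianchi/normality identities for $\kappa_H$, the aim being to conclude either local flatness near $x$ (contradicting the nowhere-trivializable hypothesis) or the appearance of an extra independent symmetry forcing $\dim\widetilde\cS>\fU=4$ (contradicting the bound above). Once the $3$-dimensional case is eliminated one gets $\dim\le2$, and the values $0,1,2$ are all realized; indeed the two equations of Example \ref{Ex:ODE} give isotropy of dimension $2$ (of mixed type $\fs_0\oplus\fs_1$) and $1$ respectively.
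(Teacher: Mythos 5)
Your reduction is correct as far as it goes: $\fs_2(u)=0$ follows from Theorem \ref{T:gen}, $\dim\fs_1(u)\le 1$ follows from $[\fg_1,\fg_1]=\fg_2$, and Theorem \ref{T:strong-incl} together with the prolongation computation for $A_2/P_{1,2}$ does confine any $3$-dimensional isotropy to a point with $\kappa_H(\bx)=0$. But your argument stops exactly where the theorem actually lives: the offending configuration is never excluded, only a strategy is described (``feed it into the Bianchi/normality identities\dots the aim being to conclude\dots''). That strategy has two concrete defects. First, your list of invariant monomials rests on a duality error: the coordinate \emph{functions} on $T_\bx M\cong\fg_-$ carry weights $+\alpha_1,\,+\alpha_2,\,+\alpha_1+\alpha_2$ (dual to the tangent vectors), while the fibre $H^2_+(\fg_-,\fg)$ has weights $3\alpha_1+\alpha_2$ and $\alpha_1+3\alpha_2$; invariance of a Taylor coefficient would force a non-negative combination of positive roots to equal $-(3\alpha_1+\alpha_2)$ or $-(\alpha_1+3\alpha_2)$, which is impossible. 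So there is no finite-dimensional space of ``rigid polynomials'' to analyze: the entire $\infty$-jet of $\kappa_H$ at $\bx$ vanishes. Second, and more seriously, vanishing of the $\infty$-jet of $\kappa_H$ at one point proves nothing in the smooth category (the theorem is not an analytic statement), so no amount of jet algebra at $\bx$ can by itself produce the open set of flat points you need for a contradiction.

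The paper closes this case with a dynamical argument that you are missing, and it needs only $\fs_0(u)=\fh$ (not the extra $\fg_1$-direction): if $\fh\subset\fk$ then the grading element $\sfZ$ is the isotropy of a symmetry $\bZ\in\fX(M)$; since $\sfZ$ acts on $H^2_+(\fg_-,\fg)$ with positive eigenvalues, $\sfZ\cdot\kappa_H(u)=0$ forces $\kappa_H(\bx)=0$, and the linearization of $\bZ$ at $\bx$ has spectrum $\{-1,-1,-2\}$, so $\bx$ is an asymptotically stable sink of the flow of $\bZ$. Because $\kappa_H$ is invariant under the natural lift of this flow (which moreover expands the fibre directions, eigenvalue $+4$, while the base contracts), every nearby point flows into $\bx$ and continuity forces $\kappa_H\equiv 0$ on a whole neighbourhood of $\bx$; hence the ODE is point-trivializable on an open set, contradicting the hypothesis. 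This attractor argument is precisely the mechanism that converts pointwise information at $\bx$ into flatness on an open set, and it is the step your jet-theoretic plan cannot supply.
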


 \begin{proof} For $G/P = \SL(3,\bbR) / P_{1,2}$, we have a contact gradation on $\fg$, while $\fp = \fg_0 \op \fg_1 \op \fg_2$ has dimensions $(2,2,1)$.  Let $(\cG \to M, \omega)$ correspond to the ODE geometry on $M$.  From the discussion in Section \ref{S:main}, fixing $u \in \pi^{-1}(\bx)$, we can regard the isotropy algebra as a graded subalgebra $\fk = \fs_{\geq 0} \subset \fp$.
 
 By the jet-determination result Theorem \ref{T:gen}, we have $\fk \cap \fg_2 = 0$, so $\fk \subset \fg_0 \op \fg_1$. Since $[\fg_1,\fg_1] = \fg_2$, then $\fk \cap \fg_1$ is at most 1-dimensional.

 Assume $\dim(\fk \cap \fg_0) = 2$.  Then the grading element $\sfZ\in\fk$. We shall deduce that the geometry is locally flat around point ${\rm\bf x}$.  Since $\fs_0$ (in particular, $\sfZ$) annihilates $\kappa_H(\bx)$, we must have $\kappa_H(\bx) = 0$.  Let $\bZ\in\fX(M)$ be the symmetry corresponding to $\sfZ$. This vector field has a critical point at ${\rm\bf x}$ with the spectrum of its linear part $\{-1,-1,-2\}$. Thus, $\bx$ is an asymptotically stable sink for ${\rm\bf Z}$. Now $\kappa_H$ is constant along the flow of $\bZ$ that has the attractor at $\bx$.
Hence, by continuity $\kappa_H$ vanishes in a neighborhood of $\bx$, which contradicts the assumption.  Thus, $\dim(\fk \cap \fg_0) \leq 1$.

Thus, $\dim\,\fk\in\{0,1,2\}$. All values are realized as follows from Example \ref{Ex:ODE}.
 \end{proof}

 \begin{remark}
The same arguments allow one to reprove Beloshapka's result \cite{B1980} without appealing to the Chern--Moser normal form.
 \end{remark}
 
 Recall that $y'' = F(x,y,y')$ is the (unparametrized) geodesic equation of a projective connection $[\nabla]$, i.e.\ an $\SL(3,\bbR) / P_1$ geometry, if and only if $F(x,y,p)$ is cubic in $p$.  (See also \cite[(5.10)]{KT2014}.)
 
 \begin{example} Let $(x^0,x^1) = (x,y)$ and $\nabla_{\partial_{x^a}} \partial_{x^b} = \Gamma^c_{ab} \partial_{x^c}$ with $\Gamma^c_{ab} = \Gamma^c_{(ab)}$.
  \begin{align*}
 \begin{array}{c|c|c}
 \mbox{ODE} & \mbox{Non-vanishing $\Gamma^c_{ab}$ for $\nabla$} & \mbox{Projective symmetry algebra} \\ \hline\hline
 y'' = (xy' - y)^3 & 
 \begin{array}{l} 
 \Gamma^0_{11} = x^3, \quad
 \Gamma^1_{11} = 3x^2 y, \\
 \Gamma^0_{00} = 3xy^2, \quad
 \Gamma^1_{00} = y^3
 \end{array}
 & x\partial_y, \, x\partial_x-y\partial_y, \, y\partial_x \\ \hline
 y'' = (xy'-y)^2 &
 \begin{array}{ccc}
 \Gamma^1_{11} = -x^2, \,\, \Gamma^0_{00} = -2xy, \,\, \Gamma^1_{00} = -y^2
 \end{array} & x\partial_y, \, x\partial_x - 2y\partial_y \\ \hline
 y'' = xy y' & \Gamma^0_{00} = xy & x\partial_x - 2y\partial_y 
 \end{array}
 \end{align*}
 In each of these cases, the projective symmetry algebra agrees with the isotropy algebra at $0$.  (Again, $0$ is not a regular point.) Since 3 is the well-known submaximal symmetry dimension, we conclude that 3 is the maximal isotropy algebra dimension for non-flat 2-dimensional projective structures.  Moreover, isotropy algebras of dimension 0, 1, 2, or 3 are all realizable.
 \end{example}

Using the same ideas, further results on dimensions of the isotropy algebras for other geometries can be obtained, but this lies outside the scope of this paper.


 \appendix

 \section{Dynkin diagram recipes and Kostant's theorem}
 \label{S:DD}

 Let $\fg$ be a complex semisimple Lie algebra with Borel subalgebra $\fb$, Cartan subalgebra $\fh$, simple roots $\{ \alpha_i \} \subset \fh^*$, dual basis $\{ \sfZ_i \} \subset \fh$, coroots $\alpha_i^\vee = \frac{2\alpha_i}{\langle \alpha_i, \alpha_i \rangle}$, and fundamental weights $\{ \lambda_i \} \subset \fh^*$.  Let $\langle \cdot, \cdot \rangle$ be the symmetric bilinear form on $\fh^*$ induced from the Killing form $B$ on $\fg$.  Let:
 \begin{itemize}
 \item $\fD(\fg)$ be the corresponding Dynkin diagram.
 \item Given a parabolic subalgebra $\fb \subseteq \fp \subsetneq \fg$ with corresponding index set $I_\fp$, let $\fD(\fg,\fp)$ be the marked diagram obtained by putting crosses on $\fD(\fg)$ corresponding to $I_\fp$.
 \item Given a weight $\mu \in \fh^*$, inscribe the coefficient $r_i = \langle \mu, \alpha_i^\vee \rangle$ on the $i$-th node of $\fD(\fg)$ or $\fD(\fg,\fp)$.  Denote this by $\fD(\fg,\mu)$ or $\fD(\fg,\fp,\mu)$ respectively.  Its {\em support} consists of all connected components containing at least one nonzero coefficient over a node.  This will be denoted by using $0$ as a superscript on the previous diagram.
 \end{itemize}

  Let $\fg = \fg_{-\nu} \op ... \op \fg_\nu$ be the grading induced by $\sfZ = \sum_{i \in I_\fp} \sfZ_i$.

 \begin{enumerate}[label=\textbf{(R.\arabic*)}]

 \item {\bf Structure of $\fg_0$}: Removing crossed nodes from $\fD(\fg,\fp)$ yields $\fD(\fg_0^{ss})$ and $\dim(\fz(\fg_0)) = |I_\fp|$.
 \item {\bf $\fg_{-1}$ as a $\fg_0^{ss}$-module}: Fix a crossed node $i$.  For any adjacent uncrossed node $j$, inscribe the multiplicity of the bond between $i$ and $j$ if the bond is directed from $i$ to $j$.  Otherwise, inscribe a 1.  Do this for each crossed node $i$ to get the decomposition of $\fg_{-1}$ into irreducibles.
 \item {\bf $\fg_1$ as a $\fg_0^{ss}$-module}:  Since $(\fg_{-1})^* \cong \fg_1$, we apply the duality involution\footnote{The duality involution is trivial except in the following cases: $A_\ell$ for $\ell \geq 2$, $D_\ell$ for $\ell$ odd, or $E_6$.  In these cases, it is the unique non-trivial automorphism of the Dynkin diagram.} to the $\fg_0^{ss}$ irreps in $\fg_{-1}$ to obtain the $\fg_0^{ss}$-decomposition for $\fg_1$.
 \end{enumerate}

 Now suppose that $\fg$ is simple and $\lambda$ its highest weight, given below.
 \begin{table}[h]
 \[
 \begin{array}{|c||c|c|c|c|c|c|c|c|c|} \hline
 G &
 \begin{array}{c} A_\ell \\ (\ell \geq 1) \end{array} &
 \begin{array}{c} B_\ell \\ (\ell \geq 3) \end{array} &
 \begin{array}{c} C_\ell \\ (\ell \geq 2) \end{array} &
 \begin{array}{c} D_\ell \\ (\ell \geq 4) \end{array} & G_2 & F_4 & E_6 & E_7 & E_8\\ \hline
 \lambda & \lambda_1 + \lambda_\ell & \lambda_2 & 2\lambda_1 & \lambda_2 & \lambda_2 & \lambda_1 & \lambda_2 & \lambda_1 & \lambda_8\\ \hline
 \begin{tabular}{c} $\lambda$ in root\\ notation \end{tabular} & 11\cdots11 & 12\cdots2 & 2\cdots21 & 12\cdots211 & 32 & 2342 & 122321 & 2234321 & 23465432\\ \hline
 \end{array}
 \]
 \caption{Highest roots in terms of fundamental weights and simple roots}
  \label{F:hw}
 \end{table}

 \begin{enumerate}[label=\textbf{(R.\arabic*)}] \setcounter{enumi}{3}
 \item {\bf Top-slot $\fg_\nu$ as a $\fg_0^{ss}$-module}: Remove crosses from $\fD(\fg,\fp,\lambda)$ to get the diagram $\fT(\fg_0^{ss},\lambda)$.
 \item {\bf Effective part of the $\fg_0^{ss}$-action on $\fg_\nu$}:  Restrict to the support $\fT^0(\fg_0^{ss},\lambda)$ of $\fT(\fg_0^{ss},\lambda)$.
 \end{enumerate}

 \begin{example}[$D_6 / P_{1,4}$] $\Dsix{xwwxww}{0,1,0,0,0,0} \leadsto \Atwo{ww}{1,0}$.  Here, $\nu = 3$ and $\fg_0^{ss} = A_2 \times A_1 \times A_1$, but $A_1 \times A_1$ acts trivially on $\fg_\nu$.  The effective part is given by the $A_2$-action with the above weight.
 \end{example}

 The extended Dynkin diagram $\widetilde\fD(\fg)$ augments $\fD(\fg)$ by a single node corresponding to $-\lambda$ and corresponding bonds.  Refer to the node(s) adjacent to $-\lambda$ as the {\em contact node(s)}.  Equivalently, if $\lambda = \sum_i r_i \lambda_i$, these are nodes $j$ for which $r_j \neq 0$.  (See Table \ref{F:hw}.)  Mark these on $\fD(\fg,\lambda)$ with a \mydiamond.

 \begin{enumerate}[label=\textbf{(R.\arabic*)}] \setcounter{enumi}{5}
 \item {\bf Simple roots orthogonal to $\lambda$}:  All nodes in $\fD(\fg)$ other than those marked with \mydiamond.
 \item {\bf The subalgebra $\fg(\lambda) \subset \fg$}:  Remove all \mydiamond{} from $\fD(\fg,\lambda)$ to obtain a diagram $\fD_\lambda(\fg,\fp)$.  (Equivalently, if $\fq \subset \fg$ is the parabolic subalgebra corresponding to crossing all \mydiamond{}, then $\fg(\lambda)$ is the semisimple part of $\fq$.)
 \item {\bf The ideal $\fl_\fp(\lambda) \subset \fg(\lambda)$}: From $\fD_\lambda(\fg,\fp)$, remove all cross-free connected components to obtain $\fD_\lambda^0(\fg,\fp)$ corresponding to an ideal $\fl_\fp(\lambda) \subset \fg(\lambda)$.  (All other ideals of $\fg(\lambda)$ lie in $\fg_0$.) Let $\fp(\lambda) = \fl_\fp(\lambda) \cap \fp$.
 \end{enumerate}

 Using Lemma \ref{L:max}, we obtain the following recipe (see also Example \ref{E:E7P7}):

  \begin{enumerate}[label=\textbf{(R.\arabic*)}] \setcounter{enumi}{8}
 \item {\bf Top-slot orthogonal cascade}:
 \begin{enumerate}
 \item Start with $\fD(\fg,\fp,\lambda)$, where $\fg$ is simple, $\fp \subsetneq \fg$ is parabolic, and $\lambda = \max \Delta(\fg)$.
 \begin{itemize}
 \item Termination condition: $\fT^0(\fg_0^{ss},\lambda) = \emptyset$ or $\fT^0(\fg_0^{ss},\lambda) \cong \Add{wwww}{1,0,0,0}$.
 \end{itemize}
 Remove all $\mydiamond$.  Remove all connected components without crosses.  Result: $\fD(\fl_\fp(\lambda),\fp(\lambda))$.
 Now iterate using $\fD(\fl_\fp(\lambda),\fp(\lambda),\mu)$ for the new $\fD(\fg,\fp,\lambda)$, where $\mu = \max \Delta(\fl_\fp(\lambda))$.
 \item For each diagram of the sequence produced in (a), write the highest root (use Table \ref{F:hw}).  Write the corresponding root in the initial $\fg$ by putting a zero coefficient for all nodes that carried a $\mydiamond$ in the previous steps.
 \end{enumerate}
 \end{enumerate}

By Kostant's version of the Bott--Borel--Weil theorem \cite{CS2009,Kos1963}, the $\fg_0$-irreps $\bbU_\mu \subset H^2(\fg_-,\fg)$ are parametrized by the length two elements $w \in W^\fp(2)$ of the {\em Hasse diagram} $W^\fp$, which is a distinguished subset of the Weyl group $W$ of $\fg$. Let $\sigma_i$ denote the simple reflections in $W$, defined by $\sigma_i(\alpha) = \alpha - \langle \alpha, \alpha_i^\vee \rangle \alpha_i$.  Then all $w \in W^\fp(2)$ are of the form $w = (jk) := \sigma_j \circ \sigma_k$, where
 \begin{itemize}
 \item[(P.1)] $j \in I_\fp$ (i.e.\ a crossed node), and
 \item[(P.2)] $j \neq k$ with either $k \in I_\fp$ or $k$ is adjacent to $j$ in the Dynkin diagram of $\fg$.
 \end{itemize}
 The submodule $\bbU_\mu \subset H^2(\fg_-,\fg)$ has lowest $\fg_0$-weight $\mu = -w\cdot\lambda$, where $\cdot$ denotes the affine action of $W$.  By \cite[Prop. 3.2.14 (1)]{CS2009}, $w \cdot 0 = -\alpha_j - \sigma_j(\alpha_k)$.  If $\lambda = \sum_i r_i \lambda_i$ (so $r_i = \langle \lambda, \alpha_i^\vee \rangle$) then
 \begin{align}
  \mu &= -w\cdot \lambda = -w(\lambda) - w\cdot 0 = -\sigma_j(\lambda - r_k \alpha_k) + \alpha_j + \sigma_j(\alpha_k) \label{E:mu} \\
  &= -\lambda + (r_j + 1) \alpha_j + (r_k+1)(\alpha_k - c_{kj} \alpha_j), \nonumber
 \end{align}
 where $c_{kj} = \langle \alpha_k, \alpha_j^\vee \rangle$ are entries of the Cartan matrix.

 Using the natural $\fg_0$-module isomorphism between $H^2(\fg_-,\fg)$ and $\ker(\Box) \subset \bigwedge^2 \fg_-^* \ot \fg$, where $\Box$ is the Kostant Laplacian, Kostant identified the following lowest $\fg_0$-weight vector for $\bbU_{-w \cdot \lambda}$:
 \begin{align} \label{E:vw}
 e_{\alpha_j} \wedge e_{\sigma_j(\alpha_k)} \otimes e_{w(-\lambda)},
 \end{align}
 where $e_\gamma$ denotes a root vector corresponding to the root $\gamma \in \Delta$.  Since $\lambda \in \Delta$, then $w(-\lambda) \in \Delta$.


 \section{Classification of Yamaguchi-nonrigid, torsion-free parabolic geometries}
 \label{S:nr-torfree}

 For any regular, normal parabolic geometry of type $(G,P)$, the harmonic curvature $\kappa_H$ takes values in $H^2_+(\fg_-,\fg)$.  The geometry is {\em Yamaguchi-rigid} if $H^2_+(\fg_-,\fg) = 0$.  All such geometries are automatically flat.  In the non-rigid case, the geometry is {\em torsion-free} if the curvature $\kappa$ takes values in the $P$-submodule $\bigwedge^2 \fp_+ \ot \fp \subset \bigwedge^2 \fp_+ \ot \fg$.  We will prove the following classification result.

  \begin{thm} Let $G$ be a complex simple Lie group.  All Yamaguchi-nonrigid, torsion-free (regular, normal) parabolic geometries of type $(G,P)$ are given by:
 \[
 \begin{array}{ccccl}
 G & \mbox{Range} & P & w \in W^\fp & \mbox{Description of some real forms} \\ \hline
 A_\ell & \ell \geq 2 & P_1 & (12) & \mbox{projective structures}\\
 & \ell \geq 3 & P_2 & (21) & \mbox{$(2,\ell-1)$-Segr\'e structures}\\
 & \ell \geq 2 & P_{1,2} & (21) & \mbox{torsion-free second order ODE system in $(\ell-1)$-dep. vars}\\
 & \ell \geq 3 & P_{1,\ell} & (1\ell) & \mbox{integrable Legendrian contact structures}\\
 B_\ell & \ell \geq 2 & P_1 & (12) & \mbox{odd-dimensional conformal structures}\\
 B_2 & - & P_{1,2} & (12) & \mbox{scalar 3rd order ODE with vanishing W\"unschman invariant}\\
 B_3 & - & P_3 & (32) & \mbox{$(3,6)$-distributions}\\
 C_\ell & \ell \geq 2 & P_1 & (12) & \mbox{contact projective structure}\\
 & \ell \geq 3 & P_2 & (21) & \mbox{split quaternionic contact structure}\\
 & \ell \geq 3 & P_{1,2} & (21) & \mbox{contact path geometry}\\
 D_\ell & \ell \geq 4 & P_1 & (12) & \mbox{even-dimensional conformal structures}\\
 G_2 & -& P_1 & (12) & \mbox{$(2,3,5)$-distributions}
 \end{array}
 \]
 \end{thm}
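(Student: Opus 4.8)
The plan is to translate ``torsion-free'' into a single inequality on the combinatorial data $w=(jk)$ indexing the harmonic curvature, and then to scan Yamaguchi's list of nonrigid geometries against it.

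First I would recall the algebraic setup from Appendix~\ref{S:DD}. By Kostant's theorem the $\fg_0$-irreducible components of $H^2(\fg_-,\fg)$ are the modules $\bbU_{-w\cdot\lambda}$ for $w=(jk)\in W^\fp(2)$, with lowest weight vector \eqref{E:vw}, i.e.\ $e_{\alpha_j}\wedge e_{\sigma_j(\alpha_k)}\otimes e_{w(-\lambda)}$. The crucial point is that $\bigwedge^2\fg_-^*\otimes\fg$ carries two commuting $\fg_0$-invariant gradings -- the ``form degree'' $\sfZ|_{\bigwedge^2\fg_-^*}\otimes\id$ and the ``value degree'' $\id\otimes\sfZ|_\fg$ -- each of which, being a $\fg_0$-module endomorphism, acts by Schur's lemma as a scalar on the irreducible $\bbU_{-w\cdot\lambda}$. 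Reading these scalars off the lowest weight vector, the value degree is constantly $\sfZ(w(-\lambda))$. Hence the component is entirely of \emph{curvature type} (valued in $\fp$, contributing no torsion) when $w(-\lambda)\in\Delta(\fp)$, i.e.\ $\sfZ(w(-\lambda))\ge 0$, and entirely of \emph{torsion type} otherwise. Since a regular, normal parabolic geometry is torsion-free precisely when $\kappa_H$ takes values only in curvature-type components \cite{CS2009}, a Yamaguchi-nonrigid torsion-free geometry of type $(G,P)$ exists exactly when $H^2_+(\fg_-,\fg)$ has a curvature-type component, and the table must record the corresponding $w$.

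Next I would make the criterion fully explicit. From \eqref{E:mu} one computes $w(-\lambda) = -\lambda + (r_j - r_k c_{kj})\alpha_j + r_k\alpha_k$, and using $\sfZ(\alpha_i)=1$ for $i\in I_\fp$ and $0$ otherwise together with $j\in I_\fp$ and $\nu=\sfZ(\lambda)$, this yields
\[
\sfZ(w(-\lambda)) = -\nu + (r_j - r_k c_{kj}) + r_k\,[k\in I_\fp],
\]
where $r_i=\langle\lambda,\alpha_i^\vee\rangle$ and $c_{kj}=\langle\alpha_k,\alpha_j^\vee\rangle\le 0$. The torsion-free condition is then $\sfZ(w(-\lambda))\ge 0$, while nonrigidity is the positivity $\sfZ(\mu)=\sfZ(w(-\lambda))+1+(\sfZ(\alpha_k)-c_{kj})>0$. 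These two inequalities, in the five integers $(\nu,r_j,r_k,c_{kj},[k\in I_\fp])$ read from the marked Dynkin diagram, are what must be solved.

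Finally I would run this test over Yamaguchi's classification of nonrigid geometries for complex simple $G$ (equivalently the components collected in \cite[Table 8]{KT2014} together with the $|1|$-graded and parabolic-contact families). For each admissible $(G,P)$ and each $w=(jk)$ I evaluate $\sfZ(w(-\lambda))$ and retain exactly those with $\sfZ(w(-\lambda))\ge 0$; the survivors, together with the descriptions of their real forms, constitute the table. The main obstacle is this enumeration: it is finite but genuinely case-by-case across the classical series $A_\ell,B_\ell,C_\ell,D_\ell$ -- with the two subcases $k\in I_\fp$ and $k$ merely adjacent to $j$, and the duality-involution bookkeeping of Appendix~\ref{S:DD} -- and the exceptional $G_2$; one must carefully discard all torsion-type components while checking that no nonrigid case is skipped. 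Recognizing each surviving $(G,P,w)$ as the familiar structure named in the last column (projective, Segr\'e, conformal, contact-projective, quaternionic-contact, $(2,3,5)$- and $(3,6)$-distributions, etc.) is a separate identification step, not needed for the classification itself.
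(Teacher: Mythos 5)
Your reduction is exactly the paper's: torsion-freeness is translated into the single condition $\sfZ(w(-\lambda))\geq 0$ on the components $\bbU_{-w\cdot\lambda}$ of $H^2_+(\fg_-,\fg)$, read off from Kostant's lowest weight vector \eqref{E:vw} (your Schur's-lemma argument for why each irreducible component carries a single well-defined ``value degree'' makes explicit what the paper leaves implicit), and both you and the paper delegate the geometric equivalence -- normality plus the fact that the lowest homogeneity component of $\kappa$ is harmonic -- to \cite{CS2009}. Where you genuinely part ways is the enumeration. You take Yamaguchi's nonrigid classification (\cite{Yam1993,Yam1999}, i.e.\ \cite[Table 8]{KT2014} together with the $|1|$-graded and contact families) as an input and filter it by the inequality; the paper's appendix never invokes that classification. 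Instead it solves the condition from scratch over all $(G,P,w)$: the strictly positive case $\sfZ(w(-\lambda))>0$ (so $w(-\lambda)\in\Delta^+$) is dispatched by citing \cite[Lemma 4.3.2]{KT2014}, which leaves only four rank-two types, and the boundary case is rewritten as the Diophantine condition \eqref{E:zero-hom} and solved by the dichotomy on whether $j$ or $k$ is a contact node, type by type in the data $r_a$, $c_{kj}$, $\sfZ(\alpha_k)$. A payoff of the paper's route is that nonrigidity need not be assumed: since $\sfZ(-w\cdot\lambda)=\sfZ(w(-\lambda))+1+\sfZ(\alpha_k)-c_{kj}\geq 1$ whenever $\sfZ(w(-\lambda))\geq 0$, every curvature-type component automatically lies in $H^2_+$, so the appendix is self-contained and in effect re-derives the relevant portion of Yamaguchi's list. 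Your route buys brevity and reuses known tables, but the case-by-case labour you defer to the scan is essentially the same labour the paper spends solving \eqref{E:zero-hom} -- the infinite families still have to be treated symbolically -- and it additionally rests on the completeness of the tabulated $W^\fp(2)$ data rather than on a closed argument.
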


 Let $\lambda$ be the highest weight of $\fg$.  From \eqref{E:vw}, if the geometry is torsion-free, then $\kappa_H$ takes values in the direct sum of those $\bbU_{-w\cdot \lambda} \subset H^2_+(\fg_-,\fg)$, where $w \in W^\fp(2)$ and the grading element $\sfZ$ satisfy
 \begin{align} \label{E:nn-hom}
 \sfZ(w(-\lambda)) \geq 0.
 \end{align}
 The component of $\kappa$ of lowest homogeneity is harmonic (see \cite[Thm.\ 3.1.12]{CS2009} for a precise statement), so if \eqref{E:nn-hom} holds, then the geometry is torsion-free. The following appeared in \cite[Lemma 4.3.2]{KT2014}.

 \begin{lemma}
 Let $\fg$ be complex simple.  If $w \in W^\fp(2)$, and $w(-\lambda) \in \Delta^+$, then $G/P$ is one of $A_2 / P_1$, $A_2 / P_{1,2}$, $B_2 / P_1$, or $B_2 / P_{1,2}$.
 \end{lemma}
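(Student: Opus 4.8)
The plan is to convert the condition $w(-\lambda) \in \Delta^+$ into a constraint on simple-root coefficients, which will force $\fg$ to have rank $2$, and then to finish by a short finite check. Write $w = (jk) = \sigma_j \circ \sigma_k$ with $j \in I_\fp$ and $j \neq k$ (properties (P.1)--(P.2) in Appendix \ref{S:DD}), and abbreviate $r_i = \langle \lambda, \alpha_i^\vee \rangle$ and $c_{kj} = \langle \alpha_k, \alpha_j^\vee \rangle$. Using $\sigma_i(\beta) = \beta - \langle \beta, \alpha_i^\vee\rangle \alpha_i$ twice, I would record the identity
\[
w(-\lambda) = \sigma_j \sigma_k(-\lambda) = -\lambda + (r_j - r_k c_{kj})\,\alpha_j + r_k\,\alpha_k .
\]
Since $w$ preserves $\Delta$ and $-\lambda \in \Delta$, the left-hand side is automatically a root; the only issue is its sign.

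Next I would run a full-support argument. Expanding $\lambda = \sum_i m_i \alpha_i$, recall that the highest root of a simple Lie algebra has every mark $m_i \geq 1$. In the displayed identity the correction terms touch only $\alpha_j$ and $\alpha_k$, so for every $i \notin \{j,k\}$ the $\alpha_i$-coefficient of $w(-\lambda)$ equals $-m_i \leq -1$. A root lies in $\Delta^+$ precisely when all its simple-root coefficients are nonnegative; hence $w(-\lambda) \in \Delta^+$ is impossible unless there is no index outside $\{j,k\}$, i.e.\ $\{1,\dots,\ell\} = \{j,k\}$. As $j \neq k$, this forces $\ell = 2$ (and $\ell \geq 2$ is automatic, since $W^\fp$ carries no length-$2$ element in rank $1$). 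Thus $\fg \in \{A_2, B_2, G_2\}$ with $\{j,k\} = \{1,2\}$; since the two rank-$2$ nodes are adjacent, (P.2) is free and the only surviving combinatorial condition is (P.1): $j \in I_\fp$.

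Finally I would dispatch the three rank-$2$ diagrams, computing $w(-\lambda)$ for both $w \in \{(12),(21)\}$ and reading off $I_\fp$ (hence $P$) from $j \in I_\fp$ whenever $w(-\lambda) \in \Delta^+$. Direct reflection computations (with the Bourbaki Cartan integers and Table \ref{F:hw}) give: for $A_2$, $\sigma_1\sigma_2(-\lambda) = \alpha_1$ and $\sigma_2\sigma_1(-\lambda) = \alpha_2$, both positive, producing $A_2/P_1$, its dual $A_2/P_2 \cong A_2/P_1$, and $A_2/P_{1,2}$; for $B_2$ (say $\alpha_1$ long, so $\lambda = \alpha_1 + 2\alpha_2$), $\sigma_1\sigma_2(-\lambda) = \alpha_1 \in \Delta^+$ but $\sigma_2\sigma_1(-\lambda) = -\alpha_1 \in \Delta^-$, so only $B_2/P_1$ and $B_2/P_{1,2}$ survive while $B_2/P_2$ is excluded; and for $G_2$ (with $\lambda = 3\alpha_1 + 2\alpha_2$), $\sigma_1\sigma_2(-\lambda) = -\alpha_2$ and $\sigma_2\sigma_1(-\lambda) = -(3\alpha_1 + \alpha_2)$ are both negative, removing $G_2$ entirely. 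Collecting the survivors yields exactly the four pairs asserted.

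The conceptual crux is the full-support observation of the second step, which collapses the problem to rank $2$ in one stroke; after that there is no genuine obstacle. The only delicate point is the rank-$2$ bookkeeping, where the asymmetry $c_{kj} \neq c_{jk}$ between long and short simple roots is exactly what breaks the $j \leftrightarrow k$ symmetry: it is what keeps $B_2/P_1$ while discarding $B_2/P_2$, and what kills $G_2$ outright. I would double-check these signs against the chosen Bourbaki normalization to be safe.
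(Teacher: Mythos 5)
Your proof is correct. Note that the paper itself does not prove this lemma --- it is imported verbatim from \cite[Lemma 4.3.2]{KT2014} --- so there is no in-paper argument to compare against; your write-up therefore serves as a complete, self-contained proof. The key identity $w(-\lambda) = -\lambda + (r_j - r_k c_{kj})\alpha_j + r_k\alpha_k$ agrees with the computation underlying \eqref{E:mu}, the full-support observation (every simple root appears in the highest root of a simple Lie algebra with coefficient $\geq 1$) is exactly the right lever to force $\ell = 2$, and I checked your rank-two sign computations in the Bourbaki conventions: for $A_2$ both $(12)$ and $(21)$ give $\alpha_1$, $\alpha_2$ respectively; for $B_2$ (node $1$ long, $\lambda = \alpha_1 + 2\alpha_2$, $r_1 = 0$, $r_2 = 2$, $c_{21} = -1$) one gets $\sigma_1\sigma_2(-\lambda) = \alpha_1$ but $\sigma_2\sigma_1(-\lambda) = -\alpha_1$, killing $B_2/P_2$; and for $G_2$ ($\lambda = 3\alpha_1 + 2\alpha_2$, $r_1 = 0$, $r_2 = 1$, $c_{21} = -3$) both elements land in $\Delta^-$. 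You were also right to flag the one point where the literal statement needs care: $A_2/P_2$ does satisfy the hypothesis (via $w = (21)$) and is absent from the list only because it is identified with $A_2/P_1$ under the diagram automorphism, a convention consistent with how the paper (and \cite{KT2014}) catalogue these geometries.
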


Thus, it suffices to study the case $\sfZ(w(-\lambda)) = 0$.  The rank 2 cases are easily settled by hand: $B_2 / P_2 = C_2 / P_1$ and $G_2 / P_1$ are the only cases with $\sfZ(w(-\lambda)) = 0$.  So let $\ell := \rnk(G) \geq 3$.

 Since $\lambda = \sum_a r_a \lambda_a$ is a dominant integral weight, then $r_a := \langle \lambda, \alpha_a^\vee \rangle \in \bbZ_{\geq 0}$.  (Indeed $r_a \in \{ 0,1,2 \}$ always.)  Hence, for $w = (jk) \in W^\fp(2)$,
 \begin{align*}
 w(\lambda) = \sigma_j\sigma_k(\lambda)
 &= \sigma_j(\lambda - r_k \alpha_k) = \lambda - r_j \alpha_j - r_k (\alpha_k - c_{kj} \alpha_j)
 = \lambda - (r_j - r_k c_{kj}) \alpha_j - r_k \alpha_k.
 \end{align*}
 Since $j \in I_\fp$, then $\sfZ(\alpha_j) = 1$, so $\sfZ(w(-\lambda)) = 0$ if and only if
 \begin{align} \label{E:zero-hom}
 \sfZ(\lambda) = r_j - r_k c_{kj} + r_k \sfZ(\alpha_k).
 \end{align}
 Since $I_\fp \neq \emptyset$, then $\sfZ(\lambda) \geq |I_\fp| \geq 1$, so $r_j = r_k = 0$ is impossible.  Thus, $r_j \geq 1$ or $r_k \geq 1$.  If $r_a \geq 1$, node $a$ will be referred to as a {\em contact node}.  An important property is:
 \begin{itemize}
 \item[(P.3)]   If $\fg$ is not of type A, there is a {\em unique} contact node $a$ and we have $\sfZ_a(\lambda) = 2$.  If $\fg$ is of type C, then $r_a = 2$, and otherwise $r_a = 1$.
 \end{itemize}
 Using (P.1), (P.2) (from Appendix \ref{S:DD}), and (P.3), we extract the implications of \eqref{E:zero-hom}.


 \begin{enumerate}
 \item \framebox{$r_j \geq 1$}. ($j$ is a contact node)

 \begin{itemize}
 \item Type A: Since $\lambda = \lambda_1 + \lambda_\ell$, then we may assume $j=1$, so $r_j = 1$ and $\sfZ_j(\lambda) = 1$.
 \begin{itemize}
 \item If $r_k \geq 1$, then $k = \ell \geq 3$, $r_k = 1$, $c_{kj} = 0$, and $k \in I_\fp$.  Thus, \eqref{E:zero-hom} implies $\sfZ(\lambda) = 2$.  Since $\lambda = \alpha_1 + ... + \alpha_\ell$ with $j,k \in I_\fp$, then  $G/P = A_\ell / P_{1,\ell}$.
 \item Otherwise, if $r_k = 0$, then \eqref{E:zero-hom} implies $\sfZ(\lambda) = 1$, so $G/P = A_\ell / P_1$.
 \end{itemize}
 \item Other types: We must have $r_k = 0$, so \eqref{E:zero-hom} implies $\sfZ(\lambda) = r_j$.
 \begin{itemize}
 \item Type C: Since $\lambda = 2\lambda_1 = 2\alpha_1 + ... + 2\alpha_{\ell-1} + \alpha_\ell$, then $j=1$, $r_j = 2$, and $\sfZ(\lambda) = \sfZ_j(\lambda) = 2$.  Thus, $G/P = C_\ell / P_1$.
 \item Not type A,C: $r_j = 1$, so $\sfZ(\lambda) = 1$ forces $|I_\fp| = 1$.  However, since $j$ is the contact node, then $\sfZ_j(\lambda) = 2$, so $\sfZ(\lambda) = r_j$ is impossible.
 \end{itemize}

 \end{itemize}

 \item \framebox{$r_j = 0$ and $r_k \geq 1$}. ($k$ is a contact node)

 \begin{itemize}
 \item Type A: We may assume $k=1$, so $r_k = 1$ and \eqref{E:zero-hom} implies $\sfZ(\lambda) = \sfZ(\alpha_k) - c_{kj}$.
 \begin{itemize}
 \item $c_{kj} = 0$: Then $I_\fp \supset \{ j, k \}$, so $2 \leq \sfZ(\lambda) = \sfZ(\alpha_k) \leq 1$, a contradiction.
 \item $c_{kj} \neq 0$: $2 = j \in I_\fp$ and $c_{kj} = -1$.  If $k \not\in I_\fp$, then $\sfZ(\alpha_k) = 0$ and $G/P = A_\ell / P_2$.  If $k \in I_\fp$, then $\sfZ(\alpha_k) = 1$ and $G/P = A_\ell / P_{1,2}$.
 \end{itemize}

 \item Type C: $k=1$, $r_k = 2$, and \eqref{E:zero-hom} implies $\sfZ(\lambda) = 2 \sfZ(\alpha_1) - 2 c_{1j}$.
 \begin{itemize}
 \item $j\neq 2$: $c_{1j} = 0$, so $1 \leq \sfZ(\lambda) = 2\sfZ(\alpha_1)$ implies $1 \in I_\fp$ and $\sfZ(\lambda) = 2$. Since $\lambda = 2(\sum_{i=1}^{\ell-1} \alpha_i) + \alpha_\ell$, then $I_\fp = \{ 1 \}$.  But $j \in I_\fp$, so $j=k=1$, a contradiction.
 \item $j=2$: $c_{1j} = -1$, so $\sfZ(\lambda) = 2 + 2\sfZ(\alpha_1)$.  If $1 \not\in I_\fp$, then $G/P = C_\ell / P_2$.  If $1 \in I_\fp$, then $G/P = C_\ell / P_{1,2}$.
 \end{itemize}

 \item Not type A,C: By (P.3), $r_k = 1$ and $\sfZ_k(\lambda) = 2$, so \eqref{E:zero-hom} implies
 $\sfZ(\lambda) = - c_{kj} + \sfZ(\alpha_k)$.
 \begin{itemize}
 \item $k \in I_\fp$: Since $j \in I_\fp$, $\sfZ_j(\lambda) \geq 1$, so $ 3 \leq \sfZ_j(\lambda) + \sfZ_k(\lambda) \leq \sfZ(\lambda) = 1-c_{kj}$.  Since $\ell = \rnk(G) \geq 3$, then $G$ must be doubly-laced with $c_{kj} = -2$ and $\sfZ(\lambda) = 3$.
 \begin{itemize}
 \item Type B: $r_k=1$ and $c_{kj} = -2$ implies $k=2$ and $j=\ell=3$.  But then $\sfZ(\lambda) \geq \sfZ_2(\lambda) + \sfZ_3(\lambda) \geq 4$ (since $\lambda = \alpha_1 + 2\alpha_2 + 2\alpha_3$), a contradiction.
 \item Type F: $r_k=1$ so $k=1$, but then $c_{kj} \neq -2$, a contradiction.
 \end{itemize}

 \item $k \not\in I_\fp$: Here, \eqref{E:zero-hom} implies $\sfZ(\lambda) = - c_{kj}$ with $k$ the unique contact node.
 \begin{itemize}
 \item Type B: $k=2$ and $\lambda = \alpha_1 + 2(\sum_{i=2}^\ell \alpha_i)$.  If $c_{kj} = -1$, then $\sfZ(\lambda) = \sfZ_j(\lambda) = 1$, so $G/P$ is $B_\ell / P_1$.  Otherwise, if $c_{kj} = -2$, then $G/P = B_3 / P_3$.
 \item Type D: $k=2$ and $\lambda = \alpha_1 + 2(\sum_{i=2}^{\ell-2} \alpha_i) + \alpha_{\ell-1} + \alpha_\ell$, so $G/P = D_\ell / P_1$.
 \item Type E: $\sfZ(\lambda) = -c_{kj} = 1$, so $G \neq E_8$.  For $E_6$, $k=2$ and $j=1$, but then $c_{kj} = 0$.  For $E_7$, $k=1$ and $j=7$, but then $c_{kj} = 0$.  Both are contradictions.
 \item Type F: $k=1$, so $j=2$, and $3 = \sfZ_2(\lambda) \leq \sfZ(\lambda) = 1$, a contradiction.
 \end{itemize}
 \end{itemize}

 \end{itemize}
 \end{enumerate}

\end{document}